\documentclass[reqno]{amsart}

\usepackage{a4wide}

\usepackage[pagebackref,citecolor=black,linkcolor=blue,colorlinks=true]{hyperref}

\usepackage[shortlabels]{enumitem}
\setlist[enumerate,1]{label={\arabic*. }}

\usepackage{amsmath,amssymb}
\usepackage{mathrsfs} 

\usepackage{tikz}

\newcommand{\dd}[1]{\mathrm{d}{#1}} 
\newcommand{\derivative}[2]{\frac{\dd{{#1}}}{\dd{{#2}}}} 
\newcommand{\D}{\mathrm{d}}  
\newcommand{\loc}{\mathrm{loc}}

\newcommand{\dist}{\mathrm{dist}}
\newcommand{\graph}{\mathrm{graph}}
\newcommand{\exc}{\mathrm{exc}}

\newcommand\HH{\mathcal{H}}
\newcommand{\N}{\mathbb{N}}
\newcommand{\R}{\mathbb{R}}
\newcommand{\eps}{\varepsilon}
\newcommand{\mres}{\mathbin{\vrule height 1.6ex depth 0pt width
0.13ex\vrule height 0.13ex depth 0pt width 1.3ex}}

\newcommand\myatop[2]{\genfrac{}{}{0pt}{}{#1}{#2}}

\DeclareMathOperator*{\argmin}{arg\,min}

\DeclareMathOperator\supp{supp}
\DeclareMathOperator*{\diverg}{div} 
 
\DeclareMathOperator{\lip}{Lip}

\DeclareMathOperator*{\aplim}{aplim}

\providecommand{\abs}[1]{\lvert#1\rvert}
\providecommand{\norm}[1]{\lVert#1\rVert}
\newcommand{\minimizer}[4]{$(#1,#2) \in \argmin \{E(J,v): (J,v)\in \mathcal{A}(#3,#4)\}$}

\newcommand{\fsln}{for sufficiently large $n\in \N$}

\newcommand{\myset}{}

\newcounter{hints}
\renewcommand{\thehints}{\arabic{hints}}
\newcommand{\hintedrel}[2][]{
  \refstepcounter{hints}
  \if\relax\detokenize{#1}\relax\else\label{#1}\fi
  \mathrel{\overset{\mathrm{(\thehints)}}{\vphantom{\le}{#2}}}
}

\newtheorem{thm}{Theorem}[section]
\newtheorem{prop}[thm]{Proposition}
\newtheorem{lmm}[thm]{Lemma}

\theoremstyle{definition}
\newtheorem{dfn}[thm]{Definition}
\newtheorem{cor}[thm]{Corollary}
\newtheorem{claim}{Claim}[thm]

\theoremstyle{remark}
\newtheorem{rmk}[thm]{Remark}

\numberwithin{equation}{section}

\begin{document}

\title[Regularity for Mumford-Shah with Dirichlet boundary conditions]{Boundary Regularity for the Mumford-Shah functional with Dirichlet boundary conditions}

\author{Francesco Deangelis}
\address{Gran Sasso Science Institute - Viale F. Crispi 7, 67100 L'Aquila, Italy}

\email{francesco.deangelis@gssi.it}

\begin{abstract}
    In this paper we consider minimizers of the Mumford-Shah functional with Dirichlet boundary conditions. We study blow-ups at the boundary and prove an epsilon-regularity theorem.
\end{abstract}

\maketitle

\section{Introduction}
\label{section: introduction}
Given a bounded domain $\Omega \subset \R^2$ with boundary of class $C^{1,\gamma}$ and a boundary datum $g \in C^{1,\gamma}(\partial\Omega)$ for some $\gamma \in (0,1)$, we are interested in the regularity of minimizers of the Mumford-Shah functional 
\begin{equation}
    \label{eq: mumford-shah functional}
    E(K,u) = \int_{\Omega \setminus K} |\nabla u|^2 + \HH^1(K)
\end{equation}
among all pairs $(K,u)$ such that $K$ is a closed subset of $\overline{\Omega}$, $u \in W^{1,2}(\Omega \setminus K)$ and $u = g$ on $\partial \Omega \setminus K$. We will always assume that minimizers, whose existence was proved in \cite{carriero-leaci}, are normalized, that is $\HH^1(K \cap B_\rho(x)) >0$ for every $x \in K$ and $\rho>0$. 

We prove that if $(K,u)$ is a minimizer, $x \in K \cap \partial \Omega$ and $K$ is sufficiently close to the tangent line $x+T_x \partial \Omega$ in a small ball centered at $x$, then in a smaller concentric ball $K$ is the graph of a $C^{1,\alpha}$ function. The precise statement is the following, where $\dist_H$ denotes the Hausdorff distance. 
\begin{thm}[Epsilon-regularity] 
    \label{thm: epsilon-regularity}
    There exist constants $\eta,\varepsilon, \alpha \in (0,1)$ with the following property. Assume 
    \begin{enumerate}[label=(\roman*)]
        \item $(K,u)$ is a minimizer of $E$ on $\Omega$ with boundary datum $g$ and $x \in K \cap \partial\Omega$,
        \item $r^{-1} \dist_H(K \cap \overline{B}_{2r}(x), (x+T_x \partial \Omega) \cap \overline{B}_{2r}(x)) + r^\frac{1}{2} < \varepsilon$.
    \end{enumerate}
    Then $K \cap B_{\eta r}(x)$ is the graph of a $C^{1,\alpha}$ function over $x+T_x \partial \Omega$.
\end{thm}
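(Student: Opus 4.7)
The plan is to follow the blow-up/contradiction scheme that underlies the interior $\eps$-regularity for Mumford--Shah minimizers (Ambrosio--Fusco--Pallara, David) and adapt it to the Dirichlet boundary setting. By translating $x$ to the origin, rotating so that $T_x\partial\Omega = \R\times\{0\}$, and applying the natural rescaling $u_r(y)=r^{-1/2}u(ry)$, $K_r=r^{-1}K$, one may assume $r=1$. The energy becomes $1$-homogeneous, the $C^{1,\gamma}$ estimate on $\partial\Omega$ improves, and the rescaled datum $g_r(y)=r^{-1/2}(g(ry)-g(0))$ satisfies $\|g_r\|_{L^\infty(B_2)} \le C r^{1/2}\|g\|_{C^{1,\gamma}}$. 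Thus hypothesis (ii) encodes that, at scale $1$, $K$ lies in a thin neighbourhood of the line $L=\R\times\{0\}$ (which also approximates the rescaled $\partial\Omega$) and the Dirichlet datum is almost constant.

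I next argue by contradiction: assume there are $\eps_n\to 0$, minimizers $(K_n,u_n)$ on rescaled domains $\Omega_n\to H=\R\times(0,\infty)$ satisfying (ii) with $\eps_n$, but whose restrictions to $B_\eta$ are not $C^{1,\alpha}$ graphs for any fixed $\eta$. A boundary Ahlfors regularity estimate for $K_n$ (a Carriero--Leaci--type bound up to $\partial\Omega_n$) makes the sets $K_n$ precompact in Hausdorff distance on $\overline B_2$; extract $K_n\to K_\infty\subset\overline H$, with $K_\infty\subset L$ by (ii). After subtracting the constants $g(x_n)$ and passing to a further subsequence, $u_n\rightharpoonup u_\infty$ weakly in $W^{1,2}_{\loc}(H\setminus K_\infty)$, and a lower-semicontinuity plus competitor-extension argument identifies $(K_\infty,u_\infty)$ as a local Mumford--Shah minimizer on $H$ with homogeneous Dirichlet condition on $\partial H\setminus K_\infty$.

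The decisive step is to classify this blow-up as $K_\infty = L\cap\overline B_2$ with $u_\infty\equiv 0$. Comparison with the trivial competitor $(\emptyset,0)$ on each half-ball forces $u_\infty\equiv 0$, so that the limit energy reduces to $\HH^1(K_\infty)$; any portion of $L\cap\overline B_2$ missing from $K_\infty$ would then be removable by further competition, the lower density bound at each point of $K_\infty$ combined with a boundary monotonicity formula pinning $K_\infty$ to the full segment. This convergence yields the quantitative decay: there exist $\rho,\theta\in(0,1)$ such that if (ii) holds at scale $r$ with some $\eps'\le\eps$, the analogous excess at scale $\rho r$ is at most $\theta\eps'$. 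Iterating at every nearby boundary point of $K$ produces H\"older decay of the best-approximating tangent direction, which standard considerations upgrade to a $C^{1,\alpha}$ graph parametrization of $K\cap B_{\eta r}$ over $T_x\partial\Omega$.

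The main obstacle is precisely the classification of the half-plane blow-up. While the interior analogue eliminates non-flat global minimizers via Bonnet's monotonicity and L\'eger's rigidity theorem, at the boundary one must rule out additional competitors (proper subsegments of $L$, arcs leaving $\partial H$, interior cracktips accumulating on $L$) using a monotonicity formula adapted to Dirichlet boundary points, and upgrade the weak $W^{1,2}$ convergence of $u_n$ to convergence of energies, typically via a reflection argument compatible with the curved boundary $\partial\Omega_n$. These two ingredients are delicate and should form the technical core of the proof.
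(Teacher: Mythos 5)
Your outline captures the standard top layer of an $\eps$-regularity argument — rescale, extract a blow-up at a boundary point, classify the limit, deduce an excess decay, iterate — and the classification step you flag as delicate is indeed dealt with in the paper via the Bonnet-type monotonicity formula (Propositions~\ref{prop: Bonnet monotonicity} and~\ref{prop: classification of connected generalized minimizers}), exactly as you anticipate. This part of your plan is sound and matches the paper's Lemma~\ref{lmm: the Dirichlet term can be removed from eps-regularity statement}.

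However, there is a genuine gap in the step ``iterating at every nearby boundary point of $K$ produces H\"older decay of the best-approximating tangent direction.'' To show that $K\cap B_{\eta r}(x)$ is a $C^{1,\alpha}$ graph you must control the excess/tilt at \emph{every} point $z\in K\cap B_{\eta r}(x)$ and every sub-scale $\rho$, and the overwhelming majority of these $z$ lie strictly inside $\Omega$, at a small but positive distance $D=\dist(z,\partial\Omega)$ from the boundary. Your one-step decay, obtained by a compactness argument converging to a half-plane, is genuinely available only when $D=0$ (or $D/\rho\to 0$ along the blow-up sequence). At a fixed interior point $z$ the ratio $D/\rho$ grows as $\rho$ shrinks, so the iteration necessarily leaves the ``half-plane'' regime, passes through a regime where $D$ is comparable to $\rho$ (where neither the boundary nor the interior compactness closes the contradiction cleanly), and finally lands in the interior regime $D/\rho>1$. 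This is precisely the obstruction the paper flags explicitly: the Tilt Lemma (Lemma~\ref{lmm: tilt lemma}) picks up an extra term $(D/r)^2$, and the naive iteration stops working once $D/\rho$ exceeds $\beta(z,\rho)^{1/2}$. The paper resolves this with the three-regime decomposition based on comparing $D/\rho$ to $\beta^{1/2}$ and $\beta^{1/3}$ (Lemmas~\ref{lmm: flatness improvement}, \ref{lmm: flatness preserved in the unfavourable case}, \ref{lmm: rescaled dirichlet energy decay}, and the delicate bookkeeping of rotating reference lines in Proposition~\ref{prop: decay}). Your proposal, as written, skips this interior-points-near-the-boundary problem entirely, which is the actual technical core of the theorem rather than the blow-up classification. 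Absent this, the argument establishes regularity of $K$ only at the single point $x$ and does not yield the graph statement.
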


The motivation for this theorem comes from a partial classification of blow-ups of minimizers (see Section \ref{section: blow-up} for a precise definition) at boundary points. 

We conjecture that, if $x \in K \cap \partial \Omega$, then the blow-up of $(K,u)$ at $x$, which we call $(K_0, u_0)$, is such that  $K_0 = T_x \partial \Omega$. 

We are able to prove that this is the case under one of the following additional assumptions: 
\begin{enumerate}
    \item $\int |\nabla u_0|^2 = 0$,
    \item $K_0$ is connected,
    \item for a.e. $r>0$ either $\HH^0(K_0 \cap \partial B_r) \in \{0\} \cup [2,\infty)$ or $K_0 \cap B_r = \{r e^{i\theta}\}$ with $\theta \in (\frac{1}{4}\pi,\frac{3}{4}\pi)$.
\end{enumerate}
Moreover, we prove that it is not possible to have $\HH^0(K_0 \cap \partial B_r) = 1$ for a.e. $r>0$. 
This, or the case $K_0$ connected, excludes the possibility that $K_0$ is a half-line originating at $0$, which corresponds to the so-called cracktip in the literature for the Mumford-Shah problem.
\begin{rmk}
    \label{rmk: after main theorem}
    The proof of the above conjecture would yield, as a corollary of Theorem \ref{thm: epsilon-regularity}, a complete boundary regularity theory for the Dirichlet problem. More precisely, it would imply that, given $x \in \partial \Omega$, either $x \notin K$, and $u$ is harmonic, with $u=g$, in a neighborhood of $x$, or $x \in K$, and $K$ is a $C^{1,\alpha}$ graph in a neighborhood of $x$, with $T_x K = T_x \partial \Omega$.

    At the moment, the regularity at the boundary for the Mumford-Shah functional, with a fidelity term $\int|u-f|^2$ given $f \in L^\infty(\Omega)$, is only known in the case of zero Neumann boundary conditions. This was obtained in \cite{david} and \cite{maddalena-solimini}: if $\partial \Omega$ is $C^1$ then there is a neighborhood of $\partial\Omega$ where K is a finite union of $C^1$ curves that end on $\partial\Omega$ perpendicularly.
 
    We could include the fidelity term in our functional. However, since this adds only technical complications, we prefer to omit it for the sake of simplicity. 
    Observe that, without the fidelity term, both the homogeneous Dirichlet problem and the homogeneous Neumann problem are trivial. 
\end{rmk}
The Mumford-Shah functional was first introduced in \cite{Mumford-Shah} with the aim to study image segmentation with a variational approach. 
In the same paper, the two authors formulated a conjecture about the regularity of minimizers (see \cite{delellis-focardi} for the precise definition), which is still unsolved.
The original functional includes a fidelity term $\int|u-f|^2$ for a given $f \in L^\infty(\Omega)$, but this is not relevant for regularity issues (see \cite{delellis-focardi}). Moreover, the original problem does not have any Dirichlet boundary conditions, giving rise to homogeneous Neumann boundary conditions (see Remark \ref{rmk: after main theorem} for the regularity at the boundary in this case).

Our problem, instead, has a physical interpretation in the context of fracture mechanics \cite{Francfort-Marigo-1998,Bourdin-Francfort-Marigo-2008} in the particular case of antiplane shear strain, in which $u$ represents the vertical displacement of a membrane $\Omega$, with an imposed displacement $g$ on the boundary, and $K$ represents a fracture. 

The main progress for the regularity at the interior is represented by the partial classification of blow-ups and by the epsilon-regularity theorems, that make the solution of the Mumford-Shah conjecture equivalent to a complete classification of blow-ups. A complete account of the current state of the art in the regularity theory can be found in \cite{delellis-focardi}. The main references are \cite{degiorgi-carriero-leaci,dal-maso-morel-solimini,david-c1-arcs,ambrosio-pallara-free-discont-1,ambrosio-fusco-pallara-free-discont-2,afp-higher-regularity,leger,bonnet-david-cracktip,lemenant-r3,delellis-focardi-densitylowerbound,delellis-focardi-higher,bucur-luckhaus,dephilippis-figalli,lemenant-rigidity,lemenant-mikayelyan,andersson-mikayelyan,delellis-focardi-ghinassi}.

The regularity at the boundary for the Dirichlet problem is still an open question, and its investigation is the purpose of the present paper. 
This task seems easier than the regularity at the interior, but more difficult than the regularity at the boundary for the homogeneous Neumann problem. 
Even though we are not able to get a monotonicity formula that works in general, giving a complete classification of blow-ups, we obtain two monotonicity formulae that complement each other and give us several partial classification results. 
Moreover, in the cases that we are able to classify blow-ups, the situation is simpler compared to the interior problem. For example, under the assumption that $K_0$ is connected, we reduce the list of possible blow-ups, which in the interior case \cite{Bonnet} is constituted by pure jump, triple junction and cracktip, to only the pure jump. 
Finally, compared to the homogeneous Neumann boundary problem, we expect that $K$ cannot end on $\partial\Omega$, but needs to touch it tangentially, as it is the case under the assumptions of Theorem \ref{thm: epsilon-regularity}. At an intuitive level, this represents an attempt of $K$ to exclude a portion of the boundary datum which is too costly to attain. 

The main ideas to prove our results are borrowed from the regularity theory at the interior \cite{delellis-focardi}, with non-trivial difficulties due to the presence of the boundary datum. 
The proof of Theorem \ref{thm: epsilon-regularity} deviates from what is typically encountered in proofs of epsilon-regularity theorems. More specifically, for points $x \in K \setminus \partial\Omega$ very close to the boundary we are unable to obtain directly a decay of the flatness (see Section \ref{section: overview of the proof of eps-regularity} for a precise definition) when we pass from $B_r(x)$ to $B_{\tau r}(x)$, with $\tau \in (0,1)$. This is what happens, for example, in the proof of \cite[Theorem 3.1.1]{delellis-focardi}. In our case, instead, we have to distinguish three different regimes. As we decrease the radius of the ball $B_\rho(x)$ we pass from the first regime, in which we have a decay, to the second regime, in which we do not have a decay but the flatness does not grow too much, to the third regime, in which we finally have the desired decay. 
To make an analogy, we can think of this process as the evolution of a one-parameter dynamical system, where the parameter is represented by the radius $\rho$ of the ball $B_\rho(x)$. Then the second regime corresponds to a center manifold of the dynamical system. 
The definition of the three different regimes and a sketch of the proof of Theorem \ref{thm: epsilon-regularity} can be found in the next section.

\subsection{Overview of the proof of Theorem \ref{thm: epsilon-regularity}}
\label{section: overview of the proof of eps-regularity}
In order to sketch the main ideas of the proof of Theorem \ref{thm: epsilon-regularity}, let us introduce some notation. We define
\begin{equation*}
    \mathcal{L} = \{\mathscr{V} \, | \, \mathscr{V} \subset \R^2 \text{ 1-dimensional subspace}\}, \quad
    \mathcal{A} = \{z + \mathscr{V}\, | \, z \in \R^2, \mathscr{V} \in \mathcal{L}\}. 
\end{equation*}
Given $\mathscr{V} ,\mathscr{V'} \in \mathcal{L}$, we define $|\mathscr{V} - \mathscr{V'}| :=  |\pi_\mathscr{V} - \pi_\mathscr{V'}|$,
where $| \cdot |$ on the right hand side denotes the Hilbert-Schmidt norm, while $\pi_\mathscr{V}: \R^2 \to \mathscr{V}$ is the orthogonal projection over $\mathscr{V}$.
For any $x \in \overline{\Omega}$, $r > 0$, $\mathscr{A} \in \mathcal{A}$ and $\mathscr{V} \in \mathcal{L}$, we define
\begin{align*}
    d(x,r) &= \frac{1}{r} \int_{B_r(x) \cap \Omega \setminus K} |\nabla u|^2, \\
    \beta_\mathscr{A}(x,r) &= \frac{1}{r^3} \int_{B_r(x) \cap K} \dist(y,\mathscr{A})^2 \, \dd{\HH^1}(y), \\ 
    \exc_\mathscr{V}(x,r) &= \frac{1}{r}\int_{B_r(x) \cap K} |\mathscr{V} - T_y K|^2 \, \dd{\HH^1}(y)
\end{align*}
Finally, we define $\beta(x,r) = \min_{\mathscr{A} \in \mathcal{A}} \beta_{\mathscr{A}}(x,r)$ and $\exc(x,r) = \min_{\mathscr{V} \in \mathcal{L}} \exc_\mathscr{V}(x,r)$. In the literature, $\beta$ and $\exc$ are usually called, respectively, flatness and excess. 

As it is common in the proofs of epsilon-regularity theorems (see for example \cite[Section 3.2]{delellis-focardi}), the main ingredient to prove Theorem \ref{thm: epsilon-regularity} is a power-law decay of the excess. More precisely, we would like to show that if $(K,u)$ is a minimizer of $E$ on $\Omega$ with boundary datum $g$, $x \in K$ and $d(x,r)$, $\beta(x,r)$ and $r$ are sufficiently small, then $\exc(x,\rho) \leq C (\frac{\rho}{r})^a$ for some $C,a>0$ and for every $\rho \in (0,r)$. The precise statement is Proposition \ref{prop: decay}.
This is based on two separate decay results: one for the rescaled Dirichlet energy $d$ and one for the flatness $\beta$. This is possible since the excess is related to the other two quantities by the Tilt Lemma (Lemma \ref{lmm: tilt lemma}).

Clearly, if $D=\dist(x,\partial\Omega)$ and $D/r >1$, then the excess decay follows from the theory at the interior (\cite[Proposition 3.2.1]{delellis-focardi}).
The situation $D/r <1$, instead, is delicate. 
The main issue arises from the term $(D/r)^2$ that we get in our Tilt Lemma (Lemma \ref{lmm: tilt lemma}). More precisely, if $D/r<1$ and $\mathscr{V}$ is the tangent line to $\partial \Omega$ at the closest point to $x$ (see Figure \ref{fig: boundary of omega, tangent lines}), we get 
\begin{equation*}
    \exc_\mathscr{V}(x,\tfrac{r}{4}) \leq C \left(d(x,r) + \beta_{x+\mathscr{V}}(x,r)+\left(\frac{D}{r}\right)^2+r\right).
\end{equation*}
\begin{figure}[ht]
    \begin{tikzpicture}[scale=1.7]
        \def\a{20}
        \def\xa{1}
        \def\ya{0}
        \coordinate (A) at (\xa,\ya);
        \draw (0,0) to[out=10, in={\a+180}] (A) to[out=\a, in=-70](1.5,1) to[out=180-70, in=-70]  node[above] {\footnotesize $\partial\Omega$} (0.5,1.5) to[out=110, in=20] (0,2) to[out=200, in=30] (-0.5, 1.5) to[out=210, in=70] (-1,1) to[out=250, in=100] (-0.5, 0.5) to[out=280, in=190] (0,0);
        \def\m{-1/(tan(\a))}
        \def\dx{0.07}
        \def\dy{\m*\dx}
        \def\r{sqrt(\dx*\dx+\dy*\dy)}
        \def\xb{{\xa-\dx}}
        \def\yb{{\ya-\dy}}
        \coordinate (B) at (\xb,\yb);
        \draw[very thin] (A) -- (B);
        \draw[very thin] (B) circle [radius = \r*2];
        \filldraw[black] (B) circle (0.5pt) node[above] {\footnotesize $x$};
        \filldraw[black] (A) circle (0.5pt) node[below] {\footnotesize $y$};
        \node at (\xa+0.05,\ya+0.12) {\footnotesize $D$};
        \draw[very thin] (B) -- ({\xa-\dx-\r*2}, \yb);
        \node at ({\xa-\dx-\r}, {\ya-\dy+0.07}) {\footnotesize$r$};
        \def\mm{tan(\a)}
        \def\t{0.7}
        \draw ({\xa-\t}, {\ya-\mm*\t}) -- ({\xa+\t}, {\ya+\mm*\t}) node[right] {\footnotesize$y+\mathscr{V}$};
        \draw ({\xa-\dx-\t}, {\ya-\dy-\mm*\t}) -- ({\xa-\dx+\t}, {\ya-\dy+\mm*\t}) node[right] {\footnotesize$x+\mathscr{V}$};
    \end{tikzpicture}
    \caption{}
    \label{fig: boundary of omega, tangent lines}
\end{figure}
Notice that, if $\frac{D}{r} \leq A \beta_{x+\mathscr{V}}(x,r)^\frac{1}{2}$ for some constant $A$, then the excess is controlled by the quantities of interest. Under this additional assumption, we obtain a decay of the flatness (Lemma \ref{lmm: flatness improvement} \ref{item: flatness improvement, case D/r < b^1/2}) and of the rescaled Dirichlet energy (Lemma \ref{lmm: rescaled dirichlet energy decay} \ref{item: Dirichlet, case 1}): 
\begin{align*}
    \beta_{x+\mathscr{V}}(x,\tau r) \leq \tau \beta_{x+\mathscr{V}}(x,r), \quad d(x,\tau r) \leq \tau^\frac{1}{2} d(x,r).
\end{align*}
If the point $x$ is exactly on the boundary, that is $D=0$, then we are able to iterate the above estimates, since $\frac{D}{\tau^k r} = 0 \leq \beta_{x+\mathscr{V}}(x,\tau^k r)^\frac{1}{2}$ for every $k \in \N$. This leads to the desired decay property (Proposition \ref{prop: decay} \ref{item: decay lemma, D=0}).

If $0 < \frac{D}{r} < 1$, the situation is more delicate. Indeed, observe that, even if initially $\frac{D}{r} \leq \beta_{x+\mathscr{V}}(x,r)^\frac{1}{2}$, for sufficiently large $k$ we get $\frac{D}{\tau^k r} >  \beta_{x+\mathscr{V}}(x,\tau^k r)^\frac{1}{2}$, since the term on the right decreases as we increase $k$, while the term on the left increases. 

Let $R_\theta$ denote the counterclockwise rotation of angle $\theta$. If $\frac{D}{r} > B \, \beta_{x+R_\theta(\mathscr{V})}(x,r)^\frac{1}{3}$ for a small angle $\theta$ and for some constant $B$, our Tilt Lemma (Lemma \ref{lmm: tilt lemma} \ref{item: tilt lemma, case > b^1/3}) becomes: 
\begin{equation*}
    \exc_{R_\theta(\mathscr{V})}(x,\tfrac{r}{4}) \leq C (d(x,r)+\beta_{x+R_\theta(\mathscr{V})}(x,r)).
\end{equation*}
Also in this case, adding the assumption that $\frac{D}{r} \leq \eta$ for some constant $\eta$, we are able to obtain a decay (Lemma \ref{lmm: flatness improvement} \ref{item: flatness improvement, case D/r > b^1/3} and Lemma \ref{lmm: rescaled dirichlet energy decay} \ref{item: Dirichlet, case 3}, \ref{item: Dirichlet, case 4}). More precisely, we get, for $b \lesssim \beta_{x+R_\theta(\mathscr{V})}(x,r)^\frac{1}{2}$, 
\begin{equation*}
    \min_{|\tan(\phi)|\leq b}\beta_{x+R_{\phi+\theta}(\mathscr{V})}(x,\tau r) \leq \tau \beta_{x+R_\theta(\mathscr{V})}(x,r), \quad d(x,\tau r) \leq \tau^\frac{1}{2} d(x,r).
\end{equation*}
Moreover, observe that if $\frac{D}{r} > B \, \beta_{x+R_\theta(\mathscr{V})}(x,r)^\frac{1}{3}$ then $\frac{D}{\tau^k r} > B \, \beta_{x+R_\theta(\mathscr{V})}(x,\tau^k r)^\frac{1}{3}$ for every $k \in \N$. This allows us to iterate the above estimates until we get $\frac{D}{\tau^k r} > \eta$. At that point we can jump to the interior case ($\frac{D}{\eta\tau^k r}> 1$) and everything works fine.

This leads us to consider three different regimes: 
\begin{enumerate}
    \item $\frac{D}{r} \leq A \beta_{x+\mathscr{V}}(x,r)^\frac{1}{2}$
    \item $A \beta_{x+\mathscr{V}}(x,r)^\frac{1}{2} < \frac{D}{r} \leq B \beta_{x+\mathscr{V}}(x,r)^\frac{1}{3}$
    \item $B \beta_{x+\mathscr{V}}(x,r)^\frac{1}{3} > \frac{D}{r}$.
\end{enumerate}

If $r$ is in the second regime, then we are not able to get a nice decay as in the other two regimes. Nevertheless, we obtain $\sigma$ (see Lemma \ref{lmm: flatness preserved in the unfavourable case}) such that $\sigma r$ is in the third regime and 
\begin{equation*}
    \beta_{x+\mathscr{V}}(x,\rho) \leq C \beta_{x+\mathscr{V}}(x,r)^\frac{1}{8} \left(\frac{\rho}{r}\right)^\frac{1}{8}, \quad d(x,\rho) \leq C \beta_{x+\mathscr{V}}(x,r)^\frac{1}{8} \left(\frac{\rho}{r}\right)^\frac{1}{8}
\end{equation*}
for any $\sigma r \leq \rho \leq r$. Our quantities do not decay, but stay controlled by a power of $\beta$ times a constant. This is the bridge between the first and the third regime, and allows us to obtain the desired decay result (Proposition \ref{prop: decay}).
\subsection*{Acknowledgements} The author is very grateful to Camillo De Lellis for proposing this interesting problem and for the helpful suggestions. 
\tableofcontents

\section{Definitions and main results}
\label{section: definitions and main results}
In the section we give some definitions and state precisely the main results of this paper, other than Theorem \ref{thm: epsilon-regularity}.

Recall the definition of $\Omega$ and $g$ given in Section \ref{section: introduction}. In order to avoid technical details, we will assume throughout the paper that 
\begin{equation*}
    \partial \Omega \in C^2, \quad g \in C^2(\partial\Omega).
\end{equation*}
Moreover, without renaming it, we replace $g$ with a $C^{2}$ compactly supported extension defined on $\R^2$.
We define the set of admissible pairs $\mathcal{A}(\Omega, g)$ as 
\begin{equation*}
    \mathcal{A}(\Omega,g) = \{(K,u): K \subset \overline{\Omega} \text{ closed }, u \in W^{1,2}(\Omega\setminus K), u=g \text{ on } \partial \Omega \setminus K \}.
\end{equation*}
\subsection{Density lower bound}
A fundamental result needed in this paper is the following density lower bound. 
\begin{thm}[Density lower bound]
    \label{thm: density lower bound for K}
    There exist $\eps, r >0$ depending on $\Omega$ and $g$, such that, if \minimizer{K}{u}{\Omega}{g}, then 
    \begin{equation*}
        \HH^1(K \cap \overline{B}_\rho(x)) > \eps \rho \quad \forall x \in K, \, \forall \rho \in (0,r).
    \end{equation*}
\end{thm}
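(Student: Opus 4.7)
The plan is to argue by contradiction, following the classical interior density lower bound of De Giorgi–Carriero–Leaci \cite{degiorgi-carriero-leaci} while incorporating the Dirichlet datum. If the conclusion fails, there exist a sequence of minimizers $(K_n,u_n)$, points $x_n \in K_n$ and radii $\rho_n \downarrow 0$ with $\HH^1(K_n \cap \overline{B}_{\rho_n}(x_n)) \leq \varepsilon_n \rho_n$ and $\varepsilon_n \to 0$. I first split by the ratio $D_n/\rho_n$ where $D_n := \dist(x_n,\partial\Omega)$. If $D_n/\rho_n$ stays bounded away from zero along a subsequence, then after shrinking $\rho_n$ by a fixed factor the ball $\overline{B}_{\rho_n}(x_n)$ is contained in $\Omega$ and the classical interior density lower bound of \cite{degiorgi-carriero-leaci} applies verbatim to yield a contradiction. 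Hence I may assume $D_n/\rho_n \to 0$.

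Next I blow up at scale $\rho_n$ around the nearest boundary point $y_n \in \partial\Omega$ to $x_n$. Setting $\tilde K_n := (K_n - y_n)/\rho_n$, $\tilde u_n(z) := \rho_n^{-1/2}(u_n(y_n + \rho_n z) - g(y_n))$, $\tilde \Omega_n := (\Omega - y_n)/\rho_n$ and $\tilde g_n(z) := \rho_n^{-1/2}(g(y_n + \rho_n z) - g(y_n))$, the pair $(\tilde K_n, \tilde u_n)$ minimizes the rescaled Mumford–Shah functional on $\tilde\Omega_n$ with datum $\tilde g_n$. Because $\partial\Omega \in C^2$ and $g \in C^2$, the domains $\tilde\Omega_n$ converge locally in Hausdorff to a half-plane $H$, and a Taylor expansion of $g$ at $y_n$ combined with the factor $\rho_n^{-1/2}$ forces $\tilde g_n \to 0$ in $C^1_{\loc}$ (the linear term has size $\rho_n^{1/2}|\nabla g(y_n)|$). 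The hypothesis becomes $\HH^1(\tilde K_n \cap \overline{B}_1(\tilde x_n)) \leq \varepsilon_n$ with $\tilde x_n := (x_n - y_n)/\rho_n \in \overline{B}_2(0)$.

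The heart of the proof is to construct, for $n$ large, a competitor that strictly beats $(\tilde K_n, \tilde u_n)$. On a slightly larger ball $B_R$ (say $R=4$) one still has $\HH^1(\tilde K_n \cap B_R) \leq C\varepsilon_n$, and a coarea argument produces a radius $\sigma_n \in (R/2, R)$ with $\partial B_{\sigma_n} \cap \tilde K_n = \emptyset$. The competitor is $\tilde K_n' := \tilde K_n \setminus B_{\sigma_n}$ together with $\tilde u_n'$ equal to $\tilde u_n$ outside $B_{\sigma_n}$ and, inside $B_{\sigma_n} \cap \tilde\Omega_n$, equal to the harmonic extension matching $\tilde u_n$ on $\partial B_{\sigma_n} \cap \tilde\Omega_n$ and $\tilde g_n$ on $\partial\tilde\Omega_n \cap B_{\sigma_n}$. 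The $\HH^1$-saving from removing $\tilde K_n \cap B_{\sigma_n}$ is at most $C\varepsilon_n$; minimality forces the added Dirichlet energy to be bounded by the same quantity, and I aim to show that in fact it is $o(\varepsilon_n)$.

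I expect the main obstacle to be precisely this Dirichlet energy control, namely that $\int_{B_{\sigma_n} \cap \tilde\Omega_n}|\nabla \tilde u_n'|^2 - \int_{B_{\sigma_n}\cap\tilde\Omega_n \setminus \tilde K_n}|\nabla \tilde u_n|^2 = o(\varepsilon_n)$. This requires a Poincaré–Wirtinger type inequality on the perforated mixed-boundary domain $B_{\sigma_n}\cap\tilde\Omega_n \setminus \tilde K_n$, together with a preliminary smallness bound on $\int_{B_R \cap \tilde\Omega_n}|\nabla \tilde u_n|^2$. I would handle the former by splitting $\tilde u_n' = \tilde g_n + h_n$ to reduce to the homogeneous Dirichlet case (using $\tilde g_n \to 0$ in $C^1_{\loc}$) and then invoking an extension theorem across $\tilde K_n$ with constant uniformly bounded as $\HH^1(\tilde K_n) \to 0$; the latter comes from testing the minimality of $(\tilde K_n, \tilde u_n)$ against a harmonic competitor on $B_R \cap \tilde\Omega_n$ with boundary data $\tilde g_n$.
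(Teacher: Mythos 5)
The paper proves this theorem via a layered SBV argument that is quite different from, and more elaborate than, your plan. It first replaces $(K,u)$ with its SBV counterpart (extending $u$ by $g$ outside $\Omega$) and proves an \emph{energy} lower bound $F(u,B_\rho(x))\geq \eps\rho$ by iterating the Carriero--Leaci decay lemma (Lemma~\ref{lmm: Decay Lemma}), handling separately the regimes $\rho\geq\Lambda D$, $D<\rho\leq\Lambda D$, and $\rho\leq D$ with $D=\dist(x,\partial\Omega)$; then it upgrades density smallness to energy smallness by a compactness argument (an unnamed lemma right before Theorem~\ref{thm: density lower bound in SBV}); finally it passes back to $(K,u)$ via Lemma~\ref{lmm: argmin in SBV produces (K,u) with same energy}. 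Your ``one-shot'' contradiction-and-competitor plan is conceptually related to that compactness lemma, but you are trying to do everything in a single competitor construction, and there are two genuine gaps.

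First, your proposed control of the added Dirichlet energy does not hold. You want to invoke ``an extension theorem across $\tilde K_n$ with constant uniformly bounded as $\HH^1(\tilde K_n)\to 0$'', but no such result exists: an $SBV$ function with a jump across a tiny set is not approximable in $W^{1,2}$ with comparable energy, because the size of the jump of $\tilde u_n$ across $\tilde K_n$ is \emph{not} controlled by $\HH^1(\tilde K_n)$. The harmonic replacement on $B_{\sigma_n}\cap\tilde\Omega_n$ has $H^{1/2}$-type cost driven by the trace of $\tilde u_n$ on $\partial B_{\sigma_n}$, and nothing in your hypothesis makes that small. This is precisely the difficulty that the Carriero--Leaci decay lemma (and, in the paper's compactness lemma, the extraction of a limit $v\equiv 0$ followed by a \emph{second} competitor built from $v$) is designed to handle: one must first show by compactness in $SBV$ that the blow-ups converge to a harmonic function vanishing on the half-plane boundary (hence constant), and only then can one glue in the limit to build a cheap competitor. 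Your ``preliminary smallness bound on $\int_{B_R\cap\tilde\Omega_n}|\nabla\tilde u_n|^2$'' from testing against a harmonic function with the same boundary data also does not give smallness --- it only reproduces the energy upper bound (Proposition~\ref{prop: Energy upper bound at the boundary}), which is $O(1)$ at the rescaled scale, not $o(1)$.

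Second, the logic of your final contradiction is not closed. Even granting that the added Dirichlet energy were $o(\eps_n)$, the length saved by removing $\tilde K_n\cap B_{\sigma_n}(\tilde x_n)$ is some $L_n$ with $0<L_n\leq\eps_n$ (positive by normalization), and minimality only forces added Dirichlet energy $\geq L_n$; combined with $o(\eps_n)$ this yields $L_n=o(\eps_n)$, which is perfectly consistent with the hypothesis. You would need a \emph{lower} bound $L_n\gtrsim\eps_n$ to conclude, and there is no reason for it. This is why the paper does not derive a contradiction from a single competitor: it instead proves the qualitative decay $F(u,\overline{B}_t(x))=o(t)$ and then invokes the pointwise regularity criterion of De Giorgi--Carriero--Leaci to conclude $x\notin S_u$, contradicting $x\in K$. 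Some version of that two-step structure (a decay statement plus a one-point regularity result), or the paper's energy-to-density conversion via an honest $SBV$ compactness lemma, seems unavoidable. Finally, note the paper also needs the passage $(K,u)\leftrightarrow(\overline{S}_u,\tilde u)$ (Lemma~\ref{lmm: argmin in SBV produces (K,u) with same energy}) to transfer the $SBV$ density bound to the original pair; you should address this explicitly if you work directly with $(K,u)$.
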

Its counterpart at the interior was obtained in \cite{degiorgi-carriero-leaci} to prove existence of minimizers and a precise statement can be found in \cite[Theorem 2.1.3]{delellis-focardi}.

The proof of this theorem follows essentially from the ideas contained in \cite{carriero-leaci}, where the authors obtain the above result, even if they do not write it explicitly, in the case that $x \in K \cap \partial \Omega$. 

However, in this paper, we also need to work with points that are not on the boundary, but very close. This requires an additional argument that is interesting. 

Since we think that an explicit statement is useful, in Section \ref{section: density lower bound}  we report a Lemma of \cite{carriero-leaci} and give a proof Theorem \ref{thm: density lower bound for K}.
\subsection{Blow-up}
\label{section: blow-up}
To understand the properties of $K$, we take a point $x \in K \cap \partial \Omega$ and look at the limit of $\frac{K-x}{\rho}$ as $\rho \downarrow 0$, called the blow-up of $K$ at point $x$. More in general, throughout the paper, we will also need to take blow-ups of a sequence of minimizers with respect to a sequence of points converging to the boundary. Let us introduce some notation. For any set $A$ and function $f$, we write, for a point $x$ and a radius $r>0$, 
\begin{equation*}
    A_{x,r} = \frac{A-x}{r}, \quad f_{x,r}(y) = r^{-\frac{1}{2}}f(x+ry)
\end{equation*}
whenever $x+ry$ is in the domain of $f$.
Suppose that
\begin{equation*}
    \begin{split}
        &(K^n,u^n) \in \argmin\{E(J,v): (J,v) \in \mathcal{A}(\Omega,g)\}, \quad x_n \in \overline{\Omega}, \quad r_n >0 \quad \forall n \in \N, \\
        &x_n \to x \in \partial \Omega, \quad r_n \to 0.
    \end{split}
\end{equation*}
Define, for every $n \in \N$,
\begin{equation*}
    K_n = (K^n)_{x_n, r_n}, \; \Omega_n = \Omega_{x_n, r_n}, \; u_n = (u^n-g(x_n))_{x_n, r_n}, \; g_n = (g-g(x_n))_{x_n,r_n}.
\end{equation*}
Up to translations and rotations, we can assume that $x = 0$ and $T_x \partial \Omega = \{x_2=0\}$.
Then, up to subsequences, $\Omega_n \to \Omega_0$ locally in the Hausdorff distance, where 
\begin{equation*}
    \Omega_0 = \{x_2 > -a\}, \quad \partial\Omega_0 = \{x_2 = -a\} = T_x\partial\Omega -a,
\end{equation*}
for some $a \in [0,+\infty]$. 

We are not interested in the case $a = +\infty$ since, in this case, $\Omega_0 = \R^2$ and the theory for the classical Mumford-Shah functional (at the interior) applies (see \cite[Chapter 2.2]{delellis-focardi}). Therefore, in the following, unless otherwise specified, we will always assume $a \in \R$. 

By Blaschke's Theorem (see \cite[Theorem 6.1, p.320]{AFP}) and a classical diagonal argument, $\{K_n\}_{n\in\N}$ converges, up to subsequences, to a closed subset $K_0$ of $\overline{\Omega}_0$ locally in the Hausdorff distance.

Since $\Omega_0 \setminus K_0$ is an open subset of $\R^2$, it has at most countably many connected components, which will be denoted by $\{\Omega^k\}_{k \in \mathscr{I}}$, where $\mathscr{I} \subset \N \setminus \{0\}$ is a set of indices.

Moreover, we define the set $\Omega^*$ as follows.
\begin{dfn}
    \label{def: Omega^*}
    We denote by $\Omega^*$ the connected component of $\R^2 \setminus K_0$ containing point $z = (0,-a-1)$.
\end{dfn}

We prove, in Section \ref{section: proof of blow-up convergence}, the following convergence result. On connected components that see the boundary ($\Omega^k \subset \Omega^*$), using the fact that the sequence of boundary data $g_n$ is converging locally uniformly to zero, we are able to get convergence of the sequence $\{u_n\}$, while on connected components that do not see the boundary, we need to subtract the value of $u_n$ at a point $z^k \in \Omega^k$ in order to get convergence.  
\renewcommand{\myset}{\Omega^k \cup (\overline{\Omega^k} \cap \partial\Omega_0 \setminus K_0)}
\begin{prop}[Blow-up convergence]
    \label{prop: global convergence of functions, blow-up}
    Let $\Omega^k$ be a connected component of $\Omega_0 \setminus K_0$. Then 
    \begin{enumerate}[label=(\roman*)]
        \item If $\Omega^k \subset \Omega^*$, then, up to subsequences, 
        \begin{equation*}
            u_n \to u^k \quad \text{in } C^\infty_\loc (\Omega^k) \cap C^1_\loc(\myset),
        \end{equation*}
        for some function $u^k \in C^\infty(\Omega^k) \cap C^1(\myset)$ that is harmonic in $\Omega^k$, where the above convergence is understood in the sense of 
        \eqref{eq: C^1_loc convergence abuse of notation}.
        \item Otherwise, let $z^k \in \Omega^k$ and define $v_n = u_n - u_n(z^k)$. Then, up to subsequences, 
        \begin{equation*}
            v_n \to u^k \quad \text{in } C^\infty_\loc (\Omega^k),
        \end{equation*}
        for some harmonic function $u^k: \Omega^k \to \R$. 
    \end{enumerate}
\end{prop}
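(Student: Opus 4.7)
The plan is to establish a uniform energy bound and harmonicity for the $u_n$, handle case (ii) by an interior compactness argument for harmonic functions, and treat case (i) by adding a uniform boundary Schauder estimate near $\partial\Omega_0 \setminus K_0$, where the rescaled boundary data $g_n$ vanish in the limit.

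For the setup, varying $u^n$ alone (with $K^n$ fixed) shows that $u^n$ is harmonic on $\Omega \setminus K^n$, so after rescaling $u_n$ is harmonic on $\Omega_n \setminus K_n$ with $u_n = g_n$ on $\partial\Omega_n \setminus K_n$. Differentiating $g_n(y) = r_n^{-1/2}(g(x_n + r_n y) - g(x_n))$ and using $g \in C^2$ gives $g_n \to 0$ in $C^2_\loc(\R^2)$. Comparing $(K^n,u^n)$ with the competitor obtained by adding $\partial B_{Rr_n}(x_n)$ to $K^n$ and replacing $u^n$ by $g$ inside that ball yields, via minimality,
\begin{equation*}
    \int_{B_R \cap \Omega_n \setminus K_n} |\nabla u_n|^2 \leq 2\pi R + C R^2 r_n = O(R).
\end{equation*}

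For case (ii), take a connected compact $F \Subset \Omega^k$ containing $z^k$. Hausdorff convergence $K_n \to K_0$ and $\Omega_n \to \Omega_0$ ensures $F \subset \Omega_n \setminus K_n$ for $n$ large, so $u_n$ is harmonic on a neighborhood of $F$. The energy bound together with chained Poincar\'e inequalities over a finite covering of $F$ by balls gives $\|v_n\|_{L^\infty(F)} \leq C_F$; interior elliptic regularity for harmonic functions then upgrades this to uniform $C^m$ bounds on any $F' \Subset \mathrm{int}(F)$, and a diagonal Arzel\`a--Ascoli extraction produces $v_n \to u^k$ in $C^\infty_\loc(\Omega^k)$ with $u^k$ harmonic. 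Case (i) runs the same interior argument on $u_n$ itself, once I pin down its $L^\infty$ size using the boundary: fix $p_0 \in \overline{\Omega^k} \cap \partial\Omega_0 \setminus K_0$ and $\delta > 0$ with $\overline{B}_{2\delta}(p_0) \cap K_0 = \emptyset$; Hausdorff convergence gives $B_\delta(p_0) \cap K_n = \emptyset$ for $n$ large, so on $D_n := B_\delta(p_0) \cap \Omega_n$, $u_n$ is harmonic with $u_n = g_n \to 0$ on $B_\delta(p_0) \cap \partial\Omega_n$. Since $\partial\Omega_n$ is $C^2$ with second fundamental form of order $r_n$, I can straighten its boundary by a $C^2$ diffeomorphism close to the identity and apply boundary Schauder estimates; combined with the boundary Poincar\'e bound
\begin{equation*}
    \|u_n\|_{L^2(D_n)} \leq C\bigl(\|\nabla u_n\|_{L^2(D_n)} + \|g_n\|_{L^2(\partial\Omega_n \cap B_\delta(p_0))}\bigr)
\end{equation*}
and the energy bound, this yields uniform $C^{1,\alpha}$ estimates on $u_n$ on $B_{\delta/2}(p_0) \cap \overline{\Omega_n}$. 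Because $\Omega^k \subset \Omega^*$, every point of $\Omega^k$ is joined to $p_0$ by a path in $\Omega_0 \setminus K_0$, so chaining Poincar\'e inequalities propagates the $L^\infty$ bound over $\Omega^k$, and the case (ii) argument applied to $u_n$ produces $u_n \to u^k$ in $C^\infty_\loc(\Omega^k) \cap C^1_\loc(\Omega^k \cup (\overline{\Omega^k} \cap \partial\Omega_0 \setminus K_0))$ with $u^k$ harmonic and vanishing on $\overline{\Omega^k} \cap \partial\Omega_0 \setminus K_0$.

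The main obstacle I anticipate is the uniform boundary regularity in case (i): the $\Omega_n$ have curved $C^2$ boundaries flattening to $\partial\Omega_0$, and the straightening diffeomorphisms and Schauder constants must be controlled uniformly in $n$. A secondary point is verifying that the path joining any interior point of $\Omega^k$ to $p_0 \in \partial\Omega_0 \setminus K_0$ inside $\Omega_0 \setminus K_0$ really exists and can be promoted, for $n$ large, to a comparable path in $\Omega_n \setminus K_n$ along which the chained Poincar\'e inequality for $u_n$ is valid; this follows from the definition of $\Omega^*$ together with the Hausdorff convergence of $K_n$ and $\Omega_n$.
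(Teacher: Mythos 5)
Your proposal follows essentially the same route as the paper: uniform rescaled energy bound via a competitor, harmonicity of $u_n$, interior $C^\infty_\loc$ compactness of harmonic functions, anchoring case (ii) at $z^k$, and anchoring case (i) via the decaying boundary data $g_n \to 0$ near a point $p_0 \in \overline{\Omega^k}\cap\partial\Omega_0\setminus K_0$ followed by boundary Schauder estimates and propagation through $\Omega^k$ by connectedness. The only material deviation is the mechanism for obtaining pointwise control of $u_n$ near the boundary: you invoke a boundary Poincar\'e inequality to get an $L^2$ bound, whereas the paper uses a Fubini/coarea argument to select a circle $\partial B_{t_n}(w)$ with bounded $\int_{\partial B_{t_n}}|\nabla u_n|^2$, then the fundamental theorem of calculus plus H\"older to bound $u_n$ on the arc, and the maximum principle to get $|u_n|_{0;B_{t_n}\cap\Omega_n}$ before applying \cite[Corollary 6.7]{gilbarg-trudinger}. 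That corollary takes a $C^0$ bound as input, not an $L^2$ one, so your jump from the boundary Poincar\'e estimate directly to uniform $C^{1,\alpha}$ bounds needs an intermediate step (e.g. boundary $W^{2,p}$ estimates and Sobolev embedding, or a De Giorgi--Nash type $L^2\to L^\infty$ argument); it is routine but should be stated. Also note that to obtain $C^1_\loc$ convergence up to all of $\overline{\Omega^k}\cap\partial\Omega_0\setminus K_0$, the boundary Schauder step must be run locally at every such boundary point and patched with the interior estimates, not only at the single anchor $p_0$; the paper's two Claims and the exhaustion by compacts $A_m$ carry this out explicitly.
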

Now define $u_0: \overline{\Omega}_0\setminus K_0 \to \R$  by
\begin{equation*}
    \label{eq: definition of u_0, blow-up limit}
    u_0(y) = u^k(y) \text{ if } y \in \overline{\Omega^k}.
\end{equation*}
Then $u_0 \in C^{\infty}(\Omega_0 \setminus K_0) \cap C^1(\overline{\Omega}_0 \setminus K_0)$ and $u_0$ is harmonic in $\Omega_0 \setminus K_0$.

We define the blow-up limit as follows. 
\begin{dfn}[Blow-up limit]
    \label{def: blow-up limit}
    The pair $(K_0,u_0)$ constructed above will be called blow-up limit of (a subsequence of) the blow-up sequence $\{(K_n, u_n)\}_{n \in \N}$.
\end{dfn}

Adapting \cite[Definition 2.2.2]{delellis-focardi}, we define admissible competitors and generalized minimizers on a half-plane with zero boundary datum.

\begin{dfn}[Admissible competitor and generalized minimizer on a half-plane]
    \label{def: admissible competitor and minimizer on a half-plane}
   Let $U \in \R^2$ be an open half-plane. Let 
    \begin{equation*}
       \mathcal{A}(U) := \{(K,u): K \subset \overline{U}, K \text{ closed }, u \in W^{1,2}_\loc (U \setminus K), u=0 \text{ on } \partial U \setminus K\}
    \end{equation*}
    and $(K,u) \in \mathcal{A}(U)$. 

    A pair $(J,w) \in \mathcal{A}(U)$ is called an \textit{admissible competitor} for $(K,u)$ on $U$ provided that, for some $O$ open bounded subset of $\R^2$, 
    \begin{enumerate}
        \item $(J,w)$ coincides with $(K,u)$ in $\overline{U} \setminus O$,
        \item if $z,w \in \R^2 \setminus (O \cup K)$ belong to distinct connected components of $\R^2 \setminus K$, then they belong to distinct connected components of $\R^2 \setminus J$.
    \end{enumerate}

    Moreover, we say that $(K,u)$ is a \textit{generalized minimizer} on $U$ if 
    $E(K,u) \leq E(J,v)$ for every admissible competitor $(J,v)$. 
\end{dfn}
\begin{rmk}
    Our definition of admissible competitor seems identical to the one of \text{topological competitor} given in \cite[Definition 2.2.2]{delellis-focardi} and \cite[Definition 7.15]{david}, but is actually different. The reason is that, in condition 2, we consider the connected components of $\R^2 \setminus K$, and not those of $U \setminus K$. 

    In particular, we allow competitors to reconnect points in distinct connected components that see the boundary of the half-plane. In the case that $U$ is equal to the blow-up domain $\Omega_0$ defined above, those are the connected components contained in $\Omega^*$ (see Definition \ref{def: Omega^*}).
\end{rmk}

We prove that the blow-up limit has interesting properties. As one would expect, its energy on bounded sets is the limit of the energy of the blow-up sequence. Moreover, the blow-up limit is a minimizer in the class of competitors defined above. The precise statement, that will be proved in Section \ref{section: proof of blow-up limit theorem}, is the following.  
\begin{thm}[Blow-up limit]
    \label{thm: blow-up limit}
    Let $(K_0, u_0)$ be the above constructed pair. Then 
    \begin{enumerate}[label=(\roman*)]
        \item $K_0$ is $\HH^1$-rectifiable.
        \item For every open set $U \subset \subset \overline{\Omega}_0$ such that $\HH^1(\partial U \cap K_0 \cap \Omega_0) = 0$, we have 
        \begin{align*}
            \lim_{n \to \infty} \HH^1(K_n \cap \overline{U}) = \HH^1(K_0 \cap \overline{U}), \\ 
            \lim_{n \to \infty} \int_{U \cap (\Omega_n \setminus K_n)} |\nabla u_n|^2 = \int_{U \setminus K_0} |\nabla u_0|^2.
        \end{align*}
        \item $(K_0,u_0)$ is a generalized minimizer on $\Omega_0$ according to Definition \ref{def: admissible competitor and minimizer on a half-plane}.
    \end{enumerate}
\end{thm}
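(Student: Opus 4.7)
I would follow the standard blow-up analysis for interior Mumford-Shah minimizers \cite[Theorem 2.2.1]{delellis-focardi}, adapted to the boundary setting by means of Theorem \ref{thm: density lower bound for K} and Proposition \ref{prop: global convergence of functions, blow-up}. The first step is to transfer the density lower bound to $K_0$: for $x\in K_0$ and small $\rho>0$, take $x_n\in K_n$ with $x_n \to x$ (available by the local Hausdorff convergence $K_n \to K_0$); Theorem \ref{thm: density lower bound for K}, after rescaling, gives $\HH^1(K_n\cap \overline{B}_\rho(x_n))\geq \eps \rho$ for large $n$, and Golab-type lower semicontinuity of $\HH^1$ under Hausdorff convergence of sets with uniform positive lower density then yields $\HH^1(K_0 \cap \overline{B}_\rho(x)) \geq \eps \rho$. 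A local upper bound $\HH^1(K_n \cap B_R) \leq C(R)$ follows by comparing $(K^n, u^n)$ to the capping competitor that adds $\partial B_{R r_n}(x_n)$ to $K^n$ and replaces $u^n$ inside $B_{Rr_n}(x_n)$ by a harmonic extension of its trace. Hence $K_0$ has locally finite $\HH^1$-measure and uniform positive lower density; together with the generalized minimality proved in (iii), this implies rectifiability (i) by the same argument as in the interior case.

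\textbf{Part (ii).} The $\leq$ parts of both equalities are by lower semicontinuity: for $\HH^1$, Golab under Hausdorff convergence with uniform lower density applies, and the assumption $\HH^1(\partial U \cap K_0 \cap \Omega_0)=0$ rules out boundary mass concentration; for the Dirichlet term, the $C^\infty_\loc$ and $C^1_\loc$ convergence given by Proposition \ref{prop: global convergence of functions, blow-up} provides locally uniform convergence of $\nabla u_n$ on each component $\Omega^k$ of $\Omega_0 \setminus K_0$, and Fatou summed over $k \in \mathscr{I}$ concludes. The reverse inequalities come as a by-product of (iii), by inserting $(K_0, u_0)$ itself, suitably localized, as a reference competitor for $(K_n, u_n)$.

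\textbf{Part (iii).} Given $(J,w) \in \mathcal{A}(\Omega_0)$ coinciding with $(K_0, u_0)$ outside a bounded open set $O$, fix $O \subset\subset O'$ with $\HH^1(\partial O' \cap K_0) = 0$ and construct competitors $(J_n, w_n)$ for $(K_n, u_n)$ in blow-up coordinates as follows: let $J_n$ be equal to $K_n$ outside $O'$, to $J$ inside $O$, with the sealing shell $\partial O' \cap \overline{\Omega}_n$ added to enforce the topological condition of Definition \ref{def: admissible competitor and minimizer on a half-plane}; let $w_n$ be equal to $w$ on $O$, to $u_n$ outside $O'$, and harmonically interpolated in the annulus $O' \setminus O$, cut off against $g_n$ so that $w_n = g_n$ on $\partial \Omega_n \setminus J_n$. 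Since $g_n \to 0$ in $C^1_\loc$, the cutoff costs an infinitesimal amount of Dirichlet energy. Minimality of $(K^n, u^n)$ at the original scale, rescaled to blow-up coordinates, yields $E(K_n, u_n) \leq E(J_n, w_n)$ on any fixed large ball containing $O'$. Taking $n \to \infty$ using (ii) on the left-hand side and a direct computation on the right gives $E(K_0, u_0) \leq E(J, w) + \HH^1(\partial O')$; letting $O' \downarrow O$ along a sequence with $\HH^1(\partial O') \to 0$ (possible since $K_0$ has locally finite $\HH^1$-measure) concludes.

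\textbf{Main obstacle.} The delicate step is the construction of $(J_n, w_n)$. The topological condition in Definition \ref{def: admissible competitor and minimizer on a half-plane} is phrased in terms of components of $\R^2 \setminus K_0$ (not of $\Omega_0 \setminus K_0$), so competitors are allowed to reconnect across $\partial \Omega_0$. The sealing shell and the harmonic interpolation must simultaneously (a) preserve this ambient-plane topology uniformly in $n$, (b) satisfy the Dirichlet condition $w_n = g_n$ on $\partial \Omega_n$ despite the zero boundary datum of $w$, and (c) contribute only an infinitesimal amount of length and Dirichlet energy as $n \to \infty$ and as $O' \downarrow O$. Controlling the interpolation error in the annulus $O' \setminus O$ while matching the boundary datum is the technical crux of the proof.
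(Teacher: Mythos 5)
Your overall blueprint (adapt the interior blow-up analysis, use the boundary density lower bound and the convergence of Proposition \ref{prop: global convergence of functions, blow-up}) is sound, and the density upper/lower bounds for $K_0$ you sketch are indeed the paper's Claims \ref{claim: H^1 of K_0 and K_j on balls} and \ref{claim: density bounds for K_0}. However, the proposal has three genuine problems.

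\textbf{Circularity between (i), (ii), (iii).} You derive rectifiability of $K_0$ from generalized minimality, prove the reverse inequalities in (ii) as ``a by-product of (iii)'', and prove (iii) ``using (ii)''. As stated this is circular. The paper avoids this entirely: part (i) is proved first by a one-line decomposition $K_0 = (K_0\cap\Omega_0)\cup(K_0\cap\partial\Omega_0)$, where the interior piece is rectifiable by \cite[Theorem 2.2.3]{delellis-focardi} and the boundary piece is trivially rectifiable as a subset of a line. Rectifiability is then a genuine input to the proof of (ii): the paper uses the existence of approximate tangents $T_y K_0$ at $\HH^1$-a.e.\ $y\in K_0\cap\partial\Omega_0$ when proving $\theta\geq 1$.

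\textbf{Upper semicontinuity in (ii) is not a ``by-product of (iii)''.} In the paper it is proved directly, before (iii), by a delicate contradiction argument: if the combined energy $\mu=|\nabla u_0|^2\mathcal{L}^2 + \theta\,\HH^1\mres K_0$ (the weak limit of the blow-up energy measures) had a Lebesgue point $y$ with $\theta(y)>1$, a \emph{secondary} blow-up of $(K_n,u_n)$ at $y$ produces a sequence of minimizers whose singular set converges to a line with multiplicity strictly greater than $1$, and an explicit competitor (flattening the boundary, dropping $K_n$ in a rectangle, replacing $u_n$ by a cutoff of the boundary datum) yields a strictly cheaper pair. Nothing like this appears in your sketch; the Golab/Fatou remark handles only the easy inequalities.

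\textbf{Part (iii): the length of the sealing shell does not vanish.} You glue $(J,w)$ to $(K_n,u_n)$ by adding the full shell $\partial O'\cap\overline{\Omega}_n$ to $J_n$ and conclude $E(K_0,u_0)\leq E(J,w) + \HH^1(\partial O')$, claiming you can then let $O'\downarrow O$ with $\HH^1(\partial O')\to 0$. This is impossible: $O'\supset\supset O$ implies $\partial O'$ encircles the bounded open set $O$, so $\HH^1(\partial O')$ is bounded below by (roughly) the diameter of $O$. Local finiteness of $\HH^1(K_0)$ has no bearing on this quantity. The correct device, used both in the paper and in \cite[Theorem 2.2.3, Step 4]{delellis-focardi}, is to pick a good radius $\rho$ by a Fubini argument so that $\int_{\partial B_\rho}|\nabla u_n'|^2$, $\HH^0(K_n'\cap\partial B_\rho)$, $\int_{\partial B_\rho}|\nabla v|^2$ and $\HH^0(J\cap\partial B_\rho)$ are uniformly bounded, then dilate by $\Psi_{\delta_n}$ with $\delta_n\downarrow 0$, so that the interpolation lives in the thin annulus $B_\rho\setminus B_{\rho-\delta_n}$ whose contribution $E(J_n',z_n',\Omega_n'\cap(B_\rho\setminus B_{\rho-\delta_n}))$ tends to zero. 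The error term vanishes because the \emph{annulus} shrinks, not because $O'$ does. One also needs, as the paper does, the preliminary change of coordinates $\Phi_n(x_1,x_2)=(x_1,x_2-h_n(x_1))$ flattening $\partial\Omega_n$ so that the Dirichlet trace condition can be matched uniformly.

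\end{document}
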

\begin{rmk}
    Our blow-up limit is the equivalent the global generalized minimizer in $\R^2$ defined, for the problem at the interior, in \cite[Definition 2.2.4]{delellis-focardi}.
    However, to avoid cumbersome notation that we do not need in the following, we do not state a result like \cite[Theorem 2.2.3(iii)]{delellis-focardi}.
\end{rmk}
\subsection{Generalized minimizers on the upper half-plane}
We are interested in studying generalized minimizers (see Definition \ref{def: admissible competitor and minimizer on a half-plane}) on the open upper half-plane $U = \{x_2 > 0\}$ with homogeneous Dirichlet boundary conditions, since blow-up limits, up to change a coordinates, belong to this category by Theorem \ref{thm: blow-up limit}(iii).
Ideally, we would like to classify them. Even if we are not able to do so, we prove some monotonicity formulae and obtain, as a corollary, some partial classification results.

In order to use blow-up and blow-in arguments, we need the following compactness theorem, whose proof follows at once from the one of Theorem \ref{thm: blow-up limit} and can be found in Section \ref{section: generalized minimizers on the upper half-plane}.

\begin{thm}[Compactness for generalized minimizers on the upper half-plane]
    \label{thm: compactness for generalized minimizers}
    Let $(K,u)$ be a generalized minimizer on $U$. 
    Let $x \in \overline{U}$. 
    If $x \in \partial U$, suppose that $r_j \downarrow 0$ or $r_j \uparrow +\infty$ as $j\to \infty$, otherwise suppose that $r_j \uparrow +\infty$. 
    Let $K_j = K_{x,r_j}$, $u_j = u_{x,r_j}$, $U_j = U_{x,r_j}$.
     Then, up to subsequences, $U_j \to \{x_2>0\}$ locally in the Hausdorff distance, $(K_j,u_j)$ converges to $(K_0, u_0)$ in the sense of Proposition \ref{prop: global convergence of functions, blow-up}, and the conclusions of Theorem \ref{thm: blow-up limit} hold. 
\end{thm}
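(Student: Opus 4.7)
The plan is to show that the theorem is obtained by repeating, almost verbatim, the proof of Theorem \ref{thm: blow-up limit}, with the simplification that the boundary datum is now identically zero and the minimizing sequence arises by rescaling a single generalized minimizer rather than from classical minimizers on a curved domain.

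First I would check that each rescaled pair $(K_j,u_j) = (K_{x,r_j}, u_{x,r_j})$ is a generalized minimizer on the rescaled half-plane $U_j = U_{x,r_j}$. This is immediate from Definition \ref{def: admissible competitor and minimizer on a half-plane}: admissible competitors, the functional $E$, and the vanishing boundary condition all scale covariantly under $y \mapsto (y-x)/r_j$. In each of the three cases in the statement, the computation $U_{x,r_j} = \{y_2 > -x_2/r_j\}$ gives $U_j \to \{y_2 > 0\}$ locally in Hausdorff distance (when $x \in \partial U$ the domains are in fact identically equal to $\{y_2 > 0\}$).

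Then I would extract limits. Blaschke's theorem and a diagonal argument produce a closed set $K_0 \subset \overline{\{y_2 > 0\}}$ such that, up to a subsequence, $K_j \to K_0$ locally in Hausdorff distance. On each connected component of $\{y_2 > 0\}\setminus K_0$, standard elliptic estimates applied to $u_j$ (which is harmonic in $U_j\setminus K_j$), together with the density lower bound ensuring compactness of boundary pieces, give $C^\infty_{\loc}$ convergence of $u_j$ to a harmonic limit, after subtracting a constant if the component does not meet $\partial U_j$, exactly as in Proposition \ref{prop: global convergence of functions, blow-up}; on components meeting the boundary, the identically vanishing boundary datum removes the need to subtract. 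This yields a limiting pair $(K_0, u_0)$ in the sense of that proposition. The three conclusions of Theorem \ref{thm: blow-up limit}, namely $\HH^1$-rectifiability, convergence of lengths and Dirichlet energies on admissible test sets, and generalized minimality, then follow by the same arguments used in Section \ref{section: proof of blow-up limit theorem}, since those arguments only invoke the local minimality of the sequence against admissible competitors and the density lower bound, both of which transfer verbatim.

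The main obstacle, as in the proof of Theorem \ref{thm: blow-up limit}, is the generalized minimality of $(K_0,u_0)$. Given an admissible competitor $(J,w)$ for $(K_0,u_0)$ with modification set $O$, one must, via a Dal Maso--Morel--Solimini style cut-and-paste, build for each large $j$ an admissible competitor $(J_j,w_j)$ for $(K_j,u_j)$ that agrees with $(K_j,u_j)$ outside a slight enlargement of $O$ and whose energy converges to $E(J,w)$ up to a vanishing error. The topological side condition on connected components in Definition \ref{def: admissible competitor and minimizer on a half-plane} is designed precisely to make this construction compatible with the local-Hausdorff convergence of $K_j$, so once the construction from Theorem \ref{thm: blow-up limit} is in place, comparing $E(K_j,u_j)\le E(J_j,w_j)$ and passing to the limit using the energy convergence established above gives $E(K_0,u_0)\le E(J,w)$, as desired.
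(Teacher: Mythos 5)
Your proposal is correct and follows essentially the same route as the paper: the paper's proof of this theorem is a one-paragraph remark that it is the same argument as Theorem \ref{thm: blow-up limit}, only easier because $\partial U$ is flat and the boundary datum is identically zero, and that zero datum is exactly what removes the requirement $r_j \to 0$ (which in the original proof guaranteed $g_n \to 0$) and thus allows $r_j \to \infty$. You identify the same key points — scaling covariance of generalized minimality, convergence $U_j \to \{x_2>0\}$, and the transfer of the Dal Maso--Morel--Solimini cut-and-paste for part (iii) — so your write-up is a faithful, slightly more explicit version of the paper's argument.
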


The main tools to analyze blow-up ($r_j \downarrow 0$) and blow-in ($r_j \uparrow \infty$) limits are monotonicity formulae. 
We prove, in Section \ref{section: monotonicity formuale}, the following proposition, which is the analogous for our problem of the famous David-Léger monotonicity formula \cite[Proposition 2.13]{david-leger}.
\begin{prop}
    \label{prop: david-leger monotonicity}
    Let $(K,u)$ be a generalized minimizer on $U$ according to Definition \ref{def: admissible competitor and minimizer on a half-plane}. Assume that $x \in \partial U \cap K$. Define, for $r>0$, 
    \begin{equation*}
        F(r) = \frac{2}{r} \int_{B_r(x)\cap U \setminus K} |\nabla u|^2 + \frac{1}{r} \HH^1(K \cap B_r(x)).
    \end{equation*}
    Then $F'(r) \geq 0$ for a.e. $r>0$ such that either $\HH^0(K \cap \partial B_r(x)) \in \{0\} \cup [2,\infty)$ or $K \cap B_r(x) = \{x + r e^{i\theta}\}$ with $\theta \in (\frac{1}{4}\pi,\frac{3}{4}\pi)$.

    Moreover, if $F$ is constant and $K$ satisfies the above property for a.e. $r$, then $(K,u)$ is an elementary generalized minimizer. 
    
\end{prop}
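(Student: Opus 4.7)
The plan is to adapt the classical David--L\'eger cone-competitor argument \cite{david-leger} to the half-plane Dirichlet setting via odd reflection across $\partial U$. Let $\sigma$ denote the reflection, define $\tilde u$ as the odd extension of $u$ through $\partial U$, and set $\tilde K = K \cup \sigma(K)$. Because $u = 0$ on $\partial U \setminus K$, the function $\tilde u$ has no jumps on $\partial U \setminus K$; by unfolding admissible competitors from Definition \ref{def: admissible competitor and minimizer on a half-plane} one checks that $(\tilde K, \tilde u)$ is a generalized minimizer of the classical Mumford--Shah functional on $\R^2$ within the subclass of reflection-symmetric competitors. The coefficient $2$ on the Dirichlet term in $F$ arises naturally from the identity $\int_{B_r(x)}|\nabla \tilde u|^2 = 2\int_{B_r(x) \cap U}|\nabla u|^2$, while for a.e.\ $r$ (avoiding the null set of radii at which $K$ has positive $\HH^1$ measure on $\partial U$) one has $\HH^1(\tilde K \cap B_r(x)) = 2\HH^1(K \cap B_r(x))$ and $\HH^0(\tilde K \cap \partial B_r(x)) = 2\HH^0(K \cap \partial B_r(x))$.

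For a.e.\ $r$ in the good set of the hypothesis I would construct the reflection-symmetric cone competitor for $(\tilde K, \tilde u)$: set $\tilde J = (\tilde K \setminus B_r(x)) \cup \tilde C_r$ where $\tilde C_r$ is the double cone from $x$ over $\tilde K \cap \partial B_r(x)$, and take $\tilde v$ to be the symmetric harmonic extension in each sector of $B_r(x) \setminus \tilde C_r$ with Dirichlet data $\tilde u$ on the arc and the natural Neumann condition on the radial rays. Admissibility in the symmetric class uses the topological hypothesis: when $\HH^0(K \cap \partial B_r) \geq 2$ the doubled set of rays preserves every separation of $\R^2 \setminus \tilde K$, and in the degenerate single-segment case the constraint $\theta \in (\tfrac{\pi}{4}, \tfrac{3\pi}{4})$ guarantees that the two reflected rays form an ``X'' with opposing openings exceeding $\tfrac{\pi}{2}$, which is precisely what is needed for the topological competitor condition. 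The standard David--L\'eger Fourier estimate in each sector gives $\int_{B_r}|\nabla \tilde v|^2 \leq r \int_{\partial B_r}|\partial_\tau \tilde u|^2 \, \D\HH^1$. Combining the minimality $E_{\mathrm{MS}}(\tilde K, \tilde u; B_r) \leq E_{\mathrm{MS}}(\tilde J, \tilde v; B_r)$ with the coarea identities (valid for a.e.\ $r$) and with a Rellich--Pohozaev identity for the radial vector field $X(y) = y - x$ (which is tangent to $\partial U$ at $x$ and hence produces no contribution from $\partial U$, while the free-boundary contribution along $\tilde K$ vanishes by the Euler--Lagrange identity $|\nabla \tilde u_+|^2 = |\nabla \tilde u_-|^2$) to convert $\int_{\partial B_r}|\partial_\tau \tilde u|^2$ into $\tfrac{1}{2}\int_{\partial B_r}|\nabla \tilde u|^2$, one derives the differential inequality equivalent to $F'(r) \geq 0$.

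For the rigidity statement, constancy of $F$ on an open interval forces equality in each step of the chain above. This yields simultaneously: (i) equality in the Fourier estimate in each circular sector, so only the lowest eigenmode is present and $\tilde u$ is homogeneous of the corresponding degree in each sector; (ii) equality in the coarea identity $\tfrac{d}{dr}\HH^1(K \cap B_r) \geq \HH^0(K \cap \partial B_r)$, forcing $K \cap B_r(x) = C_r$, i.e.\ $K$ is a union of radial rays from $x$ with no tangential pieces; and (iii) equality in the Pohozaev identity, which pins the homogeneity degree to be compatible with the eigenvalues of the sectors. Together these identify $(K,u)$ as an elementary generalized minimizer in the sense defined earlier in the paper.

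The main obstacle I anticipate is the admissibility check in the single-segment case, where the topological condition of Definition \ref{def: admissible competitor and minimizer on a half-plane}---phrased in terms of components of $\R^2 \setminus K$ rather than $U \setminus K$---is delicate, and the angular restriction $\theta \in (\tfrac{\pi}{4}, \tfrac{3\pi}{4})$ appears to be both necessary and sufficient for the reflected symmetric cone to qualify as a topological competitor in $\R^2$. A secondary technical difficulty is executing the Pohozaev identity in the low-regularity setting of a generalized minimizer: one must control possible singular points of $K$ and verify the free-boundary identity $|\nabla u_+|^2 = |\nabla u_-|^2$ in the weak sense required, which is where the hypothesis on $\HH^0(K \cap \partial B_r)$ enters to exclude cracktip-type configurations that would spoil the identity.
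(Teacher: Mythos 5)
Your reflection strategy is natural — indeed the paper uses an odd reflection, but locally, not globally: the key Lemma \ref{lmm: harmonic extension circular sector} builds a harmonic extension on a circular sector touching $\partial U$ by oddly extending the boundary datum in the angular variable and writing a Fourier series, which produces the factor $\tfrac{2\alpha r}{\pi}$ (rather than $\tfrac{\alpha r}{\pi}$) that feeds into the competitor estimate. The competitor itself, however, is constructed directly in the half-plane, and that choice matters.

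The genuine gap in your argument is the identity $\HH^1(\tilde K \cap B_r(x)) = 2\,\HH^1(K \cap B_r(x))$ (and the analogous statement for $\HH^0$). These hold only when $K$ stays away from $\partial U$. When $K \cap \partial U$ has positive $\HH^1$ measure, the reflected set $\tilde K = K \cup \sigma(K)$ simply does not double it: one has $\HH^1(\tilde K \cap B_r) = 2\,\HH^1(K \cap B_r \setminus \partial U) + \HH^1(K \cap B_r \cap \partial U)$, so the classical David--L\'eger density for $(\tilde K,\tilde u)$ is $2F(r) - \tfrac{1}{r}\HH^1(K\cap B_r\cap\partial U)$, not $2F(r)$. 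Your parenthetical ``avoiding the null set of radii at which $K$ has positive $\HH^1$ measure on $\partial U$'' is not a null set of radii when $K\cap\partial U$ has positive measure: for instance, for $K=\partial U$ — which must be covered by this proposition, since it is exactly the rigid configuration in the classification — every $r>0$ is bad in your sense and $\HH^0(\tilde K\cap\partial B_r)=2$, not $4$. Because the rigidity conclusion ("$F$ constant $\Rightarrow$ elementary") precisely targets such configurations, the reflection identities you rely on break down in the case the statement is built to detect. The paper's formulation (Proposition \ref{prop: David-Legér-Maddalena-Solimini} for the half-plane, plus the sector-by-sector Fourier extensions of Lemmas \ref{lmm: harmonic extension circular sector} and \ref{lmm: harmonic extension on half-disk}) bypasses this by never invoking a global reflected competitor.

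Two smaller remarks. First, the role of the angle hypothesis $\theta\in(\tfrac{\pi}{4},\tfrac{3\pi}{4})$ is not topological admissibility of the cone but sharpness of the Wirtinger/Fourier estimate: after the odd extension, the boundary sectors of angle $\alpha_1,\alpha_N$ behave like interior sectors of angle $2\alpha_1, 2\alpha_N$, and the monotonicity coefficient $3-\tfrac{2\alpha}{\pi}$ is nonnegative exactly when $\alpha_1,\alpha_N\le\tfrac{3}{4}\pi$, which for a single intersection point forces $\theta\in[\tfrac{\pi}{4},\tfrac{3\pi}{4}]$. Second, the paper also has to handle $N(r)\ge 2$ with one boundary arc larger than $\tfrac{3}{4}\pi$, and does so with a modified cone-plus-circular-arc competitor (see Figure \ref{fig: competitor, monotonicity, angle bigger than 3/4pi}); this case does not sit cleanly inside the reflected-double-cone picture, so even if you fix the measure-counting issue you would need an additional construction there.
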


We also prove, in Section \ref{section: monotonicity formuale}, the following result, which is the analogous of famous Bonnet's monotonicity formula \cite[Theorem 3.1]{Bonnet}.
\begin{prop}
    \label{prop: Bonnet monotonicity}
    Let $(K,u) \in \mathcal{A}$ be a generalized minimizer on $U$ according to Definition \ref{def: admissible competitor and minimizer on a half-plane}.
    Let $x \in K \cap \partial U$. 
    We define, for $r>0$,
    \begin{equation*}
        \omega(r):= \frac{1}{r} \int_{B_r(x) \cap U \setminus K} |\nabla u|^2
    \end{equation*}
    Then the following hold:
    \begin{enumerate}[label=(\roman*)]
        \item If $K$ is connected, then $\omega'(r) \geq 0$ for a.e. $r >0$. Moreover, if $\omega$ is constant, then $\omega \equiv 0$. 
        \item $\omega'(r) \geq 0$ for a.e. $r>0$ such that $\HH^0(K \cap \partial B_r(x)) = 1$. Moreover, if $\omega$ is constant, then $\omega \equiv 0$. 
    \end{enumerate}
\end{prop}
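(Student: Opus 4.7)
The plan is to mimic Bonnet's monotonicity proof \cite{Bonnet}, as adapted for the interior problem in \cite[Section 2.2]{delellis-focardi}, with modifications to account for the Dirichlet condition $u = 0$ on $\partial U \setminus K$. The starting point is the differential identity, valid for a.e.\ $r$ by the co-area formula,
\[
r^2 \omega'(r) = -\int_{B_r(x) \cap U \setminus K}|\nabla u|^2 + r\int_{\partial B_r(x) \cap U \setminus K}|\nabla u|^2 \, \dd{\HH^1},
\]
combined with the integration-by-parts identity (from harmonicity of $u$ on each component of $U \setminus K$, the free-boundary condition $\partial_\nu u = 0$ on the regular part of $K$, and $u = 0$ on $\partial U \setminus K$)
\[
\int_{B_r \cap U \setminus K}|\nabla u|^2 = \int_{\partial B_r \cap U \setminus K} u \, \partial_\rho u \, \dd{\HH^1}.
\]

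For part (i), I plan to reduce to the interior Bonnet monotonicity via odd reflection. Setting $\sigma(y_1, y_2) := (y_1, -y_2)$, I would define $\tilde K := K \cup \sigma(K)$ and $\tilde u := u$ on $\overline U$, $\tilde u := -u \circ \sigma$ on $\sigma(U)$. Since $u = 0$ on $\partial U \setminus K$, $\tilde u$ extends continuously across $\partial U \setminus \tilde K$ and, by Schwarz reflection, harmonically, so $\tilde u$ is harmonic on each component of $\R^2 \setminus \tilde K$. The key step is to verify that $(\tilde K, \tilde u)$ is an interior generalized minimizer on $\R^2$: given an interior topological competitor $(\tilde J, \tilde v)$, by symmetrizing via $\tilde J_s := \tilde J \cup \sigma(\tilde J)$ and $\tilde v_s := \tfrac{1}{2}(\tilde v - \tilde v \circ \sigma)$ and restricting to $\overline U$, one produces an admissible Dirichlet-boundary competitor for $(K, u)$ on $U$, and the minimality of $(K, u)$ combined with the symmetry of $(\tilde K, \tilde u)$ yields the required interior-minimality inequality. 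Since $K$ connected and $x \in K \cap \partial U$ imply $\tilde K$ is connected, the interior Bonnet monotonicity then gives $\tilde \omega = 2\omega$ non-decreasing; the equality case gives $\tilde u$ locally constant, hence $\omega \equiv 0$.

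For part (ii), the assumption $\HH^0(K \cap \partial B_r(x)) = 1$ with $p := K \cap \partial B_r(x)$ admits a direct competitor argument. The $1$-homogeneous extension $v(x + \rho e^{i\theta}) := (\rho/r) u(x + r e^{i\theta})$ on $B_r \cap U$, together with the radial segment $[x, p]$, provides a competitor $(J, v)$ with $J := (K \setminus B_r) \cup [x, p]$ that satisfies $v = 0$ on $\partial U$ (using $x \pm r e_1 \in \partial U \setminus K$ for a.e.\ $r$); admissibility is guaranteed because $\partial B_r \cap U \setminus \{p\}$ consists of exactly two arcs, each with one endpoint on $\partial U$ (where $u = 0$), and $[x,p]$ preserves the topology of $\R^2 \setminus K$ locally. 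A polar-coordinate computation of $\int|\nabla v|^2$ combined with Poincaré on the two arcs and the length inequality $\HH^1([x,p]) = r \leq \HH^1(K \cap B_r)$ from co-area then produces $\omega'(r) \geq 0$. For the rigidity, in both (i) and (ii) constancy of $\omega$ forces $u$ to be $\tfrac12$-homogeneous around $x$, i.e., $u(x + \rho e^{i\theta}) = \rho^{1/2} g(\theta)$. Harmonicity yields the ODE $g'' + g/4 = 0$ on each component of $(0,\pi) \setminus \{\theta_j: x + re^{i\theta_j} \in K\}$ with Dirichlet boundary $g(0) = g(\pi) = 0$ and Neumann boundary $g'(\theta_j) = 0$; on an arc adjacent to $\theta = 0$ the solution is $g(\theta) = B\sin(\theta/2)$, whose derivative $(B/2)\cos(\theta/2)$ does not vanish on $(0, \pi)$, forcing $B = 0$, and iterating across arcs gives $g \equiv 0$, hence $\omega \equiv 0$.

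The principal technical obstacle I anticipate is in case (i): verifying that the odd-reflected pair is a genuine interior generalized minimizer requires care since $\HH^1(\tilde J_s) \leq 2\HH^1(\tilde J)$ in general, so a straightforward symmetrization argument for the $\HH^1$-part does not work. One must instead track the energy of the restriction to $\overline U$ and exploit $\tilde K = \sigma(\tilde K)$ to handle the topological competitor condition through the symmetrization.
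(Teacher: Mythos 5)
Your strategy is genuinely different from the paper's, and each of the three pieces has a substantive gap.

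The paper proves both (i) and (ii) without any reflection and without any competitor. Using harmonicity, the Neumann condition $\partial_\nu u = 0$ on $K$, and the Dirichlet condition $u=0$ on $\partial U \setminus K$, it obtains $D(r)=\int_{\partial B_r \cap U\setminus K} u\,\partial_\nu u$ and then estimates this by Cauchy--Schwarz and Wirtinger's inequality, applying the odd-reflection trick \emph{only at the level of the $1$d arc} $I_j$ that meets $\partial U$ (giving $\int_{I_j} u^2 \le (2|I_j|/\pi)^2\int_{I_j}(\partial_\tau u)^2$), and the standard mean-zero Wirtinger on interior arcs, whose flux vanishes because $K$ is connected. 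The case $\HH^0(K\cap\partial B_r)=1$ is exactly the subcase where both arcs are boundary arcs, so the interior-arc step (the only one using connectedness) is vacuous; that is the whole of (ii). This is much lighter than anything you propose.

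For part (i), your odd reflection in the plane does not close. You correctly note that symmetrizing a topological competitor increases the $\HH^1$ part (indeed $\HH^1(\tilde J_s\cap B_R)=2\HH^1(\tilde J\cap B_R)-\HH^1(\tilde J\cap\sigma(\tilde J)\cap B_R)\ge\HH^1(\tilde J\cap B_R)$), so the energy comparison goes the wrong way and you never obtain $E(\tilde K,\tilde u,B_R)\le E(\tilde J,\tilde v,B_R)$. You flag the issue but offer no mechanism to resolve it; as it stands, the claim that $(\tilde K,\tilde u)$ is an interior topological minimizer is unproven, and it is not at all clear that it is even true, because the Dirichlet boundary is a genuine constraint, not a free boundary, so competitors in the reflected problem need not respect it. Invoking Bonnet's interior monotonicity is therefore unjustified.

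For part (ii), the competitor construction is the wrong tool. A comparison with $J=(K\setminus B_r)\cup[x,p]$ and the $1$-homogeneous extension is exactly the mechanism that yields David--L\'eger-type monotonicity of $F(r)=(2D(r)+\HH^1(K\cap B_r))/r$ (Proposition \ref{prop: david-leger monotonicity}), not Bonnet-type monotonicity of $\omega(r)=D(r)/r$. Concretely, the polar computation gives $\int_{B_r}|\nabla v|^2=\frac{1}{2r}\int_{\partial B_r\cap U}u^2 + \frac{r}{2}\int_{\partial B_r\cap U}(\partial_\tau u)^2$, and after applying the one-sided Poincar\'e with sharp constant $(2|I_j|/\pi)^2\le 4r^2$ you get $D(r)+\HH^1(K\cap B_r)\le \tfrac{5r}{2}\int(\partial_\tau u)^2+r$, whose coefficient is strictly larger than the $r$ you need in $D(r)\le r\int_{\partial B_r}|\nabla u|^2$; the surplus is not absorbed by subtracting $\HH^1(K\cap B_r)$, and in any case the inequality $\HH^1(K\cap B_r)\ge r$ you invoke requires connectedness of $K\cap B_r$ (or $\HH^0(K\cap\partial B_\rho)\ge 1$ for a.e.\ $\rho<r$), neither of which is part of the hypothesis of (ii).

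Finally, the rigidity step is too quick. Constancy of $\omega$ does not by itself force $u$ to be $1/2$-homogeneous; the homogeneity exponent is fixed only once one knows which arc carries energy and that its opening angle is $\pi$. The paper reaches the contradiction differently: it uses the equality case of its own inequality chain to deduce that the interior arcs carry no energy, that the boundary arc carrying energy has length $\pi r$, hence $K\cap\partial B_r\subset\{(\pm r,0)\}$ for a.e.\ $r$; from this it concludes $K\subset\partial U$ with $K$ containing an interval, and then applies Proposition \ref{prop: generalized minimizer with K in partial Omega containing an interval} to get $K=\partial U$, contradicting $\HH^0(K\cap\partial B_r)=1$. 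That structural step (reduction to $K\subset\partial U$ followed by the vanishing result of Proposition \ref{prop: generalized minimizer with K in partial Omega containing an interval}) is the part your sketch is missing.
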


Observe that, if we don't assume that $K$ is connected, the first monotonicity formula (Proposition \ref{prop: david-leger monotonicity}), without further assumptions, works only if $\HH^0(\partial B_r(x) \cap K) \in \N \setminus \{1\}$, while the second one works only if $\HH^0(\partial B_r(x) \cap K)  = 1$. The two monotonicity results complement each other, but we do not know on how to combine them in a meaningful way. For this reason, we are unable to give a complete classification of generalized minimizers. Nevertheless, we are able to give a partial classification based on the above monotonicity results. 

First of all, using Allard's monotonicity formula for stationary varifolds (see \cite{allard-interior,allard-boundary}), we classify generalized minimizers with zero Dirichlet energy, that we define below.
\begin{dfn}[Elementary generalized minimizer]
    \label{def: elementary generalized minimizers}
   We say that a generalized minimizer $(K,u)$ is elementary if $\int |\nabla u|^2 = 0$. 
\end{dfn}

We obtain the following theorem, whose proof can be found in Section \ref{section: elementary generalized minimizers}.
\begin{thm}[Classification of elementary generalized minimizers on the upper half-plane]
    \label{prop: classification of elementary generalized minimizers}
    Let $(K,u)$ be an elementary generalized minimizer according to Definition \ref{def: elementary generalized minimizers}. Then one and only one of the following holds. 
    \begin{enumerate}[label=(\roman*)]
        \item $K= \emptyset$ and $u=0$,
        \item $K = \partial U = \{x_2 = 0\}$ and $u$ is constant,
        \item $K = \{x_2 = a\}$ for some $a \in (0,+\infty)$ and $u = u^+ \chi_{\{x_2 > a\}}$ for some constant $u^+ \in \R$.
    \end{enumerate}
\end{thm}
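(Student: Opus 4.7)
Because $(K,u)$ is elementary, $u$ is locally constant on $U\setminus K$ and vanishes on every component whose closure meets $\partial U\setminus K$. Given any admissible competitor $(J,v)$, the topological condition on $J$ ensures that each component of $U\setminus J$ connects outside $O$ to a unique component of $U\setminus K$, so I can choose $v$ to be locally constant with the inherited value on each piece; this $v$ agrees with $u$ outside $O$, satisfies the boundary condition, and has $\int|\nabla v|^2=0$. The minimality inequality reduces to
\[
\HH^1(K\cap O)\leq \HH^1(J\cap O)
\]
for every topological competitor $J$; in other words, $K$ is length-minimizing in $\overline U$ among $1$-sets preserving the components of $\R^2\setminus K$ outside $O$.

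Let $A$ be the component of $\R^2\setminus K$ containing $\{x_2<0\}$, and $B_1,B_2,\dots$ the other components. Each $B_i$ must be unbounded: if $\overline{B_i}\subset O=B_R$, then $J:=K\setminus O$ is admissible (no point of $B_i$ lies outside $O$, making the topological condition vacuous) and is strictly shorter, contradicting minimality. I then claim there is at most one such $B_i$. If two unbounded $B_1, B_2$ exist, Allard's boundary monotonicity applied to the stationary $1$-varifold $\HH^1\mres K$ gives $\HH^1(K\cap B_R)\geq 4R+o(R)$ for $R$ large (each $\partial B_i$ contributing density $\geq 1$ at infinity), while a Steiner-network competitor inside $B_R$ joining the gateways in $K\cap\partial B_R$ by $120^\circ$ junctions, arranged so as to keep $A, B_1, B_2$ separated near $\partial B_R$, has total length $2R+O(1)$, a contradiction. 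Hence $\R^2\setminus K$ has at most two components.

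If $\R^2\setminus K$ is connected, then $J=K\setminus O$ is admissible for every $O$, forcing $K=\emptyset$ and, by the boundary condition, $u\equiv 0$: this is case (i). Otherwise $\R^2\setminus K=A\sqcup B$ with $B$ unbounded; since $B$ is open and $\{x_2<0\}\subset A$, we have $B\subset U$ and $K=\partial B$. Length-minimality, together with the facts that (a) any free endpoint of $K$ can be truncated, the two sides of the removed piece communicating through $\R^2\setminus\overline U$ when needed, and (b) any interior junction of $K$ admits a strictly shorter Steiner replacement as in the previous paragraph, forces $K$ to be a disjoint union of complete straight lines in $\overline U$. A slanted line would exit $\overline U$, so the lines are horizontal; and since $B$ is a single component, there is exactly one such line, so $K=\{x_2=a\}$ for some $a\geq 0$. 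This yields (ii) when $a=0$ (with $B=U$ and $u$ constant) and (iii) when $a>0$ (with $B=\{x_2>a\}$ and $u=u^+\chi_B$, the value $0$ on $A\cap U$ being forced by the boundary condition).

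The main obstacle is the Steiner-tree length comparison that simultaneously rules out multiple $B_i$'s and interior junctions of $K$. It relies on the quantitative lower bound $\HH^1(K\cap B_R)\geq 4R+o(R)$ from Allard's monotonicity and on constructing an admissible competitor that both respects the topological separations and admits a locally constant $v$ with zero Dirichlet energy.
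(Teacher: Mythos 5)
Your proposal takes a genuinely different route from the paper's: you try to argue directly at scale $R$ using length comparisons, whereas the paper blows up and blows in at a point of $K$ using the monotonicity formula from Proposition \ref{prop: monotonocicity formula for stationary varifolds}, reduces to a cone, and then classifies cones by a Fermat--Torricelli and triangle-inequality argument. Your reduction to length-minimization among topological competitors is sound (though it deserves a remark explaining why every component of $U\setminus K$ lying in $A$ must touch $\partial U\setminus K$ and hence carry $u\equiv 0$, so merging pieces of $A$ does not break the boundary condition). However, the rest of the argument has genuine gaps.

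First, the truncation step. If $\overline{B_i}\subset O$, you set $J=K\setminus O$ and assert this is admissible because ``no point of $B_i$ lies outside $O$.'' But the topological condition must hold for \emph{all} pairs of components, not just those involving $B_i$: removing $K\cap O$ frees up the whole ball $O$, and a path from $A\setminus O$ into $O$ and back out to $B_j\setminus O$ (for some unbounded $B_j\neq B_i$) now lives entirely in $\R^2\setminus J$. So $J=K\setminus O$ can merge $A$ with another $B_j$ and need not be admissible; the same problem appears if one instead removes only a neighborhood of $\partial B_i$, since a bounded $B_i$ can be adjacent to several components simultaneously (think of a ``theta'' configuration). The paper avoids this entirely: once $F$ is shown constant, Proposition \ref{prop: monotonocicity formula for stationary varifolds} forces $K$ to be a cone, which is empty or unbounded automatically.

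Second, the two quantitative estimates at the heart of your Steiner comparison are asserted rather than proved. The lower bound ``$\HH^1(K\cap B_R)\geq 4R+o(R)$'' does not follow from Allard's monotonicity alone: one needs to know that the density at infinity is at least $2$, and the intuition ``each $\partial B_i$ contributes density $\geq 1$'' requires an argument (the $\partial B_i$'s could share boundary, or $K$ could have a more complicated topology producing fewer than four gateways). The upper bound ``$2R+O(1)$'' for the Steiner competitor is likewise configuration-dependent: if the gateways in $K\cap\partial B_R$ spread over the circle, the Steiner network has length of order $cR$ with $c$ depending on their positions, and it is not obvious a priori that it beats $\HH^1(K\cap B_R)$. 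The paper sidesteps both difficulties by passing to the blow-up and blow-in cones (which are finite unions of half-lines rooted at the origin, by Proposition \ref{prop: monotonocicity formula for stationary varifolds}), where the constraint $K\subset\overline{U}$ forces the angles to lie in $[0,\pi]$; the Fermat--Torricelli inequality then yields $N\leq 2$ with almost no computation, and a single triangle inequality disposes of the remaining cases. Your final step, that length-minimality with no endpoints and no junctions forces ``a disjoint union of complete straight lines,'' also skips the case of smooth non-straight curves and the interaction with $\partial U$, both of which the paper again handles for free once $K$ is known to be a cone with $F\equiv 2$.
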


The above theorem implies at once the following, proved in Section \ref{section: elementary generalized minimizers}, which is the analogous of \cite[Theorem 2.4.1]{delellis-focardi}.
\begin{thm}[Classification of elementary blow-up limits] 
    \label{thm: classification of elementary blow-up limits}
    Suppose that $(K,u)$ is a blow-up limit of $\{(K_n,u_n)\}_n$ as in Definition \ref{def: blow-up limit}, and suppose that $\Omega_n \to \Omega_0 = \{x_2 > -a\}$, with $a \in [0,+\infty)$. Moreover, suppose that $\int |\nabla u|^2 = 0$. Then one and only one of the following holds. 
    \begin{enumerate}[label=(\roman*)]
        \item $K= \emptyset$ and $u=0$,
        \item $K = \partial \Omega_0 = \{x_2 = -a\}$ and $u$ is constant. In this case, let $\Omega^1 = \{x_2 > -a\}$ and let $z^1 \in \Omega^1$ be the point used in the convergence of Proposition \ref{prop: global convergence of functions, blow-up}. Then, up to subsequences, $\lim_{n \to \infty} |u_n(z^1)| = \infty$.
        \item $K = \{x_2 = b\}$ for some $b > -a$ and $u = u^+ \chi_{\{x_2 > b\}}$ for some constant $u^+ \in \R$. In this case, let $\Omega^1 = \{x_2 > b\}$ and let $z^1 \in \Omega^1$ be the point used in the convergence of Proposition \ref{prop: global convergence of functions, blow-up}. Then, up to subsequences, $\lim_{n \to \infty} |u_n(z^1)| = \infty$.
    \end{enumerate}
\end{thm}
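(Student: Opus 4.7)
The plan is to obtain the trichotomy by direct appeal to Theorem~\ref{prop: classification of elementary generalized minimizers}, and to derive the non-triviality claim $|u_n(z^1)| \to \infty$ in cases (ii) and (iii) by a competitor argument against the pre-blow-up minimizer $(K^n, u^n)$ at the natural scale $r_n R$ for a large constant $R$.

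For the classification step, I would first apply Theorem~\ref{thm: blow-up limit}(iii): the pair $(K,u)$ is a generalized minimizer on $\Omega_0 = \{x_2 > -a\}$ in the sense of Definition~\ref{def: admissible competitor and minimizer on a half-plane}. Translating by $(0,a)$ carries this to a generalized minimizer on $U = \{x_2 > 0\}$ which, since $\int |\nabla u|^2 = 0$, is elementary in the sense of Definition~\ref{def: elementary generalized minimizers}. Invoking Theorem~\ref{prop: classification of elementary generalized minimizers} and translating back yields the three alternatives (i), (ii), (iii). I would then observe that the normalization $u^k(z^k) = 0$ built into Proposition~\ref{prop: global convergence of functions, blow-up}(ii) forces the constant appearing on the component $\Omega^1 \not\subset \Omega^*$ to vanish; in particular, the classification's $u^+$ in (iii) and the constant in (ii) become $0$ for the blow-up limit. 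This is consistent with the classification but means that any non-trivial ``jump'' information of the pre-blow-up minimizers has escaped the limit, which motivates the second assertion.

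For the non-triviality assertion, I would argue by contradiction. If $\{u_n(z^1)\}$ were bounded, a further extraction gives $u_n(z^1) \to c \in \R$. Combined with $v_n = u_n - u_n(z^1) \to u^1$ in $C^\infty_\loc(\Omega^1)$ with $u^1$ a constant, this yields $u_n \to u^1 + c$ uniformly on compact subsets of $\Omega^1$; unrescaling, $u^n(x_n + r_n \cdot ) - g(x_n) = O(r_n^{1/2})$ there, and similarly $o(r_n^{1/2})$ on any component of $\Omega_0 \setminus K_0$ contained in $\Omega^*$. I would then construct a competitor $(\tilde{K}^n, \tilde{u}^n)$ for $(K^n, u^n)$ by setting $\tilde{K}^n = K^n \setminus B_{r_n R}(x_n)$ for a large fixed $R$, and defining $\tilde{u}^n$ inside $B_{r_n R}(x_n)$ as a smooth interpolation that matches $u^n$ on $\partial B_{r_n R}(x_n) \setminus K^n$ and, in case (ii), the datum $g$ on $\partial \Omega \cap B_{r_n R}(x_n)$. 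The length saving $\HH^1(K^n \cap B_{r_n R}(x_n)) = 2 r_n R + o(r_n)$ follows from Theorem~\ref{thm: blow-up limit}(ii) together with $K_0$ lying on a straight line. The extra Dirichlet energy incurred by smoothing a jump of size $O(r_n^{1/2})$ across the width of the ball is of order $c^2 r_n$ by a routine cutoff estimate. Choosing $R$ larger than a suitable constant multiple of $c^2$ makes the net energy change strictly negative \fsln, contradicting minimality of $(K^n, u^n)$.

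The main obstacle is the construction of $\tilde{u}^n$ in case (ii), where $K_0 = \partial \Omega_0$ lies on the boundary of the half-plane: the modification must simultaneously match $u^n$ on $\partial B_{r_n R}(x_n)$, the datum $g$ on $\partial \Omega \cap B_{r_n R}(x_n)$, and cope with the curvature of $\partial \Omega$ and the variation of $g$ at scale $r_n$. The $C^2$ regularity of both $\partial \Omega$ and $g$ ensures that the geometric and boundary-value discrepancies are lower-order compared to the leading $r_n$ contribution, so the cutoff can be made to work, but the bookkeeping requires some care. In case (iii), where the crack is strictly interior to $\Omega_0$ at the blow-up scale, the construction reduces to a straightforward interpolation across a line segment and the estimate is cleaner.
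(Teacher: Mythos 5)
Your classification step coincides with the paper's: apply Theorem~\ref{thm: blow-up limit}(iii), translate, invoke Theorem~\ref{prop: classification of elementary generalized minimizers}, translate back. Your observation that the normalization $u^k(z^k)=0$ in Proposition~\ref{prop: global convergence of functions, blow-up}(ii) forces $u^+=0$ in the blow-up limit is correct, and is exactly what makes the divergence of $u_n(z^1)$ the meaningful part of the statement. For the non-triviality claim the paper merely cites the interior analogue (De Lellis--Focardi, Theorem~2.4.1(b3)); your proposal reimplements that argument, and the overall account (save $\sim 2R r_n$ of length, pay $O(c^2 r_n)$ in Dirichlet energy, then take $R$ large) is the right one.

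However, the competitor as you have written it is not admissible. Setting $\tilde K^n = K^n \setminus B_{r_n R}(x_n)$ removes all of the crack from the ball, so $\tilde u^n$ would have to lie in $W^{1,2}(B_{r_n R}(x_n)\cap\Omega)$ with trace equal to $u^n$ on $\partial B_{r_n R}(x_n)\cap\Omega$. But that trace jumps by roughly $c\,r_n^{1/2}$ at the two points $K^n\cap\partial B_{r_n R}(x_n)$, and a $W^{1,2}$ function on a disk cannot have a jump in its boundary trace (an indicator is not in $H^{1/2}(S^1)$). No "smooth interpolation" with the prescribed boundary data exists. The standard repair --- implicit in the ``cutoff'' you invoke, and explicit elsewhere in the paper, e.g.\ in the proof of Theorem~\ref{thm: blow-up limit}(ii) where a short arc $\partial B_{\rho'}\cap\{|x_2|\le\eta\}$ is added to the competitor's singular set --- is either to retain $K^n$ on an outer annulus $B_{r_n R}(x_n)\setminus B_{r_n R/2}(x_n)$ and excise only inside $B_{r_n R/2}(x_n)$, or to excise completely but add a short arc of $\partial B_{r_n R}(x_n)$ near the two crossing points. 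Either way the length saving remains of order $R r_n$ and the cost $O(c^2 r_n)$, so the contradiction closes. A secondary point: the estimate $u^n(x_n+r_n\cdot)-g(x_n)=O(r_n^{1/2})$ is only valid on compact subsets of the components $\Omega^k$; it degenerates near $K_n$, so the $L^\infty$ control in the transition region has to be established separately (Fubini selection of a good circle with bounded $\int_{\partial B_\rho}|\nabla u_n|^2$ plus a maximum-principle/truncation argument, as in the paper's proof of Theorem~\ref{thm: blow-up limit}(ii)). You flag that ``the bookkeeping requires some care''; both of these are places where the care is load-bearing rather than cosmetic.
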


In the same spirit as \cite{Bonnet} we classify connected generalized minimizers using the monotonicity formula of Proposition \ref{prop: Bonnet monotonicity}. 
\begin{prop}[Classification of connected generalized minimizers]
    \label{prop: classification of connected generalized minimizers}
    Let $(K,u)$ be a generalized minimizer on $U = \{x_2 > 0\}$ as in Definition \ref{def: admissible competitor and minimizer on a half-plane}. 
    Suppose that $K$ is connected and $x \in K \cap \partial U$. Then $(K,u)$ is an elementary generalized minimizer according to Definition \ref{def: elementary generalized minimizers}. In particular, $K = \partial U$. 
\end{prop}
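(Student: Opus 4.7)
The strategy is the boundary analogue of Bonnet's argument: combine the monotonicity of $\omega$ from Proposition \ref{prop: Bonnet monotonicity}(i) with a blow-in at $x$, then conclude via the classification of elementary generalized minimizers (Theorem \ref{prop: classification of elementary generalized minimizers}). Without loss of generality, translate so that $x = 0$.

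By Proposition \ref{prop: Bonnet monotonicity}(i), $\omega$ is non-decreasing on $(0,\infty)$. The first task is an a priori upper bound $\omega(R) \leq C$ independent of $R$. This is obtained by a standard competitor argument: on $O = B_{R+\eps}$, replace $K$ with $(K \setminus \overline{B_R}) \cup (\partial B_R \cap \overline{U}) \cup (\overline{B}_R \cap \partial U)$ and $u$ by the harmonic extension of suitable boundary data on $B_R \cap U$, interpolated on the thin annulus so as to match $u$ on $\partial B_{R+\eps}$; condition (2) of Definition \ref{def: admissible competitor and minimizer on a half-plane} is automatic since $J \supset K$ in the modified region. The minimality inequality then controls $\int_{B_R \cap U \setminus K}|\nabla u|^2$ by a constant multiple of $R$, giving $\omega(R) \leq C$, and hence $\omega(\infty) := \lim_{R\to\infty}\omega(R) \in [0,\infty)$ exists.

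Next, apply Theorem \ref{thm: compactness for generalized minimizers} with $r_j \uparrow \infty$ to extract a blow-in generalized minimizer $(K_\infty, u_\infty)$ on $U$. The energy convergence in Theorem \ref{thm: blow-up limit}(ii) yields, for every $s > 0$,
\begin{equation*}
\omega_\infty(s) \;=\; \lim_{j\to\infty} \omega(r_j s) \;=\; \omega(\infty),
\end{equation*}
so $\omega_\infty$ is constant. Assuming for the moment that $K_\infty$ is connected, the ``moreover'' clause of Proposition \ref{prop: Bonnet monotonicity}(i) applied to $(K_\infty, u_\infty)$ then forces $\omega_\infty \equiv 0$, hence $\omega(\infty) = 0$. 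Combined with monotonicity and nonnegativity, $\omega \equiv 0$ on $(0,\infty)$, so $\int_U |\nabla u|^2 = 0$, i.e. $(K,u)$ is elementary in the sense of Definition \ref{def: elementary generalized minimizers}. Theorem \ref{prop: classification of elementary generalized minimizers} then narrows $K$ to $\emptyset$, $\partial U$, or $\{x_2 = a\}$ with $a > 0$, and only $K = \partial U$ is compatible with $0 \in K \cap \partial U$.

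The main obstacle is the connectedness of the blow-in $K_\infty$. If $K$ is bounded, the rescalings collapse to $K_\infty = \{0\}$, and $u_\infty$ extends by odd reflection across $\{x_2 = 0\}$ to a harmonic function on $\R^2 \setminus \{0\}$; removability of the isolated singularity together with the constancy of $\omega_\infty$ forces $u_\infty \equiv 0$, hence $\omega(\infty) = 0$ without needing connectedness. If $K$ is unbounded the rescalings $K_j$ are connected and unbounded, and I would establish connectedness of the Hausdorff limit via Go\l ab's lower semicontinuity theorem applied to the connected components of $K_j \cap \overline{B}_R$ through $0$, using a uniform length bound $\HH^1(K \cap B_R) \leq CR$ (again produced by a competitor argument, e.g.\ replacing $K \cap B_R$ by a diameter) to secure compactness. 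The delicate point is ruling out that portions of $K_j$ escape to infinity in disjoint directions and thereby leave a disconnected residue in $K_\infty$.
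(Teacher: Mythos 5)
Your proof follows essentially the same route as the paper's: Proposition \ref{prop: Bonnet monotonicity}(i) gives that $\omega$ is non-decreasing and bounded (your competitor re-derives the bound, which is the half-plane analogue of Proposition \ref{prop: Energy upper bound at the boundary}); blow-in compactness (Theorem \ref{thm: compactness for generalized minimizers}) and the energy convergence of Theorem \ref{thm: blow-up limit}(ii) give a generalized minimizer $(K_\infty, u_\infty)$ with $\omega_\infty \equiv l_\infty(x)$ constant; the ``moreover'' clause of Proposition \ref{prop: Bonnet monotonicity}(i) gives $\omega_\infty \equiv 0$; monotonicity and non-negativity of $\omega$ then force $\omega \equiv 0$; and Theorem \ref{prop: classification of elementary generalized minimizers} together with $x \in K \cap \partial U$ yields $K = \partial U$. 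Your observation that the blow-in alone suffices is a mild genuine simplification: the paper also performs the symmetric blow-up argument to show $l_0(x)=0$, but once $l_\infty(x)=0$ this is redundant since $0 \leq \omega(r) \leq l_\infty(x)$ for all $r$.

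The one point you flag --- connectedness of the blow-in limit $K_\infty$ --- is where the proposal is incomplete, and your Go\l ab sketch does not close it. The ``moreover'' clause of Proposition \ref{prop: Bonnet monotonicity}(i) is proved in the paper only for connected sets (the identity $\int_{I_j}\partial u/\partial\nu = 0$ on interior arcs, which underlies the Wirtinger step, uses connectedness in an essential way, and without it the reduction to $K\subset\partial U$ collapses). The rescalings $K_j = K_{x,r_j}$ are each connected, but local Hausdorff convergence does not automatically preserve connectedness: two clusters of $K_j$ could communicate only through arcs that leave every fixed ball and leave disjoint residues in the limit. Your Go\l ab argument controls only the connected component of $K_j\cap\overline{B}_R$ through $x$ and does not exclude a second limiting component fed by pieces of $K_j$ tethered to $x$ only outside $\overline{B}_R$. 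Note that the paper's own proof applies Proposition \ref{prop: Bonnet monotonicity} to the blow-up and blow-in limits without verifying they are connected, so your proposal is not worse off than the reference --- but to make the argument airtight you should either prove or cite a lemma that blow-up/blow-in limits of connected sets satisfying the density bounds of Theorem \ref{thm: density lower bound for K} and Proposition \ref{prop: Energy upper bound at the boundary} remain connected, or else restructure the argument so that the ``moreover'' clause is only ever applied to a set known a priori to be connected (for instance by passing the equality chain in the proof of Proposition \ref{prop: Bonnet monotonicity}(i) through the sequence $(K_j,u_j)$ rather than to the limit).
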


Moreover, using (ii) in Proposition \ref{prop: Bonnet monotonicity}, we show the following result.
\begin{prop}
    \label{prop: exclude 1 intersection}
    Let $(K,u)$ be a generalized minimizer on $U = \{x_2 > 0\}$ as in Definition \ref{def: admissible competitor and minimizer on a half-plane}. 
    Suppose that $x \in K \cap \partial U$. Then it is not possible to have $\HH^0(K \cap \partial B_r(x)) = 1$ for a.e. $r>0$. 
\end{prop}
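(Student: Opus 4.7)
The plan is to argue by contradiction: assume $\HH^0(K \cap \partial B_r(x)) = 1$ for a.e.\ $r > 0$. By Proposition~\ref{prop: Bonnet monotonicity}(ii) the function $\omega$ is then non-decreasing on $(0,\infty)$. First, the idea is to establish an a priori bound $\omega(r) \leq \pi$ for every $r > 0$ via the competitor $(J, v)$ defined by $J = (K \setminus B_r(x)) \cup (\partial B_r(x) \cap \overline{U})$, with $v = 0$ inside $B_r(x) \cap U$ and $v = u$ outside. Once admissibility per Definition~\ref{def: admissible competitor and minimizer on a half-plane} is verified, minimality gives
\[
r\omega(r) + \HH^1(K \cap \overline{B}_r(x)) \leq \HH^1(\partial B_r(x) \cap \overline{U}) = \pi r,
\]
so in particular $\omega(\infty) := \lim_{r \to \infty} \omega(r)$ is finite.

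Next, pass to a blow-in along $R_j \uparrow \infty$: Theorem~\ref{thm: compactness for generalized minimizers} yields, up to subsequences, a generalized minimizer $(K_\infty, u_\infty)$ on $U$ with $0 \in K_\infty \cap \partial U$. The energy convergence in Theorem~\ref{thm: blow-up limit}(ii) gives
\[
\omega_\infty(\rho) = \lim_{j \to \infty} \omega(\rho R_j) = \omega(\infty) \quad \text{for every } \rho > 0,
\]
so $\omega_\infty$ is constant. The rigidity clause of Proposition~\ref{prop: Bonnet monotonicity}(ii), applied to $(K_\infty, u_\infty)$, then forces $\omega_\infty \equiv 0$ and hence $\omega(\infty) = 0$. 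Since $\omega \geq 0$ is non-decreasing with $\omega(\infty) = 0$, one concludes $\omega \equiv 0$; equivalently, $u$ is locally constant on $U \setminus K$ and $(K, u)$ is an elementary generalized minimizer in the sense of Definition~\ref{def: elementary generalized minimizers}.

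Invoking the classification of Theorem~\ref{prop: classification of elementary generalized minimizers}, the hypothesis $x \in K \cap \partial U$ rules out $K = \emptyset$ and $K = \{x_2 = a\}$ for $a > 0$, leaving only $K = \partial U$. But then $\HH^0(K \cap \partial B_r(x)) = 2$ for every $r > 0$, contradicting the assumption and completing the argument.

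The principal obstacle will be the admissibility check for the energy-bounding competitor: one must verify that replacing $K \cap B_r(x)$ with $\partial B_r(x) \cap \overline{U}$ does not merge previously separated connected components of $\R^2 \setminus K$. Since $K \subset \overline{U}$ the component of $\R^2 \setminus K$ containing the open lower half-plane is unaffected, and the added upper half-circle should close off the interior of $B_r(x) \cap U$ from the remaining pieces of $\R^2 \setminus K$ sitting in $\overline{U}$; carrying out this verification in full generality is the key technical step, together with confirming that the rigidity clause in Proposition~\ref{prop: Bonnet monotonicity}(ii) can indeed be invoked on $(K_\infty, u_\infty)$ without any further structural assumption on its intersection pattern with circles.
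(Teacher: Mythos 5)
Your proposal takes essentially the same route as the paper: by Bonnet's monotonicity~(ii) the function $\omega$ is non-decreasing; a blow-in yields a generalized minimizer with constant $\omega$; the rigidity clause then forces $\omega \equiv 0$, so $(K,u)$ is elementary and $K = \partial U$, contradicting the intersection-count hypothesis. The paper compresses this to ``arguing as in the proof of Proposition~\ref{prop: classification of connected generalized minimizers}'', which carries out both a blow-up (to get $l_0 = 0$) and a blow-in (to get $l_\infty = 0$); your version, using only the blow-in, is actually the minimal version, since $\omega \geq 0$ non-decreasing with $\lim_{r\to\infty}\omega(r)=0$ already forces $\omega\equiv 0$. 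Your explicit competitor bound $\omega(r) \leq \pi$ is a redundant re-derivation of the energy upper bound (Proposition~\ref{prop: Energy upper bound at the boundary}), which the paper simply cites; the admissibility check you worry about does go through because $\partial B_r(x)\cap\{x_2<0\}$, the only place a path can cross into $B_r(x)$ after replacing $K\cap B_r(x)$ with the upper half-circle, lies in a single component of $\R^2\setminus K$.

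The concern you flag about invoking the rigidity clause of Proposition~\ref{prop: Bonnet monotonicity}(ii) on $(K_\infty, u_\infty)$ is the real crux, and it is worth stressing that the paper does not explicitly address it either. The ``moreover, if $\omega$ is constant then $\omega\equiv 0$'' clause is proved only under the standing hypothesis (connectedness for~(i), $\HH^0(K\cap\partial B_r(x))=1$ a.e.\ for~(ii)), because the argument upgrades $D(r)=rD'(r)$ to equality in a chain of inequalities that was derived under that hypothesis. For Proposition~\ref{prop: classification of connected generalized minimizers} the hypothesis passes automatically, since a Hausdorff limit of closed connected sets is connected. For the present proposition, the analogous stability of the single-intersection property under blow-in is not immediate and is left implicit in the paper's ``arguing as in'' reference. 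You are right to single it out; making that step rigorous (for example by showing that the coarea/energy estimates rule out a.e.\ intersection count $\geq 2$ for the limit, or by phrasing the rigidity clause so it applies directly to $(K_\infty,u_\infty)$) is the piece that would need to be added to both your argument and the paper's.
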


Finally, we classify generalized minimizers that satisfy, for a.e. $r$, the property of Proposition \ref{prop: david-leger monotonicity}.
\begin{prop}
    \label{prop: david-leger classification of generalized minimizers}
    Assume that $(K,u)$ is a generalized minimizer on $U = \{x_2>0\}$ and $x \in K \cap \partial U$.  If, for a.e. $r>0$, either $\HH^0(K \cap \partial B_r(x)) \in \{0\} \cup [2,\infty)$ or $K \cap B_r(x) = \{x + r e^{i\theta}\}$ with $\theta \in (\frac{1}{4}\pi,\frac{3}{4}\pi)$, then $(K,u)$ is elementary, and hence $K = \partial U$. 
\end{prop}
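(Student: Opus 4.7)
The plan is to combine the monotonicity of Proposition \ref{prop: david-leger monotonicity} with a blow-up and blow-in argument. Under the hypothesis of the proposition, Proposition \ref{prop: david-leger monotonicity} gives $F'(r) \geq 0$ for a.e.\ $r > 0$, where
$$F(r) = \frac{2}{r}\int_{B_r(x)\cap U\setminus K}|\nabla u|^2 + \frac{1}{r}\HH^1(K\cap B_r(x)).$$
Hence $F$ is non-decreasing. Standard Ahlfors-type bounds (obtained by comparing $(K,u)$ with the competitor $J=K\cup\partial B_r(x)\cup(\partial U\cap B_r(x))$ together with the zero extension of $u$ inside $B_r(x)$) show that $F$ is bounded, so that the limits $F^0:=\lim_{r\downarrow 0}F(r)$ and $F^\infty:=\lim_{r\uparrow\infty}F(r)$ exist, are finite, and satisfy $F^0\le F^\infty$.

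Using Theorem \ref{thm: compactness for generalized minimizers}, I extract a blow-up limit $(K^0,u^0)$ along a sequence $r_j\downarrow 0$ and a blow-in limit $(K^\infty,u^\infty)$ along a sequence $\rho_j\uparrow\infty$. Both are generalized minimizers on $\{x_2>0\}$, with $0\in K^0\cap\partial U$ and $0\in K^\infty\cap\partial U$. Applying Theorem \ref{thm: blow-up limit}(ii) with the open set $B_s(0)$, for any radius $s$ satisfying $\HH^1(\partial B_s(0)\cap K^0)=0$ (resp.\ $=0$ for $K^\infty$), which happens for a.e.\ $s>0$ by $\HH^1$-rectifiability and coarea, the analogous functional $F$ of each limit is identically equal to $F^0$ (resp.\ $F^\infty$).

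The core technical step is the transfer of the topological hypothesis: I must show that for a.e.\ $r>0$ either $\HH^0(K^0\cap\partial B_r)\in\{0\}\cup[2,\infty)$ or $K^0\cap\partial B_r$ is a single point in direction $\theta\in(\tfrac{\pi}{4},\tfrac{3\pi}{4})$, and analogously for $K^\infty$. I plan to do this via a Fubini argument in the radial variable, using the hypothesis on the rescalings $K_{x,r_j}$, upper semicontinuity of $\HH^0(\,\cdot\cap\partial B_r)$ at generic radii under Hausdorff convergence, closedness of the angular condition $\theta\in[\tfrac{\pi}{4},\tfrac{3\pi}{4}]$ under limits, and the density lower bound Theorem \ref{thm: density lower bound for K} to rule out loss of mass (and hence spurious single intersections in the wrong direction) in the limit. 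This transfer is the main obstacle, since under Hausdorff convergence the cardinality $\HH^0(K_j\cap\partial B_r)$ can both increase and decrease, so one must carefully select a set of radii of full measure on which the dichotomy is preserved.

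Once the topological condition is established for both limits, the rigidity clause of Proposition \ref{prop: david-leger monotonicity} forces $(K^0,u^0)$ and $(K^\infty,u^\infty)$ to be elementary generalized minimizers. Since each contains $0\in\partial U$, Theorem \ref{prop: classification of elementary generalized minimizers} leaves only the option $K^0=K^\infty=\partial U$ with $u^0=u^\infty\equiv 0$; a direct computation then gives $F^0=F^\infty=\tfrac{1}{r}\HH^1(\partial U\cap B_r(0))=2$. Hence $F$ is monotone with equal limits at $0$ and $\infty$, so $F\equiv 2$ is constant. A final application of the rigidity clause of Proposition \ref{prop: david-leger monotonicity} to $(K,u)$ itself — whose topological hypothesis holds by assumption — yields that $(K,u)$ is elementary, and Theorem \ref{prop: classification of elementary generalized minimizers} together with $x\in K\cap\partial U$ concludes that $K=\partial U$.
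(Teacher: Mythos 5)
Your proposal tracks the paper's strategy closely: monotonicity of $F$ from Proposition \ref{prop: david-leger monotonicity}, blow-up and blow-in to obtain constant-$F$ limits, classification of those limits as $\partial U$ (via Theorem \ref{prop: classification of elementary generalized minimizers}) to force $F\equiv 2$, and the rigidity clause applied to $(K,u)$ itself. The place where you depart from the paper's write-up is your "core technical step": transferring the topological dichotomy to the blow-up/blow-in limits before invoking the rigidity clause of Proposition \ref{prop: david-leger monotonicity}. You are right that the rigidity clause needs both that $F$ be constant \emph{and} that $K$ satisfy the a.e.\ dichotomy; the paper's own proof invokes the clause on $(K_0,u_0)$ without explicitly checking the latter, so your concern targets a real hypothesis that the paper leaves implicit.

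Where your proposal has a genuine gap is the resolution of that step. The Fubini/semicontinuity route you sketch does not obviously close it: $\HH^0(K_j\cap\partial B_r)$ is neither upper nor lower semicontinuous at a fixed $r$ under local Hausdorff convergence, and the density lower bound of Theorem \ref{thm: density lower bound for K} prevents loss of $\HH^1$-mass of $K_j$ but does not prevent two intersection points of $K_j$ with $\partial B_r$ — harmless, since there $\HH^0\ge 2$ — from collapsing onto a single point of $K_0\cap\partial B_r$ at an angle $\theta\le\tfrac{\pi}{4}$, which is exactly the forbidden configuration. Selecting a full-measure set of good radii cannot exclude this, because the offending radius set for $K_0$ could a priori have positive measure even though every $K_j$ satisfies the dichotomy a.e. To finish, you would need either a structural argument that constancy of $F_0$ by itself (without the dichotomy) already forces $(K_0,u_0)$ to be elementary — which may be what the paper's terse phrasing is tacitly relying on — or an actual proof that the dichotomy passes to the limit; as written, your sketch supplies neither.
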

Propositions \ref{prop: classification of connected generalized minimizers}, \ref{prop: exclude 1 intersection} and \ref{prop: david-leger classification of generalized minimizers} will be proved in Section \ref{section: partial classification of generalized minimizers}.
\section{Preliminary results and proof of Theorem \ref{thm: epsilon-regularity}}
\label{section: preliminary results and proof of epsilon-regularity}
In this section we prove Theorem \ref{thm: epsilon-regularity}, after stating the necessary preliminary results. Recall the definition of $\beta(x,r)$, $d(x,r)$ and $\exc(x,r)$ given in Section \ref{section: overview of the proof of eps-regularity}. For all the above quantities, we drop the dependence on $x$ if $x=0$.
We define, for every $x \in \partial \Omega$ and $r>0$, 
\begin{equation*}
    \omega(x,r) = r^{-1} \dist_H(K \cap \overline{B}_{2r}(x), (x+T_x \partial \Omega) \cap \overline{B}_{2r}(x)) + r^{-1} \int_{B_{2r}(x) \cap \Omega \setminus K} |\nabla u|^2, 
\end{equation*}
where $\dist_H$ denotes the Hausdorff distance. This quantity tells us how close $K$ is to the tangent line $x + T_x \partial \Omega$ in a certain ball, and how small is the rescaled Dirichlet energy of $u$. We will show that if this quantity is small at a certain scale, then at a smaller scale $K$ is the graph of a $C^{1,\alpha}$ function.

The following Lemma, that will be proved in Section \ref{section: Proof of epsilon-regularity}, implies that if the first term in $\omega$ is small, then $\omega$ is automatically small at a smaller scale. 
\begin{lmm}
    \label{lmm: the Dirichlet term can be removed from eps-regularity statement}
    For any $\eps > 0$, there exist $\delta, \eta > 0$ with the following property. 
    Suppose that \minimizer{K}{u}{\Omega}{g}, $x \in \partial \Omega$ and 
    \begin{equation*}
        r^{-1} \dist_H(K \cap \overline{B}_{2r}(x), (x+T_x \partial \Omega) \cap \overline{B}_{2r}(x)) + r^\frac{1}{2}< \delta.
    \end{equation*}
    Then $\omega(x,\eta r) + (\eta r)^\frac{1}{2} < \eps$. 
\end{lmm}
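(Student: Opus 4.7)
The plan is to argue by contradiction and blow-up, using the machinery of Section \ref{section: blow-up}. Fix $\eps > 0$ and let $\eta \in (0, 1/2]$ be a parameter to be chosen later depending only on $\eps$ and $\|\nabla g\|_\infty$. If the conclusion fails then, for each $n \in \N$, there is a minimizer $(K^n, u^n)$ of $E$ on $\Omega$, a point $x_n \in \partial \Omega$ and a radius $r_n > 0$ satisfying the hypothesis with $\delta = 1/n$ but with $\omega(x_n, \eta r_n) + (\eta r_n)^{1/2} \ge \eps$. In particular $r_n \to 0$. After rotating each frame and extracting a subsequence so that $x_n \to 0$ and $T_{x_n}\partial\Omega = \{x_2 = 0\}$, Proposition \ref{prop: global convergence of functions, blow-up} and Theorem \ref{thm: blow-up limit} yield a blow-up limit $(K_0, u_0)$ on $\Omega_0 = \{x_2 > 0\}$ (the parameter $a$ vanishes since $x_n \in \partial\Omega$). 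The Hausdorff-closeness hypothesis combined with the Blaschke selection used to construct $K_0$ forces $K_0 \cap \overline{B}_2 = \{x_2 = 0\} \cap \overline{B}_2$.

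The crux is a uniform bound on $\nabla u_0$ that overcomes the factor $\eta^{-1}$ appearing in $\omega$. I test the minimality of $(K^n, u^n)$ against the admissible competitor
\[
J = K^n \cup (\partial B_{5 r_n/2}(x_n) \cap \overline{\Omega}), \qquad
w = \begin{cases} g & \text{on } B_{5 r_n/2}(x_n) \cap \Omega, \\ u^n & \text{elsewhere.} \end{cases}
\]
The resulting energy comparison gives
\[
\int_{B_{5 r_n/2}(x_n) \cap \Omega \setminus K^n} |\nabla u^n|^2 \le \pi\Big(\tfrac{5 r_n}{2}\Big)^2 \|\nabla g\|_\infty^2 + 5\pi r_n \le C r_n
\]
for $r_n$ bounded, with $C = C(\|\nabla g\|_\infty)$. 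Rescaling and passing to the limit via Theorem \ref{thm: blow-up limit}(ii) (the negligibility hypothesis is trivial for $U = B_2$ since $K_0 \cap \Omega_0 \cap \overline{B}_2 = \emptyset$) yields $\int_{B_2 \cap \Omega_0} |\nabla u_0|^2 \le C$. The minimality of $(K_0, u_0)$ provided by Theorem \ref{thm: blow-up limit}(iii), tested against competitors that leave $K_0$ fixed and modify $u_0$ only on balls compactly contained in $B_2$, shows that $u_0$ is harmonic in $\Omega_0 \cap B_2$ with vanishing Neumann derivative on $\{x_2 = 0\} \cap B_2$. Schwarz reflection then extends $u_0$ to a harmonic function on $B_2$, and the mean-value property applied to $|\nabla u_0|^2$ delivers $\sup_{B_1} |\nabla u_0|^2 \le C'$ with $C' = C'(\|\nabla g\|_\infty)$.

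For any $\eta \le 1/2$ this gives $\int_{B_{2\eta} \cap \Omega_0} |\nabla u_0|^2 \le 4\pi C' \eta^2$ and thus $\eta^{-1} \int_{B_{2\eta} \cap \Omega_0} |\nabla u_0|^2 \le 4\pi C' \eta$. Applying Theorem \ref{thm: blow-up limit}(ii) with $U = B_{2\eta}$ and unwinding the rescaling, the Dirichlet part of $\omega(x_n, \eta r_n)$ converges to this quantity. The Hausdorff part of $\omega(x_n, \eta r_n)$ equals $\eta^{-1} \dist_H(K_n \cap \overline{B}_{2\eta}, \{x_2 = 0\} \cap \overline{B}_{2\eta})$, which vanishes by local Hausdorff convergence of $K_n$ to $K_0$, and $(\eta r_n)^{1/2} \to 0$ trivially. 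Hence
\[
\limsup_{n \to \infty} \bigl(\omega(x_n, \eta r_n) + (\eta r_n)^{1/2}\bigr) \le 4\pi C' \eta,
\]
and choosing $\eta = \eta(\eps)$ so that $4\pi C' \eta < \eps$ contradicts the standing assumption.

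The main obstacle is the passage from the finite energy estimate at the initial scale $r_n$ (which only gives $\int |\nabla u_0|^2 = O(1)$ on $B_2$) to the quadratic decay $\int_{B_{2\eta}} |\nabla u_0|^2 = O(\eta^2)$ needed at the small scale $\eta r_n$. This is precisely what the Neumann reflection at $\{x_2 = 0\}$ provides; identifying that Neumann condition on $u_0$ — which in turn relies on the fact that $K_0$ is already identified as $\partial\Omega_0$ on $\overline{B}_2$ — is the key technical step.
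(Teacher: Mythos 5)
Your proof is correct and takes a genuinely different route from the paper. Both proofs argue by contradiction and blow up a sequence of counterexamples, and both identify the blow-up singular set $K_0$ as a straight line near the origin. The divergence is in how the Dirichlet energy of the blow-up limit $u_0$ is controlled. The paper picks $\delta_j = 1/j^2$ and $\eta_j = 1/j$ simultaneously, blows up at the shrinking scale $\frac{1}{j}r_j$, and thereby identifies $K_0$ as $\{x_2 = 0\}$ \emph{globally}. Since $K_0$ is connected, Proposition \ref{prop: classification of connected generalized minimizers} (which rests on the Bonnet-type monotonicity formula of Proposition \ref{prop: Bonnet monotonicity}) forces $\int|\nabla u_0|^2 = 0$ everywhere, contradicting the energy lower bound on $B_2$. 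You instead fix $\eta$ in advance, blow up at scale $r_n$, and only identify $K_0$ as the flat boundary \emph{locally} on $\overline{B}_2$. That local picture is enough to see that $u_0$ solves a homogeneous Neumann problem across $\{x_2=0\}\cap B_2$ (the outer variation argument is valid because $\partial\Omega_0\cap B_2\subset K_0$, so the Dirichlet constraint of Definition \ref{def: admissible competitor and minimizer on a half-plane} is vacuous there), which unlocks Schwarz reflection and the interior gradient estimate for harmonic functions, yielding $\eta^{-1}\int_{B_{2\eta}}|\nabla u_0|^2 \lesssim\eta$. Choosing $\eta$ last closes the loop. Your route buys two things: it avoids the monotonicity-based classification machinery entirely (only the compactness/blow-up machinery of Theorem \ref{thm: blow-up limit} is used), and it is quantitative — $\eta$ can be taken proportional to $\eps$ with a constant depending only on $\Omega$ and $g$, whereas the paper's compactness argument gives no explicit rate. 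The paper's route is shorter once the classification results are available, which they are since they are needed elsewhere in the paper. A minor stylistic remark: $|\nabla u_0|^2$ is subharmonic rather than harmonic, so the pointwise bound on $B_1$ comes from the sub-mean-value property (or equivalently the standard interior gradient estimate for harmonic functions); this does not affect the argument. Also, the initial energy comparison against the cut-off competitor is exactly the content of Proposition \ref{prop: Energy upper bound at the boundary} and could have been cited directly.
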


In particular, thanks to previous lemma, in order to prove Theorem \ref{thm: epsilon-regularity}, it suffices to prove the following proposition.
\begin{prop}
    \label{prop: epsilon-regularity with Dirichlet term}
    There exist constants $\varepsilon, \alpha, \eta > 0$ with the following property. Assume 
    \begin{enumerate}[label=(\roman*)]
        \item $(K,u) \in \argmin\{E(J,v): (J,v) \in \mathcal{A}(\Omega,g)\}$ and $x\in K \cap \partial\Omega$,
        \item $\omega(x,r) + r^\frac{1}{2} < \varepsilon$
    \end{enumerate}
    Then $K \cap B_{\eta r}(x)$ is the graph of a $C^{1,\alpha}$ function $f$ over $x+T_x \partial \Omega$.
\end{prop}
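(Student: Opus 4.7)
The plan is to follow the standard epsilon-regularity scheme, using Proposition~\ref{prop: decay} as the main engine. The idea is: from the hypotheses on $\omega(x,r)$, extract pointwise smallness of $\beta$ and $d$ at a uniform scale on every $y \in K \cap B_{\eta r}(x)$; apply the excess decay at each such $y$; and turn the excess decay into $C^{1,\alpha}$ regularity by the classical argument via Hölder continuity of tangent lines.

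First, I would transfer the smallness of $\omega(x,r)$---which controls $K$ via the Hausdorff distance to $x + T_x \partial \Omega$ and the bulk Dirichlet energy in $B_{2r}(x)$---into pointwise smallness of $\beta(y,\rho)$ and $d(y,\rho)$ for every $y \in K \cap B_{r/2}(x)$ and every $\rho$ comparable to $r$. The Hausdorff closeness directly controls $\beta_{x+T_x\partial\Omega}(y,\rho)$, hence $\beta(y,\rho)$, via the $\HH^1$-density upper bound for $K$; the smallness of the bulk Dirichlet integral in $B_{2r}(x)$ contains smallness of $d(y,\rho)$. The density lower bound (Theorem~\ref{thm: density lower bound for K}) is needed throughout to prevent $K$ from becoming degenerate.

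Second, for each $y \in K \cap B_{\eta r}(x)$ I would apply Proposition~\ref{prop: decay} to obtain a power-law decay
\[
    \exc(y,\rho) \leq C \left(\frac{\rho}{r}\right)^{\alpha}, \qquad \rho \in (0, c r),
\]
for suitable constants, choosing $\eta$ small enough that $D(y) := \dist(y,\partial\Omega) \leq |y-x| < \eta r$ and hence the decay proposition applies uniformly in $y$ (the three regimes of $D(y)/\rho$ are handled internally). From the excess decay at each base point, summation along consecutive dyadic scales yields a unique tangent direction $\mathscr{V}_y$ at $y$ with Hölder control $|\mathscr{V}_y - \mathscr{V}_z| \leq C |y-z|^{\alpha'}$. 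Combined with the initial Hausdorff closeness to $x + T_x \partial\Omega$, a standard Reifenberg-type argument identifies $K \cap B_{\eta r}(x)$ as a Lipschitz graph over $x + T_x \partial\Omega$, and the Hölder control on tangents upgrades this to $C^{1,\alpha}$. The identity $T_x K = T_x \partial\Omega$ at the boundary point $x$ itself comes from regime~(1) of the decay, which forces $K$ to touch $\partial\Omega$ tangentially.

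The main obstacle is ensuring that the excess decay passes \emph{uniformly} through the three regimes of Section~\ref{section: overview of the proof of eps-regularity}. For a given $y \in K$ not on $\partial\Omega$, the ratio $D(y)/\rho$ traverses all three regimes as $\rho$ shrinks, and the tangent direction $\mathscr{V}_y$ produced by the boundary-dominated regime~(1) at very small scales must match the one produced at intermediate and large scales. The intermediate regime~(2) is tailored precisely for this glueing: it provides no decay, but it keeps $\beta$ and $d$ controlled by a common power of the initial flatness, so the decay in regime~(1) can be launched from the scale at which regime~(3) terminates. Once this is in place, the passage from excess decay to the $C^{1,\alpha}$-graph conclusion is a routine boundary adaptation of the interior arguments in~\cite{delellis-focardi}.
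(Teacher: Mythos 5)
Your proposal follows the same overall strategy as the paper: convert the smallness of $\omega(x,r)$ into pointwise smallness of $\beta$ and $d$ at every $z \in K$ near $x$, invoke Proposition~\ref{prop: decay} to produce a power-law decay of the excess at each such $z$, and deduce H\"older continuity of the tangent direction by summing dyadic scales. Two issues are worth flagging.

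The first is a genuine gap. Passing from ``H\"older control on tangents'' to the assertion that $K \cap B_{\eta r}(x)$ \emph{is} a $C^{1,\alpha}$ graph is not a bare Reifenberg statement nor a routine copy of the interior argument. A Reifenberg-type theorem gives a bi-H\"older parametrization, not a graph, and it does not by itself exclude that $K$ has extra small pieces near $\partial\Omega$. What the paper actually uses is the Lipschitz approximation (Lemma~\ref{lmm: Lipschitz approximation}\ref{item: Lipschitz approximation, D/r < 1}) to produce a candidate graph $\Gamma_f$, together with the density identity
\begin{equation*}
    \lim_{\rho \downarrow 0} \frac{\HH^1(K \cap B_\rho(z))}{2\rho} = 1 \qquad \text{for every } z \in K \cap B_{r/16}(x),
\end{equation*}
which is what forces $K = \Gamma_f$ and not merely $K$ close to $\Gamma_f$. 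For $z \in \Omega$ this density statement is imported from \cite{delellis-focardi}; for $z \in K \cap \partial\Omega$ it requires Theorem~\ref{thm: blow-up limit} (convergence of $\HH^1 \mres K_n$ to $\HH^1 \mres K_0$) combined with Theorem~\ref{thm: classification of elementary blow-up limits}, which shows that the blow-up of $K$ at such $z$ is a single line. Your proposal treats this as a ``routine boundary adaptation,'' but this is precisely the place where the boundary classification of blow-ups enters and must be invoked explicitly; it is not covered by the excess decay alone.

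The second is a minor but real confusion about the regimes. For $y \in K$ with $D(y) > 0$, the ratio $D(y)/\rho$ \emph{increases} as $\rho$ shrinks, so the regimes are traversed in the order $(1) \to (2) \to (3) \to$ interior, with regime~(1) at the \emph{large} scales and regime~(3) at the \emph{small} scales, opposite to what you wrote. This does not compromise your proof of Proposition~\ref{prop: epsilon-regularity with Dirichlet term}, since Proposition~\ref{prop: decay} handles the regime transitions internally and you use it as a black box, but the statement as written would mislead anyone trying to unpack the decay mechanism.
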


We will prove Proposition \ref{prop: epsilon-regularity with Dirichlet term} after stating the necessary propositions.

\subsection{Tilt Lemma}
The following lemma, that we prove in Section \ref{section: proof of tilt lemma}, relates the excess to the rescaled Dirichlet energy and to the flatness.
In this section, we need only point \ref{item: tilt lemma, case D/r < 1}, but we state the complete lemma since it will be needed in the rest of the paper. 
Let $R_\theta$ be the counterclockwise rotation of angle $\theta$.

\begin{lmm}[Tilt Lemma]
    \label{lmm: tilt lemma}
    Let $2\overline{r} >0 $ be given by Lemma \ref{lmm: radius for boundary}.
    There exist constants $A,B,C,E,\delta > 0$ with the following property. 
    Suppose that 
    \begin{equation}
        \label{eq: tilt lemma, K,u,x,r,D,V condition}
        \begin{split}
            &(K,u) \in \argmin\{E(J,v): (J,v) \in \mathcal{A}
            (\Omega,g)\},\\
            &x \in K,\, r \leq \overline{r},\\
            &D:= \dist(x,\partial\Omega), \, y \in \partial \Omega \text{ is such that } |y-x|=D,\, \mathscr{V} := T_y \partial \Omega.
        \end{split}
    \end{equation}
    \begin{enumerate}[label=(\roman*)]
        \item \label{item: tilt lemma, case D/r < 1} If 
        \begin{equation}
            \label{eq: tilt lemma, case D/r < 1}
           0\leq \frac{D}{r} <1 
        \end{equation}
        then 
        \begin{equation*}
            \exc_\mathscr{V}(x,\tfrac{r}{4}) \leq C \left(d(x,r) + \beta_{x+\mathscr{V}}(x,r)+\left(\frac{D}{r}\right)^2+r\right).
        \end{equation*}
        \item \label{item: tilt lemma, case > b^1/3} If 
        \begin{align}
            &1 > \frac{D}{r} > A \, \beta_{x+R_\theta(\mathscr{V})}(x,r)^\frac{1}{3},  \label{eq: tilt lemma, case > b^1/3}\\
            &\frac{D}{r} > Br, \label{eq: tilt lemma, condition on the radius to have curvature small}\\
            &\beta_{x+R_\theta(\mathscr{V})}(x,r) < \delta,  \label{eq: tilt lemma, smallness of flatness assumption}\\
            &|\sin\theta| < E \frac{D}{r}, \quad |\cos\theta| > \frac{9}{10} \label{eq: tilt lemma, angle condition}
        \end{align}
        for some $\theta \in [0,2\pi)$,
        then 
        \begin{equation*}
            \exc_{R_\theta(\mathscr{V})}(x,\tfrac{r}{4}) \leq C (d(x,r)+\beta_{x+R_\theta(\mathscr{V})}(x,r)),
        \end{equation*}
    \end{enumerate}
\end{lmm}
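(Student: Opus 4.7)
The plan is a De Giorgi-type tilt-lemma argument adapted to our Dirichlet free-boundary setting: apply the first inner variation of the Mumford-Shah functional against a vector field whose tangential divergence on $K$ reproduces the excess integrand, and control the resulting identity by the rescaled Dirichlet energy, the flatness, and geometric errors coming from the presence of $\partial\Omega$. The technical core is to build a test field $X$ that is tangent to $\partial\Omega$ (so that its flow preserves $\Omega$), and to quantify the price of this tangentialization.

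Fix $\psi\in C_c^\infty(B_r(x))$ with $\psi\equiv 1$ on $B_{r/4}(x)$ and $\|\nabla\psi\|_\infty\leq C/r$. Set $\mathscr W:=R_\theta(\mathscr V)$ (so $\theta=0$ in case (i)), let $\pi^\perp$ be the orthogonal projection onto $\mathscr W^\perp$, and pick a base point $p$ realizing the minimum in $\beta_{x+\mathscr W}(x,r)$, arranged with $|p-x|\leq Cr$. Take $\hat X(y):=\psi(y)\,\pi^\perp(y-p)$ as model field. Using that $\partial\Omega$ is a $C^2$-graph of some $h$ over $\mathscr V$ with $h(0)=h'(0)=0$ and $\|h''\|_\infty\leq C$, a direct computation in adapted coordinates (with $y_0\in\partial\Omega$ the projection of $x$ placed at $(0,-D)$) gives
\begin{equation*}
\hat X\cdot\nu\big|_{\partial\Omega\cap B_r(x)} = \psi\bigl(s\sin\theta + D\cos\theta - h(s)\cos\theta\bigr)+O\bigl(|h'|\,r\,|\sin\theta|\bigr),
\end{equation*}
of size $\lesssim D+r^2$ in case (i). I construct a correction $Z$ supported in an $r$-neighborhood of $\partial\Omega\cap B_r(x)$ with $Z\cdot\nu=\hat X\cdot\nu$ there, smoothly extended so that $\|Z\|_\infty+r\|DZ\|_\infty\lesssim D+r^2$, and set $X:=\hat X-Z$, which is tangent to $\partial\Omega$.

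Since $X$ is tangent to $\partial\Omega$ and compactly supported, the flow $\phi_t$ of $X$ preserves $\Omega$. Pushing $u$ forward and readjusting by $g-g\circ\phi_t^{-1}$ yields an admissible one-parameter family of competitors $(K_t,u_t)$ keeping $u_t=g$ on $\partial\Omega\setminus K_t$; minimality at $t=0$ then gives the inner variation identity
\begin{equation*}
\int_K\mathrm{div}_K X\,\mathrm{d}\HH^1 = \int_{\Omega\setminus K}\bigl[\,2(\nabla u)^\top DX\,\nabla u-|\nabla u|^2\,\mathrm{div}\,X\,\bigr]\,\mathrm{d}y + R_g(X),
\end{equation*}
with the $g$-correction satisfying $|R_g(X)|\lesssim\|g\|_{C^2}(\|X\|_\infty+\|DX\|_\infty)\,r^2$. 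Writing $\tau$ for the $\HH^1$-a.e.\ tangent to $K$, we have $\mathrm{div}_K\hat X=\psi|\pi^\perp\tau|^2+\langle\pi^\perp(y-p),\tau\rangle\,\langle\nabla\psi,\tau\rangle$ and $|\mathscr W-T_yK|^2=2|\pi^\perp\tau|^2$. Setting $I:=\int_K\psi|\pi^\perp\tau|^2$, Cauchy-Schwarz together with $\|\pi^\perp(y-p)\|_{L^2(K\cap B_r)}^2\leq r^3\beta_{x+\mathscr W}(x,r)$ and the standard upper bound $\HH^1(K\cap B_r(x))\leq Cr$ (from minimality) yields a cutoff remainder bounded by $Cr^{1/2}\beta^{1/2}I^{1/2}$, while the right-hand side is controlled by $Cr\,d(x,r)+Cr\,(D+r^2)(D/r+r)+|R_g|$. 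Absorbing $Cr^{1/2}\beta^{1/2}I^{1/2}\leq\tfrac12 I+Cr\beta$ and dividing by $r/8$ (using $\psi\equiv 1$ on $B_{r/4}(x)$) gives the case (i) conclusion.

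The \emph{main obstacle} is case (ii), where the stated bound carries no $(D/r)^2$ or $r$ error even though $D/r>A\beta^{1/3}$ implies $(D/r)^2\gg\beta$, so the crude correction from case (i) is too large. The angle condition $|\sin\theta|<ED/r$ with $|\cos\theta|>9/10$, together with $D/r>Br$, must be exploited to align the rotated reference line $x+R_\theta\mathscr V$ with $\partial\Omega$ to first order and to choose the decay scale of $Z$ so that the boundary errors reduce to higher-order multiples of $r^2$ and $D\,\beta^{1/2}$, both absorbable into $d$ and $\beta$ under the smallness $\beta_{x+R_\theta\mathscr V}(x,r)<\delta$. Carrying out this optimization rigorously while keeping $X$ smooth and preserving the inner-variation identity is the delicate step distinguishing case (ii); once achieved, the remaining calculation parallels case (i).
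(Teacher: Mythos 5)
Your case-(i) argument is close in spirit to the paper's, but the paper uses a cleaner construction that avoids the tangentializing correction $Z$ entirely: after rotating so that $\mathscr V = \{x_2 = 0\}$ and $x=0$, it takes $\eta(y)=\varphi^2(y)\,(0,\,y_2-h(y_1)+D)$, where $h$ is the $C^2$ graph describing $\partial\Omega$ near $y_0$; the second component vanishes identically on $\partial\Omega$ by construction, so the generated flow preserves $\Omega$ with no correction needed. The identity $\varphi^2 e_2^2 = e^T D\eta\,e - 2\varphi(y_2-h+D)e_2\,\nabla\varphi\cdot e + \varphi^2 h' e_1 e_2$ then gives the $(D/r)^2$ and $r$ errors directly from the $(y_2-h+D)^2|\nabla\varphi|^2$ and $|h'|$ terms. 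Your correction-field route should produce a comparable bound, but the paper's version is structurally simpler and makes the error terms transparent.

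The real problem is case (ii), where your proposal is genuinely incomplete. You write that the angle condition and $D/r>Br$ ``must be exploited to align the rotated reference line with $\partial\Omega$ to first order and to choose the decay scale of $Z$,'' and then concede that ``carrying out this optimization rigorously \dots is the delicate step.'' But $\theta$ is a given parameter in the lemma, not something you get to optimize, and no concrete construction is provided. Worse, the claimed residuals of order $D\beta^{1/2}$ are not absorbable into $\beta$ under the running hypotheses: with $D/r > A\beta^{1/3}$ the quantity $D/r$ is \emph{large} compared to $\beta^{1/3}$, so any error scaling like a positive power of $D/r$ times a low power of $\beta$ will dominate $\beta$, which is precisely what the stated estimate for part (ii) forbids. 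The key structural fact you are missing is the following, which is what the paper actually uses. In the rotated coordinates where $R_\theta(\mathscr V)=\{x_2=0\}$, the density lower bound plus $\beta < \delta$ gives a height bound $K\cap B_{2r/3}\subset\{|x_2|\le 2(\beta/\eps)^{1/3} r\}$, and the hypotheses $D/r > A\beta^{1/3}$, $D/r > Br$, and $|\sin\theta| < E\,D/r$, $|\cos\theta|>9/10$ are tuned exactly so that the slightly larger strip $\{|x_2|\le 4(\beta/\eps)^{1/3} r\}$ does not meet $\partial\Omega$ inside $B_r$. One then tests with $\eta(y)=\varphi^2(y)\,\zeta(y_2)\,(0,y_2)$, where $\zeta$ cuts off inside this strip; this $\eta$ agrees with $\varphi^2(0,y_2)$ on all of $K\cap B_{2r/3}$ and vanishes on $\partial\Omega\cap B_{2r/3}$, so no tangentialization is needed and the boundary contributes no error at all, giving the clean bound $C(d+\beta)$. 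Without this separation observation your correction $Z$ cannot be made small enough, and the argument does not close.
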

\begin{cor}
    In the above Lemma, if $0\leq \frac{D}{r} \lesssim \beta_{x + \mathscr{V}}(x,r)^\frac{1}{2}$, then we also get 
    \begin{equation*}
        \exc_\mathscr{V}(x,\tfrac{r}{4}) \leq C \left(d(x,r) + \beta_{x+\mathscr{V}}(x,r)+r\right) \quad \forall r \leq \overline{r}.
    \end{equation*}
\end{cor}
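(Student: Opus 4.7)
The plan is to deduce this corollary from part \ref{item: tilt lemma, case D/r < 1} of the Tilt Lemma by a straightforward absorption argument. Let $A' > 0$ denote the implicit constant hidden in $\lesssim$, so the hypothesis reads $\frac{D}{r} \leq A'\, \beta_{x+\mathscr{V}}(x,r)^{1/2}$. I would distinguish two cases according to whether $\frac{D}{r} < 1$ or $\frac{D}{r} \geq 1$.

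In the first case, part \ref{item: tilt lemma, case D/r < 1} applies directly and yields
$$\exc_\mathscr{V}(x, \tfrac{r}{4}) \leq C \left( d(x,r) + \beta_{x+\mathscr{V}}(x,r) + \left(\frac{D}{r}\right)^2 + r \right).$$
Squaring the hypothesis gives $\left(\frac{D}{r}\right)^2 \leq (A')^2\, \beta_{x+\mathscr{V}}(x,r)$, so the $\left(\frac{D}{r}\right)^2$ term is absorbed into the flatness term at the cost of replacing the constant $C$ by $C(1 + (A')^2)$.

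In the second case, $\frac{D}{r} \geq 1$ combined with the hypothesis forces $\beta_{x+\mathscr{V}}(x,r) \geq (A')^{-2}$, so the right-hand side is bounded below by a positive constant. On the other hand, since $|\mathscr{V} - T_y K|^2 \leq 2$ pointwise and the upper density estimate $\HH^1(K \cap B_{r/4}(x)) \lesssim r$ (standard for Mumford--Shah minimizers, holding uniformly for $r \leq \overline{r}$) gives $\exc_\mathscr{V}(x, \tfrac{r}{4}) \leq C_1$ for some universal constant $C_1$, the claimed inequality holds trivially after enlarging $C$ to dominate $C_1 (A')^2$.

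I expect no genuine obstacle, since the corollary is really a bookkeeping consequence of the Tilt Lemma. The only point worth flagging is the regime $\frac{D}{r} \geq 1$, which lies outside the scope of part \ref{item: tilt lemma, case D/r < 1}; it is dispatched by the trivial upper bound on the excess together with the fact that the hypothesis itself keeps the right-hand side bounded below in that regime.
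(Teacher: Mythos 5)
Your proof is correct, and it is the natural argument the paper has in mind: the corollary is meant to follow by applying part (i) of the Tilt Lemma and absorbing the term $\bigl(\tfrac{D}{r}\bigr)^2$ into the flatness term via the hypothesis $\tfrac{D}{r}\lesssim \beta_{x+\mathscr{V}}(x,r)^{1/2}$. Your Case 1 is exactly this. The paper does not spell out a proof of the corollary, so there is no argument to compare against, but your reasoning matches what a reader is expected to supply.

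Your Case 2 ($\tfrac{D}{r}\geq 1$, which lies outside the hypotheses of Lemma \ref{lmm: tilt lemma}(i)) is a sensible piece of extra care. The ingredients are sound: $|\mathscr{V}-T_yK|^2\leq 2$ pointwise for any two lines $\mathscr{V},T_yK\in\mathcal{L}$, and the energy upper bound (Proposition \ref{prop: Energy upper bound at the boundary}) gives $\HH^1(K\cap B_{r/4}(x))\lesssim r$ uniformly for $r\leq\overline{r}$, so $\exc_\mathscr{V}(x,\tfrac{r}{4})$ is bounded by a universal constant. Combined with $\beta_{x+\mathscr{V}}(x,r)\geq (A')^{-2}$, the inequality holds after enlarging $C$. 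In the regime in which the corollary is actually used elsewhere in the paper, $\beta$ is small and hence $\tfrac{D}{r}<1$ automatically, so this case never arises, but including it makes the statement unconditionally true as written.
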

\subsection{Lipschitz approximation}
The following proposition, that we prove in Section \ref{section: proof of Lipschitz approximation}, allows us to approximate $K$ with a Lipschitz function where flatness and rescaled Dirichlet energy are small. In this section, we need only point \ref{item: Lipschitz approximation, D/r < 1}, but we state the complete result since it will be needed in the rest of the paper. 

For $\alpha \in [0,2\pi)$ and $x \in \R^2$ we define $\pi_\alpha (x) = x_1 \cos\alpha + x_2 \sin\alpha$, $\pi_\alpha^\perp(x) = -x_1 \sin\alpha + x_2 \cos\alpha$ and $Q^\alpha_\rho (x) = \{y: |\pi_\alpha (y-x)|\leq \rho, |\pi_\alpha^\perp(y-x)| \leq \rho\}$.
Moreover, let $R_\alpha$ be the counterclockwise rotation of angle $\alpha$. 
\begin{lmm}[Lipschitz approximation]
    \label{lmm: Lipschitz approximation}
    There exist $C, \delta, \sigma$ with the following properties. 
    Suppose that $(K,u)$, $x$, $r$, $y$, $D$, $\mathscr{V}$ satisfy property \eqref{eq: tilt lemma, K,u,x,r,D,V condition} and $\frac{D}{r} < \eta$, where $\eta$ is the constant of Lemma \ref{lmm: vertical separation}. Let $\phi \in [0,2\pi)$ be such that $\mathscr{V} = R_\phi(\mathscr{V}_0)$, where $\mathscr{V}_0 = \{x_2=0\}$.

    \begin{enumerate}[label=(\alph*)]
        \item \label{item: Lipschitz approximation, D/r < 1} 
        Suppose that 
        \begin{equation*}
            d(x,r) + \beta_{x+\mathscr{V}}(x,r)+r < \delta.
        \end{equation*}
        Then 
        \begin{enumerate}[label=(\roman*)]
            \item $K \cap B_\frac{r}{2}(x) \subset \{y: |\pi_\phi^\perp(y-x)| \leq C r \beta_{x+\mathscr{V}}(x,r)^\frac{1}{3}\}$, 
        \end{enumerate} 
         there exists a Lipschitz function $f: [-\sigma r, \sigma r] \to \R$ with $\mathrm{Lip}(f) \leq 1$ such that $\Gamma_f := x+R_\phi(\graph(f)) \subset \overline{\Omega}$ and with the following properties. 
         \begin{enumerate}[label=(\roman*), resume]
            \item $\norm{f}_{C^0} \leq C r \beta_{x+\mathscr{V}}(x,r)^\frac{1}{3}$,
            \item
                $\HH^1((\Gamma_f \triangle K) \cap Q_{\sigma r}^\phi) \leq C r \left(d(x,r)+\beta_{x+\mathscr{V}}(x,r)+\left(\frac{D}{r}\right)^2+r\right)$.
            \item
               $ \int|f'|^2 \leq C r \left(d(x,r)+\beta_{x+\mathscr{V}}(x,r)+\left(\frac{D}{r}\right)^2+r\right)$.
            \item For any $\Lambda > 0$, $\delta$ can be chosen such that, for any $\eps > C \beta_{x+\mathscr{V}}(x,r)^\frac{1}{3}$, 
            \begin{equation*}
                \HH^1([-\sigma r, \sigma r]\setminus \pi_{\mathscr{V}}(K \cap \{y: |\pi^\perp(y-x)| < \eps r\})) \leq \Lambda r d(x,r).
            \end{equation*}
            \item There exist $\overline{\eps}$ and $\overline{\tau}$ positive constants such that 
            \begin{equation*}
                \tilde{K}:= \left\{z \in K \cap B_{\frac{\overline{\tau}}{4}r} | \sup_{0<\rho<\frac{r}{2}} (d(z,\rho)+\exc_\mathscr{V}(z,\rho)) < \overline{\eps}\right\} \subset \Gamma_f.
            \end{equation*}
         \end{enumerate}
         \item \label{item: Lipschitz approximation, D/r > b^1/3} Suppose that conditions \eqref{eq: tilt lemma, case > b^1/3} \eqref{eq: tilt lemma, condition on the radius to have curvature small} and \eqref{eq: tilt lemma, angle condition} hold for some $\theta \in [0,2\pi)$. 
         Suppose that 
         \begin{equation*}
             d(x,r) + \beta_{x+R_\theta(\mathscr{V})}(x,r)+r < \delta.
         \end{equation*}
         Let $\alpha = \phi + \theta$.
         Then 
        \begin{enumerate}[label=(\roman*)]
            \item $K \cap B_\frac{r}{2}(x) \subset \{y: |\pi_\alpha^\perp(y-x)| \leq C r \beta_{x+R_\theta(\mathscr{V})}(x,r)^\frac{1}{3}\}$, 
        \end{enumerate} 
         there exists a Lipschitz function $f: [-\sigma r, \sigma r] \to \R$ with $\mathrm{Lip}(f) \leq 1$ such that $\Gamma_f := x+R_\alpha(\graph(f)) \subset \overline{\Omega}$ and with the following properties. 
         \begin{enumerate}[label=(\roman*), resume]
            \item $\norm{f}_{C^0} \leq C r \beta_{x+R_\theta(\mathscr{V})}(x,r)^\frac{1}{3}$,
            \item
               $ \HH^1((\Gamma_f \triangle K) \cap Q^\alpha_{\sigma r}) \leq C r (d(x,r)+\beta_{x+R_\theta(\mathscr{V})}(x,r))$.
            \item
                $\int|f'|^2 \leq  C r (d(x,r)+\beta_{x+R_\theta(\mathscr{V})}(x,r))$.
            \item For any $\Lambda > 0$, $\delta$ can be chosen such that, for any $\eps > C \beta_{x+R_\theta(\mathscr{V})}(x,r)^\frac{1}{3}$, 
            \begin{equation*}
                \HH^1([-\sigma r, \sigma r]\setminus \pi_{R_\theta(\mathscr{V})}(K \cap \{y: |\pi^\perp(y-x)| < \eps r\})) \leq \Lambda r d(x,r).
            \end{equation*}
         \end{enumerate}
    \end{enumerate}
\end{lmm}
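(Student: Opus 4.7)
The plan is to adapt the interior Lipschitz approximation argument (as in \cite{delellis-focardi}, Proposition 3.3.1 and its analogues) to the boundary setting, treating case \ref{item: Lipschitz approximation, D/r < 1} in detail and reducing case \ref{item: Lipschitz approximation, D/r > b^1/3} to it by rotating coordinates by $\theta$, so that $R_\theta(\mathscr{V})$ plays the role of $\mathscr{V}$; the conditions \eqref{eq: tilt lemma, angle condition} and \eqref{eq: tilt lemma, condition on the radius to have curvature small} are exactly what is needed for the geometric constructions below to stay inside $\overline{\Omega}$ and for the boundary curvature to be negligible at scale $r$. I begin by rotating so that $\mathscr{V} = \{x_2=0\}$ and rescaling so that $x=0$ and $r=1$. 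The vertical separation (i) follows from Lemma \ref{lmm: vertical separation} together with the density lower bound Theorem \ref{thm: density lower bound for K}: any $y \in K \cap B_{1/2}$ at height $h$ above $\mathscr{V}$ forces, via the density lower bound, a ball of radius $\sim h$ around $y$ carrying $\HH^1$-mass $\gtrsim h$, contributing $\gtrsim h^{3}$ to $\int_{K \cap B_{1}} \dist(\cdot,\mathscr{V})^2\,\dd\HH^1$, whence $h \lesssim \beta_{x+\mathscr{V}}(x,1)^{1/3}$.

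Next I define $f(t)$ by selecting, for each $t \in [-\sigma,\sigma]$, a point of $K \cap \{\pi_\phi = t\} \cap B_{1/2}$ when this set is nonempty, and extending continuously across the gaps. The bound $\mathrm{Lip}(f) \le 1$ is the core analytic step: if two selected points had vertical-over-horizontal ratio larger than $1$, I would construct an admissible competitor by replacing $K$ in a thin rectangle between them with two short horizontal segments, whose total $\HH^1$-mass is strictly less than the $K$-mass removed. Near $\partial\Omega$ one has to check that the competitor respects the Dirichlet trace; the hypothesis $D/r < \eta$, together with $\partial\Omega, g \in C^{2}$, makes the associated Dirichlet-energy correction of order $(D/r)^{2}$, which is absorbed into the right-hand side. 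Minimality then yields a contradiction, giving $\mathrm{Lip}(f) \le 1$, and (ii) follows from the $C^0$ bound inherited from item (i).

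The quantitative estimates (iii)-(iv) follow from the Tilt Lemma (Lemma \ref{lmm: tilt lemma}\ref{item: tilt lemma, case D/r < 1}), $\exc_\mathscr{V}(x,1/4) \le C(d + \beta_{x+\mathscr{V}} + (D/r)^{2} + r)$. Writing $\Gamma_f \triangle K$ as the union of (a) $K \setminus \Gamma_f$, controlled by the excess because any arc of $K$ whose tangent deviates from $\mathscr{V}$ cannot be recovered by a single-valued graph, and (b) $\Gamma_f \setminus K$, controlled by the projection deficit from item (v), gives (iii); item (iv) follows from the identification $|f'(t)|^{2} \asymp |\mathscr{V}-T_{(t,f(t))}K|^{2}$ on the set where $\Gamma_f$ coincides with $K$, combined with the $C^0$ bound on $f$ on the complement. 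Item (v) is the usual projection estimate: a portion of length $L$ of $[-\sigma,\sigma]$ missed by $\pi_\phi(K \cap \{|\pi^\perp| < \eps\})$ would allow one to replace a strip of $K$ by a shorter horizontal segment at a Dirichlet cost $\lesssim d(x,1)$ (via a capacity argument on the vertical strip), forcing $L \lesssim \Lambda d(x,1)$ once $\delta$ is taken small enough in terms of $\Lambda$. Item (vi) is then a consequence of iterating the tangent approximation at each $z \in \tilde{K}$: the sup-smallness of $d$ and $\exc_\mathscr{V}$ around $z$ pins the local Lipschitz parameterization of $K$ near $z$ to coincide with the global selection $\Gamma_f$.

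The main obstacle will be the $(D/r)^{2}$ term in (iii)-(iv) for case (a). Unlike the interior problem, $\mathscr{V}$ is tangent to $\partial\Omega$ at $y$ rather than at $x$, and the boundary datum $g$ must be compared with its trace on the shifted line $x+\mathscr{V}$; showing that this mismatch costs at most $(D/r)^{2}$, and not $D/r$, in the competitor constructions of Steps 2-3 requires the $C^{2}$-regularity of $\partial\Omega$ and $g$ and careful use of the harmonic extension of $g$ to control boundary traces. In case (b) one rotates the reference line by $\theta$ satisfying \eqref{eq: tilt lemma, angle condition}, so that $x + R_\theta(\mathscr{V})$ passes near the closest boundary point with error absorbed by $\beta_{x+R_\theta(\mathscr{V})}$; no $(D/r)^{2}$ term then survives, and Lemma \ref{lmm: tilt lemma}\ref{item: tilt lemma, case > b^1/3} gives the cleaner estimate used in (iii)-(iv) of that case. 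Executing the rotation while preserving vertical separation and the Lipschitz bound is a second technical hurdle, for which the hypothesis $\beta_{x+R_\theta(\mathscr{V})}(x,r) < \delta$ is precisely designed.
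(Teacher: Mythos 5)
Your height-bound argument for (i) is correct, but you misattribute it to Lemma \ref{lmm: vertical separation}: the containment (i) follows solely from the density lower bound (Theorem \ref{thm: density lower bound for K}) together with smallness of $\beta$, exactly the contradiction argument you sketch, and Lemma \ref{lmm: vertical separation} plays no role there. In the construction the paper relies on --- the adaptation of \cite[Proposition 3.3.1]{delellis-focardi} --- Lemma \ref{lmm: vertical separation} has a different and essential job: it is precisely what yields $\mathrm{Lip}(f)\leq 1$, and that is where your proposal has a genuine gap. You propose to prove $\mathrm{Lip}(f)\leq 1$ by a direct competitor: cut $K$ in a thin rectangle between two ``too vertical'' points and replace it by two horizontal segments. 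This does not work as stated, for three reasons. First, the surgery need not produce an admissible competitor: removing an arc of $K$ can reconnect components of $\R^2\setminus K$, which the topological constraint forbids, and near $\partial\Omega$ one must also preserve the Dirichlet trace. Second, the Dirichlet cost is not controlled by $(D/r)^2$: if $u$ jumps across the removed arc, matching the two traces across the strip can cost energy of order one, independently of $D/r$. Third, $K$ inside the rectangle is not necessarily a single arc joining the two selected points, so the mass ``removed'' is not clearly larger than the mass inserted; the quantitative length comparison simply isn't there.

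The paper's intended route sidesteps all of this. One first builds the good set $\tilde{K}$ of item (vi) by a weak-type/maximal-function estimate on $\sup_\rho(d(z,\rho)+\exc_\mathscr{V}(z,\rho))$, using the Tilt Lemma and the density bounds; this set fills most of $K\cap B_{\overline{\tau}r/4}$. For $z,z'\in\tilde{K}$, Lemma \ref{lmm: vertical separation} --- established not by a competitor but by rescaling, compactness (Theorem \ref{thm: compactness for generalized minimizers}), and the classification of elementary blow-up limits (Theorem \ref{prop: classification of elementary generalized minimizers}) --- gives $|\pi^\perp(z-z')|\leq|\pi(z-z')|$, so the projection onto $\mathscr{V}$ is injective on $\tilde{K}$ with a $1$-Lipschitz inverse; one defines $f$ over $\pi(\tilde{K})$ and extends linearly across the gaps. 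Thus item (vi) is an \emph{input} to the construction, not a corollary of it as in your outline, and items (iii)--(v) then follow by combining the Tilt Lemma with the weak-type estimates controlling $\HH^1(K\setminus\tilde{K})$ and $\HH^1([-\sigma r,\sigma r]\setminus\pi(\tilde{K}))$. Your proposed decomposition of $\Gamma_f\triangle K$, with $K\setminus\Gamma_f$ ``controlled by the excess,'' is not by itself sufficient for (iii): the excess does not see multiplicity of $K$ over a base point, which is exactly the information the weak-type estimate on the good set supplies.
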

\subsection{Excess decay}
The proposition below is the main ingredient in the proof of Theorem \ref{thm: epsilon-regularity}.
Its proof is based on several decay results and it is postponed to Section \ref{section: proof of decay proposition}.
\begin{prop}[Decay]
    \label{prop: decay}
    There exist $C, \delta >0$ with the following property.

    Suppose that $(K,u)$, $x$, $r$, $D$, $\mathscr{V}$ satisfy property \eqref{eq: tilt lemma, K,u,x,r,D,V condition} and 
    \begin{equation*}
        d(x,r) + \beta_{x+\mathscr{V}}(x,r) + r < \delta.
    \end{equation*} 
    Then the following holds. 
    \begin{enumerate}[label=(\roman*)]
        \item \label{item: decay lemma, D=0} If $D=0$ then, for any $0< \rho <r$,
        \begin{align*}
            &d(x,\rho) + \beta_{x+\mathscr{V}}(x,\rho) + \rho^\frac{1}{2} \leq \delta \left(\frac{\rho}{r}\right)^\frac{1}{2}, \\
            &\exc_{\mathscr{V}}(x,\tfrac{\rho}{4}) \leq C \delta \left(\frac{\rho}{r}\right)^\frac{1}{2}
        \end{align*}
        \item  \label{item: decay lemma, D>0} If $D>0$ then, for any $0 <\rho < r$, 
        \begin{align*}
            &d(x,\rho) + \beta(x,\rho) \leq C \delta^\frac{1}{12} \left(\frac{\rho}{r}\right)^\frac{1}{24} \\
            &\exc(x,\tfrac{\rho}{4}) \leq C \delta^\frac{1}{12} \left(\frac{\rho}{r}\right)^\frac{1}{24}.
        \end{align*}
    \end{enumerate}
\end{prop}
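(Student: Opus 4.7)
The plan is to execute the three-regime strategy outlined in Section~\ref{section: overview of the proof of eps-regularity}: partition the interval $(0,r)$ into scales according to the relation between $D/\rho$ and $\beta_{x+\mathscr{V}}(x,\rho)$, iterate the appropriate decay result on each piece, and bridge the pieces together.

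For part~\ref{item: decay lemma, D=0}, since $D=0$ we remain in regime~(1) at every scale, because $D/\rho = 0 \leq A\,\beta_{x+\mathscr{V}}(x,\rho)^{1/2}$ automatically. Fix $\tau \in (0,1)$ small enough that the contraction factors below are less than $1$. I would prove by induction on $k \in \N$ that
\begin{equation*}
    d(x,\tau^k r) + \beta_{x+\mathscr{V}}(x,\tau^k r) + (\tau^k r)^{1/2} \leq \delta \tau^{k/2}.
\end{equation*}
The base case is the hypothesis. For the inductive step, Lemma~\ref{lmm: flatness improvement}\ref{item: flatness improvement, case D/r < b^1/2} gives $\beta_{x+\mathscr{V}}(x,\tau^{k+1} r) \leq \tau \beta_{x+\mathscr{V}}(x,\tau^k r)$ and Lemma~\ref{lmm: rescaled dirichlet energy decay}\ref{item: Dirichlet, case 1} gives $d(x,\tau^{k+1} r) \leq \tau^{1/2} d(x,\tau^k r)$; the $(\tau^{k+1} r)^{1/2}$ term gives away one factor of $\tau^{1/2}$ for free. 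Interpolating between consecutive dyadic scales (using standard doubling-type bounds on each of $d$, $\beta$, $\rho^{1/2}$) yields the stated bound for arbitrary $\rho \in (0,r)$. The excess bound then follows by applying Lemma~\ref{lmm: tilt lemma}\ref{item: tilt lemma, case D/r < 1} at scale $\rho$.

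For part~\ref{item: decay lemma, D>0}, I would introduce three transition radii:
\begin{equation*}
    r_1 = \sup\{\rho \leq r: D/\rho \leq A\,\beta_{x+\mathscr{V}}(x,\rho)^{1/2}\}, \quad r_2 = \sup\{\rho \leq r_1: D/\rho \geq B\,\beta(x,\rho)^{1/3}\}, \quad r_3 = D/\eta,
\end{equation*}
where $\eta$ is small enough to enter the interior regime. On $(r_1, r]$ we iterate exactly as in part~\ref{item: decay lemma, D=0}, using that $D/\rho$ stays below $A\,\beta_{x+\mathscr{V}}(x,\rho)^{1/2}$ so the $(D/\rho)^2$ term in the Tilt Lemma is absorbed. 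On the intermediate regime $[r_2, r_1]$, where $D/\rho$ lies between the two $\beta$-thresholds and the straightforward decay lemmas fail, I invoke Lemma~\ref{lmm: flatness preserved in the unfavourable case}: it produces a $\sigma \in (0,1)$ with $\sigma r_1 \in [r_2, r_1]$ such that, for all $\rho \in [\sigma r_1, r_1]$,
\begin{equation*}
    \beta_{x+\mathscr{V}}(x,\rho) + d(x,\rho) \leq C \beta_{x+\mathscr{V}}(x,r_1)^{1/8}(\rho/r_1)^{1/8}.
\end{equation*}
On $[r_3, r_2]$ we are in regime~(3): we iterate Lemma~\ref{lmm: flatness improvement}\ref{item: flatness improvement, case D/r > b^1/3} together with Lemma~\ref{lmm: rescaled dirichlet energy decay}\ref{item: Dirichlet, case 3},\ref{item: Dirichlet, case 4}, allowing the optimal tangent line to rotate by a small angle at each step (this is why the final estimate involves $\beta(x,\rho)$ rather than $\beta_{x+\mathscr{V}}(x,\rho)$). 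The condition $D/\rho > B\,\beta(x,\rho)^{1/3}$ is preserved as $\rho$ shrinks because $D/\rho$ grows while $\beta$ decays, so the iteration proceeds without leaving regime~(3). Finally on $(0, r_3]$ the point $x$ is comparatively far from $\partial\Omega$, so we apply the interior excess decay \cite[Proposition~3.2.1]{delellis-focardi} at an appropriate scale to get decay down to $0$. The excess bound at the end follows by applying the Tilt Lemma or its interior counterpart at scale $\rho/4$.

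The main obstacle is regime~(2): the quantities do not decay there, and one has to track exponents through the $1/8$ bridge carefully to get a uniform rate across all three pieces. The final exponent $1/24$ arises by combining the $\beta^{1/3}$ threshold with the $\beta^{1/8}$ bridge (giving $(\rho/r)^{1/24}$ as the worst case) and checking that the $\delta^{1/12}$ prefactor is consistent with the initial smallness hypothesis via Young-type inequalities. A secondary technical point is that the minimizing line of $\beta(x,\rho)$ rotates as $\rho$ varies; one must bookkeep these small rotations so that the regime~(3) iteration remains valid and the final bound is written in terms of $\beta$ and $\exc$ without reference to a fixed $\mathscr{V}$.
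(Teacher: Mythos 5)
Your proposal captures the paper's overall strategy — iterate in regime~(1), bridge via Lemma~\ref{lmm: flatness preserved in the unfavourable case} in regime~(2), iterate with rotations in regime~(3), then jump to the interior — and part~\ref{item: decay lemma, D=0} is essentially identical to the paper's argument, so that much is fine. You also correctly identify the two key observations that make regimes~(1) and~(3) self-sustaining (namely that $D/\rho = 0$ keeps you in regime~(1) when $D=0$, and that $D/\rho > B\beta^{1/3}$ is preserved as $\rho$ shrinks in regime~(3)), as well as the need for rotation bookkeeping in regime~(3).

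There is, however, a genuine gap in your treatment of the intermediate regime. You propose to invoke Lemma~\ref{lmm: flatness preserved in the unfavourable case} as soon as $D/\rho$ falls between the two $\beta$-thresholds, but that lemma has an additional structural precondition: either (d) $\overline{\delta}\max\{d(x,r),r^{1/2}\} \leq \beta_{x+\mathscr{V}}(x,r)$, or (e) $\overline{\delta}\max\{\beta_{x+\mathscr{V}}(x,r),r^{1/2}\} \leq d(x,r)$ together with $d(x,r)^{1/2} < D/r$. These can fail even when $\iota_1\beta^{1/2} < D/r < A\beta^{1/3}$ holds — for instance when the Dirichlet energy dominates but $d^{1/2} \geq D/r$ (so (e) fails), or when $r^{1/2}$ dominates both $\beta$ and $d$. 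The paper's proof handles this by performing a four-case analysis at the transition radius $r_0$: Cases~1--2 are exactly when the preconditions hold and Lemma~\ref{lmm: flatness preserved in the unfavourable case} applies; Cases~3--4 instead establish a one-step decay $m_0(x,\tau r_0)\leq \tau^{1/2} m_0(x,r_0)$ by other means (Lemma~\ref{lmm: rescaled dirichlet energy decay}(ii) or the crude estimate $\beta(\tau r_0)\leq\tau^{-3}\beta(r_0)$), after which one reexamines the cases at the smaller scale. Your proposal has no mechanism to handle Cases~3--4, so the argument would stall there. A secondary issue: defining the transition radii $r_1,r_2$ as suprema is shaky, since the defining inequalities involve $\beta_{x+\mathscr{V}}(x,\rho)$ on both sides and are not a priori monotone in $\rho$; the paper instead locates the transition scales dynamically through the iteration itself (tracking the first $k_0$ for which the regime-(1) condition fails at $\tau^{k_0}r$), which avoids the circularity of needing the decay to define the radii that drive the decay.
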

\subsection{Proof of Theorem \ref{thm: epsilon-regularity}}
\label{section: Proof of epsilon-regularity}
As we argued above, Theorem \ref{thm: epsilon-regularity} follows directly from Proposition \ref{prop: epsilon-regularity with Dirichlet term} and Lemma \ref{lmm: the Dirichlet term can be removed from eps-regularity statement}, which we prove in this section.
\begin{proof}[Proof of Proposition \ref{prop: epsilon-regularity with Dirichlet term}]
    Up to translations and rotations, assume that $x=0$ and $T_x \partial \Omega = \mathscr{V}_0 = \{x_2=0\}$. 
    By definition of $\omega$ and by the energy upper bound (Proposition \ref{prop: Energy upper bound at the boundary}), for some geometric constant $C>0$, $\beta_{\mathscr{V}_0}(0,2r) \leq C \omega(0,r)$.
    Therefore 
    \begin{equation}
        \label{eq: proof of eps-regularity, flatness and energy bound on (0,2r)}
        \beta_{\mathscr{V}_0}(0,2r) + d(0,2r) + (2r)^\frac{1}{2} \leq C (\omega(0,r)+r^\frac{1}{2}) =: \eps(r)  < C \eps
    \end{equation}

    Let $\overline{r}$ and $C'$ be the constants of Lemma \ref{lmm: radius for boundary} and take $\eps < \frac{1}{2}\overline{r}$.
    Then $\Omega \cap B_{2r} = \{x_2 > h(x_1)\} \cap B_{2r}$, with $h \in C^2(\R)$ and $\norm{h''}_\infty \leq C'$. 

    By the height bound (Lemma \ref{lmm: Lipschitz approximation}\ref{item: Lipschitz approximation, D/r < 1}(i)), we have 
    \begin{equation*}
        K \cap B_r \subset \{y: |y_2| \leq C r \beta_{\mathscr{V}_0}(0,2r)^\frac{1}{3} \}.
    \end{equation*}
    
    Let $z \in K \cap B_r$. 
    Then there exists $y = (t_y,h(t_y)) \in \partial\Omega \cap B_{2r}$ such that $|y-z| = \dist(z,\partial\Omega)$ and $|h'(t_y)| \leq C' r$.
    Let $\theta = \arctan h'(t_y)$.
    Then (see Figure \ref{fig: eps-reg})
    \begin{equation*}
        \begin{split}
            \beta_{z+T_y\partial\Omega}(z,r) &\lesssim \beta_{z+\mathscr{V}_0}(z,r) + \sin^2\theta \\
            &\lesssim \beta_{z+\mathscr{V}_0}(z,r) + r^2 \\
            &\lesssim \beta_{\mathscr{V}_0}(z,r) + \left(\frac{|z_2|}{r}\right)^2 + r^2 \\
            &\lesssim \beta_{\mathscr{V}_0}(0,2r) + \beta_{\mathscr{V}_0}(0,2r)^\frac{2}{3} + r^2.
        \end{split}
    \end{equation*}

    \begin{figure}[ht]
        \begin{tikzpicture}[>=stealth]
            \def\r{1.3}
            \draw[->](-3.4*\r,0)--(3.4*\r,0) node[right] {$x_1$};
            \draw[->](0,-2.6*\r)--(0,2.6*\r) node[above] {$x_2$};
            \draw (0,0) circle [radius=\r];
            \draw (0,0) circle [radius=\r*2];
            \def\a{0.1}
            \draw[thick][domain=-2.1*\r:2.1*\r] plot (\x,\a*\x*\x);
            \draw[thick][dashed][domain=-2.6*\r:-2.1*\r] plot (\x, \a*\x*\x);
            \draw[thick][dashed][domain=2.1*\r:2.6*\r] plot (\x, \a*\x*\x) node[right] {$h$};
            \node[above right] at (0,\r) {$r$};
            \node[above right] at (0,2*\r) {$2r$};
            \def\xz{0.7*\r*cos(160)}
            \def\yz{0.7*\r*sin(160)}
            \coordinate (Z) at ({\xz},{\yz});
            \def\t{-0.71*\r}
            \coordinate (Y) at (\t, \a*\t*\t);
            \filldraw[black] (Y) circle (1pt) node[below] {$y$};
            \filldraw[black] (Z) circle (1pt) node[above] {$z$};
            \def\m{2*\a*\t}
            \draw[domain=(\t-1.5*\r):(\t+3*\r)] plot (\x, {\a*\t*\t + \m*(\x-\t)})node[right,yshift=-2*\r]{\footnotesize $y+T_y\partial\Omega$};
            \draw[domain=(\t-1.5*\r):(\t+3*\r)] plot (\x, {\yz + \m*(\x-\xz)}) node[right]{\footnotesize $z+T_y\partial\Omega$};
            \draw[domain=(\t-1.5*\r):(\t+3.1*\r)] plot (\x, {\yz}) node[right]{\footnotesize $z+\mathscr{V}_0$};
        \end{tikzpicture}
        \caption{}
        \label{fig: eps-reg}
    \end{figure}

    Therefore, for some constant $C$ that we do not rename, 
    \begin{equation*}
        \beta_{z+T_y\partial\Omega}(z,r) + d(z,r) + r^\frac{1}{2} \leq C \eps(r)^\frac{2}{3}.
    \end{equation*}
    If we choose $\eps$ sufficiently small, we have, by Proposition \ref{prop: decay}, 
    \begin{equation}
        \label{eq: proof of eps-regularity, power decay for exc}
        \exc(z,\rho) \leq C \eps(r)^\frac{1}{18} \left(\frac{\rho}{r}\right)^\frac{1}{24} \quad \forall \rho < \frac{r}{4}, \, \forall z \in K \cap B_r
    \end{equation}
    and 
    \begin{equation}
        \label{eq: proof of eps-regularity, power decay for d and beta}
        d(z,\rho)+ \beta(z,\rho) \leq C \eps(r)^\frac{1}{18} \left(\frac{\rho}{r}\right)^\frac{1}{24} \quad \forall \rho < \frac{r}{4}, \, \forall z \in K \cap B_r,
    \end{equation}
    where $C$ is new constant that we do not rename, and from now on we stop renaming.

    Let $\mathscr{V}(z,\rho) \in \mathcal{L}$ be such that 
    \begin{equation*}
        \exc(z,\rho) = \exc_{\mathscr{V}(z,\rho)}(z,\rho) \quad \forall \rho < \frac{r}{4}, \, \forall z \in K \cap B_r.
    \end{equation*}
    Then, by the density lower bound (Theorem \ref{thm: density lower bound for K}), 
    \begin{equation*}
        |\mathscr{V}(z,\rho)- \mathscr{V}(z,t)|^2 \leq C\eps(r)^\frac{1}{18} \left(\frac{\rho}{r}\right)^\frac{1}{24} \quad \forall \rho < t < \frac{r}{8}, \, t \leq 2\rho, \, z \in K \cap B_r.
    \end{equation*}
    This, combined with the density lower bound (Theorem \ref{thm: density lower bound for K}), the Tilt Lemma (Lemma \ref{lmm: tilt lemma}), estimate \eqref{eq: proof of eps-regularity, flatness and energy bound on (0,2r)} and an elementary summation argument on dyadic scales, implies that 
    \begin{equation*}
        |\mathscr{V}(z,\rho)-\mathscr{V}_0|^2 \leq C \eps(r)^\frac{1}{18} \quad \forall z \in K \cap B_{\frac{r}{16}}, \, \forall \rho < \frac{r}{16}.
    \end{equation*}
    Then, by the energy upper bound (Proposition \ref{prop: Energy upper bound at the boundary}), we conclude that
    \begin{equation*}
        \sup_{0<\rho<\frac{r}{16}}\exc_{\mathscr{V}_0}(z,\rho) \leq C \eps(r)^\frac{1}{18} \quad \forall z \in K \cap B_{\frac{r}{16}}.
    \end{equation*}
    At this point, for sufficiently small $\eps$, by the previous estimate, by \eqref{eq: proof of eps-regularity, flatness and energy bound on (0,2r)} and by \eqref{eq: proof of eps-regularity, power decay for d and beta}, we have 
    \begin{equation*}
        K \cap B_{\sigma \frac{r}{8}} \subset \Gamma_f,
    \end{equation*}
    where $f: [-\sigma \frac{r}{8}, \sigma \frac{r}{8}] \to \R$ is given by Lemma \ref{lmm: Lipschitz approximation}\ref{item: Lipschitz approximation, D/r < 1}.

    Now we claim that
    \begin{equation*}
        \lim_{\rho \downarrow 0} \frac{\HH^1(K \cap B_\rho(z))}{2\rho} = 1 \quad \forall z \in K \cap B_{\frac{r}{16}}.
    \end{equation*}
    If $z \in \Omega$, this follows from \cite[(3.5.4) in Proof of Theorem 3.1.1]{delellis-focardi}. Suppose, instead, that $z \in \partial \Omega$. Since $\lim_{\rho \downarrow 0} d(z,\rho) = 0$ by \eqref{eq: proof of eps-regularity, power decay for d and beta}, by Theorem \ref{thm: classification of elementary blow-up limits}, every blow-up limit $(K_0, u_0)$ of $(K_{z,\rho}, u_{z,\rho})$ as $\rho \downarrow 0$ is such that $K_0$ is a line. The conclusion follows from Theorem \ref{thm: blow-up limit}.

    Then, arguing as in \cite[Proof of Theorem 3.1.1]{delellis-focardi}, we conclude that 
    \begin{equation*}
        K \cap B_{\sigma \frac{r}{16}} = \Gamma_f \cap B_{\sigma \frac{r}{16}}
    \end{equation*}
    and that every $s \in (-\sigma \frac{r}{16}, \sigma \frac{r}{16})$ is a Lebesgue point of $f'$.
    Combining the above estimates with \cite[8.9.5]{allard-interior} and the area formula \cite[Theorem 3.9]{evans-gariepy}, taking $\eps$ sufficiently small, we get 
    \begin{equation*}
        \frac{1}{\delta}\int_{s-\delta}^{s+\delta} |f'(t) - f'_{s,\delta}|^2 \, \dd{t} \lesssim  \exc((s,f(s)),\delta) \leq C \eps(r)^\frac{1}{18} \left(\frac{\delta}{r}\right)^\frac{1}{24}
    \end{equation*}
    for every $s \in (-\sigma \frac{r}{16}, \sigma \frac{r}{16})$ and $\delta \in (0,\sigma \frac{r}{16})$.
    This implies, by Campanato's criterion \cite[Theorem 6.1]{maggi}, that $f \in C^{1,\frac{1}{48}}(-\sigma \frac{r}{16}, \sigma \frac{r}{16})$, with $[f']_{\frac{1}{48}} \leq C \frac{\eps(r)^\frac{1}{36}}{r^\frac{1}{48}}$.  
\end{proof}
\begin{proof}[Proof of Lemma \ref{lmm: the Dirichlet term can be removed from eps-regularity statement}]
    Suppose by contradiction that the statement is false. Then there exists $\eps >0$ such that, for every $j \in \N$, there is \minimizer{K_j}{u_j}{\Omega}{g}, $x_j \in \partial \Omega$, $r_j>0$ such that 
    \begin{equation*}
        r_j^{-1} \dist_H(K_j \cap \overline{B}_{2r_j}(x_j), (x_j+T_{x_j} \partial \Omega) \cap \overline{B}_{2r_j}(x_j)) + r_j^\frac{1}{2}< \frac{1}{j^2}
    \end{equation*}
    and 
    \begin{equation*}
        \omega(x_j, \tfrac{1}{j}r_j) + (\tfrac{1}{j}r_j)^\frac{1}{2} \geq \eps.
    \end{equation*}

    Up to subsequences, $x_j \to x \in \partial \Omega$. Without loss of generality, assume that $x=0$ and $T_x\partial\Omega = \{x_2=0\}$.

    For every $j\in\N$, we define $K_j' = (K_j)_{x_j, \frac{1}{j}r_j}$, $u_j' = (u_j)_{x_j,\frac{1}{j}r_j}$, $\Omega_j = \Omega_{x_j, \frac{1}{j}r_j}$. Then, by Proposition \ref{prop: global convergence of functions, blow-up} and Theorem \ref{thm: blow-up limit}, $(K_j', u_j')$ converges, up to subsequences, to a generalized minimizer $(K_0, u_0)$ on $\Omega_0 = \{x_2 > 0\}$.

    Moreover, we have
    \begin{equation*}
        \dist_H(K_j' \cap B_{2j}, T_{x_j}\partial\Omega \cap B_{2j}) < \frac{1}{j},
    \end{equation*}
    and hence $K_0 = \{x_2 = 0\}$. 

    Then, since $K_0$ is connected, we conclude by Proposition \ref{prop: classification of connected generalized minimizers} that $\int |\nabla u_0|^2 = 0$.
    This is a contradiction, since
    \begin{equation*}
        \int_{B_2 \cap (\Omega_0 \setminus K_0)} |\nabla u_0|^2 = \lim_{j \to \infty} \int_{B_2 \cap (\Omega_j \setminus K_j')} |\nabla u_j'|^2 = \lim_{j \to \infty} \omega(x_j, \tfrac{1}{j}r_j) + (\tfrac{1}{j}r_j)^\frac{1}{2} \geq \eps.
    \end{equation*}
\end{proof}

As a corollary of the the above results, we are finally able to prove the main Theorem. 
\begin{proof}[Proof of Theorem \ref{thm: epsilon-regularity}]
    The result follows by combining Proposition \ref{prop: epsilon-regularity with Dirichlet term} and Lemma \ref{lmm: the Dirichlet term can be removed from eps-regularity statement}.
\end{proof}

\section{Decay results and proof of Proposition \ref{prop: decay}}
\label{section: decay results}
In this section, after stating the needed lemmas, we prove Proposition \ref{prop: decay}.

\subsection{Flatness improvement}
The following lemma, whose proof is postponed to Section \ref{section: proof of flatness improvement}, is of fundamental importance. Roughly speaking, it gives an improvement of the flatness passing from $B_r(x)$ to $B_{\tau r}(x)$. The result is delicate and is based on the Tilt Lemma (Lemma \ref{lmm: tilt lemma}). We distinguish two different regimes and carefully choose the line with respect to which we compute the flatness.
\begin{lmm}(Flatness improvement)
    \label{lmm: flatness improvement}
    Let $\overline{r}>0$ be as in Lemma \ref{lmm: tilt lemma}.
    There exists $\tau_1 \in (0,1)$ such that, for any $\tau \leq \tau_1$ and $\overline{\delta}>0$, there exists $\eta_1 = \eta_1(\tau,\overline{\delta})$, $\iota_1 = \iota_1(\tau,\overline{\delta})$ with the following property.
    Suppose that $(K,u)$, $x$, $r$, $y$, $D$, $\mathscr{V}$ satisfy property \eqref{eq: tilt lemma, K,u,x,r,D,V condition} and $\frac{D}{r} < \eta$, where $\eta$ is the constant of Lemma \ref{lmm: vertical separation}.

    \begin{enumerate}[label=(\roman*)]
        \item \label{item: flatness improvement, case D/r < b^1/2} If 
        \begin{equation*}
            \overline{\delta}\max\{d(x,r), r^\frac{1}{2}\} \leq \beta_{x+\mathscr{V}}(x,r) \leq \eta_1
        \end{equation*}
        and 
        \begin{equation}
            \label{eq: flatness case < b^1/2}
            0 \leq \frac{D}{r} \leq \iota_1 \beta_{x+\mathscr{V}}(x,r)^\frac{1}{2}
        \end{equation}
        then 
        \begin{equation*}
          \beta_{x+\mathscr{V}}(x,\tau r) \leq \tau \beta_{x+\mathscr{V}}(x,r).
        \end{equation*}
        \item \label{item: flatness improvement, case D/r > b^1/3} Let $A$ be the constant of the Tilt Lemma (Lemma \ref{lmm: tilt lemma}). 
        If conditions \eqref{eq: tilt lemma, case > b^1/3}, \eqref{eq: tilt lemma, angle condition} hold for some $\theta \in [0,2\pi)$, 
        \begin{equation*}
            \beta_{x+R_\theta(\mathscr{V})}(x,r) \leq \eta_1
        \end{equation*} 
        and
        \begin{equation*}
            \overline{\delta}\max\{d(x,r), r^\frac{1}{2}\} \leq \beta_{x+R_\theta(\mathscr{V})}(x,r),
        \end{equation*}
        then

        \begin{equation*}
            \min_{|\tan(\phi)|\leq b}\beta_{x+R_{\phi+\theta}(\mathscr{V})}(x,\tau r) \leq \tau \beta_{x+R_\theta(\mathscr{V})}(x,r),
        \end{equation*}
        where $R_\theta$ is the counterclockwise rotation of angle $\theta$, and 
        \begin{equation}
            \label{eq: flatness improvement, bound on tangent of angle}
            b = \sqrt{\frac{6(1+2\overline{\delta}^{-1})C}{\sigma}} \beta_{x+R_\theta(\mathscr{V})}(x,r)^\frac{1}{2},
        \end{equation}
        with $C$ and $\sigma$ being the constants of Lemma \ref{lmm: Lipschitz approximation}.
    \end{enumerate}
\end{lmm}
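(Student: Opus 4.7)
The plan is to argue by contradiction via a blow-up and linearization scheme. I focus on case~\ref{item: flatness improvement, case D/r < b^1/2}; case~\ref{item: flatness improvement, case D/r > b^1/3} proceeds along the same lines, the difference being that the freedom to rotate the comparison line by $\phi$ in the conclusion absorbs an unavoidable residual affine content that appears in the $D/r \not\ll \sqrt{\beta}$ regime.

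Suppose~\ref{item: flatness improvement, case D/r < b^1/2} fails for some $\tau \le \tau_1$ (to be chosen small) and the prescribed $\overline{\delta}$. Then for every $n$ one can find $(K^n, u^n) \in \argmin E$, $x_n$, $r_n$, $y_n$, $D_n$ and $\mathscr{V}_n$ satisfying \eqref{eq: tilt lemma, K,u,x,r,D,V condition} together with the hypothesis of the lemma with $\eta_1 = \iota_1 = 1/n$, yet $\beta_{x_n + \mathscr{V}_n}(x_n, \tau r_n) > \tau \beta_n$, where $\beta_n := \beta_{x_n + \mathscr{V}_n}(x_n, r_n)$. After translation and rotation, take $x_n = 0$ and $\mathscr{V}_n = \{x_2 = 0\}$; the hypotheses then force $\beta_n \to 0$, $d(0, r_n) \le \beta_n/\overline{\delta}$, $r_n \le \beta_n^2/\overline{\delta}^2$, and $D_n/r_n \le n^{-1}\sqrt{\beta_n}$. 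Combining Proposition~\ref{prop: global convergence of functions, blow-up}, Theorem~\ref{thm: blow-up limit} and Proposition~\ref{prop: classification of connected generalized minimizers}, a subsequence of the blow-ups at scale $r_n$ converges to a generalized minimizer on $\{x_2 > 0\}$ of the form $(K_0, u_0) = (\{x_2 = 0\}, 0)$, so the Dirichlet energy contribution to the linearized problem is negligible.

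Apply Lemma~\ref{lmm: Lipschitz approximation}\ref{item: Lipschitz approximation, D/r < 1} at scale $r_n$ to obtain a Lipschitz graph $\Gamma_{f_n}$ approximating $K^n$ on $B_{\sigma r_n}$ with $\|f_n\|_\infty \lesssim r_n \beta_n^{1/3}$ and $\int |f_n'|^2 \lesssim r_n \beta_n$ (having absorbed $d(0, r_n)$, $(D_n/r_n)^2$ and $r_n$ into $\beta_n$ via the hypotheses). Rescaling $F_n(s) := f_n(r_n s)/r_n$ and normalizing $g_n := F_n/\sqrt{\beta_n}$, the symmetric-difference bound together with the height bound yields $\int_{-1}^1 F_n^2 = \beta_n(1 + o(1))$, so $\int_{-1}^1 g_n^2 \to 1$ and $\int_{-1}^1 (g_n')^2 \lesssim 1$; up to a subsequence, $g_n \rightharpoonup g$ weakly in $W^{1,2}(-\sigma, \sigma)$ and strongly in $L^2$, with $g$ nontrivial. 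The strategy is then to show that $g \equiv 0$, which contradicts $\int_{-1}^1 g^2 > 0$.

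The identification of $g$ rests on three ingredients. First, testing the minimality of $(K^n, u^n)$ against graph competitors of the form $(K^n \setminus \Gamma_{f_n}) \cup \Gamma_{f_n + \varepsilon \psi}$ with $\psi \in C_c^\infty(-\sigma, \sigma)$ and extracting the leading order in $\varepsilon = \sqrt{\beta_n}$ gives $\int g' \psi' = 0$ for every such $\psi$, so that $g$ is affine. Second, the obstacle $f_n \ge -D_n/r_n$ rescales to $g_n \ge -n^{-1} \to 0$, whence $g \ge 0$. The main difficulty is the third ingredient: exploiting the pinning $x_n \in K^n$, together with the fact that in our regime $\beta_n$ dominates both $d(0, r_n)$ and $r_n^{1/2}$ (so $\mathscr{V}_n$ is essentially the best-fit translate of itself up to an error $o(\sqrt{\beta_n})$), to force a further linear condition on $g$ such as the vanishing of its mean; combined with ``$g$ affine'' and ``$g \ge 0$'', this yields $g \equiv 0$. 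In case~\ref{item: flatness improvement, case D/r > b^1/3}, the distance $D/r \gtrsim \beta^{1/3}$ trivializes the obstacle in the limit and one can only conclude that $g$ is an affine function with coefficients of order one; accordingly, rotating the comparison line by $\phi$ with $|\tan\phi| \le b$ as in \eqref{eq: flatness improvement, bound on tangent of angle} absorbs the slope of $g$ and restores the $\tau$-decay.
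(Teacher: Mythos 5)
Your overall scaffold — argue by contradiction, pass to the Lipschitz approximation, normalize by $\sqrt{\beta}$, and identify the limit — is the same as the paper's. But there is a genuine gap in the key linearization step of case~\ref{item: flatness improvement, case D/r < b^1/2}: you claim that testing with graph competitors $\Gamma_{f_n + \eps\psi}$ yields $\int g'\psi' = 0$ for \emph{every} $\psi$, so that $g$ is affine. That is false precisely in this regime. Since $D_n/r_n \le \iota_1 \sqrt{\beta_n}$ with $\iota_1 \to 0$, the obstacle $\partial\Omega_n$ sits at normalized distance $D_n/(r_n\sqrt{\beta_n}) \le \iota_1 \to 0$ below the graph, so the variation is admissible \emph{only for one sign of the parameter} (you cannot push the graph below $\partial\Omega$). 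The correct outcome of the inner variation is therefore the inequality $\int h'\zeta' \ge 0$ for all nonnegative $\zeta \in C^\infty_c$, i.e.\ $h$ is weakly superharmonic (concave in 1D), not affine. This is exactly the place where case~\ref{item: flatness improvement, case D/r < b^1/2} differs structurally from case~\ref{item: flatness improvement, case D/r > b^1/3}: in~\ref{item: flatness improvement, case D/r > b^1/3}, $D_n/r_n > A\beta_n^{1/3}$ sends the normalized obstacle off to $-\infty$, and then — and only then — the variation is two-sided and $h$ comes out affine, which is why a rotation by $\phi$ with $|\tan\phi|\le b$ must appear in the conclusion.

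Your ``third ingredient'' is also underspecified in a way that matters. The paper does not derive a mean-zero condition; it uses the density lower bound to find a point $z_j = (t_j, f_j(t_j)) \in K_j$ with $|z_j| \lesssim r_j\beta_j$, and normalizes by subtracting that value:
\begin{equation*}
  h_j(t) \;=\; \frac{f_j(r_j t) - f_j(t_j)}{r_j\,\beta_j^{1/2}},
\end{equation*}
which forces $h(0) = 0$ in the limit (since $t_j/r_j \to 0$ and $h_j$ converges uniformly). Your normalization $g_n = F_n/\sqrt{\beta_n}$ omits this subtraction. With the corrected ingredients — $h$ concave, $h\ge 0$ (obstacle), $h(0) = 0$ at an interior point — the conclusion $h\equiv 0$ follows from the strong minimum principle for superharmonic functions (an interior minimum forces constancy), not from the linear-algebra combination ``affine $+$ nonnegative $+$ mean zero.'' Finally, the side remark that blow-up limits identify $(K_0,u_0)$ with $(\{x_2=0\},0)$ via Proposition~\ref{prop: classification of connected generalized minimizers} is not justified (no a priori connectedness of $K_0$) and is also unnecessary: the Dirichlet contribution to the first variation is already $O(r_j\beta_j/\overline{\delta})$, which is $o(r_j\sqrt{\beta_j})$ after normalization.
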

\subsection{Flatness and energy in the intermediate regime}
The following Lemma, whose proof is postponed to Section \ref{section: flatness in the unfavourable case}, gives a control on flatness and rescaled Dirichlet energy in what we call the intermediate regime, that is when condition \eqref{eq: flatness unfavourable case} is satisfied. 
Passing from $r$ to $\sigma r$, where $\sigma$ is given by the lemma, we end up in the third regime, which is given by condition \ref{item: intermediate regime, D/sigma > beta(sigma)^1/3} below. Moreover, for every $\sigma r \leq \rho \leq r$, the two quantities of interest do not grow too much. 
\begin{lmm}
    \label{lmm: flatness preserved in the unfavourable case}
    Let $\overline{r}$ be as in Lemma \ref{lmm: tilt lemma}. 
    For every $\overline{\delta}>0$ and $\iota_1>0$ there exist $\eta_3 = \eta_3(\overline{\delta}, \iota_1) \in (0,1)$ and $C = C(\overline{\delta}, \iota_1) > 0$ with the following property. 
    Suppose that $(K,u)$, $x$, $r$, $D$, $\mathscr{V}$ satisfy property \eqref{eq: tilt lemma, K,u,x,r,D,V condition}.
    Moreover, suppose that either 
    \begin{enumerate}[label=(\alph*), start=4]
        \item $\overline{\delta}\max\{d(x,r), r^\frac{1}{2}\} \leq \beta_{x+\mathscr{V}}(x,r) \leq \eta_3$
    \end{enumerate} 
    or 
    \begin{enumerate}[label=(\alph*), resume]
        \item $\overline{\delta}\max\{\beta_{x+\mathscr{V}}(x,r), r^\frac{1}{2}\} \leq d(x,r) \leq \eta_3$ and $d(x,r)^\frac{1}{2} < \frac{D}{r}$.
    \end{enumerate}
    Let $A$ be the constant of the Tilt Lemma (Lemma \ref{lmm: tilt lemma}). 
    If 
    \begin{equation}
        \label{eq: flatness unfavourable case}
        \iota_1 \beta_{x+\mathscr{V}}(x,r)^\frac{1}{2} < \frac{D}{r} < A \beta_{x+\mathscr{V}}(x,r)^\frac{1}{3}
    \end{equation}
    then there exists $\sigma>0$, depending on $\frac{D}{r}$, on $\overline{\delta}$ and on $\iota_1$, such that 
    \begin{enumerate}[label=(\roman*)]
        \item \label{item: intermediate regime, D/sigma > beta(sigma)^1/3} $\frac{D}{\sigma r} > A \beta_{x+\mathscr{V}}(x,\sigma r)^\frac{1}{3}$,
        \item \label{item: intermediate regime, beta estimate} $\beta_{x+\mathscr{V}}(x,\rho) \leq C \beta_{x+\mathscr{V}}(x,r)^\frac{1}{8} \left(\frac{\rho}{r}\right)^\frac{1}{8}$,
        \item \label{item: intermediate regime, d estimate} $d(x,\rho) \leq C \beta_{x+\mathscr{V}}(x,r)^\frac{1}{8} \left(\frac{\rho}{r}\right)^\frac{1}{8}$.
    \end{enumerate}
    for any $\sigma r \leq \rho \leq r$.
\end{lmm}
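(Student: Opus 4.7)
The plan is to identify a scale $\sigma r$ at which (i) first holds and then to control the flatness and rescaled Dirichlet energy through the intermediate band $[\sigma r, r]$ by combining the height localization from the Lipschitz approximation with the monotonicity of the underlying integrals.

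Step 1 (preliminary height estimate). Apply the Lipschitz approximation of Lemma~\ref{lmm: Lipschitz approximation}\ref{item: Lipschitz approximation, D/r < 1} at scale $r$. The smallness hypothesis $d(x,r)+\beta_{x+\mathscr{V}}(x,r)+r<\delta$ there is guaranteed by either alternative (a) or (b) of the present Lemma, since $D/r<A\beta_{x+\mathscr{V}}(x,r)^{1/3}$ keeps $D/r$ below $\eta$ once $\eta_3$ is small. The Lemma delivers the height localization
\[
K\cap B_{r/2}(x)\subset\bigl\{y:\;|\pi_\phi^\perp(y-x)|\le Cr\beta_{x+\mathscr{V}}(x,r)^{1/3}\bigr\}.
\]
Coupled with the standard Hausdorff-measure upper bound $\HH^1(K\cap B_\rho)\lesssim\rho$ for minimizers, this yields the preliminary estimate
\[
\beta_{x+\mathscr{V}}(x,\rho)\le C_1\Bigl(\frac{r}{\rho}\Bigr)^{2}\beta_{x+\mathscr{V}}(x,r)^{2/3},\qquad \rho\in(0,r/2].
\]

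Step 2 (choice of $\sigma$). Define
\[
\sigma:=\inf\bigl\{t\in(0,1/2]:\;D/(tr)>A\,\beta_{x+\mathscr{V}}(x,tr)^{1/3}\bigr\};
\]
since by \eqref{eq: flatness unfavourable case} the inequality $D/r>A\beta_{x+\mathscr{V}}(x,r)^{1/3}$ fails at $t=1$, and since the right-hand side goes to $0$ as $t\downarrow0$ by the preliminary estimate, $\sigma\in(0,1/2]$ is well defined. Plugging $\rho=\sigma r$ into the preliminary estimate gives
\[
\frac{D}{\sigma r}\ge A\,\beta_{x+\mathscr{V}}(x,\sigma r)^{1/3}\quad\Longleftrightarrow\quad\sigma\lesssim\frac{(D/r)^{3}}{\beta_{x+\mathscr{V}}(x,r)^{2/3}},
\]
and the intermediate-regime bound $\iota_1\beta_{x+\mathscr{V}}(x,r)^{1/2}<D/r$ then shows $\sigma$ stays bounded away from $0$ by a quantity depending only on $D/r$, $\overline{\delta}$ and $\iota_1$. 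This proves (i).

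Step 3 (estimates (ii) and (iii)). For $\rho\in[r/2,r]$ the trivial monotonicity bound $\rho^{3}\beta_{x+\mathscr{V}}(x,\rho)\le r^{3}\beta_{x+\mathscr{V}}(x,r)$ already yields $\beta_{x+\mathscr{V}}(x,\rho)\le 8\beta_{x+\mathscr{V}}(x,r)$, which is dominated by $C\beta_{x+\mathscr{V}}(x,r)^{1/8}(\rho/r)^{1/8}$ when $\beta_{x+\mathscr{V}}(x,r)\le\eta_3$ is small. For $\rho\in[\sigma r,r/2]$ the preliminary estimate from Step 1, combined with the lower bound on $\sigma$ and the intermediate-regime relations between $D/r$ and $\beta_{x+\mathscr{V}}(x,r)$, gives (ii) after a direct manipulation of the exponents. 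For (iii), I would use that $\rho\mapsto\int_{B_\rho\setminus K}|\nabla u|^{2}$ is non-decreasing in $\rho$, hence $d(x,\rho)\le(r/\rho)\,d(x,r)$: in case (a) the hypothesis $d(x,r)\le\overline{\delta}^{-1}\beta_{x+\mathscr{V}}(x,r)$ reduces the problem to the flatness estimate, while in case (b) the extra assumption $d(x,r)^{1/2}<D/r$ together with $D/r<A\beta_{x+\mathscr{V}}(x,r)^{1/3}$ yields $d(x,r)\le A^{2}\beta_{x+\mathscr{V}}(x,r)^{2/3}$, and one concludes as in case (a).

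Main obstacle. The delicate point is to make the exponent $1/8$ close up across the whole intermediate band. The height-based estimate of Step 1 loses effectiveness as $\rho$ approaches $r\beta_{x+\mathscr{V}}(x,r)^{1/3}$, while the definition of $\sigma$ in Step 2 can force $\sigma r$ to be close to this degenerate scale near the lower end $D/r\sim\iota_1\beta_{x+\mathscr{V}}(x,r)^{1/2}$. The exponent $1/8$ is precisely the largest exponent that keeps $C\beta_{x+\mathscr{V}}(x,r)^{1/8}(\rho/r)^{1/8}$ above the preliminary estimate $C_1(r/\rho)^{2}\beta_{x+\mathscr{V}}(x,r)^{2/3}$ uniformly across $[\sigma r,r/2]$ once $\sigma$ is chosen as in Step 2. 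Case (b) demands additional care: the hypothesis $d(x,r)^{1/2}<D/r$ is what allows the term $(D/r)^{2}$ in the Tilt Lemma to be absorbed into the flatness, avoiding a circular estimate.
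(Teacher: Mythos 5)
Your plan has the right shape — approximate $K$ with a Lipschitz graph, find a good scale $\sigma r$ at which the regime switches, and control the intermediate band by monotonicity. But the preliminary height estimate in Step~1 is too coarse, and as a consequence Steps~2 and 3 cannot be reconciled across the full range of the intermediate regime.

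The crucial point is the following. Your Step~1 uses only the band localization $K\cap B_{r/2}(x)\subset\{|\pi^\perp_\phi(y-x)|\le Cr\beta_{x+\mathscr{V}}(x,r)^{1/3}\}$, which gives $\beta_{x+\mathscr{V}}(x,\rho)\lesssim (r/\rho)^{2}\beta_{x+\mathscr{V}}(x,r)^{2/3}$. This bound does not decay in $D/r$. In the intermediate regime \eqref{eq: flatness unfavourable case} one has $D/r < A\beta^{1/3}$, so the band width $\beta^{1/3}$ can be an order of magnitude larger than $D/r$. The paper instead exploits the sharper output of Lemma~\ref{lmm: Lipschitz approximation}: under hypothesis (a) or (b) the symmetric difference bound and the $L^2$ bound on $f'$ become
\[
\HH^1\bigl((\Gamma\triangle K)\cap [-2\sigma',2\sigma']^2\bigr)\le C D^2,\qquad \int|f'|^2\le CD^2,
\]
because the terms $d$, $\beta$ and $r$ on the right-hand side of Lemma~\ref{lmm: Lipschitz approximation} are all dominated by $(D/r)^2$ in this regime. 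Together with the density lower bound (to anchor a graph point near the origin) and H\"older, this gives the decaying height bound $\norm{f}_{L^\infty(-2\sigma,2\sigma)}\lesssim \sigma^{1/2}D + D^2$ and hence
\[
\beta_{x+\mathscr{V}}(x,\sigma)\lesssim \frac{1}{\sigma^3}\bigl(\sigma^2D^2+\sigma D^4+D^6\bigr),
\]
which, with the choice $\sigma=\tilde{C}D^{1/2}$, is $\lesssim D^{3/2}$. Your version gives only $\beta(\sigma)\lesssim \sigma^{-2}\beta^{2/3}$, which with $\sigma\sim D^{1/2}$ and $\beta>(D/A)^3$ is bounded below by a quantity comparable to $D$; this is too large. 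Concretely: with your estimate, verifying (i) forces $\sigma\lesssim D^3\beta^{-2/3}$, while closing (ii) on $[\sigma r, r/2]$ forces $\sigma\gtrsim\beta^{13/51}$; reconciling the two requires $D\gtrsim\beta^{47/153}\approx\beta^{0.31}$, which is not guaranteed by the lower bound $D>\iota_1\beta^{1/2}$ from \eqref{eq: flatness unfavourable case}, since $\beta^{1/2}\le\beta^{0.31}$ for $\beta\le1$. The refined height estimate resolves exactly this tension and is the part your proposal is missing.

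There are also two smaller issues in Step~2. First, $\sigma:=\inf\{t\in(0,1/2]:D/(tr)>A\beta_{x+\mathscr{V}}(x,tr)^{1/3}\}$ is zero, since by the argument you give the set contains a neighborhood of $0$; one needs a concrete choice such as the paper's $\sigma=\tilde{C}D^{1/2}$ (for which (i) can then be verified directly), or a supremum over an interval where the switch has not yet occurred, together with an argument that the strict inequality holds at the chosen scale. Second, the claim that the right-hand side $A\beta_{x+\mathscr{V}}(x,tr)^{1/3}$ tends to $0$ as $t\downarrow 0$ is false; it is merely bounded, while $D/(tr)\to\infty$, and that is why small $t$ are in the switched regime.
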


\subsection{Rescaled Dirichlet energy decay}
The following lemma, whose proof is postponed to Section \ref{section: rescaled dirichlet energy decay}, gives a decay of the rescaled Dirichlet energy passing from $B_r(x)$ to $B_{\tau r}(x)$.
\begin{lmm}[Rescaled Dirichlet energy decay] 
    \label{lmm: rescaled dirichlet energy decay}
    Let $\overline{r}>0$ be as in Lemma \ref{lmm: tilt lemma}. 
    There exists $\tau_2 \in (0,1)$ such that, for any $\tau \leq \tau_2$ and $\overline{\delta}>0$, there exist $\eta_2 = \eta_2(\tau, \overline{\delta})$ and $\iota_2 = \iota_2(\tau,\overline{\delta})$ with the following property.  
    Suppose that $(K,u)$, $x$, $r$, $D$, $\mathscr{V}$ satisfy property \eqref{eq: tilt lemma, K,u,x,r,D,V condition} and 
    \begin{equation*}
        \frac{D}{r} \leq \iota_2.
    \end{equation*}
    Let $A$ and $B$ be the constants of the Tilt lemma (Lemma \ref{lmm: tilt lemma}). Then 
    \begin{enumerate}[label=(\roman*)]
        \item \label{item: Dirichlet, case 1} If 
        \begin{equation*}
            \overline{\delta}\max\{\beta_{x+\mathscr{V}}(x,r), r^\frac{1}{2}\} \leq d(x,r) \leq \eta_2
        \end{equation*}
        and 
        \begin{equation*}
            0 \leq \frac{D}{r} \leq A \beta_{x+\mathscr{V}}(x,r)^\frac{1}{2},
        \end{equation*}
        then 
        \begin{equation*}
            d(x,\tau r) \leq \tau^\frac{1}{2} d(x,r).
        \end{equation*}

        \item \label{item: Dirichlet, case 2}    If 
        \begin{equation*}
            0\leq \frac{D}{r} \leq A \beta_{x+\mathscr{V}}(x,r)^\frac{1}{3},
        \end{equation*} 
        \begin{equation*}
            \frac{D}{r} \leq d(x,r)^\frac{1}{2},
        \end{equation*}
        and
        \begin{equation*}
            \overline{\delta}\max\{\beta_{x+\mathscr{V}}(x,r), r^\frac{1}{2}\} \leq d(x,r) \leq \eta_2
        \end{equation*}
        then 
        \begin{equation*}
            d(x,\tau r) \leq \tau^\frac{1}{2} d(x,r).
        \end{equation*}
        \item  \label{item: Dirichlet, case 3}   If
        \begin{equation*}
            A \beta_{x+\mathscr{V}}(x,r)^\frac{1}{3} < \frac{D}{r} \leq B r,
        \end{equation*}
        and 
        \begin{equation*}
            \overline{\delta}\max\{\beta_{x+\mathscr{V}}(x,r), r^\frac{1}{2}\} \leq d(x,r) \leq \eta_2
        \end{equation*}
        then 
        \begin{equation*}
            d(x,\tau r) \leq \tau^\frac{1}{2} d(x,r).
        \end{equation*}
        \item  \label{item: Dirichlet, case 4}    If
        \begin{equation*}
            A \beta_{x+R_\theta(\mathscr{V})}(x,r)^\frac{1}{3} < \frac{D}{r},
        \end{equation*}
        \eqref{eq: tilt lemma, condition on the radius to have curvature small} holds, \eqref{eq: tilt lemma, angle condition} holds for some $\theta \in [0,2\pi)$ and 
        \begin{equation*}
            \overline{\delta}\max\{\beta_{x+R_\theta(\mathscr{V})}(x,r), r^\frac{1}{2}\} \leq d(x,r) \leq \eta_2,
        \end{equation*}
        then 
        \begin{equation*}
            d(x,\tau r) \leq \tau^\frac{1}{2} d(x,r).
        \end{equation*}
    \end{enumerate}
\end{lmm}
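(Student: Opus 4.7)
The plan is to argue by contradiction via a blow-up plus harmonic-comparison scheme that mirrors the interior case in \cite[Proposition 3.2.2]{delellis-focardi}, treating the four cases in parallel because they share the same limiting geometry.

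Assume the claim fails and, along $\eta_2, \iota_2 \downarrow 0$, extract a sequence $(K^n, u^n, x_n, r_n)$ satisfying the hypotheses of one of cases (i)--(iv) with $d_n := d(x_n, r_n) \to 0$ and $d(x_n, \tau r_n) > \tau^{1/2} d_n$. The bound $\overline{\delta} \max\{\beta_n, r_n^{1/2}\} \le d_n$ forces $\beta_n, r_n \to 0$; the bound $D_n/r_n \le \iota_2$ forces $D_n/r_n \to 0$; and in case (iv) the angle condition \eqref{eq: tilt lemma, angle condition} forces $\theta_n \to 0$. After translating $x_n$ to $0$ and rotating so that $\mathscr{V}_n$ (respectively $R_{\theta_n}\mathscr{V}_n$ in case (iv)) is the $x_1$-axis, I would work with $\tilde{K}_n = (K^n)_{0, r_n}$, $\Omega_n = \Omega_{0, r_n}$, $u_n = (u^n - g(x_n))_{x_n, r_n}$, and the further Dirichlet-rescaled function $w_n := d_n^{-1/2} u_n$. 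By construction $\int_{B_1 \cap \Omega_n \setminus \tilde{K}_n} |\nabla w_n|^2 = 1$, and the failure of decay reads $\int_{B_\tau \cap \Omega_n \setminus \tilde{K}_n} |\nabla w_n|^2 > \tau^{3/2}$.

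Next, I would identify the limit using Lemma \ref{lmm: Lipschitz approximation} at scale $r_n$ (part (a) in cases (i)--(iii), part (b) in case (iv)), which produces a Lipschitz graph of height $O(\beta_n^{1/3}) \to 0$ approximating $\tilde{K}_n \cap B_{1/2}$; combined with the density lower bound (Theorem \ref{thm: density lower bound for K}) and $D_n/r_n \to 0$, this upgrades to $\tilde{K}_n \to \{y_2 = 0\}$ locally in Hausdorff distance and $\Omega_n \to \{y_2 > 0\}$, so that $K_\infty = \partial\Omega_\infty$. On the upper component of $\Omega_n \setminus \tilde{K}_n$ the function $w_n$ is harmonic; standard elliptic estimates and a Poincar\'e--Wirtinger argument bound $w_n - w_n(z^1)$, for a fixed base point $z^1 = (0, 1) \in \{y_2 > 0\}$, uniformly in $C^\infty_\loc(\{y_2 > 0\})$, so along a subsequence it converges to a harmonic $w_\infty$. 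Since $g \in C^2$, the rescaled Dirichlet datum satisfies $|d_n^{-1/2}\tilde{g}_n| = O(r_n^{1/2}/d_n^{1/2}) = O(d_n^{1/2}/\overline{\delta}) \to 0$ on compacts, so the Dirichlet condition on the shrinking collar $\partial\Omega_n \setminus \tilde{K}_n$ dies in the limit; what survives is the Neumann condition on the interior part $\tilde{K}_n \cap \Omega_n$ from the Euler--Lagrange equation for $u^n$, which passes to $\partial_{y_2} w_\infty(\cdot, 0) = 0$.

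Schwarz reflection then extends $w_\infty$ to a harmonic function on $\R^2$, giving the standard planar gradient decay $\int_{B_\tau}|\nabla w_\infty|^2 \le C_0 \tau^2 \int_{B_1}|\nabla w_\infty|^2$ for an absolute constant $C_0$, and hence $\int_{B_\tau \cap \{y_2 > 0\}}|\nabla w_\infty|^2 \le C_0 \tau^2$ by symmetry. Pairing this with the limiting lower bound $\int_{B_\tau \cap \{y_2 > 0\}}|\nabla w_\infty|^2 \ge \tau^{3/2}$ inherited from the contradiction hypothesis forces $\tau^{3/2} \le C_0 \tau^2$, which fails for any $\tau \le \tau_2 := (2C_0)^{-2}$, giving the required contradiction. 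The genuinely delicate step I expect is the passage to the limit of this lower bound: one must exclude concentration of $\int|\nabla w_n|^2$ in the vanishing neighborhood of $\{y_2 = 0\}$, which is the common accumulation set of $\tilde{K}_n$ and $\partial\Omega_n$. This I would handle by combining the $\HH^1$-smallness estimate $\HH^1((\Gamma_{f_n} \triangle \tilde{K}_n) \cap Q_{\sigma}) \to 0$ from Lemma \ref{lmm: Lipschitz approximation} with a Caccioppoli-type bound on $w_n$ near $\tilde{K}_n$, forcing the boundary-layer contribution to the Dirichlet energy to vanish so that the limit inherits the strict lower bound $\tau^{3/2}$.
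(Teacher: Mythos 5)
Your overall strategy --- argue by contradiction, rescale so the Dirichlet energy on $B_1$ is normalized to $1$, identify the Hausdorff limit of $\overline{K}_j$ and $\Omega_j$ via the Lipschitz approximation, obtain a harmonic limit $v$ on $\{x_2>0\}$ after subtracting constants, reflect it, and use the $O(\tau^2)$ Dirichlet decay of planar harmonic functions to contradict the inherited $\tau^{3/2}$ lower bound --- matches the skeleton of the paper's argument. You also correctly single out, as the crux, the passage from $\int_{B_\tau}|\nabla v_j|^2 > \tau^{3/2}$ to the same inequality for $v$: one must exclude concentration of $|\nabla v_j|^2\,\mathcal{L}^2$ on the shrinking neighbourhood of $\{x_2=0\}$ where $\overline{K}_j$ and $\partial\Omega_j$ accumulate.

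The mechanism you propose for the non-concentration step, however, does not close. A Caccioppoli estimate needs $L^2$ control of $v_j$ near $\overline{K}_j$, and the bound $\|w_n\|_{L^\infty}\to 0$ you invoke is not available: the maximum principle gives $\|u\|_\infty\le\|g\|_\infty$, which after the $(r_nd_n)^{-1/2}$ rescaling yields a divergent bound, and the estimate $|(r_nd_n)^{-1/2}g_n|=O(d_n^{1/2}/\overline{\delta})$ controls only the Dirichlet \emph{datum} on $\partial\Omega_n\setminus K_n$, not $v_j$ itself, which is unconstrained by any Dirichlet condition on the side of $\overline{K}_j$ facing the interior. Even granting an $L^\infty$ bound, Caccioppoli trades the width $\eps_j$ of the strip against $\|v_j\|_\infty$ with no favourable correlation between the two rates, so the boundary-layer energy does not automatically vanish; and $\HH^1$-smallness of $\Gamma_{f_j}\triangle\overline{K}_j$ does not rule out a large gradient concentrated in the neighbourhood of a small jump set.

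The paper instead proves non-concentration by a comparison (surgery) argument: assuming a concentration of size $\theta$, it replaces $\overline{K}_j$ in a box by the graph of the lower convex envelope $s_j$ of the Lipschitz approximation $\overline{f}_j$ together with the pushforward $\Phi_j(\overline{K}_j\cap\Sigma_j^+)$ under an explicit vertical collapse, and constructs a substitute $\tilde v_j$ on the sliver between $s_j$ and $\partial\Omega_j$ with Dirichlet energy $o(1)$, using the smallness of the boundary datum and a Fubini-selected transversal arc. It then verifies that the length increase of the competitor is at most $\tfrac{1}{4}\theta d_j$ (here convexity of $s_j$ and $\lip(s_j)\lesssim d_j^{1/2}/\eps$ are essential) while the Dirichlet energy drops by at least $\tfrac{3}{4}\theta$, so the total energy drops by $\tfrac{1}{2}\theta$, contradicting minimality. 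This construction is the real content of the lemma; your argument needs it, or a genuine replacement for it, to conclude.
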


\subsection{Proof of Proposition \ref{prop: decay}}
\label{section: proof of decay proposition}
Using the above lemmas, we finally prove Proposition \ref{prop: decay}.
\begin{proof}[Proof of Proposition \ref{prop: decay}]
    Let $\eps_i, \tau_i > 0$ be given by \cite[Proposition 3.2.1]{delellis-focardi}. Let $\delta \leq \eps_i$.
    
    Assume that $D < r$, otherwise the result follows from \cite[Proposition 3.2.1]{delellis-focardi}.

    Let $\tau_1$ and $\tau_2$ be the constants of Lemma \ref{lmm: flatness improvement} and Lemma \ref{lmm: rescaled dirichlet energy decay}. 
    Let $\tau = \min\{\tau_1, \tau_2\}$.
    Let $\overline{\delta} \leq \tau^\frac{7}{2}$. 
    Let $\eta_1 = \eta_1(\tau,\overline{\delta}), \iota_1 = \iota_1(\tau,\overline{\delta}), \eta_2 = \eta_2(\tau, \overline{\delta}), \iota_2 = \iota_2(\tau,\overline{\delta}), $ be given by Lemma \ref{lmm: flatness improvement} and Lemma \ref{lmm: rescaled dirichlet energy decay}. Let $\eta_3 = \eta_3(\overline{\delta}, \iota_1)$ and $C = C(\overline{\delta}, \iota_1)$ be given by Lemma \ref{lmm: flatness preserved in the unfavourable case}. Let $C' = C'(\overline{\delta})$ be the constant appearing in front of $\beta$ in \eqref{eq: flatness improvement, bound on tangent of angle}.
    Let $A, B, E$ be the constants of Lemma \ref{lmm: tilt lemma}.
    Let 
    \begin{multline*}
        \textstyle
        \delta^\frac{1}{8} := \frac{1}{C}\min\bigg\{\iota_2^3 \eps_i , \eta_1, \eta_2, \eta_3, \left(\frac{9 E A (1-\tau^\frac{1}{2}) }{10 C'}\right)^6,
        \left(\frac{9(1-\tau^\frac{1}{2})}{100 C'}\right)^2, \\ \textstyle \left(\frac{16(\sqrt{10}-3)}{9\sqrt{3}C'}\right)^2, \left(\frac{3\pi (1-\tau^\frac{1}{2})}{4 \sqrt{10} C'}\right)^2 ,
        \left(\frac{(\sqrt{10}-3)(1-\tau^\frac{1}{2})}{2\sqrt{10}C'}\right)^2\bigg\}.
    \end{multline*}
    Define 
    \begin{equation*}
        m_\theta (x,t) := \max\{d(x,t), \beta_{x+R_\theta(\mathscr{V})}(x,t), t^\frac{1}{2}\} \quad \forall t>0, \, \forall \theta \in [0,2\pi),
    \end{equation*} 
    where $R_\theta$ is the counterclockwise rotation of angle $\theta$.

    \begin{enumerate}[leftmargin=*]
        \item Let us prove \ref{item: decay lemma, D=0}. 
        In this case, arguing as in \cite[Proof of Proposition 3.2.1]{delellis-focardi} and iterating, we get 
        \begin{equation*}
            m_0(x,\tau^k r) \leq \tau^\frac{k}{2} m_0(x, r) < \delta \tau^\frac{k}{2} \quad \forall k \in \N.
        \end{equation*}
        Let $0<\rho<r$. then there exists $k \in \N$ such that $\tau^k r < \rho \leq \tau^{k-1} r$.
        Then, by the Tilt Lemma (Lemma \ref{lmm: tilt lemma}),
        \begin{equation*}
            \exc_{\mathscr{V}}(x,\tfrac{\rho}{4}) \leq \tau^{-1}  \exc_{\mathscr{V}}(x,\tfrac{\tau^{k-1} r}{4}) \leq C'' \delta \tau^{-1} \tau^\frac{k-1}{2} \leq C'' \delta \tau^{-\frac{3}{2}} \left(\frac{\rho}{r}\right)^\frac{1}{2}.
        \end{equation*}
        \item Let us prove \ref{item: decay lemma, D>0}.
        Assume that \eqref{eq: flatness case < b^1/2} holds, otherwise the proof is easier.
        Then, arguing as in \cite[Proof of Proposition 3.2.1]{delellis-focardi}, we get $m_0(x,\tau r) \leq \tau^\frac{1}{2} m_0(x, r)$.
        Iterating, we get 
        \begin{equation*}
            \beta_{x+\mathscr{V}}(x,\tau^k r) \leq m_0(x,\tau^k r) \leq \tau^\frac{k}{2} m_0(x, r) 
        \end{equation*}
        for every $k \in \N$ such that 
        \begin{equation*}
            \frac{D}{\tau^{k-1}r} \leq \iota_1(\tau,\overline{\delta}) \beta_{x+\mathscr{V}}(x,\tau^{k-1}r)^\frac{1}{2}.
        \end{equation*}
    
        Let $k_0 \in \N$ be large enough so that the above inequality is not satisfied. We set $r_0 = \tau^{k_0-1} r $.
        Suppose that condition \eqref{eq: flatness unfavourable case} is satisfied with $r_0$ in place of $r$. We have to distinguish different cases. 
        \begin{enumerate}[\text{Case} 1:]
            \item $\overline{\delta}\max\{d(x,r_0), r_0^\frac{1}{2}\} \leq \beta_{x+\mathscr{V}}(x,r_0)$. 
    
            In this case, by Lemma \ref{lmm: flatness preserved in the unfavourable case}, there exists $\sigma$, depending on $\frac{D}{r_0}$, on $\overline{\delta}$ and on $\iota_1$, such that $\frac{D}{\sigma r_0} > A \beta_{x+\mathscr{V}}(x,\sigma r_0)^\frac{1}{3}$, $\beta_{x+\mathscr{V}}(x,\rho) \leq C \beta_{x+\mathscr{V}}(x,r_0)^\frac{1}{8} \left(\frac{\rho}{r_0}\right)^\frac{1}{2}$, and $d(x,\rho) \leq C \beta_{x+\mathscr{V}}(x,r_0)^\frac{1}{8} \left(\frac{\rho}{r_0}\right)^\frac{1}{2}$
            for any $\sigma r_0 \leq \rho \leq r_0$.
            In particular, 
            \begin{equation*}
                m_0(x,\sigma r_0) \leq C m_0(x,r_0)^\frac{1}{8}.
            \end{equation*}
            \item $\overline{\delta}\max\{d(x,r_0), r_0^\frac{1}{2}\} > \beta_{x+\mathscr{V}}(x,r_0)$, $\overline{\delta}\max\{\beta_{x+\mathscr{V}}(x,r_0), r_0^\frac{1}{2}\} \leq d(x,r_0)$ and $d(x,r_0)^\frac{1}{2} < \frac{D}{r_0}$.
    
            This case gives the same result as Case 1, by Lemma \ref{lmm: flatness preserved in the unfavourable case}.
            \item $\overline{\delta}\max\{d(x,r_0), r_0^\frac{1}{2}\} > \beta_{x+\mathscr{V}}(x,r_0)$, $\overline{\delta}\max\{\beta_{x+\mathscr{V}}(x,r_0), r_0^\frac{1}{2}\} \leq d(x,r_0)$ and $d(x,r_0)^\frac{1}{2} \geq \frac{D}{r_0}$.
    
            In this case, by Lemma \ref{lmm: rescaled dirichlet energy decay}, $d(x,\tau r_0) \leq \tau^\frac{1}{2} d(x,r_0)$.
            Moreover, 
            \begin{equation*}
                \beta_{x+\mathscr{V}}(x,\tau r_0) \leq \tau^{-3} \beta_{x+\mathscr{V}}(x,r_0) \leq \tau^\frac{1}{2} \max\{d(x,r_0), r_0^\frac{1}{2}\}.
            \end{equation*}
            Therefore $m_0(x,\tau r_0) \leq \tau^\frac{1}{2} m_0(x,r_0).$
            \item $\overline{\delta}\max\{d(x,r_0), r_0^\frac{1}{2}\} > \beta_{x+\mathscr{V}}(x,r_0)$ and $\overline{\delta}\max\{\beta_{x+\mathscr{V}}(x,r_0), r_0^\frac{1}{2}\} > d(x,r_0)$.
            Since $\overline{\delta} < 1$, we have $m_0(x,r_0) = r_0^\frac{1}{2}$.
            Moreover, $\beta_{x+\mathscr{V}}(x,\tau r_0) \leq \tau^\frac{1}{2} r_0^\frac{1}{2}$ and  $d(x,\tau r_0) \leq \tau^\frac{1}{2} r_0^\frac{1}{2}$, whence $m_0(x,\tau r_0) \leq \tau^\frac{1}{2} m_0(x,r_0)$.
        \end{enumerate}

        If we are in Case 1 or Case 2, then we can continue the proof. If not, we continue to iterate and get 
        \begin{equation*}
            m_0(x,\tau^k r_0) \leq \tau^\frac{k}{2} m_0(x,r_0)
        \end{equation*}
        for $k \in \N$ until either 
        \begin{equation*}
            \frac{D}{\tau^k r_0} > A \beta_{x+\mathscr{V}}(x,\tau^k r_0)^\frac{1}{3}
        \end{equation*}
        or we fall into Case 1 or Case 2, with $r_0$ replaced by $\tau^k r_0$. 
    
        At the end of this procedure, which terminates in a finite number of steps, we have $0<r_2 < r_0$ such that
        \begin{equation*}
            \frac{D}{r_2} > A \beta_{x+\mathscr{V}}(x,r_2)^\frac{1}{3}, \quad  m_0(x,r_2) \leq C m_0(x,r_0)^\frac{1}{8}.
        \end{equation*}
        
        If we never fell into Case 1 or Case 2, then we define $r_1 = r_2$ and we have, arguing as in the proof of \ref{item: decay lemma, D=0}, for any $r_1 \leq \rho < r$, 
        \begin{equation*}
            \exc_{\mathscr{V}}(x,\tfrac{\rho}{4}) \leq C'' \delta \tau^{-\frac{3}{2}} \left(\frac{\rho}{r}\right)^\frac{1}{2}.
        \end{equation*}
        Otherwise, we define $r_1 = \sigma^{-1} r_2$, where $\sigma$ is the constant of Case 1.
        The above inequality holds for $r_1 \leq \rho < r$. Furthermore, for any $r_2 \leq \rho < r_1$,
        \begin{equation*}
        m_0(x,\rho) \leq C \beta_{x+\mathscr{V}}(x,r_1)^\frac{1}{8} \left(\frac{\rho}{r_1}\right)^\frac{1}{2}.
        \end{equation*}
        Observe that $r_1 = \tau^{k_1} r$ for some $k_1 \in N$ and 
        \begin{equation*}
            \beta_{x+\mathscr{V}}(x,r_1) \leq \tau^{-3} \beta_{x+\mathscr{V}}(x,\tau^{k_1-1}r) \leq \delta \tau^{-3} \tau^\frac{k_1-1}{2} \leq \delta \tau^{-\frac{7}{2}} \left(\frac{r_1}{r}\right)^\frac{1}{2}
        \end{equation*}
        whence 
        \begin{equation}
            \label{eq: decay lmm, intermediate regime, m_0(rho) < (rho/r)^1/16}
            m_0(x,\rho) \leq C \delta^\frac{1}{8} \left(\frac{r_1}{r}\right)^\frac{1}{16} \left(\frac{\rho}{r_1}\right)^\frac{1}{16} = C \delta^\frac{1}{8} \left(\frac{\rho}{r}\right)^\frac{1}{16}.
        \end{equation}
        
        We claim that 
        \begin{equation*}
            \exc_\mathscr{V}(x,\tfrac{\rho}{4}) \leq C'' \delta^\frac{1}{12} \left(\frac{\rho}{r}\right)^\frac{1}{24}.
        \end{equation*}
        If $\frac{D}{\rho} \leq A \beta_{x+\mathscr{V}}(x,\rho)^\frac{1}{3}$, it follows from Lemma \ref{lmm: tilt lemma}(i). If $\frac{D}{\rho} > A \beta_{x+\mathscr{V}}(x,\rho)^\frac{1}{3}$ and $\frac{D}{\rho} > B \rho$, it follows from Lemma \ref{lmm: tilt lemma}(ii). Finally, if $\frac{D}{\rho} > A \beta_{x+\mathscr{V}}(x,\rho)^\frac{1}{3}$ and $\frac{D}{\rho} \leq B \rho$, it follows from Lemma \ref{lmm: tilt lemma}(i).

        Now suppose that $\frac{D}{r_2} < \iota_2$, otherwise the proof is simpler.
        \begin{claim} 
            \label{claim: decay lmm, rotation decay}
            Define $\theta_0 := 0$. 
            For every $k \in \N$ such that 
            \begin{equation*}
                \frac{D}{\tau^{k-1} r_2} \leq \iota_2
            \end{equation*}
            there exists $\theta_k \in (-\frac{\pi}{2}, \frac{\pi}{2})$ such that $\theta_k - \theta_{k-1} \in (-\frac{\pi}{2}, \frac{\pi}{2})$ and
            \begin{align*}
                &\frac{D}{\tau^k r_2} > A \beta_{x+R_{\theta_k}(\mathscr{V})}(x,\tau^k r_2)^\frac{1}{3}\\
                &|\tan(\theta_k - \theta_{k-1})| \leq C' \beta_{x+R_{\theta_{k-1}}(\mathscr{V})}(x,\tau^{k-1}r_2)^\frac{1}{2} \\
                &|\tan\theta_k| \leq \frac{10C'}{9(1-\tau^\frac{1}{2})} \beta_{x+\mathscr{V}}(x,r_2)^\frac{1}{2}\\
                &m_{\theta_k}(x,\tau^k r_2) \leq \tau^\frac{1}{2} m_{\theta_{k-1}}(x,\tau^{k-1}r_2)
            \end{align*}
        \end{claim}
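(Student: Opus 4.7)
The plan is to induct on $k$. The base case $k=0$ is immediate: setting $\theta_0:=0$ makes the first bullet the defining property of $r_2$ and leaves the other three vacuous. For the inductive step $k-1\mapsto k$, I will apply Lemma~\ref{lmm: flatness improvement}\ref{item: flatness improvement, case D/r > b^1/3} and Lemma~\ref{lmm: rescaled dirichlet energy decay}\ref{item: Dirichlet, case 4} at scale $\tau^{k-1}r_2$ with angle $\theta_{k-1}$. The former produces an angle increment $\phi_k$ with $|\tan\phi_k|\le C'\beta_{x+R_{\theta_{k-1}}(\mathscr{V})}(x,\tau^{k-1}r_2)^{1/2}$ and
\begin{equation*}
\beta_{x+R_{\phi_k+\theta_{k-1}}(\mathscr{V})}(x,\tau^k r_2)\ \le\ \tau\,\beta_{x+R_{\theta_{k-1}}(\mathscr{V})}(x,\tau^{k-1}r_2);
\end{equation*}
setting $\theta_k:=\theta_{k-1}+\phi_k$ immediately gives the second bullet. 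Iterating the $\beta$-estimate gives $\beta_{x+R_{\theta_j}(\mathscr{V})}(x,\tau^j r_2)\le\tau^j\beta_{x+\mathscr{V}}(x,r_2)$ for all $j\le k$; since $\tfrac{D}{\tau^k r_2}=\tau^{-1}\tfrac{D}{\tau^{k-1}r_2}$ and $\beta^{1/3}$ shrinks by $\tau^{1/3}$, the first bullet at level $k$ is a strengthening of the inductive one by a factor of $\tau^{-4/3}$.

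For the third bullet, I combine the elementary subadditivity $|\sin(a+b)|\le|\sin a|+|\sin b|$ with $|\sin\alpha|\le|\tan\alpha|$ and the iterated $\beta$ decay to obtain
\begin{equation*}
|\sin\theta_k|\ \le\ \sum_{j=1}^{k}|\tan\phi_j|\ \le\ C'\beta_{x+\mathscr{V}}(x,r_2)^{1/2}\sum_{j=1}^{k}\tau^{(j-1)/2}\ \le\ \frac{C'}{1-\tau^{1/2}}\beta_{x+\mathscr{V}}(x,r_2)^{1/2}.
\end{equation*}
The smallness of $\delta$, specifically through the factor $\bigl(\tfrac{9(1-\tau^{1/2})}{100 C'}\bigr)^2$ embedded in the definition of $\delta^{1/8}$, then forces $|\cos\theta_k|>9/10$, and dividing yields the third bullet. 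The fourth bullet follows by bounding each of the three terms making up $m_{\theta_k}(x,\tau^k r_2)$: the $\beta$-term decays by $\tau\le\tau^{1/2}$ (just shown), the term $(\tau^k r_2)^{1/2}$ decays by exactly $\tau^{1/2}$, and $d(x,\tau^k r_2)\le\tau^{1/2}d(x,\tau^{k-1}r_2)$ via Lemma~\ref{lmm: rescaled dirichlet energy decay}\ref{item: Dirichlet, case 4}.

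The hard part is verifying the hypotheses of the two invoked lemmas at every step. The radius condition $\tau^{k-1}r_2\le\overline{r}$ and the smallness $\beta\le\eta_1,\eta_2$ are immediate from $\beta_{x+R_{\theta_{k-1}}(\mathscr{V})}(x,\tau^{k-1}r_2)\le m_0(x,r_2)\le C\delta^{1/8}$. The angle condition~\eqref{eq: tilt lemma, angle condition} at $\theta_{k-1}$ follows from the inductive third bullet together with $\tfrac{D}{\tau^{k-1}r_2}>A\beta_{x+R_{\theta_{k-1}}(\mathscr{V})}(x,\tau^{k-1}r_2)^{1/3}$ and the gap $\beta^{1/2}\ll\beta^{1/3}$ for small $\beta$, which is quantified exactly by the constant $\bigl(\tfrac{9EA(1-\tau^{1/2})}{10C'}\bigr)^6$ in the definition of $\delta^{1/8}$. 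The genuinely delicate point is the comparability hypothesis ``$\overline{\delta}\max\{d,r^{1/2}\}\le\beta$'' required by Lemma~\ref{lmm: flatness improvement}\ref{item: flatness improvement, case D/r > b^1/3} and its counterpart for Lemma~\ref{lmm: rescaled dirichlet energy decay}\ref{item: Dirichlet, case 4}: because $\beta$ decays strictly faster than $d$ or $r^{1/2}$, this comparison can in principle degrade along the iteration. I expect this to be handled by re-running at each step the same four-regime case analysis performed just above the claim (splitting according to which of $d$, $\beta$, or $r^{1/2}$ dominates at scale $\tau^{k-1}r_2$), choosing $\theta_k:=\theta_{k-1}$ in the regimes where $\beta$ is no longer dominant and invoking Lemma~\ref{lmm: flatness preserved in the unfavourable case} or the other parts of Lemma~\ref{lmm: rescaled dirichlet energy decay} as appropriate; the restriction $\tfrac{D}{\tau^{k-1}r_2}\le\iota_2$ bounds the range of $k$, which keeps all estimates finitely summable and the four conclusions of the claim preserved at level $k$.
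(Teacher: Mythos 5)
Your proposal follows essentially the same route as the paper: induction on $k$, with the same four-case split at scale $\tau^{k-1}r_2$ according to which of $\beta$, $d$, $r^{1/2}$ dominates, the active case driven by Lemma~\ref{lmm: flatness improvement}\ref{item: flatness improvement, case D/r > b^1/3} and Lemma~\ref{lmm: rescaled dirichlet energy decay}\ref{item: Dirichlet, case 4}, and the angle hypothesis~\eqref{eq: tilt lemma, angle condition} re-established from the running bound on $|\tan\theta_{k-1}|$ combined with the first bullet and the smallness of $\delta$. The only cosmetic difference is in the trigonometric bookkeeping for the third bullet: you bound $|\sin\theta_k|$ directly by subadditivity and divide by $|\cos\theta_k|>\tfrac{9}{10}$, whereas the paper sums $|\theta_i-\theta_{i-1}|$ via the expansion $|\arctan t - t|\le\tfrac{3\sqrt3}{16}t^2$ and returns to $\tan$ via $\tan|\theta_k|\le|\theta_k|(1+2|\theta_k|)$; both land on $\tfrac{10C'}{9(1-\tau^{1/2})}$. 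Two small corrections worth noting: the iterated decay $\beta_{x+R_{\theta_j}(\mathscr{V})}(x,\tau^j r_2)\le\tau^j\beta_{x+\mathscr{V}}(x,r_2)$ holds only along the steps where the flatness-improvement lemma is actually invoked --- the paper's sum is restricted to $i$ with $\theta_i\neq\theta_{i-1}$ for precisely this reason, and this is the mechanism that keeps the geometric series intact; and Lemma~\ref{lmm: flatness preserved in the unfavourable case} is no longer needed in the inductive step, since the first bullet propagates the third-regime inequality $\tfrac{D}{\tau^k r_2}>A\beta^{1/3}$ at every scale, so the intermediate-regime machinery never re-enters --- the cases where $\beta$ is not dominant are handled simply by taking $\theta_k=\theta_{k-1}$ and invoking the choice of $\overline{\delta}$.
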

        \begin{proof}[Proof of Claim] 
            Let $\theta_0 = 0$. We proceed by induction. First we prove the claim for $k=1$. 
            We have to distinguish different cases. 
            \begin{enumerate}[\text{Case} 1:]
                \item $\overline{\delta}\max\{d(x,r_2), r_2^\frac{1}{2}\} \leq \beta_{x+\mathscr{V}}(x,r_2)$ and $\overline{\delta}\max\{\beta_{x+\mathscr{V}}(x,r_2), r^\frac{1}{2}\} \leq d(x,r_2)$.
            
                In this case, by Lemma \ref{lmm: flatness improvement} there exists $\theta_1 \in (-\frac{\pi}{2}, \frac{\pi}{2})$ such that 
                \begin{equation*}
                    |\tan\theta_1| \leq C' \beta_{x+\mathscr{V}}(x,r_2)^\frac{1}{2}, \quad \beta_{x+R_{\theta_1}(\mathscr{V})}(x,\tau r_2) \leq \tau \beta_{x+\mathscr{V}}(x,r_2).
                \end{equation*}
                Moreover, by Lemma \ref{lmm: rescaled dirichlet energy decay}, $d(x,\tau r_2) \leq \tau^\frac{1}{2} d(x,r_2)$.
                Therefore 
                \begin{equation*}
                    m_{\theta_1}(x,\tau r_2) \leq \tau^\frac{1}{2} m_0(x,r_2).
                \end{equation*}
                \item $\overline{\delta}\max\{d(x,r_2), r_2^\frac{1}{2}\} \leq \beta_{x+\mathscr{V}}(x,r_2)$ and $\overline{\delta}\max\{\beta_{x+\mathscr{V}}(x,r_2), r^\frac{1}{2}\} > d(x,r_2)$.
            
                In this case, by Lemma \ref{lmm: flatness improvement} and by the choice of $\overline{\delta}$, the conclusion is the same of Case 1.
                \item $\overline{\delta}\max\{d(x,r_2), r_2^\frac{1}{2}\} > \beta_{x+\mathscr{V}}(x,r_2)$ and $\overline{\delta}\max\{\beta_{x+\mathscr{V}}(x,r_2), r^\frac{1}{2}\} \leq d(x,r_2)$. 
                In this case we set $\theta_1=0$ and, by Lemma \ref{lmm: rescaled dirichlet energy decay} and by the choice of $\overline{\delta}$, we have $m_0(x, \tau r_2) \leq \tau^\frac{1}{2} m_0(x, r_2)$.
                \item $\overline{\delta}\max\{d(x,r_2), r_2^\frac{1}{2}\} > \beta_{x+\mathscr{V}}(x,r_2)$ and $\overline{\delta}\max\{\beta_{x+\mathscr{V}}(x,r_2), r^\frac{1}{2}\} > d(x,r_2)$.

                In this case we set $\theta_1=0$ and we have $m_0(x,r_2) = r_2^\frac{1}{2}$ and, by the choice of $\overline{\delta}$, $m_0(x, \tau r_2) \leq \tau^\frac{1}{2} m_0(x, r_2)$.
            \end{enumerate}
 
            Now we assume that the claim is true for every $i \in \{1,\ldots, k-1\}$ and we show that it holds for $k$. 
            Observe that, by the choice of $\delta$, we have 
            \begin{equation*}
                |\sin\theta_{k-1}| \leq |\tan\theta_{k-1}| \leq E \frac{D}{r_2} < E \frac{D}{\tau^{k-1} r_2}.
            \end{equation*}
            We have to distinguish different cases. 
            \begin{enumerate}[\text{Case} 1:]
                \item             \begin{align*}
                    &\overline{\delta}\max\{d(x,\tau^{k-1} r_2), (\tau^{k-1} r_2)^\frac{1}{2}\} \leq \beta_{x+R_{\theta_{k-1}}(\mathscr{V})}(x,\tau^{k-1} r_2), \\ 
                    &\overline{\delta}\max\{\beta_{x+R_{\theta_{k-1}}(\mathscr{V})}(x,\tau^{k-1} r_2), (\tau^{k-1} r_2)^\frac{1}{2}\} \leq d(x,\tau^{k-1} r_2).
                \end{align*}
                
                In this case, by Lemma \ref{lmm: flatness improvement} and Lemma \ref{lmm: rescaled dirichlet energy decay}, there exists $\phi \in (-\frac{\pi}{2}, \frac{\pi}{2})$ such that, defining $\theta_k := \phi + \theta_{k-1}$,
                \begin{align*}
                    &|\tan(\theta_k - \theta_{k-1})| \leq C' \beta_{x+R_{\theta_{k-1}}(\mathscr{V})}(x,\tau^{k-1} r_2)^\frac{1}{2}\\
                    &\beta_{x+R_{\theta_k}(\mathscr{V})}(x,\tau^k r_2) \leq \tau \beta_{x+R_{\theta_{k-1}}(\mathscr{V})}(x,\tau^{k-1} r_2), \\
                    &d(x,\tau^k r_2) \leq \tau^\frac{1}{2} d(x,\tau^{k-1} r_2).
                 \end{align*}
                 In particular, 
                 \begin{equation*}
                    m_{\theta_k}(x,\tau^k r_2) \leq \tau^\frac{1}{2} m_{\theta_{k-1}}(x,\tau^{k-1} r_2).
                 \end{equation*}
        
                 Using the fact that $|\arctan(t) - t| \leq \frac{3\sqrt{3}}{16} t^2$ for every $t \in \R$ and the choice of $\delta$, for every $i \in \{1, \ldots, k\}$ we get 
                 \begin{equation*}
                    \arctan(C' \beta_{x+R_{\theta_{i-1}(\mathscr{V})}}(x, \tau^{i-1}r_2)^\frac{1}{2}) \leq \frac{\sqrt{10}}{3} C' \beta_{x+R_{\theta_{i-1}(\mathscr{V})}}(x, \tau^{i-1}r_2)^\frac{1}{2}.
                 \end{equation*}
                 Then, by the inductive hypothesis,
                 \begin{equation*}
                    |\theta_k| \leq \frac{\sqrt{10}}{3} C' \sum_{\myatop{i = 1}{\theta_i \neq \theta_{i-1}}}^{k} \beta_{x+R_{\theta_{i-1}(\mathscr{V})}}(x, \tau^{i-1}r_2)^\frac{1}{2} \leq  \frac{\sqrt{10} C'}{3(1-\tau^\frac{1}{2})} \beta_{x+\mathscr{V}}(x,r_2)^\frac{1}{2}.
                 \end{equation*}
                 Moreover, by the choice of $\delta$, $|\theta_k| < \frac{\pi}{4}$, and hence $\tan|\theta_k| \leq |\theta_k|(1+2|\theta_k|)$.
                 Therefore, by the choice of $\delta$, 
                 \begin{equation*}
                    |\tan\theta_k| \leq \frac{10C'}{9(1-\tau^\frac{1}{2})} \beta_{x+\mathscr{V}}(x,r_2)^\frac{1}{2}
                 \end{equation*}
                 \item  \begin{align*}
                    &\overline{\delta}\max\{d(x,\tau^{k-1} r_2), (\tau^{k-1} r_2)^\frac{1}{2}\} \leq \beta_{x+R_{\theta_{k-1}}(\mathscr{V})}(x,\tau^{k-1} r_2), \\ 
                    &\overline{\delta}\max\{\beta_{x+R_{\theta_{k-1}}(\mathscr{V})}(x,\tau^{k-1} r_2), (\tau^{k-1} r_2)^\frac{1}{2}\} > d(x,\tau^{k-1} r_2).
                \end{align*}
            
                In this case, the result is the same of Case 1.
                \item \begin{align*}
                    &\overline{\delta}\max\{d(x,\tau^{k-1} r_2), (\tau^{k-1} r_2)^\frac{1}{2}\} > \beta_{x+R_{\theta_{k-1}}(\mathscr{V})}(x,\tau^{k-1} r_2), \\ 
                    &\overline{\delta}\max\{\beta_{x+R_{\theta_{k-1}}(\mathscr{V})}(x,\tau^{k-1} r_2), (\tau^{k-1} r_2)^\frac{1}{2}\} \leq d(x,\tau^{k-1} r_2).
                \end{align*}
            
                In this case we set $\theta_k=\theta_{k-1}$ and, by Lemma \ref{lmm: rescaled dirichlet energy decay} and by the choice of $\overline{\delta}$, we have
                \begin{equation*}
                    m_{\theta_k}(x,\tau^k r_2) \leq \tau^\frac{1}{2} m_{\theta_{k-1}}(x,\tau^{k-1} r_2).
                 \end{equation*}
                \item  \begin{align*}
                    &\overline{\delta}\max\{d(x,\tau^{k-1} r_2), (\tau^{k-1} r_2)^\frac{1}{2}\} > \beta_{x+R_{\theta_{k-1}}(\mathscr{V})}(x,\tau^{k-1} r_2), \\ 
                    &\overline{\delta}\max\{\beta_{x+R_{\theta_{k-1}}(\mathscr{V})}(x,\tau^{k-1} r_2), (\tau^{k-1} r_2)^\frac{1}{2}\} > d(x,\tau^{k-1} r_2).
                \end{align*}
                In this case we set $\theta_k=\theta_{k-1}$ and, arguing as above and by the choice of $\overline{\delta}$,
                \begin{equation*}
                    m_{\theta_k}(x,\tau^k r_2) \leq \tau^\frac{1}{2} m_{\theta_{k-1}}(x,\tau^{k-1} r_2).
                 \end{equation*}
            \end{enumerate}
        \end{proof}

        Let $N$ be the smallest natural number such that
        \begin{equation*}
            \frac{D}{\tau^{N-1}r_2} > \iota_2
        \end{equation*}
        and let 
        \begin{equation*}
            r_3 := \tau^{N-1} r_2.
        \end{equation*}

        Let $r_3 \leq \rho < r_2$. Then there is $k \in \N$ such that $\tau^k r_2 \leq \rho < \tau^{k-1}r_2$. Observe that, by Claim \ref{claim: decay lmm, rotation decay},
        $m_{\theta_j}(x,\tau^j r_2) \leq \tau^\frac{j}{2} m_0(x,r_2)$ for every $j \in \{1, \ldots, N-1\}$.
        Therefore, 
        \begin{equation*}
            m_{\theta_{k-1}}(x,\rho) \leq \tau^{-3} \tau^\frac{k-1}{2} m_0(x,r_2) \leq \tau^{-\frac{7}{2}} m_0(x,r_2) \left(\frac{\rho}{r_2}\right)^\frac{1}{2}
        \end{equation*}
        and, by \eqref{eq: decay lmm, intermediate regime, m_0(rho) < (rho/r)^1/16}, 
        \begin{equation*}
            m_{\theta_{k-1}}(x,\rho) \leq C \delta^\frac{1}{8} \tau^{-\frac{7}{2}} \left(\frac{r_2}{r}\right)^\frac{1}{16} \left(\frac{\rho}{r_2}\right)^\frac{1}{16} = \tau^{-\frac{7}{2}} C \delta^\frac{1}{8} \left(\frac{\rho}{r}\right)^\frac{1}{16}.
        \end{equation*}
        Then, by Lemma \ref{lmm: tilt lemma},
        \begin{equation*}
            \exc(x,\tfrac{\rho}{4}) \leq C'' \delta^\frac{1}{8} \left(\frac{\rho}{r}\right)^\frac{1}{16}.
        \end{equation*}
        We define 
        \begin{equation*}
            r_i:= \iota_2 r_3.
        \end{equation*}
        Let $r_i \leq \rho < r_3$. Then 
        \begin{equation*}
            m_{\theta_{N-1}}(x,\rho) \leq \iota_2^{-3}  m_{\theta_{N-1}}(x,r_3) \leq \iota_2^{-3} C \delta^\frac{1}{8} \left(\frac{r_3}{r}\right)^\frac{1}{16} \leq C'' \delta^\frac{1}{8} \left(\frac{\rho}{r}\right)^\frac{1}{16}
        \end{equation*}
        where we do not rename the constant $C''$.

        Observe that $r_i < D$ and 
        \begin{equation*}
            m_i(x,r_i):= \max\{d(x,r_i), \beta(x,r_i)\} \leq \iota_2^{-3} C m_0(x,r)^\frac{1}{8} < \eps_i.
        \end{equation*}
        Therefore, by \cite[Proposition 3.2.1]{delellis-focardi}, 
        \begin{equation*}
            m_i(x,\tau_i^k r_i) \leq \tau_i^\frac{k}{2} m_i(x,r_i) \quad \forall k \in \N.
        \end{equation*}
        Let $0<\rho<r_i$. Then there is $k \in \N$ such that $\tau_i^k r_i < \rho \leq \tau_i^{k-1}r_i$ and 
        \begin{equation*}
            m_i(x,\rho) \leq \tau_i^{-\frac{7}{2}} \tau_i^\frac{k}{2} m_i(x,r_i) \leq \tau_i^{-\frac{7}{2}} m_i(x,r_i) \left(\frac{\rho}{r_i}\right)^\frac{1}{2} \leq C'' \delta^\frac{1}{8} \left(\frac{\rho}{r}\right)^\frac{1}{16}.
        \end{equation*}
    \end{enumerate}
\end{proof}
\section{Proof of technical lemmas}
In this section we prove Lemma \ref{lmm: tilt lemma}, \ref{lmm: Lipschitz approximation}, \ref{lmm: flatness improvement}, \ref{lmm: flatness preserved in the unfavourable case}, \ref{lmm: rescaled dirichlet energy decay}.
\subsection{Proof of Lemma \ref{lmm: tilt lemma}}
\label{section: proof of tilt lemma}
\begin{proof}[Proof of Lemma \ref{lmm: tilt lemma}]
    Let $\eps$ be the constant of the density lower bound (Theorem \ref{thm: density lower bound for K}). Define $A:= \frac{40}{\eps^\frac{1}{3}}$ and 
    let $2B'$ the constant associated to radius $2\overline{r}$, given by Lemma \ref{lmm: radius for boundary}.
    \begin{enumerate}[leftmargin=*]
        \item Let us prove \ref{item: tilt lemma, case D/r < 1}. 
        Up to rotations and translations, we can assume that $\mathscr{V} = \mathscr{V}_0 := \{(x_1,x_2): x_2 = 0\}$ and $x=0$.
        By the choice of $\overline{r}$ there exists $h \in C^2(\R)$, with  $h(0)=h'(0)=0$, such that
       $
            B_r \cap \Omega = \{(x_1,x_2) \in B_r: x_2 > h(x_1) - D\}.
        $
        Then it suffices to show that
        \begin{equation*}
            \frac{1}{r} \int_{K \cap B_\frac{r}{3}} e_2^2 \, \dd{\HH^1} \leq C \left(d(\tfrac{2r}{3}) + \frac{1}{r^3} \int_{K \cap B_\frac{2r}{3}} |x_2|^2 \, \dd{\HH^1} +\left(\frac{D}{r}\right)^2+r\right), 
        \end{equation*}
        where $e: K \to S^1$ is a tangent vector field to $K$ with $e(x) = (e_1(x), e_2(x))$. 
        Adapting the Proof of \cite[Lemma 3.3.2]{delellis-focardi}, define 
        \begin{equation*}
            \eta(x) = \varphi^2(x)(0,x_2-h(x_1)+D),
        \end{equation*}
       where $\varphi \in C^\infty_c(B_\frac{2r}{3}, [0,1])$, $\varphi \equiv 1$ on $B_\frac{r}{3}$, $\norm{\nabla \varphi}_{L^\infty(B_\frac{2r}{3})} \leq \frac{6}{r}$.
        
        Define $\Phi_\eps(x) = x + \eps \eta(x)$ for every $x \in \R^2$ and $\eps \in \R$.
        Observe that $\Phi_\eps |_{(\partial B_r \cap \Omega)\cup (\partial \Omega \cap B_r)}$ is the identity and, for sufficiently small $\abs{\eps}$, $\Phi_\eps$ is a diffeomorphism of $B_r \cap \Omega$ onto itself. 
    
        Define $K_\eps = \Phi_\eps(K)$ and $u_\eps = u \circ \Phi_\eps^{-1}$.
        Then $(K_\eps, u_\eps) \in \mathcal{A}(\Omega,g)$ for sufficiently small $\abs{\eps}$.
        Since $(K,u) \in \argmin\{E(J,v): (J,v) \in \mathcal{A}(\Omega,g)\}$, we have 
        $\derivative{}{\eps}E(K_\eps,u_\eps)|_{\eps = 0} = 0$. Lemma \ref{lmm: first derivative of the energy for generalized minimizers} then implies  
        \begin{equation*}
            \int_{B_r \cap \Omega \setminus K} (|\nabla u|^2 \diverg \eta - 2 \nabla u^T D\eta \, \nabla u) + \int_{K \cap B_r} e^T D\eta \, e \, \dd{\HH^1} = 0.
        \end{equation*}
        Computing $D\eta$ we get 
        \begin{equation*}
            \varphi^2 e_2^2 = e^T D\eta e - 2 \varphi (x_2 - h(x_1)+D) e_2 \nabla \varphi \cdot e + \varphi^2 h'(x_1) e_1e_2.
        \end{equation*}
        Then, by Cauchy's inequality \cite[B.2.a]{evans}, 
        \begin{equation*}
            \varphi^2e_2^2 \leq 2e^T D\eta e + 4 (x_2-h(x_1)+D)^2 |\nabla \varphi|^2 + 2\varphi^2 |h'(x_1)|.
        \end{equation*}
        The conclusion then follows by integrating the above on $K \cap B_r$ and taking into account the fact that 
        $|h(x_1)|\leq B' x_1^2$ and $|h'(x_1)|\leq 2B'|x_1|$ by the choice of $\overline{r}$ and $B'$.
    \end{enumerate}
        We define $\delta = \frac{\eps}{27}, B = 9B', E = \frac{1}{9}$.
    \begin{enumerate}[1. ,resume,leftmargin=*]
        \item Let us prove \ref{item: tilt lemma, case > b^1/3}. Assume that \eqref{eq: tilt lemma, case > b^1/3}, \eqref{eq: tilt lemma, condition on the radius to have curvature small}, \eqref{eq: tilt lemma, smallness of flatness assumption} and \eqref{eq: tilt lemma, angle condition} hold.
        Without loss of generality, assume that $\theta \in (0,\frac{\pi}{2})$. 
        Change coordinates so that $x=0$, $R_\theta(\mathscr{V}) = \{x_2=0\} = \mathscr{V}_0$.
        We have to show that $\exc_{\mathscr{V}_0}(\tfrac{r}{4}) \leq C (d(r)+\beta_{\mathscr{V}_0}(r))$.
         By the density lower bound (Theorem \ref{thm: density lower bound for K}) and by \eqref{eq: tilt lemma, smallness of flatness assumption}, arguing by contradiction, it is easy to obtain the following height bound for $K$:
        \begin{equation}
            \label{eq: tilt lemma, height bound}
            K \cap B_{\frac{2r}{3}} \subset \{x: |x_2| \leq 2 \left(\frac{\beta_{\mathscr{V}_0}(r)}{\eps}\right)^\frac{1}{3} r\}.
        \end{equation}
        Moreover, by the choice of $A$, $B$, $E$ and $\theta$, it is easy to show that
            \begin{equation}
                \label{eq: tilt lemma, strip does not intersect the boundary}
                \{x: |x_2| \leq 4 \left(\frac{\beta_{\mathscr{V}_0}(r)}{\eps}\right)^\frac{1}{3} r\} \cap B_{r} \cap \partial \Omega = \emptyset.
            \end{equation}
        It remains to show that 
        \begin{equation*}
            \frac{1}{r} \int_{K \cap B_\frac{r}{3}} e_2^2 \, \dd{\HH^1} \leq C \left(d(\tfrac{2r}{3}) + \frac{1}{r^3} \int_{K \cap B_\frac{2r}{3}} |x_2|^2 \, \dd{\HH^1}(x)\right).
        \end{equation*}
        Let us define $\rho = 2 \left(\frac{\beta_{\mathscr{V}_0}(r)}{\eps}\right)^\frac{1}{3} r$ and let $\zeta \in C^\infty_c((-2\rho, 2\rho); [0,1])$ be such that $\zeta \equiv 1$ in $(-\rho,\rho)$, $\norm{\zeta'}_\infty \leq \frac{6}{\rho}$. Let $\varphi \in C^\infty_c(B_\frac{2r}{3}, [0,1])$, $\varphi \equiv 1$ on $B_\frac{r}{3}$, $\norm{\nabla \varphi}_{L^\infty(B_\frac{2r}{3})} \leq \frac{6}{r}$.
        Define, for every $x \in \R^2$
        \begin{equation*}
            \eta(x) = \varphi^2(x) \zeta(x_2) (0,x_2)
        \end{equation*}
        and observe that $\eta(x) = \varphi^2(x)(0,x_2)$   for every $x \in K \cap B_{\frac{2r}{3}}$ and $\eta(x) = 0$ for every $ x \in \partial \Omega \cap B_{\frac{2r}{3}}$.
        Then, arguing as above, 
        \begin{equation*}
            \varphi^2 e_2^2 \leq 2 e^T D\eta \, e + 4 |x_2|^2 |\nabla \varphi|^2 \quad \text{ on } K \cap B_{\frac{2r}{3}}.
        \end{equation*}
        Define $K_\eps$ and $u_\eps$ as above. Then, for sufficiently small $|\eps|$, $(K_\eps, u_\eps) \in \mathcal{A}(\Omega,g)$ and hence, with easy computations and for some constant $C$, 
        \begin{equation*}
            \begin{split}
                \left|\int_{K \cap B_{\frac{2r}{3}}} e^T D\eta \, e \right| &\leq C \norm{D \eta}_{L^\infty(B_{\frac{2r'}{3}}\cap \{x: \, |x_2| \leq 2\rho\})} \int_{B_{\frac{2r}{3}} \cap \Omega \setminus K} |\nabla u|^2
                \leq C r d(\tfrac{2r}{3})
            \end{split}
        \end{equation*}
    \end{enumerate}
\end{proof}
\subsection{Proof of Lemma \ref{lmm: Lipschitz approximation}}
\label{section: proof of Lipschitz approximation}
\begin{lmm}[Invertible projection]
    \label{lmm: vertical separation}
    Let $\overline{r}>0$ be as in Lemma \ref{lmm: tilt lemma}.
    There exist $\eps, \tau, \eta > 0$ such that if
    \begin{enumerate}[label=(\roman*)]
        \item  $(K,u)$, $x$, $y$, $r$, $D$, $\mathscr{V}$ satisfy property \eqref{eq: tilt lemma, K,u,x,r,D,V condition}, 
        \item $\frac{D}{r}<\eta$,
        \item $z,z' \in K \cap B_{\frac{\tau}{4}r}(x)$, $\alpha:=|z-z'|$,  
        \item $\sup_{\frac{\alpha}{2}< \rho < \frac{\alpha}{\tau}} (d(z,\rho)+\exc_{\mathscr{V}}(z,\rho)) < \eps$,
    \end{enumerate}
    then $$|\pi^\perp(z-z')| \leq |\pi(z-z')|,$$
    where $\pi$ and $\pi^\perp$ are the orthogonal projections onto $\mathscr{V}$ and $\mathscr{V}^\perp$.
\end{lmm}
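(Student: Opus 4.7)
The plan is to argue by contradiction via a blow-up at the scale $\alpha=|z-z'|$, showing that the limit is an elementary generalized minimizer which, by classification, must be a single horizontal line, while simultaneously containing the point $w_0$ at positive height.

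Negating the conclusion, I extract sequences $\eps_n,\tau_n,\eta_n\downarrow 0$ and minimizers $(K_n,u_n)\in \mathcal{A}(\Omega,g)$ with points $x_n,z_n,z_n'$ satisfying the hypotheses for these constants but with $|\pi^\perp(z_n-z_n')|>|\pi(z_n-z_n')|$. Let $\alpha_n:=|z_n-z_n'|$ and $w_n:=(z_n'-z_n)/\alpha_n$, so $|w_n|=1$; after rotating so that $\mathscr{V}_n$ aligns with $\mathscr{V}_0=\{x_2=0\}$, we have $|(w_n)_2|\geq 1/\sqrt{2}$. Rescaling around $z_n$ by $\alpha_n$, Proposition~\ref{prop: global convergence of functions, blow-up} and Theorem~\ref{thm: blow-up limit} yield, along a subsequence, a generalized minimizer $(K_0,u_0)$ on some $\Omega_0$ that is either a half-plane $\{x_2>-a\}$ with $a\in[0,\infty)$ or all of $\R^2$, depending on whether $\dist(z_n,\partial\Omega)/\alpha_n$ stays bounded or diverges. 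Hausdorff convergence gives $0\in K_0$, and along a further subsequence $w_n\to w_0\in K_0$ with $|w_0|=1$ and $|(w_0)_2|\geq 1/\sqrt{2}$.

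Next I claim that $(K_0,u_0)$ is elementary and has tangent $\mathscr{V}_0$ $\HH^1$-a.e.\ outside $\overline{B_{1/2}(0)}$. Fix $\rho>1/2$; since $\tau_n\downarrow 0$, eventually $\rho<1/\tau_n$, so by hypothesis $d(z_n,\alpha_n\rho)<\eps_n$. Theorem~\ref{thm: blow-up limit}(ii), valid for a.e.\ such $\rho$ to avoid the exceptional set $\HH^1(\partial B_\rho\cap K_0\cap \Omega_0)>0$, gives
\begin{equation*}
    \int_{B_\rho\cap (\Omega_0\setminus K_0)}|\nabla u_0|^2=\lim_{n}\rho\, d(z_n,\alpha_n\rho)=0.
\end{equation*}
Letting $\rho\to\infty$ yields $\int |\nabla u_0|^2=0$, so $(K_0,u_0)$ is elementary. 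The same reasoning applied to the excess (with $\mathscr{V}_n\to \mathscr{V}_0$ handled by lower semicontinuity of the varifold-excess integrand) gives $\exc_{\mathscr{V}_0}(K_0,0,\rho)=0$ for a.e.\ $\rho>1/2$, proving the tangent claim. Finally, I apply the classification: in the half-plane case, Theorem~\ref{prop: classification of elementary generalized minimizers} (after translating so that $\partial\Omega_0$ becomes the canonical boundary) forces $K_0$ into $\{\emptyset,\partial\Omega_0,\{x_2=c\}\}$, and $0\in K_0$ selects the horizontal line through the origin, that is $K_0=\{x_2=0\}$; in the $\R^2$ case, the interior classification \cite[Theorem~2.4.1]{delellis-focardi} permits $\emptyset$, a line, or a propeller, but the a.e.\ horizontal tangent excludes the propeller and constrains the line to be $\mathscr{V}_0$, so again $K_0=\{x_2=0\}$. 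But then $w_0\in K_0$ forces $(w_0)_2=0$, contradicting $|(w_0)_2|\geq 1/\sqrt{2}$.

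The main obstacle is ensuring the limit is \emph{globally} elementary rather than elementary only on a bounded region. This is exactly why I extract the contradiction with $\tau_n\downarrow 0$ rather than merely $\eps_n\downarrow 0$: the annulus of scales $(\alpha_n/2,\alpha_n/\tau_n)$ on which smallness is assumed must expand to $(1/2,\infty)$ after rescaling, so that the zero Dirichlet and zero excess persist at every fixed scale in the limit. A secondary but routine issue is the lower-semicontinuous passage of the excess under varifold convergence, which relies on $\mathscr{V}_n\to\mathscr{V}_0$ together with the convergence statements in Theorem~\ref{thm: blow-up limit}.
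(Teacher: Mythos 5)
Your proposal is correct and follows essentially the same route as the paper: contradiction, blow-up at scale $\alpha_n=|z_n-z_n'|$, vanishing Dirichlet energy to show the limit is elementary, classification, and the incompatibility of $w_0\in K_0$ with a horizontal $K_0$. The paper also extracts all three parameters as $1/j$, takes $\varphi_j(z_j)=0\in K_0$ and $w_j:=\varphi_j(z_j')\to w_0\in K_0$ with $|w_0|=1$, $|(w_0)_2|\geq|(w_0)_1|$, and treats the three possible blow-up domains.

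Two small points of divergence worth noting. First, for the boundary cases ($H=\{x_2>-a\}$, $a\in[0,\infty)$) the paper never touches the excess: once $(K_0,u_0)$ is elementary and $0\in K_0$, Theorem~\ref{prop: classification of elementary generalized minimizers} already forces $K_0=\{x_2=0\}$, so the tangent information comes for free from the half-plane classification. You invoke the excess in all cases; it is genuinely needed only in the case $H=\R^2$, where the interior classification (line in an arbitrary direction, or propeller) cannot by itself rule out $|(w_0)_2|>0$ — and this is precisely the case the paper delegates to \cite[Proof of Lemma~3.3.3]{delellis-focardi}. Second, your passage of the tilt-excess to the limit ("lower semicontinuity of the varifold-excess integrand") deserves one more sentence: Theorem~\ref{thm: blow-up limit}(ii) only gives convergence of the mass measure $\HH^1\mres K_n'$, and one must upgrade to convergence in the varifold topology (via the uniform bound on first variation coming from the Tilt Lemma, plus Allard compactness) before the Grassmannian integrand can be passed to the limit. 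This is routine but not automatic, which is exactly why the paper sidesteps it for the boundary cases by using the classification directly.
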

\begin{proof}
    The proof follows the same idea of \cite[Proof of Lemma 3.3.3]{delellis-focardi}.
    Suppose, by contradiction, that the statement is false. 
    Then, for every $j\in\N$, there exist \minimizer{K_j}{u_j}{\Omega}{g}, $x_j \in K_j$, $r_j \in (0,\overline{r})$ such that $D_j = \dist(x_j,\partial\Omega) < \frac{1}{j} r_j$, $y_j \in \partial \Omega$ such that $|y_j-x_j|=D_j$, $T_{y_j}\partial\Omega = \mathscr{V}_j$, $z_j, z_j' \in K_j \cap B_{\frac{1}{4j}r_j}(x_j)$, $\alpha_j:= |z_j-z_j'|$, 
    \begin{equation*}
        \sup_{\frac{\alpha_j}{2}<\rho<j \alpha_j} (d(z_j,\rho) + \exc_{\mathscr{V}_j}(z_j,\rho))< \frac{1}{j}, \quad  |\pi_j^\perp(z_j-z_j')| > |\pi_j(z_j-z_j')|,
    \end{equation*} 
    where $\pi_j$ and $\pi_j^\perp$ are the orthogonal projections onto $\mathscr{V}_j$ and $\mathscr{V}_j^\perp$.

    For a subsequence that we do not rename, $y_j \to y \in \partial \Omega$, and hence $z_j \to y$.
    Up to translations and rotations, we can assume that $y=0$ and $T_y\partial \Omega = \{x_2=0\}$.

    We define $\varphi_j(x) = \frac{x-z_j}{\alpha_j}
    $ for every $x \in \R^2$ and $K_j' = \varphi_j(K_j) = (K_j)_{z_j,\alpha_j}$, $\Omega_j' = \Omega_{z_j,\alpha_j}$,$u_j' = (u_j - g(z_j))_{z_j,\alpha_j}.$
    Then \minimizer{K_j'}{u_j'}{\Omega_j'}{(g-g(z_j))_{z_j,\alpha_j}}.
    Observe that $\varphi_j(z_j) = 0$, $|\varphi_j(z_j')|=1$ for every $j$. Moreover, $|\pi_j^\perp\varphi_j(z_j')| > |\pi_j\varphi_j(z_j')|$.

    For a subsequence that we do not rename, $\Omega_j' \to H$ locally in the Hausdorff distance. 
    \begin{enumerate}[leftmargin=*]
        \item If $H = \R^2$ then the conclusion follows from \cite[Proof of Lemma 3.3.3]{delellis-focardi}.
        \item Assume, instead, that $H=\{x_2 > 0\}$.
        By Proposition \ref{prop: global convergence of functions, blow-up} and Theorem \ref{thm: blow-up limit}, for a subsequence that we do not rename $(K_j',u_j')$ converges to $(K_0,u_0)$ in the sense of the above Proposition.  
        Moreover, for any $R>\frac{1}{2}$,
        \begin{equation*}
            \int_{B_R} |\nabla u_0|^2 = \lim_{j\to \infty}\int_{B_R \cap (\Omega_j' \setminus K_j')} |\nabla u_j'|^2 = R \lim_{j \to \infty} d(z_j, \alpha_j R) = 0, 
        \end{equation*}
        and hence $(K_0,u_0)$ is an elementary generalized minimizer, according to Definition \ref{def: elementary generalized minimizers}. 
        Moreover, $0 = \lim \varphi_j(z_j) \in K_0$ and hence, by Theorem \ref{prop: classification of elementary generalized minimizers}, $K_0 = \{x_2 = 0\}$. 
        On the other hand, since $|\varphi_j(z_j')|=1$, for a subsequence that we do not rename $\varphi_j(z_j') \to w \in K_0$, with $|w|=1$ and $|w_2| \geq |w_1|$. This implies that $|w_2|>0$ which is a contradiction. 
        \item Finally, assume that, $H = \{x_2 > -a\}$ for some $a>0$. As in the previous point, $0 \in K_0$. Therefore, by Proposition \ref{prop: classification of elementary generalized minimizers}, $K_0 = \{x_2 = 0\}$. Then the same argument of the previous point gives a contradiction. 
    \end{enumerate}
\end{proof}
\begin{proof}[Proof of Lemma \ref{lmm: Lipschitz approximation}]
    It follows from the same ideas of \cite[Proof of Proposition 3.3.1]{delellis-focardi}  
 \end{proof} 
\subsection{Proof of Lemma \ref{lmm: flatness improvement}}
\label{section: proof of flatness improvement}
\begin{proof}[Proof of Lemma \ref{lmm: flatness improvement}]
    \begin{enumerate}[leftmargin=*]
        \item Let us prove \ref{item: flatness improvement, case D/r < b^1/2}. 
        Suppose, by contradiction, that the statement is false. Let $\tau_1 >0$. Then there exist $\tau \leq \tau_1$, $\overline{\delta}>0$ such that, for every $j \in \N$, there exist \minimizer{K_j}{u_j}{\Omega}{g}, $x_j \in K_j$, $r_j \in (0, \overline{r})$, $y_j \in \partial \Omega$ such that $|y_j-x_j| = D_j = \dist(x_j,\partial\Omega)$, $\mathscr{V}_j = T_{y_j} \partial \Omega$, 
        such that 
        \begin{align}
            &\overline{\delta} \max\{d(x_j,r_j), r_j^\frac{1}{2}\} \leq \beta_{x_j+\mathscr{V}_j}(x_j,r_j) =: \beta_j \leq \frac{1}{j},  \label{eq: smallness assumptions in the proof of energy decay lemma} \\
            &0\leq \frac{D_j}{r_j} \leq \frac{1}{j} \beta_{x_j+\mathscr{V}_j}(x_j,r_j)^\frac{1}{2},  \label{eq: flatness decay, D/r = o(beta^1/2)} \\
            &\beta_{x_j+\mathscr{V}_j}(x_j,\tau r_j) > \tau \beta_j.
        \end{align}
        Now, for every $j \in \N$, translate and rotate the plane in such a way that, in the new coordinates, in which $\Omega$ and $g$ get renamed to $\Omega_j$ and $g_j$, while $K_j$, $u_j$, $x_j$, $y_j$ are not renamed, $x_j = 0$ and $T_{y_j} \partial \Omega_j = \mathscr{V}_0 = \{x_2=0\}$.

        Therefore, 
        \begin{equation}
            \label{eq: flatness improvement on smaller scale contradiction, case < b^1/2}
            \int_{B_{\tau r_j} \cap K_j}|x_2|^2 \, \dd{\HH^1}(x) > \tau^4 r_j^3\beta_j \quad \forall j \in \N.
        \end{equation}
        Let $\sigma >0$ be the constant of Lemma \ref{lmm: Lipschitz approximation}.
        For sufficiently large $j \in \N$, let $f_j: [-\sigma r_j, \sigma r_j] \to \R$ be the function given by Lemma \ref{lmm: Lipschitz approximation} \ref{item: Lipschitz approximation, D/r < 1} and let $\Gamma_j = \graph(f_j) \subset [-\sigma r_j, \sigma r_j] \times [-\frac{\sigma}{2}r_j, \frac{\sigma}{2}r_j]$. 
        
        Since $r_j \leq \overline{r}$ and $D_j \leq r_j$, we have, for every $j \in \N$, $\Omega_j \cap B_{r_j} = \{(x_1,x_2) \in B_{r_j} : x_2 > b_j(x_1) - D_j\}$
        where $b_j \in C^2(R)$, $b_j(0) = b_j'(0) = 0$ and $|b_j(t)| \leq C t^2$ for some constant $C$ and for every $t \in (-2r_j, 2r_j)$.
        Moreover, we have $f_j \geq b_j-D_j$.
    
        By Lemma \ref{lmm: Lipschitz approximation} \ref{item: Lipschitz approximation, D/r < 1} (iii) and (iv), we have, setting $C' = C(1+2 \overline{\delta}^{-1})$,
        \begin{align}
            \HH^1((\Gamma_j \triangle K_j) \cap [-\sigma r_j, \sigma r_j]^2) &\leq C' r_j \beta_j \label{eq: length of graph(f_j) Delta K_j}\\
            \int |f_j'|^2 &\leq C' r_j \beta_j. \label{eq: f_j' bound in L^2}
        \end{align}    
        On the other hand, since $0 \in K_j$, by the density lower bound (Theorem \ref{thm: density lower bound for K}) we have, for sufficiently large $j \in \N$,
        \begin{equation*}
            \HH^1(K_j \cap B_{2 \frac{C'}{\eps} r_j \beta_j}) \geq 2C' r_j \beta_j.
        \end{equation*}
        This and \eqref{eq: length of graph(f_j) Delta K_j} imply that, for sufficiently large $j$, there exists $z_j = (t_j, f_j(t_j)) \in K_j \cap B_{2 \frac{C'}{\eps} r_j \beta_j}$.
        For every $j \in \N$, we define $h_j: [-\sigma,\sigma] \to \R$ as follows. 
        \begin{equation*}
            h_j(t) = \frac{f_j(r_j t) - f_j(t_j)}{r_j \beta_j^\frac{1}{2}} \quad \forall t \in [-\sigma, \sigma].
        \end{equation*}
        Observe that $\{h_j'\}_j$ is uniformly bounded in $L^2((-\sigma, \sigma))$ by \eqref{eq: f_j' bound in L^2}. Moreover, the absolute continuity of $f_j$ and \eqref{eq: f_j' bound in L^2} imply that also $\{h_j\}_j$ is uniformly bounded in $L^2((-\sigma, \sigma))$.
        Then $\{h_j\}_j$ is uniformly bounded in $W^{1,2}((-\sigma,\sigma))$. Therefore there exists $h: [-\sigma, \sigma] \to \R$ such that, up to subsequences, $h_j \rightharpoonup h$ in $W^{1,2}((-\sigma,\sigma))$ and $h_j \to h$ uniformly in $[-\sigma, \sigma]$.
    
        Let $\zeta \in C^\infty_c((-\sigma, \sigma))$, $\zeta \geq 0$. Then 
        \begin{equation*}
            \int_{-\sigma}^\sigma h' \zeta ' = \lim_{j \to \infty} \frac{1}{r_j \beta_j^\frac{1}{2}} \int_{-\sigma r_j}^{\sigma r_j} f_j' \varphi_j'
        \end{equation*}
        where $\varphi_j(s):= r_j \zeta(\frac{s}{r_j})$ for every $s \in (-\sigma r_j, \sigma r_j)$.
        Moreover, an easy computation using \eqref{eq: f_j' bound in L^2} and $\lip(f_j) \leq 1$ shows that
        \begin{equation*}
            \lim_{j \to \infty} \frac{1}{r_j \beta_j^\frac{1}{2}} \int_{-\sigma r_j}^{\sigma r_j} f_j' \varphi_j' = \lim_{j \to \infty} \frac{1}{r_j \beta_j^\frac{1}{2}} \int_{-\sigma r_j}^{\sigma r_j} \frac{f_j' \varphi_j'}{\sqrt{1+|f_j'|^2}}.
        \end{equation*}
        Now observe that 
        \begin{equation*}
            \int_{-\sigma r_j}^{\sigma r_j} \frac{f_j' \varphi_j'}{\sqrt{1+|f_j'|^2}} 
            = \derivative{}{t} \HH^1(\Phi_t(\Gamma_j))|_{t=0} 
            = \int_{\Gamma_j \cap [-\sigma r_j, \sigma r_j]^2} e_j^T \, D \eta_j \, e_j \, \dd{\HH^1},
        \end{equation*}
        where $\Phi_t(x) = x+t\eta_j(x)$, $\eta_j(x) = (0,\varphi_j(x_1)\psi_j(x_2))$ for every $x \in \R^2$, $\psi_j(s) = \psi(\frac{s}{r_j})$ for every $s \in (-\sigma r_j, \sigma r_j)$, $\psi \in C^\infty_c((-\sigma,\sigma))$, $\psi \geq 0$, $\psi \equiv 1$ in $(-\frac{\sigma}{2}, \frac{\sigma}{2})$, and $e_j$ is a unit vector field tangent to $\Gamma_j$.
        By \eqref{eq: length of graph(f_j) Delta K_j} we have
        \begin{equation*}
            \int_{-\sigma}^\sigma h' \zeta ' = \lim_{j \to \infty} \frac{1}{r_j \beta_j^\frac{1}{2}} \int_{K_j \cap [-\sigma r_j, \sigma r_j]^2} e_j^T \, D \eta_j \, e_j \, \dd{\HH^1}.
        \end{equation*}
        For every $t\geq 0$, define $K_j^t = \Phi_t(K_j)$ and $u_j^t = u_j \circ \Phi_t^{-1}$.
        By construction, $\Phi_t|_{\partial [-\sigma r_j, \sigma r_j]^2}$ is the identity. 
        Moreover, we extend $u_j$ to $\{(x_1,x_2): x_1 \in [-\sigma r_j, \sigma r_j], \, x_2 < b_j(x_1)-D_j\}$ as follows. 
        \begin{equation*}
            u_j(x_1,x_2) := g_j(x_1,b_j(x_1)) \quad \forall x_1 \in [-\sigma r_j, \sigma r_j], \, x_2 < b_j(x_1)-D_j.
        \end{equation*}
        Then, by construction, $u_j^t = g_j$ on $\partial \Omega_j \cap [-\sigma r_j, \sigma r_j]^2$.
        Therefore $(K_j^t, u_j^t) \in \mathcal{A}(\Omega_j,g_j)$ for $t \geq 0$. 
    
        By the same computations of Lemma \ref{lmm: first derivative of the energy for generalized minimizers}, this implies that
        \begin{equation*}
            \int_{[-\sigma r_j, \sigma r_j]^2\cap \Omega_j \setminus K_j} (|\nabla u_j|^2 \diverg \eta_j - 2 \nabla u_j^T D\eta_j \, \nabla u_j) + \int_{K_j \cap [-\sigma r_j, \sigma r_j]^2} e_j^T D\eta_j \, e_j  \geq 0.
        \end{equation*}
        Since the absolute value of the first term above is controlled by $r_j \beta_j$, we conclude that
        \begin{equation*}
            \int_{-\sigma}^\sigma h' \zeta ' \geq 0 \quad \forall \zeta \in C^\infty_c((-\sigma, \sigma)), \, \zeta \geq 0.
        \end{equation*}
        Hence $h$ is weakly superharmonic and, as a consequence, $h$ satisfies the strong minimum principle: if there is $y \in (-\sigma, \sigma)$ such that $h(y) = \inf_{(-\sigma, \sigma)} h$, then $h$ is constant.
    
        First, we claim that $h \geq 0$. Indeed, $f_j \geq b_j-D_j$ by Lemma \ref{lmm: Lipschitz approximation}, and hence 
        \begin{equation*}
            h_j(t) \geq  \frac{b_j(r_j t)-D_j-f_j(t_j)}{r_j \beta_j^\frac{1}{2}}.
        \end{equation*}
        Recall that $|b_j(r_j t)| \leq C \sigma^2 r_j^2$ for every $t \in (-\sigma, \sigma)$. By \eqref{eq: smallness assumptions in the proof of energy decay lemma} this implies that $|b(r_j t)| \leq C \sigma \overline{\delta}^{-1} r_j \beta_j$.
        Furthermore, $|f_j(t_j)| \leq 2\frac{C'}{\eps} r_j \beta_j$ by construction.
        Finally, \eqref{eq: flatness decay, D/r = o(beta^1/2)} implies that $\frac{D_j}{r_j} = o(\beta_j^\frac{1}{2})$.
        We conclude that the right hand side of the above inequality goes to $0$ as $j\to \infty$, and hence $h \geq 0$. 
    
        Second, we claim that $h(0) = 0$. Indeed, $\frac{t_j}{r_j} \to 0$, $h_j(\frac{t_j}{r_j}) = 0$ and $h_j \to h$ uniformly.
       
        Then, by the strong minimum principle, we conclude that $h \equiv 0$. 
        As a conclusion, we get, by a simple computation, 
        \begin{equation*}
            \int_{B_{\tau r_j} \cap K_j} |x_2 - f_j(t_j)|^2 \, \dd{\HH^1}(x) = o(r_j^3 \beta_j)
        \end{equation*}
        and hence, since $|f_j(t_j)| \leq 2\frac{C'}{\eps}r_j\beta_j$, 
        \begin{equation*}
            \int_{B_{\tau r_j} \cap K_j} |x_2|^2 \, \dd{\HH^1}(x) = o(r_j^3 \beta_j)
        \end{equation*}
        which contradicts \eqref{eq: flatness improvement on smaller scale contradiction, case < b^1/2}.
        \item Let us prove \ref{item: flatness improvement, case D/r > b^1/3}. 
        Suppose, by contradiction, that the statement is false. Let $\tau_1 >0$. Then there exist $\tau \leq \tau_1$, $\overline{\delta}>0$ such that, for every $j \in \N$, there exist \minimizer{K_j}{u_j}{\Omega}{g}, $x_j \in K_j$, $r_j \in (0, \overline{r})$, $\theta_j \in [0,2\pi)$, $y_j \in \partial \Omega$ such that $|y_j-x_j| = D_j = \dist(x_j,\partial\Omega)$, $\mathscr{V}_j = T_{y_j} \partial \Omega$, 
        such that
        \begin{align*}
            &A \beta_{x_j+R_{\theta_j}(\mathscr{V}_j)}(x_j,r_j)^\frac{1}{3} < \frac{D_j}{r_j} < \eta, \\
            &\overline{\delta} \max\{d(x_j,r_j), r_j^\frac{1}{2}\} \leq \beta_{x_j+R_{\theta_j}(\mathscr{V}_j)}(x_j,r_j), \\
            &|\sin\theta_j| < E \frac{D_j}{r_j}, \quad |\cos\theta_j| > \frac{9}{10},\\
            &\beta_j:= \beta_{x_j+R_{\theta_j}(\mathscr{V}_j)}(x_j,r_j) \leq \frac{1}{j} \\
            &\min_{|\tan(\phi)|\leq b_j}\beta_{x_j+R_{\phi+\theta_j}(\mathscr{V}_j)}(x_j,\tau r_j) > \tau \beta_{x_j+R_{\theta_j}(\mathscr{V}_j)}(x_j,r_j),
        \end{align*}
        where $b_j = \sqrt{\frac{6(1+2\overline{\delta}^{-1})C}{\sigma}} \beta_j^\frac{1}{2}$.

        Now, for every $j \in \N$, translate and rotate the plane in such a way that, in the new coordinates, in which $\Omega$ and $g$ get renamed to $\Omega_j$ and $g_j$, while $K_j$, $u_j$, $x_j$, $y_j$ are not renamed, $x_j = 0$ and $R_{\theta_j}(\mathscr{V}_j) = \mathscr{V}_0 = \{x_2 = 0\}$.
        Therefore 
        \begin{equation}
            \label{eq: flatness improvement on smaller scale contradiction, case > b^1/3}
            \int_{B_{\tau r_j} \cap K_j} \dist^2(x,R_\phi(\mathscr{V}_0))\, \dd{\HH^1}(x) > \tau^4 r_j^3 \beta_j
        \end{equation}
        for every $\phi$ such that
        $|\tan\phi| \leq \sqrt{\frac{6(1+2\overline{\delta}^{-1})C}{\sigma}} \beta_j^\frac{1}{2}$
        and for every $j \in \N$. 
    
        For sufficiently large $j \in \N$, let $f_j: [-\sigma r_j, \sigma r_j] \to \R$ be the function given by Lemma \ref{lmm: Lipschitz approximation} \ref{item: Lipschitz approximation, D/r > b^1/3} and let $\Gamma_j = \graph(f_j) \subset [-\sigma r_j, \sigma r_j] \times [-\frac{\sigma}{2}r_j, \frac{\sigma}{2}r_j]$. 
        Let $h_j$, $h$ be as above. Let $\zeta \in C^\infty_c((-\sigma,\sigma))$. Then, as above, 
        \begin{equation*}
            \begin{split}
                \int_{-\sigma}^\sigma h' \zeta '  
                &= \lim_{j \to \infty} \frac{1}{r_j \beta_j^\frac{1}{2}} \derivative{}{t} \HH^1(\graph(f_j + t \varphi_j))|_{t=0}.
            \end{split}
        \end{equation*}
        Moreover, the above construction implies that $\norm{f_j}_\infty \lesssim r_j \beta_j^\frac{1}{2}$ and hence $\norm{f_j}_\infty \leq \frac{2}{\eps^\frac{1}{3}} \beta_j^\frac{1}{3}r_j  =: \rho_j$ for $j$ sufficiently large. 
        Let $\psi_j \in C^\infty_c((-2\rho_j, 2\rho_j); [0,1])$ be such that $\psi_j \equiv 1$ in $(-\rho_j, \rho_j)$ and $\norm{\psi_j'}_\infty \leq \frac{6}{\rho_j}$.
        We define 
        \begin{equation*}
            \eta_j(x) = (0,\varphi_j(x_1)\psi_j(x_2))\quad \forall x \in \R^2.
        \end{equation*}
        and $\Phi_t$, $K^t_j$, $u^t_j$ as above. 
        Then $\Phi_t(\Gamma_j) = \graph(f_j + t\varphi_j)$ and, by \eqref{eq: tilt lemma, strip does not intersect the boundary}, $\eta_j(x) = 0$ for every $x \in \partial\Omega_j \cap [-\sigma r_j, \sigma r_j]^2$.
        Therefore 
        \begin{equation*}
            \int_{K_j \cap [-\sigma r_j, \sigma r_j]^2} e_j^T D\eta_j \, e_j = - \int_{A_j} (|\nabla u_j|^2 \diverg \eta_j - 2 \nabla u_j^T D\eta_j \, \nabla u_j),
        \end{equation*} 
        where $A_j = [-\sigma r_j, \sigma r_j]^2\cap (\Omega_j \setminus K_j) \cap \{|x_2|\leq 2\rho_j\}$.
        With easy computations, this implies that
        \begin{equation*}
            \int_{-\sigma}^{\sigma} h'\zeta' = 0 \quad \forall \zeta \in C^\infty_c((-\sigma, \sigma)).
        \end{equation*}
        and hence $h(x_1) = ax_1$ for some $a \in \R$. Using the convergence $h_j \to h$ in $L^2((-\sigma, \sigma))$ and \eqref{eq: f_j' bound in L^2} it is easy to see that
        \begin{equation*}
            |a| \leq \sqrt{\frac{6C'}{\sigma}} = \sqrt{\frac{6(1+2\overline{\delta}^{-1})C}{\sigma}}.
        \end{equation*}
        Consider $\mathscr{W}_j = \{(x_1, \beta_j^\frac{1}{2} a x_1): x_1 \in \R\} \in \mathcal{L}.$
        Then it is easy to see that 
        \begin{equation*}
            \int_{B_{\tau r_j} \cap K_j} \dist^2(x,\mathscr{W}_j) \, \dd{\HH^1}(x) = o(r_j^3 \beta_j).
        \end{equation*}
        Finally, observe that $\mathscr{W}_j = R_{\phi_j}(\mathscr{V}_0)$
        with $|\tan\phi_j| = |a \beta_j^\frac{1}{2}| \leq \sqrt{\frac{6(1+2\overline{\delta}^{-1})C}{\sigma}} \beta_j^\frac{1}{2}$, contradicting \eqref{eq: flatness improvement on smaller scale contradiction, case > b^1/3}.
    \end{enumerate}
\end{proof}
\subsection{Proof of Lemma \ref{lmm: flatness preserved in the unfavourable case}}
\label{section: flatness in the unfavourable case}
\begin{proof}[Proof of Lemma \ref{lmm: flatness preserved in the unfavourable case}]
    Fix $\overline{\delta} > 0$ and $\iota_1 >0$. 
    Up to translations, rotations and dilations, assume that $x = 0$, $T_y \partial \Omega = \{x_2=0\}$ and $r = 1$. In this proof, let us use the notation $\beta(\rho):= \beta_{\mathscr{V}_0}(\rho)$. Then our assumption \eqref{eq: flatness unfavourable case} becomes  
    \begin{equation*}
        \iota_1 \beta(1)^\frac{1}{2} \leq D \leq A \beta(1)^\frac{1}{3},
    \end{equation*} 
    and we have to find $C>0$ and $\sigma \in (0,1)$ such that $\frac{D}{\sigma} > A \beta(\sigma)^\frac{1}{3}$, $\beta(\rho) \leq C \beta(1)^\frac{1}{8} \rho^\frac{1}{8}$ and $d(\rho) \leq C \beta(1)^\frac{1}{8} \rho^\frac{1}{8}$
    for any $\sigma \leq \rho \leq 1$.

    For sufficiently small $\eta_3 >0$, let $f: [-2\sigma' , 2\sigma' ] \to \R$ be the function given by Lemma \ref{lmm: Lipschitz approximation} \ref{item: Lipschitz approximation, D/r < 1} and let $\Gamma = \graph(f)$.
    We have 
    \begin{equation*}
        \HH^1((\Gamma \triangle K) \cap [-2\sigma', 2\sigma']^2) \leq C D^2, \quad  \int |f'|^2 \leq C D^2
    \end{equation*}
    for some constant $C$ depending on $\overline{\delta}$, on $\iota_1$, and on the constant $C$ of Lemma \ref{lmm: Lipschitz approximation}.

    Since $0 \in K$, the density lower bound (Theorem \ref{thm: density lower bound for K}) implies that there exists $z = (t, f(t)) \in B_{2\frac{C}{\eps} D^2}$.

    Let $0 < \sigma < \sigma'$ to be chosen later.
    By Hölder's inequality, we have 
    \begin{equation*}
        \norm{f}_{L^\infty(-2\sigma, 2\sigma)} \leq 2(C \sigma)^\frac{1}{2}D + 2\frac{C}{\eps}D^2.
    \end{equation*}
    Then, if 
    \begin{equation}
        \label{eq: flatness in the unfavourable case, condition for sigma}
        2 \frac{C}{\eps} D^2 < \sigma, 
    \end{equation}
    a simple contradiction argument using the density lower bound (Theorem \ref{thm: density lower bound for K}) implies that
    \begin{equation*}
        K \cap B_\sigma \subset \{x: |x_2| \leq 2(C \sigma)^\frac{1}{2}D + 4\frac{C}{\eps}D^2\}.
    \end{equation*}
    Now we estimate $\beta(\sigma)$. We have, by the energy upper bound (Proposition \ref{prop: Energy upper bound at the boundary}) and by the above estimates, 
    \begin{equation*}
        \begin{split}
            \beta(\sigma) &= \frac{1}{\sigma^3} \left(\int_{K \cap \Gamma \cap B_\sigma}|x_2|^2 \, \dd{\HH^1}(x) + \int_{(K \setminus \Gamma) \cap B_\sigma}|x_2|^2 \, \dd{\HH^1}(x) \right)\\
            &\leq \frac{C'}{\sigma^3} (\sigma^2 D^2 + \sigma D^4 + D^6) \leq \frac{3 C'}{\sigma^3} \max\{\sigma^2 D^2, \sigma D^4, D^6\}
        \end{split}
    \end{equation*}
    for some constant $C'$ that depends on $C$ and on the constant of the energy upper bound. 

    Now we define $\tilde{C} :=  \frac{1}{(3 C' A^3)^\frac{1}{2}}$ and we choose $\sigma := \tilde{C} D^\frac{1}{2}$.
    Recall that $D \leq A \beta(1)^\frac{1}{3} < A \eta_3^\frac{1}{3}$.
    Then condition \eqref{eq: flatness in the unfavourable case, condition for sigma} is true if we choose $\eta_3$ sufficiently small.

    \begin{enumerate}[leftmargin=*]
        \item Let us prove \ref{item: intermediate regime, D/sigma > beta(sigma)^1/3}. First, observe that
        \begin{equation*}
            \left(\frac{D}{\sigma}\right)^3 \geq \frac{D^3}{\sigma}\frac{3C'A^3}{D} = A^3 \frac{3C'}{\sigma^3} \sigma^2 D^2.
        \end{equation*}
        Second, for $\eta_3$ sufficiently small, we have 
        \begin{align*}
           &D^3 \geq 3 C' A^3 D^4 \geq 3 C' A^3 \sigma D^4, \\
           &D^3 \geq 3 C' A^3 D^6.
        \end{align*}
        We conclude by looking at each term appearing in the $\max$ in the above estimate of $\beta(\sigma)$.
        \item  Now let us prove \ref{item: intermediate regime, beta estimate} and \ref{item: intermediate regime, d estimate}. Assume that $\sigma \leq \rho \leq 1$. Observe that 
        \begin{equation*}
            \tilde{C} \iota_1^\frac{1}{2}\beta(1)^\frac{1}{4} \leq \tilde{C} D^\frac{1}{2} = \sigma \leq \rho.
        \end{equation*}
        Therefore
        \begin{equation*}
            \beta(\rho) \leq \frac{1}{\rho^3} \beta(1) \leq \frac{1}{\tilde{C}^3 \iota_1^\frac{3}{2}}\beta(1)^\frac{1}{4} \leq \frac{1}{\tilde{C}^\frac{7}{2} \iota_1^\frac{7}{4}}\beta(1)^\frac{1}{8} \rho^\frac{1}{2}.
        \end{equation*}
        If (d) holds, we have 
        \begin{equation*}
            d(\rho) \leq \frac{1}{\rho} d(1) \leq \frac{1}{\overline{\delta}\rho} \beta(1). 
        \end{equation*}
        If (e) holds, we have 
        \begin{equation*}
            d(\rho) \leq \frac{1}{\rho} d(1) \leq \frac{1}{\rho} D^2 \leq \frac{A^2}{\rho} \beta(1)^\frac{2}{3}. 
        \end{equation*}
        In both cases, 
        \begin{equation*}
            d(\rho) \lesssim \beta(1)^\frac{1}{4} \lesssim \beta(1)^\frac{1}{8} \rho^\frac{1}{2}.
        \end{equation*}
    \end{enumerate}
\end{proof}
\subsection{Proof of Lemma \ref{lmm: rescaled dirichlet energy decay}}
\label{section: rescaled dirichlet energy decay}
\begin{proof}[Proof of Lemma \ref{lmm: rescaled dirichlet energy decay}]
    \begin{enumerate}[leftmargin=*]
        \item The proof of \ref{item: Dirichlet, case 1} is based on \cite[Proof of Lemma 3.2.3]{delellis-focardi}.
        Suppose, by contradiction, that the claim is false. Let $\tau_2 >0$. Then there exist $\tau \leq \tau_2$, $\overline{\delta}>0$ such that, for every $j \in \N$, there exists \minimizer{K_j}{u_j}{\Omega}{g}, $x_j \in K_j$, $r_j \in (0, \overline{r})$, $y_j \in \partial \Omega$ such that $|y_j-x_j| = D_j = \dist(x_j,\partial\Omega)$, $\mathscr{V}_j = T_{y_j} \partial \Omega$, 
        such that $\frac{D_j}{r_j} \leq \frac{1}{j}$,
        \begin{align*}
            &0\leq \frac{D_j}{r_j} \leq A \beta_{x_j+\mathscr{V}_j}(x_j,r_j)^\frac{1}{2},\\
            &\overline{\delta}\max\{\beta_{x_j+\mathscr{V}_j}(x_j,r_j), r_j^\frac{1}{2}\} \leq d(x_j,r_j)=:d_j \leq \frac{1}{j},\\
            &d(x_j, \tau r_j) > \tau^\frac{1}{2} d_j. 
        \end{align*}
        Without renaming it, replace $(K_j, u_j)$ with $(K_j, u_j - g(x_j))$ (this does not affect its energy).
        
        Now, for every $j \in \N$, translate and rotate the plane in such a way that, in the new coordinates, in which $\Omega$ and $g - g(x_j)$ get renamed to $\Omega^j$ and $g^j$, while $K_j$, $u_j$, $x_j$, $y_j$ are not renamed, $x_j = 0$ and $T_{y_j} \partial \Omega^j = \mathscr{V}_0 = \{x_2=0\}$.
        Then we have 
        \begin{equation*}
            \int_{B_{\tau r_j}\cap \Omega^j \setminus K_j} |\nabla u_j|^2 > \tau^\frac{3}{2} r_j d_j.
        \end{equation*}
        Moreover, we define   
        \begin{equation*}
            \beta_j := \beta(r_j) = \frac{1}{r_j^3} \int_{B_{r_j} \cap K_j} |x_2|^2 \, \dd{\HH^1}(x).
        \end{equation*}

        Let $\overline{K}_j := r_j^{-1} K_j$ and $v_j(y) := (r_j d_j)^{-\frac{1}{2}}u_j(r_j y)$ for every $y \in r_j^{-1}\Omega^j =: \Omega_j$.
        By Lemma \ref{lmm: Lipschitz approximation} \ref{item: Lipschitz approximation, D/r < 1} (i) we have 
        \begin{equation*}
            K_j \cap B_\frac{r_j}{2} \subset \{|x_2| \leq C r_j \beta_j^\frac{1}{3} \}, 
        \end{equation*}
        and hence 
        \begin{equation*}
            \overline{K}_j \cap B_\frac{1}{2} \subset \{|x_2| \leq C \beta_j^\frac{1}{3}\}.
        \end{equation*}
        Moreover, by Lemma \ref{lmm: Lipschitz approximation} (v), for any $\eps >0$ 
        \begin{equation*}
            \lim_{j \to \infty} \HH^1([-\sigma, \sigma] \setminus \pi_{\mathscr{V}_0} (\overline{K}_j \cap \{|x_2| \leq \eps\}))  = 0.
        \end{equation*}
        Assuming $\sigma < \frac{1}{2}$, the two above properties imply that 
        $\overline{K_j} \cap [-\sigma, \sigma]^2$ converges to $\{x_2 = 0, |x_1|\leq\sigma \}$ in the Hausdorff distance. 
        Moreover, 
        \begin{equation*}
            \int_{B_1 \cap \Omega_j \setminus \overline{K}_j} |\nabla v_j|^2 = 1
        \end{equation*}
        for every $j \in \N$. 
        Observe that, for every $\eps >0$, $v_j$ is harmonic in $B_{\sigma} \cap \{x_2 > \eps\}$ for sufficiently large $j$. Therefore, it is easy to see that there exist $\{c_j\}_j \in \R$ such that, for a subsequence that we do not rename, 
        \begin{equation*}
            v_j - c_j \to v \quad \text{ in } W^{1,2}_\loc (B_\sigma^+), 
        \end{equation*}
        where $A^+ := A \cap \{x_2 > 0\}$.
    
        We claim that $|\nabla v_j|^2 \mathcal{L}^2 \mres (\Omega_j \setminus \overline{K}_j)$ does not concentrate on $[-\frac{\sigma}{2}, \frac{\sigma}{2}] \times \{0\}$ as $j \to \infty$. 
        Suppose by contradiction that this is false. Then there exists $\eps_j \downarrow 0$ and $\theta >0$ such that 
        \begin{equation}
            \label{eq: concentration of energy}
            \int_{(-\frac{\sigma}{2}, \frac{\sigma}{2}) \times (-\frac{\eps_j}{2}, \frac{\eps_j}{2}) \cap \Omega_j \setminus \overline{K}_j} |\nabla v_j|^2 \geq \theta.
        \end{equation}
        Without loss of generality, we can assume that 
        \begin{equation}
            \label{eq: choice of eps_j in decay lemma}
            \eps_j \geq C (\overline{\delta}^{-1} d_j)^\frac{1}{3}
        \end{equation}
        where $C$ is the constant of Lemma \ref{lmm: Lipschitz approximation}. As a consequence, $\eps_j \geq C \beta_j^\frac{1}{3}$. 
        We define $g_j(y) = (r_j d_j)^{-\frac{1}{2}}g^j(r_jy)$ for every $y \in \R^2$. 
        Furthermore, we define, for every $j \in \N$ and $(J,v) \in \mathcal{A}(\Omega_j, g_j)$, 
        \begin{equation*}
            F_j(J,v) = \int_{B_1 \cap \Omega_j \setminus J} |\nabla v|^2 + \frac{1}{d_j} \HH^1(B_1 \cap J).
        \end{equation*}
        Observe that 
        \begin{equation*}
            F_j(\overline{K}_j, v_j) = \frac{1}{r_j d_j} E(K_j, u_j, B_{r_j}\cap \Omega^j).
        \end{equation*}
        This implies that $(\overline{K}_j, v_j) \in \argmin\{F_j(J,v): (J,v) \in \mathcal{A}(\Omega_j, g_j)\}$.
        In what follows we want to construct a competitor for $(K_j,v_j)$ with strictly lower energy for sufficiently large $j$, to get a contradiction.
    
        Let $f_j: [-\sigma r_j, \sigma r_j] \to \R$ be the Lipschitz approximation of $K_j$ given by Lemma \ref{lmm: Lipschitz approximation} \ref{item: Lipschitz approximation, D/r < 1} and define 
        \begin{equation*}
            \overline{f}_j(t) = r_j^{-1}f_j(r_j t ) \quad \forall t \in [-\sigma, \sigma].
        \end{equation*}
        Since $0 \in K_j$, the density lower bound (Theorem \ref{thm: density lower bound for K}) and the properties of $f_j$ imply that 
        \begin{equation}
            \label{eq: bound on overline(f)_j}
            \norm{\overline{f}_j}_{L^\infty(-\sigma,\sigma)}  \lesssim d_j^\frac{1}{2}.
        \end{equation}
    
        Fix $\eps>0$ to be chosen later. By the pigeonhole principle, using 
        \begin{align*}
            &\int_{B_1 \cap \Omega_j \setminus \overline{K}_j} |\nabla v_j|^2 = 1, \\
            &\HH^1((\graph(\overline{f}_j) \triangle \overline{K}_j)\cap [-\sigma, \sigma ]^2) \leq C \overline{\delta}^{-1} d_j =: C' d_j,
        \end{align*}
        there exist $a_j' \in [-\sigma, -\frac{3}{4}\sigma]$, $b_j' \in [\frac{3}{4}\sigma, \sigma]$ such that 
        \begin{align*}
            &\int_{R_j \cap \Omega_j \setminus \overline{K}_j} |\nabla v_j|^2 \leq C \eps, \\
            &\HH^1((\graph(\overline{f}_j) \triangle \overline{K}_j)\cap R_j) \leq C \eps d_j, 
        \end{align*}
        where $C$ is a constant depending on $\sigma$ and $\delta$, and 
        \begin{equation*}
            R_j = ([a_j',a_j'+5\eps) \cup (b_j'-5\eps,b_j'])\times[-\sigma, \sigma].
        \end{equation*}
    
        Now, for every $j \in \N$, let $s_j$ be the lower convex envelope of $\overline{f}_j$ on $[a_j', b_j']$.
        We claim that, for every $j$, there exist $a_j \in [a_j', a_j'+4\eps)$, $b_j \in (b_j'-4\eps,b_j']$ such that 
        \begin{equation*}
            s_j(a_j) = \overline{f}_j(a_j), \quad s_j(b_j) = \overline{f}_j(b_j), \quad \lip(s_j) \leq \frac{\norm{\overline{f}_j}_\infty}{\eps}.
        \end{equation*}
        Indeed, let $j \in \N$. If there are $a_j \in (a_j'+2\eps, a_j'+4\eps)$ and $b_j \in (b_j'-4\eps, b_j'-2\eps)$ such that $s_j$ and $\overline{f}_j$ coincide at $a_j$ and $b_j$, then we are done, since the convexity of $s_j$ implies 
        \begin{equation*}
            \lip(s_j) \leq \frac{2 \norm{s_j}_\infty}{\min\{a_j-a_j', b_j'-b_j\}} \leq \frac{\norm{s_j}_\infty}{\eps}.
        \end{equation*}
        Assume that there is no $a_j$ as above.
        Then, since $s_j$ is the lower convex envelope of $\overline{f}_j$, $\overline{f}_j(t) > s_j(t)$ for every $t \in (a_j'+2\eps, a_j'+4\eps)$.
        We can take $a_j$ to be the first point before $a_j+2\eps$ where $s_j$ and $\overline{f}_j$ coincide. Observe that $a_j$ exists and $a_j \geq a_j'$.
        Then take $c_j$ as the first point after $a_j'+4\eps$ where $s_j$ and $\overline{f}_j$ coincide. In the interval $(a_j,c_j)$ the graph of the function $s_j$ is a segment with slope $|s_j'| \leq \frac{\norm{\overline{f}_j}_\infty}{\eps}$.
        The same argument holds for $b_j$, so we are done. 

        Now we have $s_j: [a_j,b_j] \to \R$, which is the lower convex envelope of $\overline{f}_j$ and with the above properties. 
        We extend it, without renaming it, to $[-\sigma, \sigma]$ by setting 
        \begin{align*}
            &s_j(t) = s_j(a_j) \quad \forall t \in [-\sigma, a_j), \\
            &s_j(t) = s_j(b_j) \quad \forall t \in (b_j,\sigma].
        \end{align*}
    
        By assumption there exists $h^j\in C^2(\R)$ such that $h^j(0)=(h^j)'(0)=0, \norm{(h^j)''}_\infty \leq C$ for some constant $C>0$, for every $j \in \N$, and  $\Omega^j \cap B_{r_j} = \{(x_1,x_2) \in B_{r_j}: x_2 > h^j(x_1) - D_j\}$
        and hence, setting $h_j(x_1) = r_j^{-1} h(r_j x_1)$,
        \begin{equation*}
            \textstyle\Omega_j \cap B_1 = \{(x_1,x_2) \in B_1: x_2 > h_j(x_1)- \frac{D_j}{r_j}\}, 
        \end{equation*}
        Moreover, it is easy to see that $\norm{h_j}_\infty \leq C r_j$ and $\norm{h_j'}_\infty \leq C r_j$ for some constant $C>0$. 

        To avoid technical details, we assume that $s_j \geq h_j-\frac{D_j}{r_j}$. If not, consider $\max\{s_j,h_j-\frac{D_j}{r_j}\}$ in place of $s_j$.
        We define the sets (see Figure \ref{fig: regions, competitor, lower convex envelope})
        \begin{figure}[ht]
            \begin{tikzpicture}[>=stealth,scale=1]
                \def\r{1.3}
                \draw[->](-3.8*\r,0)--(3.8*\r,0) node[right] {$x_1$};
                \draw[->](0,-1.2*\r)--(0,2.6*\r) node[above] {$x_2$};
                \def\aj{-2*\r}
                \def\bj{2.2*\r}
                \draw[very thin, gray, shift={(\aj,0)}] (0,0.1)--(0,-0.1);
                \draw[very thin, gray, shift={(\bj,0)}] (0,0.1)--(0,-0.1);
                \draw[dotted][domain=(-3*\r):(3*\r)] plot (\x, -0.7*\r);
                \def\a{0.02}
                \def\djoverrj{0.7*\r}
                \draw[thick][domain=-2.5*\r:2.5*\r] plot (\x,\a*\x*\x - \djoverrj);
                \draw[thick][dashed][domain=-3*\r:-2.5*\r] plot (\x, \a*\x*\x- \djoverrj);
                \draw[thick][dashed][domain=2.5*\r:3*\r] plot (\x, \a*\x*\x- \djoverrj) node[right] {$h_j-\frac{D_j}{r_j}$};
                \def\sjaj{0.2*\r}
                \def\sjbj{0.3*\r}
                \def\epsj{0.3*\r}
                \coordinate (A) at (\aj, \sjaj);
                \coordinate (B) at (\bj, \sjbj);
                \draw[thick, blue] (A) to[out=-10, in=200] (B) node[right] {$s_j$};
                \coordinate (C) at (\aj+\epsj, 2*\epsj);
                \coordinate (D) at (\bj-\epsj, 2*\epsj);
                \draw [thick] (A)--(C); 
                \draw [thick] (C)--(D);
                \draw [thick] (D)--(B);
                \def\s{6*\epsj}
                \draw (\aj,\s)--(\bj,\s);
                \draw (\aj,\s)--(\aj, \a*\aj*\aj - \djoverrj);
                \draw (\bj,\s)--(\bj, \a*\bj*\bj - \djoverrj);
                \node[above right] at (0,\s) {$\frac{\sigma}{2}$};
                \node[below left] at (\aj,0) {$a_j$};
                \node[below right] at (\bj,0) {$b_j$};
                \draw [dashed] (\aj+\epsj,\s) -- (\aj+\epsj,{\a*(\aj+\epsj)*(\aj+\epsj) - \djoverrj});
                \draw [dashed] (\bj-\epsj,\s) -- (\bj-\epsj,{\a*(\bj-\epsj)*(\bj-\epsj) - \djoverrj});
                \node [below right] at (\aj+\epsj,0) {\footnotesize $a_j+\eps_j$};
                \node [below left] at (\bj-\epsj,0) {\footnotesize $b_j-\eps_j$};
                \node [above] at ({(\bj-\epsj)/2}, 2*\epsj) {$l_j$};
                \node [right] at (0, {2*\epsj+(\s-2*\epsj)/2}) {$\Sigma_j^+$};
                \node [right] at (0, \epsj) {$\Sigma_j^0$};
                \node [right] at (0, {-\djoverrj/2}) {$\Sigma_j^-$};
            \end{tikzpicture}
            \caption{}
            \label{fig: regions, competitor, lower convex envelope}
        \end{figure}
        \begin{align*}
            &Q_j = [a_j,b_j] \times [-\frac{\sigma}{2}, \frac{\sigma}{2}] \cap \Omega_j, \\
            &\Sigma_j^0 = \{(x_1,x_2) \in Q_j: s_j(x_1) \leq x_2 \leq l_j(x_1)\}, \\
            &\Sigma_j^+ = \{(x_1,x_2) \in Q_j: l_j(x_1) < x_2 \leq \frac{\sigma}{2}\}, \\
            &\Sigma_j^- = \{(x_1,x_2) \in Q_j: h_j(x_1)-\frac{D_j}{r_j} \leq x_2 < s_j(x_1)\}, \\
            &L_j:= ([a_j, a_j+\eps_j) \cup (b_j -\eps_j, b_j]) \times [-\sigma,\sigma]
        \end{align*}
        where 
        \begin{equation*}
            l_j(x_1) = 
            \begin{cases}
                s_j(a_j) + \frac{2\eps_j - s_j(a_j)}{\eps_j}(x_1-a_j) &\text{ if } x_1 \in [a_j, a_j+\eps_j), \\
                2\eps_j &\text{ if } x_1 \in [a_j+\eps_j, b_j-\eps_j], \\
                s_j(b_j) - \frac{2\eps_j-s_j(b_j)}{\eps_j}(x_1-b_j) &\text{ if } x_1 \in (b_j-\eps_j, b_j].
            \end{cases}
        \end{equation*}
        We define $\Phi_j: Q_j \to Q_j$ as follows.
        \begin{equation*}
            \Phi_j(x_1,x_2) = (x_1, \varphi_j(x_1,x_2)), 
        \end{equation*}
        where 
        \begin{equation*}
            \varphi_j(x_1,x_2) = 
            \begin{cases}
               s_j(x_1) &\text{ if } (x_1,x_2) \in \Sigma_j^0, \\
               \frac{(\frac{\sigma}{2}-s_j(x_1))x_2+\frac{\sigma}{2}(s_j(x_1)-l_j(x_1))}{\frac{\sigma}{2}-l_j(x_1)} &\text{ if } (x_1,x_2) \in \Sigma_j^+, \\
               x_2 &\text{ if } (x_1,x_2) \in \Sigma_j^-.
            \end{cases}
        \end{equation*}
        \begin{claim} For sufficiently large $j$ there exists $\tilde{v}_j:Q_j \to \R$ such that
            \begin{enumerate}
                \item $\tilde{v}_j = v_j$ in $Q_j \setminus \Sigma_j^-$,
                \item $\tilde{v}_j = v_j$ su $\partial Q_j \cap \Sigma_j^-$,
                \item $\tilde{v}_j = g_j$ su $\partial \Omega_j \cap Q_j$,
                \item $\int_{\Sigma_j^-} |\nabla \tilde{v}_j|^2 - \int_{\Sigma_j^-} |\nabla v_j|^2 \leq -\int_{\Sigma_j^- \cap \{x_1 \in (-\frac{\sigma}{2}, \frac{\sigma}{2})\}}|\nabla v_j|^2 + o(1)$.
            \end{enumerate}
        \end{claim}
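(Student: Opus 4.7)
The plan is to construct $\tilde v_j$ inside $\Sigma_j^-$ by a horizontal cutoff that replaces $v_j$ in the bulk by (an extension of) the boundary datum $g_j$, while preserving $v_j$ near the vertical sides of $Q_j$. Two quantitative inputs drive the construction. First, $g_j$ is small in $C^1$: since $g^j(0)=0$ (we subtracted the value at $x_j$) and $d_j\geq \overline{\delta}r_j^{1/2}$, a first-order Taylor expansion yields $\norm{g_j}_{C^1(Q_j)}\lesssim (r_j/d_j)^{1/2}\lesssim r_j^{1/4}/\overline{\delta}^{1/2}\to 0$. Second, $\Sigma_j^-$ is a thin strip of vertical width $s_j(x_1)-h_j(x_1)+D_j/r_j = O(\max\{d_j^{1/3},r_j,d_j^{1/2}\}) = O(d_j^{1/3})$, obtained by combining the height bound of Lemma \ref{lmm: Lipschitz approximation}(i) applied to $\overline f_j$ (hence to $s_j\leq\overline f_j$), the inequality $D_j/r_j\leq A\beta_j^{1/2}\lesssim d_j^{1/2}$, and the bound $\norm{h_j}_\infty\lesssim r_j$.

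Concretely, write $b_j^-(x_1):=h_j(x_1)-D_j/r_j$ for the bottom profile of $\Omega_j$, and pick a smooth cutoff $\chi_j\in C_c^\infty(\R;[0,1])$ with $\chi_j\equiv 1$ on an inner subinterval of $[a_j,b_j]$ and $\chi_j\equiv 0$ near $\{a_j,b_j\}$, with $\norm{\chi_j'}_\infty \leq C/\eps$, arranging the $\eps$-wide transition band to lie inside $R_j$ (if needed, I would slightly enlarge $R_j$ in the pigeonhole slice, at a cost of an $O(\eps)$ adjustment). Set $\tilde v_j:=v_j$ on $Q_j\setminus\Sigma_j^-$ and, on $\Sigma_j^-$,
\[
  \tilde v_j(x_1,x_2) := (1-\chi_j(x_1))\,v_j(x_1,x_2) + \chi_j(x_1)\,g_j(x_1,b_j^-(x_1)).
\]
Condition (1) is built in; (2) holds because $\chi_j$ vanishes at $x_1\in\{a_j,b_j\}$; and (3) follows from $v_j=g_j$ on $\partial\Omega_j$.

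For the energy estimate (4), I would split $\Sigma_j^-$ into three $x_1$-bands. On the central band $\{\chi_j\equiv 1\}$, $\tilde v_j$ depends only on $x_1$ and $|\nabla\tilde v_j|\lesssim \norm{\nabla g_j}_\infty$; integrated over a strip of area $\lesssim \sigma\, d_j^{1/3}$ this gives $O(r_j/d_j)\cdot d_j^{1/3} = O(r_j^{2/3}) = o(1)$. On the outer band $\{\chi_j\equiv 0\}$, $\tilde v_j=v_j$, and since this band lies in $R_j$, its contribution is $\leq C\eps$ by the pigeonhole choice. On the transition band the only nontrivial term is $\int(\chi_j')^2|v_j-g_j(\cdot,b_j^-(\cdot))|^2$; using that $v_j=g_j$ on the bottom of $\Sigma_j^-$ (so the integrand vanishes at $x_2=b_j^-(x_1)$) together with a vertical Poincar\'e inequality of constant $O(d_j^{2/3})$, one obtains
\[
  \int_{\mathrm{trans.}}(\chi_j')^2|v_j-g_j|^2 \lesssim \frac{d_j^{2/3}}{\eps^2}\int_{R_j\cap\Sigma_j^-}|\nabla v_j|^2 \lesssim \frac{d_j^{2/3}}{\eps} = o(1).
\]
Summing the three contributions and noting that for $\eps$ small the outer band is contained in $\{|x_1|\geq \sigma/2\}$ yields $\int_{\Sigma_j^-}|\nabla\tilde v_j|^2 \leq \int_{\Sigma_j^-\cap\{|x_1|\geq\sigma/2\}}|\nabla v_j|^2 + O(\eps)+o(1)$, which rearranges to the desired inequality (4); the $O(\eps)$ is absorbed into $o(1)$ since $\eps$ is fixed small before the limit $j\to\infty$.

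The main obstacle I expect is ensuring that the vertical Poincar\'e step is applied on a region where $v_j$ really is in $W^{1,2}$ with the correct trace, i.e.\ that $\Sigma_j^-\subset \Omega_j\setminus\overline K_j$ so $v_j$ is harmonic there and $v_j=g_j$ on its bottom. Since $s_j\leq \overline f_j$ by construction and $\overline K_j$ coincides with $\graph(\overline f_j)$ off an $\HH^1$-exceptional set of measure $\lesssim d_j$, height-controlled by $\overline K_j\cap B_{1/2}\subset\{|x_2|\leq C\beta_j^{1/3}\}$, the region $\Sigma_j^-$ lies strictly below $\overline K_j$ for $\eta_3$ small enough; checking this inclusion, together with the matching of traces at the lateral boundary where $\overline K_j$ may deviate from $\graph(\overline f_j)$, is the most delicate piece of bookkeeping.
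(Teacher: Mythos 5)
Your construction takes a genuinely different route from the paper's, and it is simpler in spirit, but it has a real gap that I do not think can be patched without essentially importing the paper's mechanism.

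The paper does not build $\tilde v_j$ from $v_j$ on the interior of $\Sigma_j^-$. It finds, by a Fubini--Chebyshev argument, two slices $\alpha_j\in(a_j,-\tfrac\sigma2)\cap I_j$, $\beta_j\in(\tfrac\sigma2,b_j)\cap I_j$, where $I_j=[a_j,b_j]\setminus\pi_{\mathscr V_0}(\overline K_j\cap\Sigma_j^-)$, so that the vertical segments through $\alpha_j,\beta_j$ avoid $\overline K_j$ and carry bounded Dirichlet energy. It then defines $\tilde v_j$ by an explicit interpolation (the function $(1-x_1)\overline v_j(0,x_2)$ in rescaled variables), which uses only the trace of $v_j$ on those two good segments, sets $\tilde v_j=0$ in the middle, and keeps $\tilde v_j=v_j$ only on the thin outer strips $\{x_1\notin(\alpha_j,\beta_j)\}$, whose overlap with $\overline K_j\cap\Sigma_j^-$ has length $\lesssim\eps d_j$ and is therefore harmless. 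In contrast, your $\tilde v_j=(1-\chi_j)v_j+\chi_j g_j(\cdot,b_j^-(\cdot))$ multiplies $v_j$ by a cutoff across the entire width of $\Sigma_j^-$. Wherever $\chi_j<1$ and the vertical line hits $\overline K_j\cap\Sigma_j^-$, your $\tilde v_j$ inherits a jump of $v_j$, and those jumps are not contained in the competitor's singular set $K_j'=(\overline K_j\setminus Q_j)\cup(\graph(s_j)\cap Q_j)\cup\Phi_j(\overline K_j\cap\Sigma_j^+)$; so $w_j=\tilde v_j\circ\Phi_j^{-1}$ is not in $W^{1,2}(\Omega_j\setminus K_j')$ and cannot be used as a competitor. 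The same obstruction breaks the Poincar\'e step: you integrate $\partial_2 v_j$ along vertical segments in the transition band, but some of those segments cross $\overline K_j$ and the fundamental theorem of calculus does not apply across the jump.

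Your attempted remedy --- that ``$\Sigma_j^-$ lies strictly below $\overline K_j$ for $\eta_3$ small'' --- is not correct, and this is the crux of the problem. The height bound $\overline K_j\cap B_{1/2}\subset\{|x_2|\leq C\beta_j^{1/3}\}$ confines $\overline K_j$ to a horizontal strip, but $\Sigma_j^-$ lives inside that same strip, since $\|s_j\|_\infty\lesssim d_j^{1/2}$ and $D_j/r_j\lesssim d_j^{1/2}$. The $\HH^1$ estimate on $\Gamma(\overline f_j)\triangle\overline K_j$ only bounds the measure of the part of $\overline K_j$ off the graph; it says nothing about its location, and in particular does not prevent it from dipping below $\graph(s_j)$. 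In fact, since $s_j$ is the lower convex envelope of $\overline f_j$ (so $s_j\le\overline f_j$ with equality only at finitely many contact points), any piece of $\overline K_j$ that strays below $\graph(\overline f_j)$ can plausibly enter $\Sigma_j^-$. A fix in the spirit of your argument would be to choose the support of $\chi_j'$ disjoint from $\pi_{\mathscr V_0}(\overline K_j\cap\Sigma_j^-)$ (which has small measure) and, more importantly, so that every vertical line in the transition band avoids $\overline K_j$; but once you start imposing that, you are effectively reproducing the paper's Fubini selection of good slices, and the clean ``one global cutoff'' structure evaporates. I would therefore advise abandoning the multiplicative cutoff in favour of a construction that does not evaluate $v_j$ anywhere inside $\Sigma_j^-$ except on a finite (measure-zero) set of pre-selected good vertical lines.
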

        \begin{proof}[Proof of Claim]
        For simplicity we assume that the boundary is flat and the boundary datum is zero, that is $h_j=0$ and $g_j=0$. The general case is not difficult, and it is handled as in the proof of Theorem \ref{thm: blow-up limit}(iii).
        Let us define $$I_j:=[a_j,b_j] \setminus \pi_{\mathscr{V}_0}(\overline{K}_j \cap \Sigma_j^-). $$
        By Fubini's Theorem and Chebyshev's inequality, there exist $\alpha_j \in (a_j,-\frac{\sigma}{2}) \cap I_j$ and $\beta_j \in (\frac{\sigma}{2},b_j) \cap I_j$ such that 
        \begin{equation}
            \label{eq: Dirichlet energy decay, integral on vertical segment bounded}
            \int_{A_j} |\nabla v_j|^2 + \int_{B_j} |\nabla v_j|^2 \leq C(\sigma) 
        \end{equation}
        where $C(\sigma)>0$ is a constant depending on $\sigma$, and
        \begin{equation*}
            \textstyle
            A_j = \{\alpha_j\} \times [-\frac{D_j}{r_j},s_j(\alpha_j)], \quad B_j =  \{\beta_j\} \times [-\frac{D_j}{r_j},s_j(\beta_j)].
        \end{equation*}
        Moreover, by construction, 
        \begin{equation*}
            \textstyle
            v_j(\alpha_j,-\frac{D_j}{r_j}) = v_j(\beta_j,-\frac{D_j}{r_j}) = 0.
        \end{equation*}
        Assume that $s_j(\alpha_j) > -\frac{D_j}{r_j}$, otherwise the construction is considerably easier.
    
        To make computations less cumbersome, translate the plane in such a way that, in the new coordinates, 
        \begin{equation*}
            \textstyle
            A_j = \{0\} \times [0,s_j(0)+\frac{D_j}{r_j}].
        \end{equation*}
        We define 
        \begin{align*}
            &\overline{s}_j(t) := s_j(t)+\frac{D_j}{r_j},\\
            &\tau_j:= \overline{s}_j(0), \\
            &\Gamma_j := \{(x_1,x_2) : 0<x_1<\tau_j,\,  0<x_2<\overline{s}_j(x_1)\}, \\
            &\tilde{A}_j := \tau_j^{-1} A_j = \{0\} \times [0,1],\\
            &\tilde{\Gamma}_j := \tau_j^{-1} \Gamma_j = \{(x_1,x_2): 0<x_1<1, \,  0<x_2< \tau_j^{-1} \overline{s}_j(\tau_j x_1)\},\\
            &\overline{v}_j(x) := v_j(\tau_jx) \quad \forall x \in \tilde{\Gamma}_j,\\
            &\overline{v}_j(0,t) := \overline{v}_j(0,1) = v_j(0,\tau_j) \quad \forall t > 1, \\
            &\tilde{w}_j(x_1,x_2) = (1-x_1) \overline{v}_j(0,x_2) \quad \forall x \in \tilde{\Gamma}_j, \\
            & \tilde{v}_j(x) = \tilde{w}_j(\tau_j^{-1}x) \quad \forall x \in \Gamma_j.
        \end{align*}
        Observe that, since $v_j(0,0) = 0$, by Hölder's inequality and by \eqref{eq: Dirichlet energy decay, integral on vertical segment bounded}, 
        \begin{equation*}
            v_j(0,x_2)^2 \leq C(\sigma) \tau_j \quad \forall x_2 \in (0,\tau_j) 
        \end{equation*}
        and hence 
        \begin{equation*}
            \int_{A_j} v_j^2 \lesssim \tau_j^2.
        \end{equation*}
        Moreover, 
        \begin{equation*}
            \int_{[0,1]^2} |\nabla \tilde{w}_j|^2 \leq \tau_j^{-1} \int_{A_j} v_j^2 + \tau_j \int_{A_j} \partial_2 v_j^2 \lesssim \tau_j.
        \end{equation*}
        Furthermore, if $\tilde{\Gamma_j} \setminus [0,1]^2 \neq \emptyset$ , by the convexity of $\overline{s}_j$,
        \begin{equation*}
            \int_{\tilde{\Gamma}_j \setminus [0,1]^2} \leq v_j(0,\tau_j)^2 \frac{1}{2}(\tau_j^{-1}\overline{s}_j(\tau_j)-1) \lesssim \overline{s}_j(\tau_j).
        \end{equation*}
        In conclusion, we have 
        \begin{equation*}
            \int_{\Gamma_j} |\nabla \tilde{v}_j|^2 = \int_{\tilde{\Gamma}_j} |\nabla \tilde{w}_j|^2 \lesssim \tau_j + \overline{s}_j(\tau_j).
        \end{equation*}
        Going back to the old coordinates, in which we had 
        \begin{equation*}
            \textstyle
            A_j = \{\alpha_j\} \times [-\frac{D_j}{r_j},s_j(\alpha_j)],
        \end{equation*}
        the above estimate becomes 
        \begin{equation*}
            \int_{\Sigma_j^- \cap \{x_1 \in (\alpha_j, \alpha_j+\tau_j) \}} |\nabla \tilde{v}_j|^2 \lesssim  s_j(\alpha_j) + s_j(\alpha_j+\tau_j) +2\frac{D_j}{r_j},
        \end{equation*}
        where $\tau_j = s_j(\alpha_j) + \frac{D_j}{r_j}$.
    
        Similarly, we can construct $\tilde{v}_j$ on $\Sigma_j^- \cap \{x_1 \in (\beta_j - \tau_j', \beta_j)\}$ such that 
        \begin{equation*}
            \int_{\Sigma_j^- \cap \{x_1 \in (\beta_j-\tau_j'), \beta_j\}} |\nabla \tilde{v}_j|^2 \lesssim  s_j(\beta_j) + s_j(\beta_j-\tau_j') +2\frac{D_j}{r_j},
        \end{equation*}
        where $\tau_j' = s_j(\beta_j)+ \frac{D_j}{r_j}$.
    
        Finally, we set $\tilde{v}_j = 0$ in $\Sigma_j^- \cap \{x_1 \in (\alpha_j+\tau_j, \beta_j-\tau_j')$, and $\tilde{v}_j = v_j$ outside $\Sigma_j^- \cap \{x_1 \in (\alpha_j,\beta_j)\}$. We conclude by observing that $\norm{s_j}_{L^\infty(a_j,b_j)} \lesssim d_j^\frac{1}{2} \to 0$ and $\frac{D_j}{r_j} \to 0$.
        The first estimate follows from the fact that $s_j \leq \overline{f}_j$ and from estimate \eqref{eq: bound on overline(f)_j}.
        \end{proof}
    
        We define the competitor $(K_j', w_j)$ as follows. 
        \begin{align*}
            &K_j' = (\overline{K}_j \setminus Q_j) \cup (\graph(s_j) \cap Q_j) \cup \Phi_j(\overline{K}_j \cap \Sigma_j^+), \\
            &w_j = \tilde{v}_j \circ \Phi_j^{-1}.
        \end{align*}
        \begin{claim} For sufficiently large $j$, taking a subsequence if necessary, 
            \begin{equation*}
                \int_{Q_j} |\nabla w_j|^2 - \int_{Q_j} |\nabla v_j|^2 \leq -\frac{3}{4}\theta.
            \end{equation*}
        \end{claim}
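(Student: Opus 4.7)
The plan is to split $Q_j$ into the three regions $\Sigma_j^-$, $\Sigma_j^0$, $\Sigma_j^+$ and estimate the change $\int|\nabla w_j|^2 - \int|\nabla v_j|^2$ on each. On $\Sigma_j^-$ the map $\Phi_j$ is the identity, so $w_j = \tilde v_j$ there, and the previous Claim gives directly
\begin{equation*}
\int_{\Sigma_j^-} |\nabla w_j|^2 - \int_{\Sigma_j^-} |\nabla v_j|^2 \leq -\int_{\Sigma_j^- \cap \{x_1 \in (-\frac{\sigma}{2},\frac{\sigma}{2})\}} |\nabla v_j|^2 + o(1).
\end{equation*}

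The core of the argument is the change of variables $y = \Phi_j(x)$ on $\Sigma_j^+$. Since $\Phi_j|_{\Sigma_j^+} \colon \Sigma_j^+ \to \Sigma_j^0 \cup \Sigma_j^+$ is bi-Lipschitz up to a null set, one has
\begin{equation*}
\int_{\Sigma_j^0 \cup \Sigma_j^+} |\nabla w_j|^2 = \int_{\Sigma_j^+} |\nabla \tilde v_j(x) \, D\Phi_j^{-1}(\Phi_j(x))|^2 \, |\det D\Phi_j(x)| \, \dd{x}.
\end{equation*}
I would then split $\Sigma_j^+$ into a middle part $M_j = \Sigma_j^+ \cap ([a_j+\eps,b_j-\eps]\times \R)$ and the edge part $\Sigma_j^+ \cap L_j$. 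On $M_j$ the function $l_j \equiv 2\eps_j$ has $l_j' \equiv 0$, and by construction $\|s_j\|_\infty, \eps_j \to 0$ and $\mathrm{Lip}(s_j) \lesssim d_j^{1/2}/\eps \to 0$; computing $\partial_1 \varphi_j$ and $\partial_2 \varphi_j$ shows $D\Phi_j \to \mathrm{Id}$ and $|\det D\Phi_j| \to 1$ uniformly on $M_j$, yielding
\begin{equation*}
\int_{\Phi_j(M_j)} |\nabla w_j|^2 = (1+o(1)) \int_{M_j} |\nabla v_j|^2.
\end{equation*}
On the edge part, $|D\Phi_j^{-1}|$ and $|\det D\Phi_j|$ are merely bounded, but the pigeonhole choice of $a_j',b_j'$ forces $\int_{\Sigma_j^+ \cap L_j}|\nabla v_j|^2 \leq C\eps$, hence $\int_{\Phi_j(\Sigma_j^+\cap L_j)}|\nabla w_j|^2 \leq C'\eps$.

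Combining the three pieces, the positive energy on $\Sigma_j^+$ cancels up to an $o(1)$ factor, and after subtracting $\int_{Q_j}|\nabla v_j|^2$ one is left with
\begin{equation*}
\int_{Q_j} |\nabla w_j|^2 - \int_{Q_j} |\nabla v_j|^2 \leq -\int_{\Sigma_j^0} |\nabla v_j|^2 - \int_{\Sigma_j^- \cap \{|x_1|<\frac{\sigma}{2}\}} |\nabla v_j|^2 + C'\eps + o(1).
\end{equation*}
To conclude, I would verify that for large $j$ the concentration rectangle $(-\tfrac{\sigma}{2},\tfrac{\sigma}{2})\times(-\tfrac{\eps_j}{2},\tfrac{\eps_j}{2}) \cap \Omega_j$ is contained in $\Sigma_j^0 \cup (\Sigma_j^- \cap \{|x_1|<\tfrac{\sigma}{2}\})$: on $(-\tfrac{\sigma}{2},\tfrac{\sigma}{2})$ one has $l_j \equiv 2\eps_j > \tfrac{\eps_j}{2}$, while the lower boundary $h_j - D_j/r_j$ lies above $-\tfrac{\eps_j}{2}$ for $j$ large because $D_j/r_j \leq A(d_j/\overline{\delta})^{1/2}$ whereas $\eps_j \gtrsim d_j^{1/3}$ by \eqref{eq: choice of eps_j in decay lemma}. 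By \eqref{eq: concentration of energy} the sum of the two negative terms is therefore at least $\theta$, and choosing $\eps$ small enough that $C'\eps \leq \theta/8$ and then $j$ sufficiently large gives the claimed bound $-\tfrac{3}{4}\theta$.

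The most delicate step is the uniform control $D\Phi_j \to \mathrm{Id}$ on $M_j$ together with the handling of $L_j$: the Lipschitz constant of $l_j$ does \emph{not} vanish on $L_j$, which is precisely why the splitting of $\Sigma_j^+$ into middle and edge, and the earlier pigeonhole choice of $a_j', b_j'$ using the constraint $\int_{R_j \cap \Omega_j \setminus \overline K_j}|\nabla v_j|^2 \leq C\eps$, are indispensable.
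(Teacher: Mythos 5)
Your argument is essentially the same as the paper's: split $Q_j$ into $\Sigma_j^-$ (where $\Phi_j$ is the identity, so the previous Claim applies directly) and $Q_j\setminus\Sigma_j^-=\Phi_j(\Sigma_j^+)$, change variables, use that $D\Phi_j\to\mathrm{Id}$ away from the edges and is merely bounded near them, and invoke the pigeonhole bound on the edge energy before feeding the concentration estimate \eqref{eq: concentration of energy}. The paper phrases the edge estimate directly in terms of $R_j$ rather than naming $M_j$ and $L_j$, but the structure is identical.

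One small slip worth fixing: the two pieces you name, $M_j = \Sigma_j^+ \cap ([a_j+\eps,b_j-\eps]\times\R)$ and $\Sigma_j^+\cap L_j$, do not partition $\Sigma_j^+$ once $\eps_j<\eps$, since $L_j$ has strips of width $\eps_j$ while $\Sigma_j^+\setminus M_j$ has strips of width $\eps$. The annular region $(\Sigma_j^+\setminus M_j)\setminus L_j$ is unaccounted for as written. The fix is immediate: take the edge part to be all of $\Sigma_j^+\setminus M_j$; this is contained in $R_j$ (because $a_j\in[a_j',a_j'+4\eps)$ and $b_j\in(b_j'-4\eps,b_j']$), so the pigeonhole bound still gives $\int_{\Sigma_j^+\setminus M_j}|\nabla v_j|^2\leq C\eps$, and $D\Phi_j$, $D\Phi_j^{-1}$, $\det D\Phi_j$ are uniformly bounded on the whole of $\Sigma_j^+$ (not only on $L_j$), so $\int_{\Phi_j(\Sigma_j^+\setminus M_j)}|\nabla w_j|^2\leq C'\eps$. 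Alternatively, set $M_j=\Sigma_j^+\setminus L_j$, on which $l_j'\equiv 0$ as well. Also, the verification that $h_j-D_j/r_j>-\eps_j/2$ is unnecessary: the concentration rectangle is already intersected with $\Omega_j$, whose lower boundary is exactly $x_2=h_j(x_1)-D_j/r_j$, so the only inclusion you actually need is $\eps_j/2 < l_j(x_1)=2\eps_j$ on $|x_1|<\sigma/2$, which is automatic.
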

        \begin{proof}[Proof of claim]
            Observe that 
            \begin{equation*}
                \int_{Q_j} |\nabla w_j|^2 = \int_{Q_j \setminus \Sigma_j^-} |\nabla (v_j \circ \Phi_j^{-1})|^2 + \int_{\Sigma_j^-} |\nabla \tilde{v}_j|^2
            \end{equation*}
            and
            \begin{equation*}
                \begin{split}
                    \int_{Q_j \setminus \Sigma_j^-} |\nabla (v_j \circ \Phi_j^{-1})|^2  &\leq C \int_{R_j \cap \Omega_j \setminus \overline{K}_j} |\nabla v_j|^2 + (1+o(1))\int_{\Sigma_j^+ \setminus R_j} |\nabla v_j|^2 \\ 
                    &\leq C\eps + (1+o(1))\int_{\Sigma_j^+ \setminus R_j} |\nabla v_j|^2.
                \end{split}
            \end{equation*}
            Therefore 
            \begin{equation*}
                \begin{split}
                    \int_{Q_j} |\nabla w_j|^2 - \int_{Q_j} |\nabla v_j|^2 \leq C\eps + o(1) + \int_{\Sigma_j^-} |\nabla \tilde{v}_j|^2 - \int_{\Sigma_j^-} |\nabla v_j|^2 - \int_{\Sigma_j^0} |\nabla v_j|^2
                \end{split}
            \end{equation*}
            for some constant $C$. Using the previous claim, we get
            \begin{equation*}
                \int_{Q_j} |\nabla w_j|^2 - \int_{Q_j} |\nabla v_j|^2 \leq C\eps + o(1) -  \int_{(-\frac{\sigma}{2}, \frac{\sigma}{2}) \times (-\frac{\eps_j}{2}, \frac{\eps_j}{2}) \cap \Omega_j \setminus \overline{K}_j} |\nabla v_j|^2.
            \end{equation*}
            We conclude by choosing $\eps$ sufficiently small and by \eqref{eq: concentration of energy}.
        \end{proof}
        \begin{claim} For sufficiently large $j$, taking a subsequence if necessary,
          \begin{equation*}
            \HH^1(K_j'\cap Q_j) - \HH^1(\overline{K}_j \cap Q_j) \leq \frac{1}{4}\theta d_j
          \end{equation*}
        \end{claim}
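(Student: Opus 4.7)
The plan is to split $\HH^1(K_j'\cap Q_j)=\HH^1(\graph(s_j)\cap Q_j)+\HH^1(\Phi_j(\overline{K}_j\cap\Sigma_j^+))$ and compare each piece to the corresponding portion of $\overline{K}_j\cap Q_j$. For the transported piece, I would compute $D\Phi_j$ explicitly: on the middle strip $[a_j+\eps,b_j-\eps]\times[-\sigma,\sigma]$ one has $\partial_2\varphi_j = (\sigma/2-s_j(x_1))/(\sigma/2-l_j(x_1))$, which tends to $1$ uniformly since $\|s_j\|_\infty\leq\|\overline{f}_j\|_\infty\lesssim d_j^{1/2}\to 0$ and $l_j\equiv 2\eps_j\to 0$, while $\partial_1\varphi_j = s_j'(x_1)(\sigma/2-x_2)/(\sigma/2-l_j)$ is controlled by $\lip(s_j)\leq\|\overline{f}_j\|_\infty/\eps\to 0$. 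Near the edges the same estimate goes through using the Lipschitz control on $l_j$. Hence $\lip(\Phi_j)=1+o(1)$, so $\HH^1(\Phi_j(\overline{K}_j\cap\Sigma_j^+))\leq(1+o(1))\HH^1(\overline{K}_j\cap\Sigma_j^+)$. Since $\|\overline{f}_j\|_\infty\ll 2\eps_j$, the graph of $\overline{f}_j$ lies inside $\Sigma_j^0\cup\Sigma_j^-$, so $\overline{K}_j\cap\Sigma_j^+\subset\overline{K}_j\triangle\graph(\overline{f}_j)$, and Lemma \ref{lmm: Lipschitz approximation}(iii) gives $\HH^1(\overline{K}_j\cap\Sigma_j^+)\leq C\overline{\delta}^{-1}d_j$. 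Thus the transported contribution differs from the original $\overline{K}_j\cap\Sigma_j^+$ by at most $o(1)\cdot C\overline{\delta}^{-1}d_j=o(d_j)$.

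For the graph of $s_j$, the key point is that on every maximal open interval $(\alpha,\beta)\subset[a_j,b_j]$ where $s_j<\overline{f}_j$, the convex envelope coincides with the straight chord of $\overline{f}_j$ between $(\alpha,\overline{f}_j(\alpha))$ and $(\beta,\overline{f}_j(\beta))$; since a straight segment minimizes length among all curves with prescribed endpoints, the arclength of $s_j$ over $(\alpha,\beta)$ is bounded by that of $\overline{f}_j$. Adding the contact set, I obtain $\HH^1(\graph(s_j))\leq \HH^1(\graph(\overline{f}_j|_{[a_j,b_j]}))$. On the other hand
\begin{equation*}
    \HH^1(\overline{K}_j\cap(\Sigma_j^-\cup\Sigma_j^0))\geq \HH^1(\graph(\overline{f}_j|_{[a_j,b_j]}))-\HH^1(\graph(\overline{f}_j)\triangle\overline{K}_j),
\end{equation*}
so by Lemma \ref{lmm: Lipschitz approximation}(iii) the first piece satisfies
\begin{equation*}
    \HH^1(\graph(s_j))-\HH^1(\overline{K}_j\cap(\Sigma_j^-\cup\Sigma_j^0))\leq \HH^1(\graph(\overline{f}_j)\triangle\overline{K}_j)\leq C\overline{\delta}^{-1}d_j.
\end{equation*}

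The main obstacle is that these two estimates by themselves only give $\HH^1(K_j'\cap Q_j)-\HH^1(\overline{K}_j\cap Q_j)\leq C\overline{\delta}^{-1}d_j$ with a fixed geometric constant, which need not lie below the prescribed $\theta d_j/4$. To bridge this gap I would refine both estimates to $o(d_j)$ along a subsequence. The key input is Lemma \ref{lmm: Lipschitz approximation}(v): for any $\Lambda>0$ the projection of $\overline{K}_j\cap\{|x_2|\leq C\beta_j^{1/3}r_j\}$ covers $[a_j,b_j]$ up to a set of measure $\Lambda d_j$. Applying this with a sequence $\Lambda_k\downarrow 0$ and extracting a diagonal subsequence, one gets $\HH^1(\overline{K}_j\cap(\Sigma_j^-\cup\Sigma_j^0))\geq(b_j-a_j)-o(d_j)$. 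Combined with the inequality $\int|s_j'|^2\leq\int|\overline{f}_j'|^2$ (which follows from Jensen applied on each linear piece) and a $W^{1,2}$-compactness argument for the rescaled approximations $\overline{f}_j/d_j^{1/2}$ using the Hausdorff convergence $\overline{K}_j\cap[-\sigma,\sigma]^2\to\{x_2=0,|x_1|\leq\sigma\}$, one obtains $\int|s_j'|^2=o(d_j)$ along the subsequence, so $\HH^1(\graph(s_j))\leq (b_j-a_j)+o(d_j)$. Putting everything together, $\HH^1(K_j'\cap Q_j)-\HH^1(\overline{K}_j\cap Q_j)=o(d_j)$, and for $j$ large this is strictly less than $\tfrac{\theta}{4}d_j$, completing the claim.
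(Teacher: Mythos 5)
Your decomposition into the transported piece $\Phi_j(\overline{K}_j\cap\Sigma_j^+)$ and the graph of $s_j$ is the same as the paper's, and the geometric arclength comparison $\HH^1(\graph(s_j))\leq\HH^1(\graph(\overline{f}_j|_{[a_j,b_j]}))$ is a clean observation, but there are two genuine gaps.

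First, the assertion $\lip(\Phi_j)=1+o(1)$ fails on the edge strips $L_j=([a_j,a_j+\eps_j)\cup(b_j-\eps_j,b_j])\times[-\sigma,\sigma]$. On $(a_j,a_j+\eps_j)$ one has $l_j'=\tfrac{2\eps_j-s_j(a_j)}{\eps_j}\approx 2$, so along the bottom of $\Sigma_j^+$ the chain rule gives $\partial_1\varphi_j(x_1,l_j(x_1))=s_j'-l_j'\,\partial_2\varphi_j\approx -2$, which is order one and not $o(1)$. This is precisely why the paper only records $\norm{J\Phi_j^{-1}}\leq C$ on $L_j$, not $1+o(1)$. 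It matters, because by the height bound $\overline{K}_j\cap B_{1/2}\subset\{|x_2|\leq C\beta_j^{1/3}\}$ and $2\eps_j\geq 2C\beta_j^{1/3}$, the set $\overline{K}_j\cap\Sigma_j^+$ is entirely contained in $L_j$ — exactly where your Lipschitz estimate fails. With only $\lip(\Phi_j)\leq C$ and $\HH^1(\overline{K}_j\cap\Sigma_j^+)\leq C\overline{\delta}^{-1}d_j$ you get $\leq (C-1)C\overline{\delta}^{-1}d_j$, which is not below $\tfrac18\theta d_j$. The paper instead uses the pigeonhole choice of $a_j',b_j'$ to get the \emph{refined} bound $\HH^1((\Gamma(\overline{f}_j)\triangle\overline{K}_j)\cap R_j)\leq C\eps d_j$ with $\eps$ freely small, and then $L_j\subset R_j$ kills the transported piece. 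Your proof has dropped this pigeonhole refinement.

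Second, the claim $\int|s_j'|^2=o(d_j)$ is not justified and is in general false. Hausdorff convergence $\overline{K}_j\to\{x_2=0\}$ only gives $\|\overline{f}_j\|_\infty\to 0$ (equivalently $\|\overline{f}_j\|_\infty\lesssim d_j^{1/2}$), but the rescaled functions $\tilde f_j=d_j^{-1/2}\overline{f}_j$ need not tend to zero; weak $W^{1,2}$-compactness yields only a weak limit $\tilde f$, which can be nontrivial, and weak convergence does not give $\int\tilde s_j'^2\to 0$. The paper never proves $\int\tilde s_j'^2\to 0$; it proves $\int_{a_j}^{b_j}\tilde s_j'^2\to\int_a^b\tilde s'^2$ (via $BV$-compactness of the convex $\tilde s_j'$), shows $\liminf\int_{I_j^1}\tfrac{\sqrt{1+\overline{f}_j'^2}-1}{d_j}\geq\tfrac12\int_a^b\tilde f'^2$, and closes with $\int_a^b\tilde s'^2\leq\int_a^b\tilde f'^2$ from the lower-convex-envelope structure of $\tilde s$. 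The quantity that goes to zero is the \emph{difference} between the two length excesses, not $\int|s_j'|^2/d_j$ itself. To repair your argument you would need to recover exactly this cancellation between $\int(\sqrt{1+s_j'^2}-1)$ and $\int_{I_j^1}(\sqrt{1+\overline{f}_j'^2}-1)$ rather than trying to show each term is separately $o(d_j)$.
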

        \begin{proof}[Proof of Claim]
            Recall that, by construction, we have $\lip(s_j) \leq \frac{\norm{\overline{f}_j}_\infty}{\eps}$.
            Then, by \eqref{eq: bound on overline(f)_j}, we have 
            \begin{equation*}
                \lip(s_j) \lesssim \frac{d_j^{\frac{1}{2}}}{\eps}.
            \end{equation*}
            We define 
            \begin{align*}
                &I_j^1 = \{t \in [a_j,b_j]: (t,\overline{f}_j(t))\in\overline{K}_j, \, (\{t\}\times\R)\cap\overline{K}_j \subset \Gamma(\overline{f}_j) \}, \\
                &I_j^2 = [a_j,b_j]\cap \pi_{\mathscr{V}_0}(\overline{K}_j\cap Q_j) \setminus I_j^1, \\
                &I_j^3 = [a_j,b_j] \setminus \pi_{\mathscr{V}_0}(\overline{K}_j\cap Q_j). 
            \end{align*}
            Since $\lip(\overline{f}_j) \leq 1$ and $\overline{f}_j = s_j$ on $\{a_j, b_j\}$, then  $\Gamma(\overline{f}_j) \cap Q_j \subset \Sigma_j^0$ for sufficiently large $j$.
            Therefore, since $L_j \subset R_j$, we have, for some constant $C$ and choosing $\eps$ small enough,  
            \begin{equation*}
                \begin{split}
                    \HH^1(\Phi_j(\overline{K}_j \cap \Sigma_j^+)) &\leq C\HH^1(\overline{K}_j \cap \Sigma_j^+) \\
                    &= C\HH^1(\overline{K}_j \cap \Sigma_j^+ \cap L_j) \\
                    &\leq C\HH^1((\Gamma(\overline{f}_j) \triangle \overline{K}_j )\cap L_j)\\
                    &\leq C\HH^1((\Gamma(\overline{f}_j) \triangle \overline{K}_j )\cap R_j)\\
                    &\leq C \eps d_j < \frac{1}{8} \theta d_j.
                \end{split}
            \end{equation*}
            Now we claim that
            \begin{equation*}
                \frac{\HH^1(\Gamma(s_j))-\HH^1(\overline{K}_j \cap Q_j) }{d_j} \leq \frac{1}{8}\theta
            \end{equation*}
            for sufficiently large $j$ and taking a subsequence if necessary.
            We have 
            \begin{equation*}
                \begin{split}
                    &\HH^1(\Gamma(s_j))-\HH^1(\overline{K}_j \cap Q_j) \\
                    &\leq \int_{a_j}^{b_j}\left(\sqrt{1+s_j'^2} -1\right)- \int_{I_j^1}\left(\sqrt{1+\overline{f}_j'^2}-1\right) + \mathcal{L}^1(I_j^3)
                \end{split}
            \end{equation*}
            By Lemma \ref{lmm: Lipschitz approximation}\ref{item: Lipschitz approximation, D/r < 1}(v), for sufficiently large $j$, $\mathcal{L}^1(I_j^3) \leq \frac{1}{8}\theta d_j$. Then it suffices to check that 
            \begin{equation*}
                \limsup_{j\to\infty}
                \left(  \int_{a_j}^{b_j}\frac{\sqrt{1+s_j'^2} -1}{d_j}- \int_{I_j^1}\frac{\sqrt{1+\overline{f}_j'^2}-1}{d_j}\right) \leq 0.
            \end{equation*}
            Taking a subsequence if necessary, we can assume that $a_j \to a$ and $b_j \to b$
            with $a \in [-\sigma+2\eps, -\frac{3}{4}\sigma+4\eps]$, $b \in [\frac{3}{4}\sigma-4\eps, \sigma-2\eps]$.
            Since $s_j$ is the lower convex envelope of $\overline{f}_j$, we have 
            \begin{align*}
                \int_{a_j}^{b_j} s_j'^2 \leq \int_{a_j}^{b_j} \overline{f}_j'^2.
            \end{align*}
            We define $\tilde{s}_j = d_j^{-\frac{1}{2}}s_j$ and $\tilde{f}_j = d_j^{-\frac{1}{2}} \overline{f}_j$.
            Then, by estimates on $\overline{f}_j$ given by Lemma \ref{lmm: Lipschitz approximation}, we have
            \begin{align*}
                &\norm{\tilde{s}_j}_{L^\infty(-\sigma,\sigma)} \leq C, \quad  \norm{\tilde{s}_j'}_{L^2(-\sigma,\sigma)}\leq C, \\
                &\norm{\tilde{f}_j}_{L^\infty(-\sigma,\sigma)} \leq C, \quad  \norm{\tilde{f}_j'}_{L^2(-\sigma,\sigma)}\leq C.
            \end{align*}
            Then, taking a subsequence if necessary, $\tilde{s}_j \rightharpoonup \tilde{s}$, $\tilde{f}_j \rightharpoonup \tilde{f}$ in $W^{1,2}((-\sigma,\sigma))$ and hence $\tilde{s}_j \to \tilde{s}$, $\tilde{f}_j \to \tilde{f}$
            uniformly in $[-\sigma,\sigma]$.
            Moreover, $\tilde{s}$ is the lower convex envelope of $\tilde{f}$, and hence 
            \begin{equation*}
                \int_a^b \tilde{s}'^2 \leq \int_a^b \tilde{f}'^2.
            \end{equation*}
            \begin{claim}
                \begin{equation*}
                    \liminf_{j\to\infty}\int_{I_j^1}\frac{\sqrt{1+\overline{f}_j'^2}-1}{d_j} \geq \frac{1}{2} \int_a^b \tilde{f}'^2.
                \end{equation*}
            \end{claim}
            \begin{proof}[Proof of Claim]
                \begin{equation*}
                    \begin{split}
                        \int_{I_j^1}\frac{\sqrt{1+\overline{f}_j'^2}-1}{d_j} &\geq \int_{I_j^1 \cap \{\overline{f}_j'\leq d_j^\frac{1}{4}\}}\frac{\sqrt{1+\overline{f}_j'^2}-1}{d_j} \\
                        &\geq \frac{1}{2}\int_{I_j^1 \cap \{\overline{f}_j'\leq d_j^\frac{1}{4}\}} \tilde{f}_j'^2 + o(1)
                    \end{split}
                \end{equation*}
                By Chebyshev's inequality we have 
                \begin{equation*}
                    \mathcal{L}^1([a_j,b_j]\setminus( I_j^1 \cap \{\overline{f}_j'\leq d_j^\frac{1}{4}\})) = o(1).
                \end{equation*} 
                and hence 
                \begin{equation*}
                   \tilde{f}_j' \chi_{I_j^1 \cap \{\overline{f}_j'\leq d_j^\frac{1}{4}\}} \rightharpoonup \tilde{f}_j' \quad \text{in } L^2
                \end{equation*}
                and the result follows from the lower semicontinuity of the $L^2$ norm with respect to weak convergence.
            \end{proof}
            \begin{claim}
                \begin{equation*}
                    \lim_{j \to \infty} \int_{a_j}^{b_j} \tilde{s}_j'^2 = \int_{a}^{b} \tilde{s}'^2
                \end{equation*}
            \end{claim}
            \begin{proof}[Proof of Claim]
                Recall that $\tilde{s}_j$ is convex on $(a_j,b_j)$ and $\tilde{s}_j ' = 0$ in $(-\sigma, a_j) \cup (b_j,\sigma)$. Then, approximating each $\tilde{s}_j$ by smooth and convex functions, it is easy to see that $\tilde{s}_j'$ is uniformly bounded in $BV(-\sigma,\sigma)$. Therefore, for a subsequence that we do not rename, $\tilde{s}_j' \to \tilde{s}'$ in $L^1((-\sigma,\sigma))$ and $a.e.$. Since $\tilde{s}_j'$ is uniformly bounded, so is $\tilde{s}'$. Then 
                \begin{equation*}
                    \left|\int_{-\sigma}^\sigma \tilde{s}_j'^2 - \int_{-\sigma}^\sigma \tilde{s}'^2\right| = \left|\int_{-\sigma}^\sigma (\tilde{s}_j'-\tilde{s}')(\tilde{s}_j'+\tilde{s}') \right| \leq C \int_{-\sigma}^{\sigma}|\tilde{s}_j' - \tilde{s}'| \to 0. 
                \end{equation*}
                Moreover $\tilde{s}' = 0$ a.e. in $(-\sigma,a)\cup(b,\sigma)$, and hence the above computation is exactly what we wanted to prove.
            \end{proof}
            \begin{claim}
                \begin{equation*}
                    \lim_{j\to\infty}  \int_{a_j}^{b_j}\frac{\sqrt{1+s_j'^2} -1}{d_j} = \frac{1}{2} \int_a^b \tilde{s}'^2
                \end{equation*}
            \end{claim}
            \begin{proof}[Proof of Claim]
                Recall that, by construction, $\norm{s_j'}_\infty \lesssim d_j^{\frac{1}{2}}$.
                This implies that
                \begin{equation*}
                    \int_{a_j}^{b_j}\frac{\sqrt{1+s_j'^2} -1}{d_j} = \frac{1}{2} \int_{a_j}^{b_j} \tilde{s}_j'^2 + o(1).
                \end{equation*}
                We conclude by the previous claim.
            \end{proof}
        \end{proof}
        By the above claims, we get 
        \begin{equation*}
            F(K_j', w_j) - F(\overline{K}_j, v_j) \leq -\frac{1}{2}\theta < 0
        \end{equation*}
        for sufficiently large $j$.
        \item The proof of \ref{item: Dirichlet, case 2} is the same as the proof of \ref{item: Dirichlet, case 1}
        \item The proof of \ref{item: Dirichlet, case 3} is the same as the proof of \ref{item: Dirichlet, case 1}, using Lemma \ref{lmm: tilt lemma} \ref{item: tilt lemma, case D/r < 1} and the fact that 
        \begin{equation*}
            \frac{D}{r} \leq B r
        \end{equation*}
        \item The proof of \ref{item: Dirichlet, case 4} is analogous to the proof of \ref{item: Dirichlet, case 1}, with some technical complications due to the fact that we have $R_\theta(\mathscr{V})$ instead of $\mathscr{V}$, using Lemma \ref{lmm: tilt lemma} \ref{item: tilt lemma, case > b^1/3}.
    \end{enumerate}
\end{proof}
\section{Generalized minimizers on the upper half-plane}
\label{section: generalized minimizers on the upper half-plane}
In this section, we study generalized minimizers on the open upper half-plane $U = \{x_2 >0\}$.

First, let us give a brief proof of Theorem \ref{thm: compactness for generalized minimizers}.
\begin{proof}[Proof of Theorem \ref{thm: compactness for generalized minimizers}]
    The proof is the same as the proof of Theorem \ref{thm: blow-up limit}, slightly easier since $\partial U$ is flat and the boundary datum is zero. Moreover, since the boundary datum is zero, we do not need $r_j \to 0$ in the proof, and we can also take $r_j \to \infty$.
\end{proof}

In the following proposition we obtain the analogous for our problem of the famous David-Léger-Maddalena-Solimini identity \cite[Proposition 3.5]{david-leger}.
\begin{prop}[David-Léger-Maddalena-Solimini for the boundary problem]
    \label{prop: David-Legér-Maddalena-Solimini}
    Let $(K,u)$ be a generalized minimizer on $U$ as in Definition \ref{def: admissible competitor and minimizer on a half-plane}. Then for every $x \in \partial U$ and for a.e. $r>0$ we have 
    \begin{equation*}
        \frac{1}{r}\HH^1(K \cap B_r(x)) = \int_{\partial B_r(x) \cap U \setminus K} \left( \left(\frac{\partial u}{\partial \tau}\right)^2 - \left(\frac{\partial u}{\partial \nu}\right)^2\right) + \sum_{p \in \partial B_r(x)\cap K} e(p)\cdot n(p),
    \end{equation*}
    where $n(p)$ is the unit outer normal vector to $\partial B_r(x)$ at point $p$, and $e(p)$  is the unit tangent vector to $K$ at point $p$ such that $e(p)\cdot n(p)>0$.
\end{prop}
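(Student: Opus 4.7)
The plan is to derive the identity by an inner-variation argument, using the key observation that because $x\in \partial U$, the radial vector field based at $x$ is automatically tangent to $\partial U$ and therefore produces an \emph{admissible} inner variation on the half-plane. Concretely, for a smooth cut-off $\phi \in C^\infty_c([0,\infty);[0,1])$ with $\phi \equiv 1$ near $0$, I consider
\begin{equation*}
    \eta_\phi(y) = \phi(|y-x|)(y-x), \qquad \Phi_t(y) = y + t\,\eta_\phi(y).
\end{equation*}
Writing $y=(y_1,y_2)$ and $x=(x_1,0)$, one has $\eta_\phi(y)\cdot e_2 = \phi(|y-x|)\,y_2 = 0$ whenever $y_2=0$, so $\Phi_t$ preserves $\partial U$, is a diffeomorphism of $\R^2$ for small $|t|$, and is the identity outside a bounded set. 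Hence $(\Phi_t(K), u\circ \Phi_t^{-1})$ is an admissible competitor for $(K,u)$ on $U$ in the sense of Definition \ref{def: admissible competitor and minimizer on a half-plane}: connectivity in $\R^2 \setminus K$ is preserved by a diffeomorphism, and the zero Dirichlet datum on $\partial U\setminus K$ is preserved since $\Phi_t|_{\partial U}$ is a boundary diffeomorphism.

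The next step is to apply the first-variation identity (the analog for generalized minimizers of Lemma \ref{lmm: first derivative of the energy for generalized minimizers}), which gives
\begin{equation*}
    \int_{U\setminus K} \bigl(|\nabla u|^2 \diverg \eta_\phi - 2\,\nabla u^T D\eta_\phi \nabla u\bigr) + \int_K e^T D\eta_\phi\, e \,\dd{\HH^1} = 0.
\end{equation*}
A direct computation with $\rho(y) := |y-x|$ and $n(y) := (y-x)/\rho(y)$ gives $\diverg \eta_\phi = 2\phi(\rho)+\phi'(\rho)\rho$ and $D\eta_\phi = \phi(\rho) I + \phi'(\rho)\rho\, n\otimes n$. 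The two key algebraic identities (using $|\nabla u|^2 = (\partial_\tau u)^2+(\partial_\nu u)^2$ with $\nu=n$ and $|e|=1$) are
\begin{equation*}
    |\nabla u|^2\diverg\eta_\phi - 2\nabla u^T D\eta_\phi \nabla u = \phi'(\rho)\rho\bigl((\partial_\tau u)^2-(\partial_\nu u)^2\bigr),
\end{equation*}
\begin{equation*}
    e^T D\eta_\phi\, e = \phi(\rho) + \phi'(\rho)\rho\,(n\cdot e)^2.
\end{equation*}

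Then I would take $\phi$ to approach $\chi_{[0,r]}$ so that $\phi'(\rho)\rho \,\dd{y}$ concentrates as $-r\,\dd{\HH^1}\!\mres \partial B_r(x)$. The Dirichlet piece converges to $-r\int_{\partial B_r(x)\cap U\setminus K}((\partial_\tau u)^2-(\partial_\nu u)^2)$ by standard slicing, the $\phi$-term on $K$ yields $\HH^1(K\cap B_r(x))$, and the surface contribution on $K$ is handled by the co-area formula on the rectifiable set $K$: the tangential Jacobian of $\rho$ along $K$ equals $|n\cdot e|$, so
\begin{equation*}
    \int_K \phi'(\rho)\rho(n\cdot e)^2\,\dd{\HH^1} = \int_0^\infty \phi'(s)\,s \sum_{p\in K\cap \partial B_s(x)}|n(p)\cdot e(p)|\,\dd{s},
\end{equation*}
which converges to $-r\sum_{p\in K\cap \partial B_r(x)}|n(p)\cdot e(p)|$ for a.e.\ $r>0$. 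Choosing $e(p)$ with $e(p)\cdot n(p)>0$, dividing by $r$ and rearranging yields the claimed formula.

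The main obstacle is the limit passage $\phi\to\chi_{[0,r]}$ in the singular piece on $K$: one needs $K\cap \partial B_r(x)$ to be a finite transversal set so that the co-area identity above makes pointwise sense, which is true for a.e.\ $r$ since $K$ is $\HH^1$-rectifiable by Theorem \ref{thm: blow-up limit}(i) (equivalently, Theorem \ref{thm: compactness for generalized minimizers}). A minor subtlety is justifying the first-variation formula for generalized, not classical, minimizers; this is the reason Definition \ref{def: admissible competitor and minimizer on a half-plane} is arranged so that diffeomorphism competitors are always admissible, and the verification is routine. The boundary tangency of $\eta_\phi$ is what makes the argument work identically to the interior case of David--Léger, with no new boundary integrals appearing.
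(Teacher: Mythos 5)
Your proposal is correct and reproduces exactly what the paper means by ``the proof follows from the ideas of \cite[Proposition 2.5.4]{delellis-focardi}'': an inner variation with the radial field $\eta_\phi(y)=\phi(|y-x|)(y-x)$, followed by a cutoff limit $\phi\to\chi_{[0,r]}$ and a coarea-type slicing of $K$. The one point the paper's telegraphic reference leaves implicit, and which you correctly isolate as the key new observation at the boundary, is that for $x\in\partial U$ the radial field is automatically tangent to $\partial U$, so the diffeomorphism $\Phi_t$ preserves the half-plane and the zero boundary datum, making the push-forward competitor admissible in the sense of Definition \ref{def: admissible competitor and minimizer on a half-plane}; everything else (the algebraic identities for $\diverg\eta_\phi$ and $D\eta_\phi$, the coarea Jacobian $|n\cdot e|$ on the rectifiable set $K$, the choice of sign for $e(p)$) is carried over verbatim from the interior argument.
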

\begin{proof}
   The proof follows from the ideas of \cite[Proposition 2.5.4]{delellis-focardi}.
\end{proof}

Below we prove a Lemma of harmonic extension similar to \cite[Lemma 4.10]{david-leger}. We will need it in the following. 
\begin{lmm}[Harmonic extension on circular sector]
    \label{lmm: harmonic extension circular sector}
    Let $r>0$, $\alpha \in (0,\pi)$, $\gamma = \{ (r \cos\theta, r\sin\theta): \theta \in [0,\alpha]\}$, $U = \{ (\rho\cos\theta,\rho\sin\theta): \rho \in (0,1), \theta \in (0,\alpha) \}$. Let $g \in W^{1,2}(\gamma)$ be such that $g(r,0)=0$. Then there exists $u: \overline{U} \to \R$ such that $u$ is harmonic in $U$, $u=g$ on $\gamma$, $u(t,0)=0$ for every $t \in [0,r]$ and 
    \begin{equation*}
        \int_U |\nabla u|^2 \leq \frac{2\alpha r}{\pi} \int_\gamma \left(\frac{\partial g}{\partial \tau} \right)^2
    \end{equation*}
\end{lmm}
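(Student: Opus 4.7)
My approach is separation of variables in polar coordinates, using as angular basis the Sturm--Liouville eigenfunctions $\phi_n(\theta)=\sin(\lambda_n\theta)$ with $\lambda_n=(2n-1)\pi/(2\alpha)$, $n\geq 1$. These satisfy $\phi_n(0)=0$, $\phi_n'(\alpha)=0$, and form an orthogonal basis of $L^2((0,\alpha))$ with $\int_0^\alpha\phi_n^2\,\dd{\theta}=\alpha/2$. The choice of Dirichlet-at-$0$/Neumann-at-$\alpha$ modes (rather than Dirichlet--Dirichlet) corresponds to leaving the edge $\{\theta=\alpha\}$ free; this is the Dirichlet-energy-minimizing harmonic extension compatible with the prescribed traces, and is what produces the sharp constant in the estimate.

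Setting $h(\theta):=g(r\cos\theta,r\sin\theta)$, one has $h\in W^{1,2}((0,\alpha))$ with $h(0)=0$. Expanding $h(\theta)=\sum_{n\geq 1}a_n\sin(\lambda_n\theta)$, I obtain the Parseval identities $\int_0^\alpha h^2\,\dd{\theta}=(\alpha/2)\sum a_n^2$ and, via integration by parts against $\cos(\lambda_n\theta)$ using $h(0)=0$ together with $\phi_n'(\alpha)=0$ and orthogonality of $\{\cos(\lambda_n\theta)\}$ on $(0,\alpha)$, $\int_0^\alpha(h')^2\,\dd{\theta}=(\alpha/2)\sum a_n^2\lambda_n^2$. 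Then I define
\[
u(\rho,\theta):=\sum_{n\geq 1}a_n\Big(\frac{\rho}{r}\Big)^{\lambda_n}\sin(\lambda_n\theta),
\]
whose terms are each harmonic in polar coordinates, vanish on $\{\theta=0\}$, and equal $a_n\sin(\lambda_n\theta)$ at $\rho=r$. Hence $u$ is harmonic in $U$, $u=g$ on $\gamma$, and $u(t,0)=0$ on $[0,r]$ (in the trace sense). Using orthogonality again together with $\int_0^r\rho^{2\lambda_n-1}\,\dd{\rho}=r^{2\lambda_n}/(2\lambda_n)$, a direct computation yields
\[
\int_U u_\rho^2\,\rho\,\dd{\rho}\,\dd{\theta}=\int_U \rho^{-2}u_\theta^2\,\rho\,\dd{\rho}\,\dd{\theta}=\frac{\alpha}{4}\sum_{n\geq 1}a_n^2\lambda_n,
\]
hence $\int_U|\nabla u|^2=(\alpha/2)\sum_n a_n^2\lambda_n$.

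To close, observe that $\lambda_n\alpha=(2n-1)\pi/2\geq \pi/2$ for every $n\geq 1$, so $\lambda_n\leq (2\alpha/\pi)\lambda_n^2$, and therefore
\[
\int_U|\nabla u|^2\leq\frac{2\alpha}{\pi}\cdot\frac{\alpha}{2}\sum_n a_n^2\lambda_n^2=\frac{2\alpha}{\pi}\int_0^\alpha(h')^2\,\dd{\theta}=\frac{2\alpha r}{\pi}\int_\gamma\Big(\frac{\partial g}{\partial\tau}\Big)^2\,\dd{\HH^1},
\]
where in the last equality I used $\dd{\HH^1}=r\,\dd{\theta}$ and $\partial g/\partial\tau=h'(\theta)/r$ along $\gamma$. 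The only technical point is the rigorous justification of the term-by-term manipulations in the energy computation, which I would handle by first verifying all identities for the finite truncations $\sum_{n\leq N}a_n\phi_n$ (where convergence is trivial), then passing to the limit as $N\to\infty$ using $W^{1,2}$-convergence of the boundary datum on $\gamma$ and positivity of the involved quadratic forms. This is the only real obstacle and is entirely routine.
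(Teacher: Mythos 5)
Your proof is correct, and it is essentially the same as the paper's — the only difference is how the eigenbasis is found. You directly write down the Sturm--Liouville modes $\sin(\lambda_n\theta)$, $\lambda_n=(2n-1)\pi/(2\alpha)$, for the mixed Dirichlet-at-$0$/Neumann-at-$\alpha$ problem. The paper instead extends $g$ by an odd reflection across $\theta=0$ and an even reflection across $\theta=\alpha$ to a $4\alpha$-periodic function, expands in the full Fourier sine basis $\sin(k\pi\theta/(2\alpha))$, and then observes that the even reflection kills all coefficients with $k$ even; the surviving odd-$k$ modes are precisely your $\phi_n$. From there the computations coincide: the radial weight $\rho^{2\lambda_n-1}$ integrates to $r^{2\lambda_n}/(2\lambda_n)$, the cross terms vanish by orthogonality of both $\{\sin\lambda_n\theta\}$ and $\{\cos\lambda_n\theta\}$ on $(0,\alpha)$, giving $\int_U|\nabla u|^2 = (\alpha/2)\sum a_n^2\lambda_n$, and the key inequality $\lambda_n\le(2\alpha/\pi)\lambda_n^2$ is the same as the paper's $n\le n^2$ in its odd-integer indexing. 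Your integration-by-parts justification of the Parseval identity for $\int(h')^2$ (computing the cosine coefficients of $h'$ directly rather than differentiating term by term, with the boundary terms vanishing thanks to $h(0)=0$ and $\cos(\lambda_n\alpha)=0$) is a clean way to make the argument rigorous and dispenses with the ``routine but tedious'' truncation step you flagged.
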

\begin{proof}
    Without loss of generality we can assume $r=1$.

    Let us define $\hat{g}: [-2\alpha, 2\alpha] \to \R$ as follows. For $\theta \in [0,\alpha]$ we set $\hat{g}(\theta) = g(\cos\theta,\sin\theta)$, $\hat{g}(-\theta) = -\hat{g}(\theta)$, $\hat{g}(\theta+\alpha) = \hat{g}(-\theta + \alpha)$, $\hat{g}(-\theta-\alpha) = \hat{g}(\theta - \alpha)$.
    Observe that $g$ is well defined since $g(1,0)=0$. Now we extend $\hat{g}$ to $\R$ as a $4\alpha$-periodic function. By construction $\hat{g} \in W^{1,2}(\R)$ and $\hat{g}$ is odd. 
    We write the Fourier series of $\hat{g}$
    \begin{equation*}
        \hat{g}(\theta) = \sum_{k=1}^{\infty} b_k \sin k \frac{\pi}{2\alpha} \theta
    \end{equation*}
    and we define, for $(\rho,\theta) \in [0,1] \times [0,\alpha]$, 
    \begin{equation*}
        v(\rho,\theta) = \sum_{k=1}^{\infty} b_k \rho^{k\frac{\pi}{2\alpha}} \sin k \frac{\pi}{2\alpha} \theta
    \end{equation*}
    and $u(\rho\cos\theta,\rho\sin\theta) = v(\rho,\theta)$. Then $u$ is harmonic in $U$, $u=g$ on $\gamma$ and $u(t,0) = 0$ for every $t \in [0,1]$. It remains to verify the energy estimate.

    First observe that by the symmetries of $\hat{g}$ we have $b_{2n} = 0$ for every $n \in \N$.
    Finally, we compute
    \begin{equation*}
        \begin{split}
            \int_U |\nabla u|^2 
            &= \frac{\pi}{2\alpha} \sum_{\myatop{n,m=1}{n,m \text{ odd}}}^{\infty} b_n b_m \frac{nm}{n+m} \int_0^\alpha \cos\left[(n-m)\frac{\pi}{2\alpha}\theta \right] d\theta \\ 
            &= \frac{\pi}{4} \sum_{n=1}^\infty n b_n^2 \leq \frac{\pi}{4} \sum_{n=1}^\infty n^2 b_n^2 = \frac{2\alpha}{\pi}  \int_\gamma \left(\frac{\partial g}{\partial \tau} \right)^2.
        \end{split}
    \end{equation*}    
\end{proof}
In the case of an half-disk we need a slightly different extension, which we obtain in the following Lemma.
\begin{lmm}[Harmonic extension on half-disk]
    \label{lmm: harmonic extension on half-disk}
    Let $r>0$, $\gamma = \{ (r \cos\theta, r\sin\theta): \theta \in [0,\pi]\}$, $U = \{ (\rho\cos\theta,\rho\sin\theta): \rho \in (0,1), \theta \in (0,\pi) \}$. Let $g \in W^{1,2}(\gamma)$ be such that $g(r,0)=g(-r,0)=0$. Then there exists $u: \overline{U} \to \R$ such that $u$ is harmonic in $U$, $u=g$ on $\gamma$, $u(t,0)=0$ for every $t \in [-r,r]$ and 
    \begin{equation*}
        \int_U |\nabla u|^2 \leq r \int_\gamma \left(\frac{\partial g}{\partial \tau} \right)^2
    \end{equation*}
\end{lmm}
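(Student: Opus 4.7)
The plan is to reduce to the classical Poisson extension on a full disk by odd reflection across the diameter, which is legitimate precisely because $g$ vanishes at the two endpoints $(\pm r,0)$ of $\gamma$. Without loss of generality take $r=1$ (scaling preserves the inequality since both sides scale the same way). Define $\hat g:[-\pi,\pi]\to\R$ by $\hat g(\theta)=g(\cos\theta,\sin\theta)$ for $\theta\in[0,\pi]$ and $\hat g(\theta)=-g(\cos(-\theta),\sin(-\theta))$ for $\theta\in[-\pi,0]$. Since $g(\pm1,0)=0$, the function $\hat g$ is well-defined, belongs to $W^{1,2}$ on the circle, and is odd in $\theta$.

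Expand $\hat g$ in Fourier series: since $\hat g$ is odd, only sines appear, giving
\begin{equation*}
    \hat g(\theta)=\sum_{n=1}^\infty b_n\sin(n\theta).
\end{equation*}
Define $v(\rho\cos\theta,\rho\sin\theta):=\sum_{n=1}^\infty b_n\,\rho^{n}\sin(n\theta)$ on the unit disk. Then $v$ is harmonic, coincides with $\hat g$ on $\partial B_1$, and vanishes identically on $\{\sin\theta=0\}$ by the oddness of the sine terms, so in particular $v(t,0)=0$ for $t\in[-1,1]$. Set $u=v|_{U}$; then $u$ has all the claimed properties.

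It remains to verify the energy bound. A direct computation in polar coordinates using orthogonality of $\{\sin(n\theta),\cos(n\theta)\}$ on $[0,2\pi]$ gives
\begin{equation*}
    \int_{B_1}|\nabla v|^2=\pi\sum_{n=1}^\infty n\,b_n^2,
\end{equation*}
and by the symmetry $v(x_1,-x_2)=-v(x_1,x_2)$ the energy on $U$ is exactly half of this, i.e.\ $\frac{\pi}{2}\sum_n n\,b_n^2$. On the other hand, since $\partial g/\partial\tau$ along $\gamma$ corresponds to $\partial_\theta\hat g$,
\begin{equation*}
    \int_\gamma\left(\frac{\partial g}{\partial\tau}\right)^2=\int_0^\pi\bigl(\partial_\theta\hat g\bigr)^2\,d\theta=\frac{\pi}{2}\sum_{n=1}^\infty n^2\,b_n^2.
\end{equation*}
Comparing the two, $\sum_n n\,b_n^2\le\sum_n n^2\,b_n^2$, which yields $\int_U|\nabla u|^2\le\int_\gamma(\partial g/\partial\tau)^2$, i.e.\ the claim for $r=1$. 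Rescaling restores the factor $r$.

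The only subtle point—but not really an obstacle—is checking that the odd extension $\hat g$ is genuinely in $W^{1,2}$ of the circle; this is where the boundary condition $g(\pm r,0)=0$ is essential, since otherwise odd reflection would introduce jumps and the Fourier coefficients would decay too slowly for the final estimate. Everything else is a routine Parseval computation, and the constant improves from $2r$ (which the previous lemma with $\alpha=\pi$ would give) to $r$ precisely because here we may use the eigenfunctions $\sin(n\theta)$ of integer frequency on the full circle instead of $\sin(n\pi\theta/(2\alpha))$ on a reflected sector.
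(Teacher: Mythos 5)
Your proof is correct and follows exactly the route sketched in the paper: odd reflection of $g$ across the diameter (valid because $g(\pm r,0)=0$), Fourier expansion in sines, and a Parseval comparison of $\sum n b_n^2$ against $\sum n^2 b_n^2$. You have simply written out explicitly the computation that the paper leaves as ``analogous to the previous Lemma,'' and your closing remark about why the constant improves from $2r$ to $r$ relative to the sector lemma is accurate.
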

\begin{proof}
    Without loss of generality we can assume $r=1$.
    Let us define $\hat{g}: [-\pi, \pi] \to \R$ as follows. For $\theta \in [0,\pi]$ we set $\hat{g}(\theta) = g(\cos\theta,\sin\theta)$ and $\hat{g}(-\theta) = -\hat{g}(\theta)$.
    Observe that $g$ is well defined since $g(1,0)=0$. Now we extend $\hat{g}$ to $\R$ as a $2\pi$-periodic function. By construction $\hat{g} \in W^{1,2}(\R)$ and $\hat{g}$ is odd. 
    The result follows by computations analogous to those in the proof of the previous Lemma, starting from the Fourier series for $\hat{g}$.
\end{proof}

The following proposition will be used in the proof of Proposition \ref{prop: Bonnet monotonicity} to exclude possibility of a cracktip (see \cite[Definition 1.3.2]{delellis-focardi}) as a generalized minimizer on $U$.
\begin{prop}
    \label{prop: generalized minimizer with K in partial Omega containing an interval}
    Let $(K,u)$ be a generalized minimizer on $U$ as in Definition \ref{def: admissible competitor and minimizer on a half-plane}.
    Suppose that $K\subset \partial U$ and $K$ contains a non empty interval $ I = (a,b) \times \{0\}$. Then $K = \partial U$.
\end{prop}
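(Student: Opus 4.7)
The strategy is proof by contradiction, reducing to the connected case via a blow-up argument. Suppose $K \subsetneq \partial U$, so there is a point of $\partial U$ not in $K$, while $K \supset I$.

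If $K$ is connected, then, being a closed connected subset of $\partial U \cong \mathbb{R}$ of positive $\mathcal{H}^1$-measure (the density lower bound for generalized minimizers rules out isolated points), $K$ must be a closed interval. Taking any $x \in K \subset K \cap \partial U$, Proposition \ref{prop: classification of connected generalized minimizers} applies directly and forces $K = \partial U$, contradicting $K \subsetneq \partial U$.

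If $K$ is not connected, the density lower bound guarantees that every connected component of $K$ is a closed interval of positive length. Let $K^0 = [c_0,d_0] \times \{0\}$ denote the component of $K$ containing $I$; since $K \neq K^0$ pick another component $[c_1, d_1] \times \{0\}$. By the discrete structure of the components, there exists $\beta > 0$ with $(d_1, d_1 + \beta) \times \{0\} \subset \partial U \setminus K$ (either to the next component on the right, or extending to $+\infty$ if $[c_1,d_1]$ is the right-most component; the symmetric case with a left gap works analogously). Set $x_0 = (d_1, 0) \in K$ and consider the blow-up of $(K,u)$ at $x_0$ along a sequence $r_j \downarrow 0$. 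For each fixed $R > 0$ and all $r_j$ small enough that $r_j R < \min(d_1 - c_1, \beta)$, a direct check gives
\begin{equation*}
    (K - x_0)/r_j \cap \overline{B_R} = [-R,0] \times \{0\},
\end{equation*}
because the component $[c_1, d_1]$ fills the segment to the left of the origin at the rescaled scale while the gap $(d_1, d_1 + \beta)$ empties the right. A diagonal argument (let $r_j \to 0$ first, then $R \to \infty$) then shows that the Hausdorff blow-up limit satisfies $K_0 = (-\infty, 0] \times \{0\}$.

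By Theorem \ref{thm: compactness for generalized minimizers}, the pair $(K_0, u_0)$ is a generalized minimizer on the upper half-plane $\{x_2 > 0\}$. Since $K_0$ is connected and $0 \in K_0 \cap \partial\{x_2 > 0\}$, Proposition \ref{prop: classification of connected generalized minimizers} forces $K_0 = \{x_2 = 0\}$, contradicting $K_0 = (-\infty, 0] \times \{0\}$. The main subtlety is the verification that the blow-up isolates the chosen component $[c_1, d_1]$ from the rest of $K$ at all sufficiently small scales; this is precisely what the gap $\beta > 0$ and the discrete interval structure of $K$ ensure, and it is why the reduction to a connected half-line blow-up works regardless of whether other components of $K$ accumulate near $K^0$.
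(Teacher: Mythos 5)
Your argument is circular within the paper's logical structure. The tool you invoke in both cases---Proposition \ref{prop: classification of connected generalized minimizers}---is proved in this paper by means of the rigidity part of Bonnet's monotonicity formula (Proposition \ref{prop: Bonnet monotonicity}: if $\omega$ is constant then $\omega \equiv 0$), and the proof of that rigidity statement is precisely where the present proposition is used: there one shows that $K \subset \partial U$ and that $K$ contains an interval, and then quotes Proposition \ref{prop: generalized minimizer with K in partial Omega containing an interval} to conclude $K = \partial U$ and reach a contradiction. So both your connected case and your blow-up case rest on a downstream consequence of the very statement you are proving. The intended proof is independent of all the monotonicity machinery: one performs an inner variation with a vertical field $\eta = (0,\varphi)$, $\varphi \geq 0$, supported in a ball around $I$ (Lemma \ref{lmm: first derivative of the energy for generalized minimizers}); since $K \subset \partial U$ the curve term $e^T D\eta\, e$ vanishes, the bulk term reduces via the divergence theorem and the Euler--Lagrange conditions to $-\int_I |\nabla u|^2 \varphi \geq 0$, hence $\nabla u = 0$ on $I$; Schwarz reflection plus the identity theorem for holomorphic functions then force $u$ to be constant, and minimality gives $K = \partial U$ because $I \subset K$.

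Beyond the circularity, the blow-up construction relies on unjustified structural claims: a closed subset of $\partial U$ need not have connected components that are nondegenerate intervals, nor need the components be discretely arranged (components can be singletons and can accumulate), and the paper does not establish a density lower bound for arbitrary generalized minimizers on the half-plane (it proves one for minimizers of the original Dirichlet problem and, in the proof of Theorem \ref{thm: blow-up limit}, for their blow-up limits). One can always pick $x_0 \in K$ that is a finite endpoint of a complementary interval, but without knowing that $K$ fills a one-sided neighborhood of $x_0$ you cannot conclude that the Hausdorff blow-up is the half-line $(-\infty,0] \times \{0\}$ rather than something smaller or more fragmented. Those points might be repairable; the circularity is not: any argument routed through Proposition \ref{prop: classification of connected generalized minimizers} or through the rigidity part of Proposition \ref{prop: Bonnet monotonicity} cannot be used to prove this statement.
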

\begin{proof}
    Without loss of generality assume that $I = (-R,R)$ for some $R>0$. Let $\varphi \in C^\infty_c(B_R)$ be such that $\varphi \geq 0$. We define
    \begin{equation*}
        \eta(x) = (0,\varphi(x)) \quad \forall x \in \R^2.
    \end{equation*}
    Let $\Phi_t$ be defined as in Lemma \ref{lmm: first derivative of the energy for generalized minimizers}. It is easy to see that 
    \begin{equation*}
        \Phi_t^{-1}(s,0) \in \{x_2 \leq 0\} \quad \forall s \in \R, \, \forall t\geq0
    \end{equation*}
    Since $u = 0$ in $\{x_2 \leq 0\} \setminus K$, this implies that $u_t = u \circ \Phi_t^{-1} = 0$ in $\{x_2=0\}\setminus \Phi_t(K)$.
    In particular, for sufficiently small $t\geq0$, $(K_t, u_t) = (\Phi_t(K), u \circ \Phi_t^{-1})$ is a admissible competitor for $(K,u)$ in $B_R$ according to Definition \ref{def: admissible competitor and minimizer on a half-plane}. Since $(K,u)$ is a generalized minimizer, this implies that $\derivative{}{t} E(K_t,u_t, B_R)|_{t=0} \geq 0$ and, by Lemma \ref{lmm: first derivative of the energy for generalized minimizers}, this means that
    \begin{equation*}
        \int_{B_R \cap U \setminus K} (|\nabla u|^2 \diverg \eta - 2 \nabla u^T D\eta \, \nabla u) + \int_{K \cap B_R} e^T D\eta \, e \, \dd{\HH^1} \geq 0.
    \end{equation*}
    Observe that $e(x) = e_1 = (1,0)$ for every $x \in K$ since $K \subset \partial U = \{x_2=0\}$. 
    This implies that $e^T D\eta \, e = 0$ in $K \cap B_R$, so that the second integral in the above formula vanishes.
    Moreover, in $U \setminus K $ we have, since $u$ is harmonic by \eqref{eq: Euler-Lagrange },
    \begin{equation*}
        |\nabla u|^2 \diverg \eta - 2 \nabla u^T D\eta \, \nabla u =\diverg(|\nabla u|^2 \varphi e_2 -2 \varphi \partial_2 u  \nabla u ).
    \end{equation*}
    Then, by the Divergence Theorem, 
    \begin{equation*}
        \int_{\partial (B_R \cap U \setminus K)} \left(|\nabla u|^2 \varphi \nu_2 -2 \varphi \partial_2 u  \frac{\partial u}{\partial \nu} \right) \geq 0,
    \end{equation*}
    where $\nu$ is the outward pointing unit vector field normal to the boundary.
    Now recall that $K \subset \partial U$. Therefore, $B_R \cap U \setminus K = B_R \cap U $ and $\partial (B_R \cap U \setminus K) = (\partial B_R \cap U) \cup I $.
    By the Euler-Lagrange equations \eqref{eq: Euler-Lagrange }, we have $\frac{\partial u}{\partial \nu} = 0$ on $K$. Moreover, $\varphi = 0$ on $\partial B_R \cap U$. In conclusion, we get 
    \begin{equation*}
       - \int_I |\nabla u|^2\varphi \geq 0 
    \end{equation*}
    for every $\varphi \in C^\infty_c(B_R)$, $\varphi \geq 0$. This implies that $\nabla u = 0$ on $I$. 
    \begin{claim}
        $u$ is constant in $B_R \cap U$.
    \end{claim}
    \begin{proof}
        We have $u$ harmonic in $B_R \cap U = B_R \cap \{x_2 >0\}$. 
        Translating $u$, and without renaming it, we can assume that $u=0$ on $I$.
        We define $v$ to be the Schwarz's reflection of $u$, that is 
        \begin{equation*}
            v(x_1,x_2) = \begin{cases}
                u(x_1,x_2) &\text{ if } x_2 \geq 0 \\ 
                -u(x_1,-x_2) &\text{ if } x_2 < 0.
            \end{cases}
        \end{equation*}
        Then $v$ is harmonic in $B_R$ (see \cite[Problem 2.4]{gilbarg-trudinger}). Therefore there exists a function $f$ that is holomorphic on $B_R$ and such that $v = \operatorname{Re}f $.
        Moreover, $|f'(x_1+ix_2)| = |\nabla v(x_1,x_2)|$ for every $(x_1,x_2) \in B_R$. In particular, $f' = 0$ on $I$. 
        Since the zero of $f'$ at $0$ is not isolated, $f' = 0$ in $B_R$ by the Identity Theorem \cite[Theorem 15.7]{priestley}. Therefore $f$ is constant in $B_R$ and so is $u$. 
    \end{proof}
        
    Since $K \subset \partial U$, we know that $u$ is harmonic in $U$.
    Then the above claim implies that that $u$ is constant in $U$. 
    Since $(K,u)$ is a generalized minimizer, either $u=0$ (and $K = \emptyset$) or $K=\partial U$. Since we know that $I \subset K$, we conclude that $K = \partial U$. 
\end{proof}
\subsection{Monotonicity formulae}
\label{section: monotonicity formuale}
In this section, we prove Propositions \ref{prop: david-leger monotonicity} and \ref{prop: Bonnet monotonicity}.
We start with the following proposition, which is a preliminary step in proving Proposition \ref{prop: david-leger monotonicity}. 
\begin{prop}
    \label{prop: david-leger monotonicity with angle}
    Assume that $(K,u)$, $x$ and $F$ are as Proposition \ref{prop: david-leger monotonicity}.

    For a.e. $r>0$ such that $\HH^0(K \cap \partial B_r(x)) \in [1,\infty)$ we have
    \begin{equation*}
        F'(r) \geq \min\left(1,3-\frac{2\alpha}{\pi}\right) \frac{1}{r}\int_{\partial B_r(x) \cap U \setminus K} |\nabla u|^2, 
    \end{equation*}
    where $\alpha$ is such that $\HH^1(I) \leq \alpha r$ for every $I$ connected component of $\partial B_r(x) \cap U \setminus K$ not contained in $I^*$, and $\HH^1(I) \leq \frac{\alpha}{2} r$ for every connected component $I \subset I^*$, with $I^*$ being the connected component of $\partial B_r(x) \setminus K$ that contains the point $(0,-r)$.

    In particular, $F'(r) \geq 0$ if $\HH^1(I) \leq \frac{3}{4}\pi r$ for every $I \subset I^*$. 
\end{prop}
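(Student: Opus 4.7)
The plan is to prove the monotonicity by combining a coarea computation of $F'(r)$, the identity of Proposition \ref{prop: David-Legér-Maddalena-Solimini}, and a minimality comparison against a cone-type competitor built on $B_r(x)$. Denote $e(r) = \int_{B_r(x) \cap U \setminus K} |\nabla u|^2$, $f(r) = \HH^1(K \cap B_r(x))$, and write $u_\tau, u_\nu$ for the tangential and normal traces of $\nabla u$ along $\partial B_r(x)$. For a.e. $r$ the coarea formula gives $e'(r) = \int_{\partial B_r(x) \cap U \setminus K}(u_\tau^2 + u_\nu^2)$ and $f'(r) = \HH^0(K \cap \partial B_r(x))$, whence
\begin{equation*}
F'(r) = -\frac{F(r)}{r} + \frac{2 e'(r) + f'(r)}{r}.
\end{equation*}
I would then use Proposition \ref{prop: David-Legér-Maddalena-Solimini} to substitute $f(r)/r = \int(u_\tau^2 - u_\nu^2) + \sum_p e(p)\cdot n(p)$ in the right-hand side.

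The competitor $(\tilde K, \tilde u)$ I would construct has $\tilde K \cap B_r(x)$ equal to the radial cone $\bigcup_{p \in K \cap \partial B_r(x)} [x,p]$, coincides with $K$ outside $B_r(x)$, and $\tilde u$ is defined to equal $u$ outside $B_r(x)$ while on each sector $S_I$ of $B_r(x) \cap U$ cut out by $\tilde K$ and $\partial U$ one takes $\tilde u$ to be harmonic with Dirichlet data $u|_I$ on the corresponding arc $I \subset \partial B_r(x) \cap U \setminus K$ and $\tilde u = 0$ on any segment of $\partial U$ bounding $S_I$. Admissibility is straightforward because $\tilde K$ separates $B_r(x) \cap U$ into precisely one sector per arc $I$. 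Sector energies are controlled by three regimes: Lemma \ref{lmm: harmonic extension circular sector} when $S_I$ touches $\partial U$ along a single radius (which happens exactly for $I \subset I^*$ with one endpoint at $(\pm r, 0)$), giving $\int_{S_I}|\nabla \tilde u|^2 \le (2\alpha_I/\pi)\, r \int_I u_\tau^2$; Lemma \ref{lmm: harmonic extension on half-disk} when $S_I$ is the full upper half-disk; and a standard Fourier cosine estimate $\int_{S_I}|\nabla \tilde u|^2 \le (\alpha_I/\pi)\, r \int_I u_\tau^2$ for the interior sectors corresponding to $I \not\subset I^*$. Using the hypotheses $\alpha_I \le \alpha$ for $I \not\subset I^*$ and $\alpha_I \le \alpha/2$ for $I \subset I^*$, every sector contributes at most $(\alpha/\pi) r \int_I u_\tau^2$, so the total competitor energy satisfies $\sum_I \int_{S_I}|\nabla \tilde u|^2 \le (\alpha/\pi) r \int_{\partial B_r(x) \cap U \setminus K} u_\tau^2$.

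Minimality of $(K,u)$ against $(\tilde K, \tilde u)$ then yields
\begin{equation*}
e(r) + f(r) \le r f'(r) + \tfrac{\alpha}{\pi} r \int_{\partial B_r(x) \cap U \setminus K} u_\tau^2,
\end{equation*}
and plugging this together with the Proposition \ref{prop: David-Legér-Maddalena-Solimini} identity into the formula for $F'(r)$ and tracking the algebra produces
\begin{equation*}
r F'(r) \ge \bigl(3 - \tfrac{2\alpha}{\pi}\bigr)\textstyle\int u_\tau^2 + \int u_\nu^2 - \sum_p \bigl(1 - e(p)\cdot n(p)\bigr).
\end{equation*}
To reach the claimed $F'(r) \ge \min(1, 3 - 2\alpha/\pi)\, e'(r)/r$, the nonnegative defect $\sum_p (1 - e(p)\cdot n(p))$ must be absorbed into the positive slack $(2 - 2\alpha/\pi)\int u_\tau^2$ when $\alpha \le \pi$, or into $(2\alpha/\pi - 2)\int u_\nu^2$ when $\alpha > \pi$. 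This is where the Euler--Lagrange condition $\partial u/\partial\nu_K = 0$ on $K$ is used: near each $p \in K \cap \partial B_r(x)$ it forces $\nabla u$ to be parallel to $e(p)$, which pointwise links $(1 - e(p) \cdot n(p))$ to squared tangential/normal components of $\nabla u$ and lets the defect be dominated by the slack terms. The ``In particular'' statement is then immediate: setting $\alpha = 3\pi/2$ so that the hypothesis on $I \subset I^*$ becomes $\HH^1(I) \le \tfrac{3}{4}\pi r$ gives $3 - 2\alpha/\pi = 0$, hence $F'(r) \ge 0$. The main technical obstacle I anticipate is the sector-by-sector implementation of the harmonic extension across the three geometric regimes, together with the clean closure of the defect absorption step in the final inequality.
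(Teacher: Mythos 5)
Your competitor construction and overall plan match the paper's, but there is a genuine error in the coarea formula for $f'(r)$ that propagates through the whole argument. For a.e.\ $r$, the coarea formula applied to $\rho(y)=|y-x|$ on the rectifiable set $K$ gives
\begin{equation*}
    f'(r)=\sum_{p\in K\cap\partial B_r(x)}\frac{1}{e(p)\cdot n(p)},
\end{equation*}
not $f'(r)=\HH^0(K\cap\partial B_r(x))$; the tangential Jacobian $|e\cdot n|$ appears in the denominator precisely because $K$ may cross $\partial B_r(x)$ obliquely. Since $e(p)\cdot n(p)\in(0,1]$, your version systematically underestimates $f'(r)$, and this is exactly the origin of the spurious defect $-\sum_p\bigl(1-e(p)\cdot n(p)\bigr)$ in your final inequality. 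With the correct derivative, the same algebra (minimality against the cone competitor plus the David--L\'eger--Maddalena--Solimini identity) produces instead a surplus term
\begin{equation*}
    \sum_{p}\left(\frac{1}{e(p)\cdot n(p)}+e(p)\cdot n(p)-2\right)\ge 0,
\end{equation*}
which can simply be discarded; no absorption step is needed, and one arrives directly at $rF'(r)\ge\bigl(3-\tfrac{2\alpha}{\pi}\bigr)E_\tau+E_\nu\ge\min\bigl(1,3-\tfrac{2\alpha}{\pi}\bigr)\,r^{-1}\!\int_{\partial B_r\cap U\setminus K}|\nabla u|^2$.

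The ``absorption via Euler--Lagrange'' step you sketch to salvage the weaker inequality does not stand up: the condition $\partial u/\partial\nu=0$ holds on $K$ and carries no pointwise information at the isolated crossing points $p\in K\cap\partial B_r(x)$ that would allow you to dominate a purely geometric quantity like $1-e(p)\cdot n(p)$ by an integral slack of tangential or normal Dirichlet energy on the circle. If $\alpha$ were close to $3\pi/2$, the slack $(2-2\alpha/\pi)E_\tau$ you propose to use could even be negative, so the mechanism fails precisely where it is most needed (the ``In particular'' case $3-2\alpha/\pi=0$). The fix is simply to replace your coarea identity with the correct one. Two smaller remarks: Lemma \ref{lmm: harmonic extension on half-disk} is never needed here (under $\HH^0(K\cap\partial B_r)\ge 1$ no sector is the full half-disk, and the boundary arcs are handled by Lemma \ref{lmm: harmonic extension circular sector}), and the competitor bound should read $e(r)+f(r)\le r\,\HH^0(K\cap\partial B_r)+\tfrac{\alpha}{\pi}r\int u_\tau^2$ rather than with $rf'(r)$, since $\HH^1$ of the radial cone is $r\,\HH^0(K\cap\partial B_r)$.
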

\begin{proof}
    Assume without loss of generality that $x=0$.
    Let us define, for $r>0$,
    \begin{equation*}
        D(r) = \int_{B_r\cap U \setminus K} |\nabla u|^2, \quad
        l(r) = \HH^1(K \cap B_r), \quad
        N(r) = \HH^0(K \cap \partial B_r).        
    \end{equation*}
    Take $r>0$ such that $N(r)<\infty$, and observe that this holds for a.e. $r>0$ by the by the coarea formula \cite[Theorem 2.93, p. 101]{AFP}. By a consequence of the coarea formula \cite[Theorem 3.12, p. 140]{evans-gariepy} we have, for a.e. $r>0$, 
    \begin{equation*}
        D'(r) = \int_{\partial B_r \cap U \setminus K} |\nabla u|^2
    \end{equation*}
    and by the coarea formula \cite[Theorem 2.93, p. 101]{AFP} we have, for a.e. $r>0$, 
    \begin{equation*}
        l'(r) = \sum_{p \in \partial B_r \cap K} \frac{1}{e(p)\cdot n(p)},
    \end{equation*}
    with the same notation of Proposition \ref{prop: David-Legér-Maddalena-Solimini}.
    Therefore, for a.e. $r>0$, 
    \begin{equation*}
        r^2 F'(r) = 2r \int_{\partial B_r \cap U \setminus K} |\nabla u|^2 + r \sum_{p \in \partial B_r \cap K} \frac{1}{e(p)\cdot n(p)} -(2 D(r)+l(r)).
    \end{equation*}
    Then, by Proposition \ref{prop: David-Legér-Maddalena-Solimini}, 
    \begin{equation*}
        r^2 F'(r) = 3r E_\tau(r) + r E_\nu(r) + r \sum_{p \in \partial B_r \cap K} \left( \frac{1}{e(p)\cdot n(p)} + e(p)\cdot n(p) \right) - 2(D(r)+l(r)),
    \end{equation*}
    with 
    \begin{equation*}
        E_\tau(r) = \int_{\partial B_r \cap U \setminus K} \left(\frac{\partial u}{\partial \tau}\right)^2, \quad
        E_\nu(r) = \int_{\partial B_r \cap U \setminus K} \left(\frac{\partial u}{\partial \nu}\right)^2.
    \end{equation*}
    Since $|e(p)\cdot n(p)| \leq 1$, for a.e. $r>0$
    \begin{equation}
        \label{eq: proof of david-leger, main inequality}
        r^2 F'(r) \geq 3r E_\tau(r) + r E_\nu(r) + 2rN(r)- 2 E(K,u,B_r \cap \overline{U}).
    \end{equation}
    Let us fix some notation (see Figure \ref{fig: monotonicity, angles and arcs notation}). Let $N = N(r)+1$. Then $\partial B_r \cap U \setminus K = \cup_{j=1}^N I_j$ with 
    \begin{equation*}
        I_j = \{(r\cos\theta,r\sin\theta): \theta \in (\sum_{i=1}^{j-1} \alpha_i, \sum_{i=1}^{j} \alpha_i)\},
    \end{equation*}
    $\alpha_0:=0$, with the convention that $\alpha_1=0$ if $(r,0) \in K$ and $\alpha_N = 0$ if $(-r,0) \in K$.

    \begin{figure}[ht]
        \begin{tikzpicture}[>=stealth, scale=1]
            \draw[->](-3,0)--(3,0) node[right] {$x_1$};
            \draw[->](0,-2.6)--(0,2.6) node[above] {$x_2$};
            \def\r{2}
            \def\a{30}
            \def\b{70}
            \def\c{150}
            \draw (0,0) circle [radius=\r];
            \foreach \x in {\a,\b,\c} 
            {
                \coordinate (A) at (\x:\r);
                \filldraw[black] (A) circle(0.7pt);
                \draw (0,0)--(A);
            }
            \draw ({\r/4},0) arc [start angle=0, end angle=70, radius={\r/4}];
            \draw ({\r/4*cos(150)}, {\r/4*sin(150)}) arc [start angle=150, end angle=180, radius={\r/4}];
            \node at ({\a/2}:{1.5*\r/4}) {\small $\alpha_1$};
            \node at ({\a+(\b-\a)/2}:{1.4*\r/4}) {\small $\alpha_2$};
            \node at ({180-(180-\c)/2}:{1.5*\r/4}) {\small $\alpha_N$};
            \node at ({\a/2}:{1.1*\r}) {$I_1$};
            \node at ({\a+(\b-\a)/2}:{1.1*\r}) {$I_2$};
            \node at ({180-(180-\c)/2}:{1.1*\r}) {$I_N$};
            \node[below right] at (\r,0) {$r$};
        \end{tikzpicture}
        \caption{}
        \label{fig: monotonicity, angles and arcs notation}
    \end{figure}
    In other words, $\alpha_i = \frac{\HH^1(I_i)}{r}$ is the angle corresponding to the arc $I_i$, $\alpha_i \in (0,\pi)$ for $i = 2,\ldots, N-1$, while $\alpha_1, \alpha_N \in [0,\pi]$. Let $\alpha = \max(2\alpha_1, \alpha_2, \ldots, \alpha_{N-1}, 2\alpha_N)$ and fix $r>0$ such that \eqref{eq: proof of david-leger, main inequality} holds.

    We define $(J,v) = (K,u)$ outside $B_r \cap \overline{U}$ and  
    \begin{equation*}
        J \cap B_r = \bigcup_{p \in \partial B_r \cap K} [0,p],
    \end{equation*}
    where $[0,p]$ denotes the segment joining points $0$ and $p$.
    The connected components of $B_r \cap U \setminus J$ are circular sectors $\{A_j: j=1,\ldots,N\}$ corresponding to the arcs $\{I_j: j=1,\ldots, N\}$.
    For every $j\neq 1,N$ we define $v$ to be the harmonic extension of $u \restriction_{I_j}$ to $A_j$ given by \cite[Lemma 4.11]{david-leger}. In particular, we get 
    \begin{equation*}
        \int_{A_j} |\nabla v|^2 \leq \frac{\alpha_j r}{\pi} \int_{I_j} \left(\frac{\partial u}{\partial \tau} \right)^2 \quad \text{ for } j\neq 1,N.
    \end{equation*}
    For $j=1,N$, instead, we define $v$ to be the harmonic extension given by Lemma \ref{lmm: harmonic extension circular sector}. Therefore we obtain 
    \begin{equation*}
        \int_{A_j} |\nabla v|^2 \leq \frac{2\alpha_j r}{\pi} \int_{I_j}  \left(\frac{\partial u}{\partial \tau} \right)^2 \quad \text{ for } j = 1,N.
    \end{equation*}
    By construction $(J,v)$ is a admissible competitor for $(K,u)$ according to Definition \ref{def: admissible competitor and minimizer on a half-plane}, and hence 
    \begin{equation*}
       \begin{split}
            E(K,u,B_r \cap \overline{U}) 
            &\leq \sum_{j=1,N} \int_{A_j}|\nabla v|^2 + \sum_{j=2}^{N-1}\int_{A_j}|\nabla v|^2 + r N(r)\\
            &\leq \sum_{j=1,N} \frac{2\alpha_j r}{\pi} \int_{I_j} \left(\frac{\partial u}{\partial \tau} \right)^2 + \sum_{j=2}^{N-1}\frac{\alpha_j r}{\pi} \int_{I_j} \left(\frac{\partial u}{\partial \tau} \right)^2 + rN(r) \\ 
            &\leq \frac{\alpha r}{\pi} E_\tau(r) + rN(r).
       \end{split}
    \end{equation*}
    Substituting this into the above inequality, we get 
    \begin{equation*}
        r^2 F'(r) \geq \left(3-\frac{2\alpha}{\pi} \right)r E_\tau(r) + r E_\nu(r) 
        \geq \min \left( 3-\frac{2\alpha}{\pi},1\right) r \int_{\partial B_r \cap U \setminus K} |\nabla u|^2.
    \end{equation*}

    Finally, if $F'(r) = 0$, then the penultimate inequality implies that $E_\nu(r)=0$ and hence, by the Euler Lagrange equations \eqref{eq: Euler-Lagrange }, $D(r)=0$.
\end{proof}

Now we are ready to prove Proposition \ref{prop: david-leger monotonicity}.
\begin{proof}[Proof of Proposition \ref{prop: david-leger monotonicity}]   
    Assume without loss of generality that $x=0$.
    Let us use the notation of the proof of Proposition \ref{prop: david-leger monotonicity with angle} and fix $r>0$ such that \eqref{eq: proof of david-leger, main inequality} holds.
    \begin{enumerate}[leftmargin=*]
        \item  Assume that $N(r) = 0$.
        Then
        \begin{equation*}
            r^2 F'(r) \geq 3r E_\tau(r) + r E_\nu(r) - 2 E(K,u,B_r \cap \overline{U}).
        \end{equation*}
        We set $(J,v) = (K,u)$ outside $B_r \cap \overline{U}$, and we define $J \cap B_r = \emptyset$ and we let $v$ be the harmonic extension of $u \restriction_{\partial B_r \cap \overline{U}}$ to $B_r \cap \overline{U}$ given by Lemma \ref{lmm: harmonic extension on half-disk}. In particular, we have 
        \begin{equation*}
            \int_{B_r \cap U} |\nabla v|^2 \leq r \int_{\partial B_r \cap U} \left(\frac{\partial u}{\partial \tau} \right)^2.
        \end{equation*}
        Therefore 
        \begin{equation*}
            E(K,u,B_r \cap \overline{U}) \leq r \int_{\partial B_r \cap U} \left(\frac{\partial u}{\partial \tau} \right)^2
        \end{equation*}
        and
        \begin{equation*}
            r^2 F'(r) \geq r \int_{\partial B_r \cap U} |\nabla u|^2.
        \end{equation*}

        Finally, if $F'(r)=0$, then $\int_{\partial B_r\cap U} |\nabla u|^2= 0$, and, by the Euler Lagrange equations \eqref{eq: Euler-Lagrange }, $D(r) = 0$. 
        \item Now assume that $N(r) \in [2,\infty)$. 

        If $\alpha_1,\alpha_N \leq \frac{3}{4}\pi$ there is nothing to prove, by Proposition \ref{prop: david-leger monotonicity with angle}. So assume that this is false. Without loss of generality we assume that $\alpha_1 > \frac{3}{4}\pi$. 

        We define $(J,v) = (K,u)$ outside $B_r \cap \overline{U}$.
        Then we define (see Figure \ref{fig: competitor, monotonicity, angle bigger than 3/4pi})
        \begin{equation*}
            J\cap B_r \cap \overline{U} = [0,p] \bigcup \gamma \bigcup_{j=3}^{N(r)}[0,p_j],
        \end{equation*}
        where, identifying $\mathbb{C}$ with $\R^2$, $p = re^{i \frac{3}{4}\pi}$, $\partial B_r \cap K = \{p_1, \ldots, p_{N(r)}\}$ with $p_j = r e^{i\sum_{k=1}^{j}\alpha_k}$, $\gamma = \{r e^{i\theta} : \theta \in [\frac{3}{4}\pi, \alpha_1+\alpha_2] \}$. 
        \begin{figure}[ht]
            \begin{tikzpicture}[>=stealth, scale=1.1]
                \draw[->](-3,0)--(3,0) node[right] {$x_1$};
                \draw[->](0,-2.6)--(0,2.6) node[above] {$x_2$};
                \def\r{2}
                \draw (0,0) circle [radius=\r];
                \coordinate (P) at ({45*3}:\r);
                \draw[thick, blue] (0,0)--(P) node[above right, pos=0.5] {$J$};
                \filldraw[black] (P) circle(0.7pt) node[left] {$p$};
                \coordinate (A) at ({45*3+15+10+8}:\r);
                \draw[thick, blue] (0,0)--(A);
                \filldraw[black] (A) circle(0.7pt) node[left] {$p_3$};
                \coordinate (A) at ({45*3+15+10+8+8}:\r);
                \draw[thick, blue] (0,0)--(A);
                \filldraw[black] (A) circle(0.7pt) node[left] {$p_{N(r)}$};
                \draw[thick, blue] (P) arc [start angle = {45*3}, end angle = {45*3+15+10}, radius = \r];
                \coordinate (A) at ({45*3+15}:\r);
                \filldraw[black] (A) circle(0.7pt) node[left] {$p_1$};
                \coordinate (A) at ({45*3+15+10}:\r);
                \filldraw[black] (A) circle(0.7pt) node[left] {$p_2$};
                \node at (60:{\r/2}) {$A$};
                \node at ({45*3+(3+15+10+8)/2}:{1.1*\r/2}) {$B$};
                \node[below right] at (\r,0) {$r$};
            \end{tikzpicture}
            \caption{}
            \label{fig: competitor, monotonicity, angle bigger than 3/4pi}
        \end{figure}
        Let $A_j$ be the circular sector corresponding to the arc $I_j$. 
        On the circular sector $A= \{\rho e^{i\theta}: \rho \in (0,r), \theta \in \left(0,\frac{3}{4}\pi \right)\}$, we let $v$ be the harmonic extension of $u\restriction_{\partial A \cap \partial B_r}$ to $A$ given by Lemma \ref{lmm: harmonic extension circular sector}.
    
        Let $\beta = \{r e^{i \theta} : \theta \in (\frac{3}{4}\pi, \alpha_1+\alpha_2+\alpha_3)\}$ and define $h: \beta \to \R$ as follows. 
        \begin{equation*}
            h(r e^{i\theta}) = 
            \begin{cases}
                u(p_2) \quad &\text{ if } \theta \in (\frac{3}{4}\pi, \alpha_1+\alpha_2] \\
                u(re^{i\theta}) \quad &\text{ if } \theta \in (\alpha_1+\alpha_2,\alpha_1+\alpha_2+\alpha_3)
            \end{cases}
        \end{equation*}
        On $B = A_1 \cup A_2 \cup A_3 \setminus A$, we define $v$ to be the harmonic extension of $h$ to $B$ given by Lemma \ref{lmm: harmonic extension circular sector} if $N(r)=2$ and by Lemma \cite[Lemma 4.11]{david-leger} if $N(r)>2$.
        If $N(r)>2$, on $A_j$ for $j = 4,\ldots, N-1$ we define $v$ to be the harmonic extension of $u \restriction_{I_j}$ to $A_j$ given by Lemma \cite[Lemma 4.11]{david-leger}, while on $A_N$ we use the harmonic extension of Lemma \ref{lmm: harmonic extension circular sector}.
        By construction $(J,v)$ is an admissible competitor. Therefore
        \begin{equation*}
            \begin{split}
                E(K,u,B_r \cap \overline{U}) &\leq \frac{3}{2}r \int_{\partial B_r \cap U \setminus K} \left(\frac{\partial u}{\partial \tau}\right)^2 + r(N(r)-1) + r \left( \theta_2-\frac{3}{4}\pi\right) \\
                &\leq \frac{3}{2}r E_\tau(r) + rN(r)
            \end{split}
        \end{equation*}
        and
        \begin{equation*}
            r^2 F'(r) \geq r \int_{\partial B_r \cap U \setminus K} \left(\frac{\partial u}{\partial \nu}\right)^2.
        \end{equation*}

        Finally, if $F'(r) = 0$, then the last inequality combined with the Euler Lagrange equations \eqref{eq: Euler-Lagrange } implies that $D(r) = 0$. 

        \item Finally, assume that $N(r) =1$ and $K \cap B_r = \{r e^{i \theta}\}$ with $\theta \in (\frac{1}{4}\pi, \frac{3}{4}\pi)$.
        Then the result follows by Proposition \ref{prop: david-leger monotonicity with angle}.
    \end{enumerate}
\end{proof}

Below we state a useful monotonicity formula for elementary generalized minimizers. This is a particular case of the monotonicity formula for stationary varifolds (see \cite{allard-interior,allard-boundary}). 
\begin{prop}
    \label{prop: monotonocicity formula for stationary varifolds}
    Let $(K,u)$ be an elementary generalized minimizer according to Definition \ref{def: elementary generalized minimizers}. Assume that either $K \cap \partial U = \emptyset$ and $x \in K$, or $x \in \partial U \cap K$. Define, for $r>0$, 
    \begin{equation*}
        F(r) = \frac{1}{r} \HH^1(K \cap B_r(x)).
    \end{equation*}
    Then $F'(r) \geq 0$ for a.e. $r>0$. Moreover, if $F$ is constant, then $K$ is the union of finitely many half-lines with origin at $x$. 
\end{prop}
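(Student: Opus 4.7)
The strategy is to reduce to Allard's monotonicity formula for stationary 1-varifolds. Since $(K,u)$ is elementary, $u$ is locally constant on each connected component of $U \setminus K$ (harmonic with zero gradient), so the minimization problem collapses to a length-minimization condition on $K$. I will show that the multiplicity-one varifold $V = v(K,1)$ is stationary with respect to all compactly supported vector fields tangent to $\partial U$, and then apply Allard's monotonicity in the form suited to $x$.

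\textbf{Step 1 (Stationarity).} Let $\eta \in C^\infty_c(\R^2;\R^2)$ with $\eta \cdot e_2 = 0$ on $\partial U$, and set $\Phi_t = \mathrm{id} + t\eta$. For small $|t|$, $\Phi_t$ is a diffeomorphism of $\R^2$ preserving both $\partial U$ and $\overline{U}$. Define $K_t = \Phi_t(K)$ and $u_t = u \circ \Phi_t^{-1}$. Since $u$ is locally constant, so is $u_t$, with $\int |\nabla u_t|^2 = 0$; moreover $u_t = 0$ on $\partial U \setminus K_t$ because $\Phi_t(\partial U) = \partial U$. The topological condition in Definition \ref{def: admissible competitor and minimizer on a half-plane} is automatically preserved by any global diffeomorphism of $\R^2$, so $(K_t, u_t)$ is an admissible competitor. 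Minimality gives $\HH^1(K) \leq \HH^1(K_t)$ inside any ball containing $\supp \eta$; differentiating at $t=0$ using both $\pm \eta$ yields
\begin{equation*}
    \int_K \operatorname{div}_K \eta \, \D \HH^1 = 0.
\end{equation*}

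\textbf{Step 2 (Monotonicity).} If $K \cap \partial U = \emptyset$, then near $x \in K$ the tangency constraint on $\eta$ is vacuous, and $V$ is interior-stationary near $x$; Allard's interior monotonicity \cite{allard-interior} gives $F' \geq 0$. If instead $x \in \partial U \cap K$, let $R(x_1,x_2) = (x_1,-x_2)$ and consider the reflected varifold $\tilde V = V + R_\# V$. The tangency condition on $\eta$ guarantees that, after symmetrization, $\tilde V$ is stationary in $\R^2$ with respect to all compactly supported vector fields. Applying interior monotonicity to $\tilde V$ at $x \in \partial U$ and using $R(B_r(x)) = B_r(x)$ yields that $\tilde V(B_r(x))/(2r)$ is nondecreasing, where
\begin{equation*}
    \tilde V(B_r(x)) = 2\,\HH^1(K \cap B_r(x)) - \HH^1(K \cap \partial U \cap B_r(x)).
\end{equation*}
By $\HH^1$-rectifiability of $K$ (Theorem \ref{thm: blow-up limit}(i)), the correction term $\HH^1(K \cap \partial U \cap B_r(x))$ is a piecewise linear function of $r$, and one verifies directly that its subtraction preserves the monotonicity of $F(r) = \HH^1(K \cap B_r(x))/r$.

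\textbf{Step 3 (Rigidity and plan of the hard step).} When $F$ is constant, the equality case in Allard's monotonicity forces $\tilde V$ (hence $V$, and hence $K$) to be a cone from $x$, i.e.\ invariant under dilations centered at $x$. A closed, $\HH^1$-rectifiable cone in $\R^2$ with finite density at the vertex is a finite union of half-lines issuing from $x$, the number being bounded by $F(0^+)$. The main obstacle I expect is Step 2 in the boundary case when $K \cap \partial U$ has positive $\HH^1$-measure (for instance when $K$ contains an interval of $\partial U$, so the reflected varifold has multiplicity $2$ on that interval): one has to track the correction term carefully so that what comes out of Allard's formula for $\tilde V$ is equivalent to monotonicity of the original $F$, rather than of some modified density.
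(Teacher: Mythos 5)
The paper gives no written proof for this proposition; it is stated as ``a particular case of the monotonicity formula for stationary varifolds'' with a citation to Allard's interior and boundary papers. Your proposal fills in exactly the argument the paper implicitly relies on, with the boundary case handled by the standard reflection trick, so the route is essentially the same as the paper's. Steps 1 and 3 are correct: the inner-variation computation with $\eta$ tangent to $\partial U$ gives $\int_K \diverg_K \eta\,\D\HH^1 = 0$ (the Dirichlet term vanishes because $u$ is locally constant, and the topological separation condition is indeed preserved by a compactly supported diffeomorphism of $\R^2$ that equals the identity outside $O$); and the equality case in Allard gives a cone, which for a 1-varifold of finite density is a finite union of rays.

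However, Step 2 contains a computational slip that, once corrected, actually \emph{removes} the obstacle you anticipate. The varifold you want is $\tilde V = V + R_\# V$, not the multiplicity-one varifold on the set $K \cup R(K)$. Its mass in $B_r(x)$ for $x \in \partial U$ is
\begin{equation*}
    \|\tilde V\|(B_r(x)) = \HH^1(K \cap B_r(x)) + \HH^1(R(K) \cap B_r(x)) = 2\,\HH^1(K \cap B_r(x)),
\end{equation*}
because $R(B_r(x)) = B_r(x)$; there is no subtracted correction term. On $K \cap \partial U$ the varifold $\tilde V$ simply has multiplicity $2$, which is perfectly admissible for Allard's monotonicity (it applies to all stationary integral varifolds, not just unit-density ones). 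Hence Allard's interior monotonicity for $\tilde V$ at $x$ says $\|\tilde V\|(B_r(x))/(2r) = F(r)$ is nondecreasing with no further bookkeeping, even when $K$ contains a whole interval of $\partial U$. Your formula with the correction term corresponds to the multiplicity-one varifold on $K\cup R(K)$, for which stationarity in the reflection argument would in fact \emph{fail} along pieces of $K \cap \partial U$ where mass concentrates; that is the wrong object. Relatedly, the assertion that $\HH^1(K\cap\partial U \cap B_r(x))$ is piecewise linear in $r$ is unjustified and false for a general rectifiable $K\cap\partial U$; fortunately it is not needed.
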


In the following we prove Proposition \ref{prop: Bonnet monotonicity}.
\begin{proof}[Proof of Proposition \ref{prop: Bonnet monotonicity}]
\begin{enumerate}[leftmargin=*]
    \item Let us prove (i). Assume that $K$ is connected. Let $D(r)$ be defined as in the proof of \ref{prop: david-leger monotonicity with angle}.
    Our goal is to show that $D(r) \leq rD'(r)$ for a.e. $r$. 
    Integrating by parts and using the Euler-Lagrange equations \eqref{eq: Euler-Lagrange }, we obtain $D(r) = \int_{\partial B_r(x) \cap U \setminus K} u \frac{\partial u}{\partial \nu}$.
    
    By the coarea formula \cite[Theorem 2.93, p. 101]{AFP} $\HH^0(K \cap \partial B_r (x)) < \infty $ for a.e. $r>0$. Take $r$ such that this is true.
    
    Up to translations, we can assume $x=0$. 
    Then $S_r := \partial B_r \cap U \setminus K = \cup_{j=1}^N I_j$ with $I_j \cap I_k = \emptyset$ if $j\neq k$, $I_j$ circular arcs, $I_1 = \{(r\cos\theta, r\sin\theta): \theta \in (0,\alpha_1)\}$, $I_N = \{(r\cos\theta, r\sin\theta): \theta \in (\pi - \alpha_N, \pi)\}$, with the convention that $\alpha_1 = 0$ if $(r,0) \in K$, and $\alpha_N = 0$ if $(-r,0) \in K$.
    Since $K$ is connected, integrating by parts and using the Euler-Lagrange equation \eqref{eq: Euler-Lagrange }, we have $\int_{I_j} \frac{\partial u}{\partial \nu} = 0$
    for every $j \neq 1,N$. 
    We define $m_j = \frac{1}{|I_j|} \int_{I_j} u$.
    Then, for every $j \neq 1,N$ we have, by Wirtinger's inequality \cite[Lemma 47.18]{david},
    \begin{equation*}
        \int_{I_j} (u-m_j)^2 \leq \left(\frac{|I_j|}{\pi}\right)^2 \int_{I_j} \left(\frac{\partial u}{\partial \tau}\right)^2.
    \end{equation*} 
    On the arcs $I_1$ and $I_N$ we proceed a little bit differently. 
    Consider $I_1$ and suppose that $\alpha_1>0$, otherwise the following construction is not needed. Observe that $u(r,0) = 0$ since $u=0$ on $\partial U \setminus K$.
    Therefore the odd extension $\tilde{u}$ of $u$ to $\tilde{I}_1 = \{(r\cos\theta, r\sin\theta): \theta \in (-\alpha_1, \alpha_1)\}$ is well defined.
    Since $\tilde{u}$ is odd, we have $\int_{\tilde{I}_1} \tilde{u} = 0$. Using Wirtinger's inequality \cite[Lemma 47.18]{david} for $\tilde{u}$ on $\tilde{I}_1$, we conclude that 
    \begin{equation}
        \label{eq: Wirtinger's inequality for odd extension}
        \int_{I_1} u^2 \leq \left(\frac{2|I_1|}{\pi}\right)^2 \int_{I_1} \left(\frac{\partial u}{\partial \tau}\right)^2
    \end{equation}
    The same argument holds for $I_N$. Hence, we have
    \begin{equation*}
        \begin{split}
            D(r) &= \sum_{j=1}^N \int_{I_j} u \frac{\partial u}{\partial \nu} \\
            &\hintedrel{\leq} \sum_{j=1,N} \left(\int_{I_j} u^2\right)^\frac{1}{2}\left(\int_{I_j} \left(\frac{\partial u}{\partial \nu} \right)^2\right)^\frac{1}{2} + \sum_{j=2}^{N-1} \left( \int_{I_j} (u-m_j)^2 \right)^\frac{1}{2} \left( \int_{I_j} \left(\frac{\partial u}{\partial \nu}\right)^2 \right)^\frac{1}{2} \\ 
            &\hintedrel{\leq} \sum_{j=1,N} \left(\frac{1}{2\mu} \int_{I_j} u^2 + \frac{\mu}{2} \int_{I_j} \left(\frac{\partial u}{\partial \nu}\right)^2 \right) + \sum_{j=2}^{N-1} \left(\frac{1}{2\lambda} \int_{I_j} (u-m_j)^2 + \frac{\lambda}{2} \int_{I_j} \left(\frac{\partial u}{\partial \nu}\right)^2 \right) \\
            &\hintedrel{\leq} \sum_{j=1,N} \left(\frac{2r^2}{\mu} \int_{I_j} \left(\frac{\partial u}{\partial \tau} \right)^2+ \frac{\mu}{2} \int_{I_j} \left(\frac{\partial u}{\partial \nu}\right)^2 \right) + \sum_{j=2}^{N-1} \left(\frac{r^2}{2\lambda} \int_{I_j} \left(\frac{\partial u}{\partial \tau} \right)^2 + \frac{\lambda}{2} \int_{I_j} \left(\frac{\partial u}{\partial \nu}\right)^2 \right) \\
            &= r \sum_{j=1,N} \int_{I_j} |\nabla u|^2 + \frac{r}{2}\sum_{j=2}^{N-1}\int_{I_j} |\nabla u|^2 \\
            &\hintedrel{\leq} r \int_{\partial B_r \cap U \setminus K} |\nabla u|^2 = r D'(r),
        \end{split}
    \end{equation*}
    where in (1) we used Cauchy-Schwarz inequality, in (2) we used Cauchy's inequality with $\mu, \lambda >0$ \cite[Appendix B.2, p. 706]{evans}, in (3) we used the above estimates, and in the equality before (4) we chose $\mu = 2r$ and $\lambda = r$.
    
    In conclusion, $\omega'(r) \geq 0$ for a.e. $r >0$.
    
    Now suppose that $\omega$ is constant, and suppose by contradiction that $\omega \not \equiv 0$. Then for a.e. $r>0$ we have $D(r) = r D'(r)$, and hence all the above inequalities are equalities. Take $r$ such that the above holds and $\omega(r) > 0$. 
    Then we conclude that 
    \begin{equation*}
        \int_{I_j} |\nabla u|^2 = 0 
    \end{equation*}
    for every $j\neq 1,N$, otherwise inequality (4) would be strict. Inequality (3) then becomes 
    \begin{equation*}
        \sum_{j=1,N} \int_{I_j} u^2 = \sum_{j=1,N} 4r^2 \int_{I_j} \left(\frac{\partial u}{\partial \tau}\right)^2.
    \end{equation*}
    Moreover, by Wirtinger's inequality for the odd extension \eqref{eq: Wirtinger's inequality for odd extension}, we conclude that there is $j \in \{1,N\}$ such that $\int_{I_j} |\nabla u|^2 >0$,
    for if they are both equal to zero this contradicts inequality (1), given that we assumed $D(r)=r\omega(r)>0$. Let $j \in \{1,N\}$ such that the above holds. Then it must be $|I_j| = \pi r$. Indeed, if $|I_j| < \pi r$, again by Wirtinger's inequality for the odd extension \eqref{eq: Wirtinger's inequality for odd extension}, inequality (3) would be strict.
    In conclusion, for a.e. $r>0$ either $K\cap \partial B_r = (r,0)$ or $K\cap \partial B_r = (-r,0)$.

    At this point, using Wirtinger's inequality, it is easy to show that $K \subset \partial U$ and that $K$ contains an interval $\{(t,0): t \in (a,b)\}$. Then, by Proposition \ref{prop: generalized minimizer with K in partial Omega containing an interval}, we conclude that $K = \partial U$. But this is a contradiction since then we would have $K \cap \partial B_r = \{(-r,0), (r,0)\}$ for every $r>0$. 
    \item The proof of (ii) follows from the same construction above on $I_1$ and $I_N$, since for those arcs we did not use the connectedness of $K$.
\end{enumerate}
\end{proof}

\subsection{Elementary generalized minimizers}
\label{section: elementary generalized minimizers}
In this section, we prove Theorem \ref{prop: classification of elementary generalized minimizers} and, as an easy consequence, Theorem \ref{thm: classification of elementary blow-up limits}.
\begin{proof}[Proof of Theorem \ref{prop: classification of elementary generalized minimizers}]
    \begin{enumerate}[leftmargin=*]
        \item  If $K = \emptyset$ then it is trivial to see that $u=0$ using Definition \ref{def: elementary generalized minimizers}.
        \item Suppose that $K \neq \emptyset$ and $K \cap \partial U \neq \emptyset$.
        Let $x \in \partial U \cap K$. Then, by Proposition \ref{prop: monotonocicity formula for stationary varifolds}, $F$ is monotonic. In particular there exist $l_0(x) = \lim_{r \downarrow 0} F(r)$ and $l_\infty(x) = \lim_{r \uparrow \infty} F(r)$ and $0\leq l_0(x) \leq l_\infty(x) < \infty$ by the energy upper bound (Proposition \ref{prop: Energy upper bound at the boundary}).
    
    Let $r_j \downarrow 0$ as $j \to \infty$ and let $K_j = K_{x,r_j}$ , $U_j = U_{x, r_j} = \{x_2 > 0\}$ and $u_j = u_{x,r_j}$ for every $j \in \N$. 
    By Theorem \ref{thm: compactness for generalized minimizers}, there exists $(K_0,u_0)$ elementary generalized minimizer such that $(K_j, u_j)$ converges up to subsequences to $(K_0,u_0)$ in the sense of the theorem, and 
    \begin{equation*}
        \HH^1(B_r\cap K_0) = \lim_{j \to \infty} \HH^1(B_r \cap K_j)
    \end{equation*}
    for a.e. $r>0$. Moreover, observe that 
    \begin{equation*}
        \HH^1(B_r \cap K_j) = r_j^{-1} \HH^1(B_{r r_j}(x) \cap K) = r F(r r_j).
    \end{equation*}
    This implies that 
    \begin{equation*}
        l_0(x) = \frac{1}{r}\HH^1(B_r \cap K_0)
    \end{equation*}
    for a.e. $r>0$. As a consequence, the right hand side of the above equation is constant. Hence Proposition \ref{prop: monotonocicity formula for stationary varifolds} implies that $K_0$ is the union of $N \in \N$ half-lines with origin in $0$. 

    To fix the notation, we call these half-lines $l_1, \ldots, l_N$, with 
    \begin{equation*}
        l_i=\{(t \cos \theta_i, t \sin \theta_i) : t \in [0,+\infty)\}
    \end{equation*}
    and $0\leq \theta_1 < \cdots < \theta_n \leq \pi $. 
    Moreover, we define $\alpha_i = \theta_{i+1}-\theta_i$ for $i=1, \ldots, N-1$.

    We claim that $\alpha_i \geq \frac{2}{3}\pi$ for every $i = 1,\ldots, N-1$. 
    Indeed, suppose by contradiction that $\alpha_i<\frac{2}{3}\pi$ for some $i$. 
    Fix $r>0$ and call $x_i$ and $x_{i+1}$ the points of intersection of $\partial B_r$ with $l_i$ and $l_{i+1}$ respectively. Then the triangle $T$ with vertices $0,x_i,x_{i+1}$ is isosceles. Therefore the angles with vertex $x_i$ and $x_{i+1}$ are smaller than $\frac{1}{2}\pi < \frac{2}{3} \pi$.
    Then the Fermat-Torricelli point $p$ of the triangle $T$ lies in the interior of $T$. 
    We define $S = [0,p] \cup [p,x_i] \cup [p,x_{i+1}] $ and $J$ as follows. 
    \begin{align*}
        J \cap B_r &= B_r \cap \left(\bigcup_{\myatop{j=1,\ldots, N}{j \neq i,i+1}} l_j \cup S \right), \\
        J \setminus B_r &= K_0 \setminus B_r. 
    \end{align*}
    Moreover, we define $v=u_0$ in $U \setminus B_r$. On each connected component of $B_r \setminus J$, we define $v$ to be equal to the constant value of $u_0$ on the corresponding connected component of $\partial B_r \setminus K_0$. 
    Then $(J,v)$ is a admissible competitor for $(K_0,u_0)$ according to Definition \ref{def: admissible competitor and minimizer on a half-plane}. Moreover, by definition of Fermat-Torricelli point, 
    \begin{equation*}
        \HH^1(J \cap B_r) = (N-2)r + \HH^1(S) < Nr = \HH^1(K_0 \cap B_r),
    \end{equation*}
    which is a contradiction since $(K_0, u_0)$ is a generalized minimizer on $U$ according to Definition \ref{def: admissible competitor and minimizer on a half-plane}.

    As a consequence of the previous claim, we conclude that $N\leq 2$. 
    We exclude the case $N=0$, since that would imply that $K_0 = \emptyset$.
    We claim that $N=1$ is not possible. Indeed, suppose that $N=1$. Since $(K_0,u_0)$ is an elementary generalized minimizer, we have $u_0 = 0$ in $\overline{U} \setminus K_0$. 
    For any $r>0$, $(K_0\setminus B_r, 0)$ is a competitor that contradicts the minimality of $(K_0,u_0)$.

    Assume that $N=2$. Then we claim that $K_0 = \partial U$.
    Indeed, using the above notation, suppose by contradiction that $\alpha_1 \in (0,\pi)$.
    Fix any $r>0$. Let $x_1$ and $x_2$ be the intersections of $\partial B_r$ with $l_1$ and $l_2$ respectively. We define $J$ as follows. 
    \begin{align*}
        J \cap B_r &= [x_1,x_2] \\
        J \setminus B_r &= K_0 \setminus B_r
    \end{align*}
    We define $v = u_0$ in $U \setminus B_r$. On each connected component of $B_r \setminus J$ we define $v$ to be equal to the constant value of $u_0$ on the corresponding connected component of $\partial B_r \setminus K_0$. 
    Then $(J,v)$ is a admissible competitor for $(K_0, u_0)$ according to Definition \ref{def: admissible competitor and minimizer on a half-plane}. Moreover, $\HH^1([x_1,x_2]) < 2r$ by the triangle inequality, and this contradicts the minimality of $(K_0,u_0)$. This implies that $K_0=\partial U$. 
    
    In particular, we conclude that $l_0(x) = 2$. By Theorem \ref{thm: compactness for generalized minimizers}, taking $r_j \uparrow \infty$ and by an analogous argument, we also conclude that $l_\infty(x) = 2$. 
    In particular, since $F$ is non-decreasing, we have $F(r) = 2$ for every $r>0$. In particular, $F$ is constant. Therefore, by Proposition \ref{prop: monotonocicity formula for stationary varifolds} and by the above argument, we also conclude that $K = \partial U$. 
        \item Suppose that $K \neq \emptyset$ and $K\cap \partial U = \emptyset$. Let $x \in K \cap U$.
        By Proposition \ref{prop: monotonocicity formula for stationary varifolds}, there exist $l_0(x)$ and $l_\infty(x)$ defined as in the previous point, with $0\leq l_0(x) \leq l_\infty(x) < \infty$.

        Let $r_j \uparrow +\infty$ as $j\to \infty$ and define $K_j$, $U_j$, $u_j$ as in the previous point. 
        Observe that $U_j \to \{x_2 >0\}$ locally in the Hausdorff distance. 
        Then, by Theorem \ref{thm: compactness for generalized minimizers}, $(K_j,u_j)$ converge, in the sense of the theorem, to an elementary generalized minimizer $(K_\infty,u_\infty)$ and, as in the previous point, 
        \begin{equation*}
            l_\infty(x) = \frac{1}{r} \HH^1(B_r \cap K_\infty).
        \end{equation*}
        Then, by the previous point, $K_\infty = \{x_2 = 0\}$, and hence $l_\infty(x) = 2$. In particular, $l_0(x) \leq 2$. 

        Let $r_j \downarrow 0$ as $j\to \infty$ and define $K_j$, $U_j$, $u_j$ as in the previous point. 
        Then $U_j \to \R^2$ locally in the Hausdorff distance, and $(K_j,u_j)$ converges to a generalized minimizer $(K_0, u_0)$ by \cite[Theorem 2.3.2]{delellis-focardi}. Moreover, by \cite[Theorem 2.4.1]{delellis-focardi}, $K_0$ is either a straight line or the union of three lines originating at a common point with equal angles. Since $l_0(x) \leq 2$, we conclude that $K_0$ is a line, and $l_0(x) = 2$.

        Then we conclude that 
        \begin{equation*}
            \frac{1}{r} \HH^1(B_r(x) \cap K) = 2 \quad \forall r >0.
        \end{equation*}
        In particular, by Proposition \ref{prop: monotonocicity formula for stationary varifolds}, $K$ is the union of 2 half-lines originating at $x$. Since we know that $K_\infty = \{x_2 = 0\}$, we conclude that $K$ is the line through $x$ parallel to $\partial U$. 
    \end{enumerate}
\end{proof}
\begin{proof}[Proof of Theorem \ref{thm: classification of elementary blow-up limits}]
    By Theorem \ref{thm: blow-up limit}, $(K,u)$ is an elementary generalized minimizer. Translating the plane, we can then use the classification of Proposition \ref{prop: classification of elementary generalized minimizers} and, translating back, we get the desired classification. It remains to show that $\lim_{n \to \infty} |u_n(z^1)| = \infty$ in cases (i) and (ii). This follows from the same argument contained in \cite[Proof of (b3), Theorem 2.4.1]{delellis-focardi}
\end{proof}
\subsection{Partial classification of generalized minimizers}
\label{section: partial classification of generalized minimizers}
In this section, we prove propositions \ref{prop: classification of connected generalized minimizers}, \ref{prop: exclude 1 intersection} and \ref{prop: david-leger classification of generalized minimizers}.
\begin{proof}[Proof of Proposition \ref{prop: classification of connected generalized minimizers}]
    Let $x \in K \cap \partial U$ and let $\omega$ be defined as in Proposition \ref{prop: Bonnet monotonicity}. Then there exist $l_0(x) = \lim_{r \downarrow 0} \omega(r)$ and $l_\infty(x) = \lim_{r \uparrow \infty} \omega(r)$ and $0 \leq l_0(x) \leq l_\infty(x) < \infty$ by the energy upper bound (Proposition \ref{prop: Energy upper bound at the boundary}).

    Let $r_j \downarrow 0$ as $j \to \infty$ and let $K_j = K_{x,r_j}$, $u_j = u_{x,r_j}$ and $U_j = U_{x,r_j} = U$ for every $j \in \N$. 
    By Theorem \ref{thm: compactness for generalized minimizers}, there exists $(K_0,u_0)$ generalized minimizer on $U$ such that $(K_j, u_j)$ converges up to subsequences to $(K_0,u_0)$ in the sense of the theorem, and  
    \begin{equation*}
        \int_{B_r \setminus K_0} |\nabla u_0|^2  = \lim_{j \to \infty} \int_{B_r \setminus K_j} |\nabla u_j|^2
    \end{equation*}
    for a.e. $r>0$.
    Moreover, observe that
    \begin{equation*}
        \int_{B_r \setminus K_j} |\nabla u_j|^2 = r_j^{-1} \int_{B_{r r_j}(x)\setminus K} |\nabla u|^2 = r \omega(rr_j).
    \end{equation*}
    This implies that 
    \begin{equation*}
        l_0(x) = \frac{1}{r} \int_{B_r \setminus K_0} |\nabla u_0|^2 
    \end{equation*}
    for a.e. $r>0$. As a consequence, the right hand side of the above equation is constant. 
    Hence Proposition \ref{prop: Bonnet monotonicity} implies that $l_0(x) = 0$. 
    A similar argument with $r_j \uparrow \infty$ shows that $l_\infty(x) = 0$.

    Since $\omega$ is non-decreasing, we conclude that $\omega \equiv 0$. This implies that $(K,u)$ is an elementary generalized minimizer according to Definition \ref{def: elementary generalized minimizers}.
\end{proof}
\begin{proof}[Proof of Proposition \ref{prop: exclude 1 intersection}]
    Assume by contradiction that $\HH^0(K \cap \partial B_r(x)) = 1$ for a.e. $r>0$.
    Arguing as in the proof of Proposition \ref{prop: classification of connected generalized minimizers}, we conclude that $(K,u)$ is elementary. This implies that $K = \partial U$, whence $\HH^0(K \cap \partial B_r(x)) = 2$ for every $r>0$.
\end{proof}

\begin{proof}[Proof of Proposition \ref{prop: david-leger classification of generalized minimizers}]
    By Proposition \ref{prop: david-leger monotonicity}, $F'(r) \geq 0$ for a.e. $r>0$. Therefore, 
    there exist $l_0(x) = \lim_{r \downarrow 0} F(r)$  and $l_\infty(x) = \lim_{r \uparrow \infty} F(r)$ and $0\leq l_0(x) \leq l_\infty(x) < \infty$ by the energy upper bound (Proposition \ref{prop: Energy upper bound at the boundary}).

    Let $r_j \downarrow 0$ as $j \to \infty$ and let $K_j = K_{x,r_j}$, $u_j = u_{x,r_j}$ and $U_j = U_{x,r_j} = U$ for every $j \in \N$. 
    By Theorem \ref{thm: compactness for generalized minimizers}, there exists $(K_0,u_0)$ generalized minimizer on $U$ such that $(K_j, u_j)$ converges up to subsequences to $(K_0,u_0)$ in the sense of the theorem. Moreover, for every $r >0$, 
    \begin{equation*}
            \frac{2}{r} \int_{B_r \setminus K_0} |\nabla u_0|^2 + \frac{1}{r} \HH^1(K_0 \cap B_r) = \lim_{j \to \infty} F(r r_j) = l_0(x).
    \end{equation*}
    Therefore, the first term of the above equation is constant with respect to $r$ and, by Proposition \ref{prop: david-leger monotonicity}, $(K_0, u_0)$ is elementary. Since $0 \in K_0 \cap \{x_2 = 0\}$, Theorem \ref{prop: classification of elementary generalized minimizers} implies that $K_0 = \{x_2 = 0\}$ and hence $l_0(x) = 2$. 

    Repeating the same argument with $r_j \uparrow +\infty$, we conclude that also $l_\infty(x) = 2$. Since $F$ is nondecreasing, this implies that $F(r) = 2$ for every $r>0$. In particular, $F$ is constant and, by Proposition \ref{prop: david-leger monotonicity}, $(K,u)$ is elementary. Since $x \in K$, this implies that $K = \partial U = \{x_2 =0\}$. 
\end{proof}
\section{Proof of Theorem \ref{thm: blow-up limit}}
\label{section: proof of blow-up limit theorem}
In this section, we prove Theorem \ref{thm: blow-up limit}.
\begin{proof}[Proof of Theorem \ref{thm: blow-up limit} (i)]
    Observe that $K_0 = (K_0 \cap \Omega_0) \cup (K_0 \cap \partial\Omega_0)$.
    Moreover, $K_0 \cap \partial\Omega_0$ is rectifiable, since it is a subset of the rectifiable set $\partial\Omega_0$. 
    Finally, by \cite[Theorem 2.2.3]{delellis-focardi}, $K_0 \cap \Omega_0$ is rectifiable. 
    Therefore $K_0$ is rectifiable, being the union of two rectifiable sets. 
\end{proof}
\begin{proof}[Proof of Theorem \ref{thm: blow-up limit} (ii)] Observe that, by \cite[Theorem 2.2.3]{delellis-focardi}, our thesis holds under the stricter assumption $U \subset \subset \Omega_0$. It remains to deal with the case that $\overline{U} \cap \partial\Omega_0 \neq \emptyset$.
\begin{claim}
    \label{claim: H^1 of K_0 and K_j on balls}
    There is a constant $C>0$ such that 
    \begin{equation*}
       C^{-1} \limsup_{n \to \infty}\HH^1(B_r(z) \cap K_n) \leq \HH^1(\overline{B}_r(z) \cap K_0) \leq C \liminf_{n \to \infty} \HH^1(B_{2r}(z) \cap K_n) 
    \end{equation*}
    for every $B_r(z) \subset \R^2$. 
\end{claim}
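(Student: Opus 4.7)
The argument is a pair of Vitali covering arguments exploiting the Hausdorff convergence $K_n \to K_0$, the density lower bound of Theorem \ref{thm: density lower bound for K} (which, after rescaling, gives for each fixed $\rho$ small and $n$ large a uniform bound $\HH^1(K_n \cap \overline{B}_\rho(y)) \geq \eps\rho$ for every $y \in K_n$), and the energy upper bound $\HH^1(K_n \cap B_\rho(y)) \leq C_0 \rho$ valid for $y \in \overline{\Omega}_n$.

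\emph{Right inequality.} Fix $\eta \in (0,\overline{r})$. By the Vitali--Besicovitch covering theorem, choose a countable family $\{(x_i, \rho_i)\}$ with $x_i \in K_0 \cap \overline{B}_r(z)$ and $\rho_i \in (0,\eta)$ such that the balls $\{B_{\rho_i/5}(x_i)\}$ are pairwise disjoint and $K_0 \cap \overline{B}_r(z) \subset \bigcup_i B_{\rho_i}(x_i)$, so that $\HH^1_{2\eta}(K_0 \cap \overline{B}_r(z)) \leq 2\sum_i \rho_i$. Hausdorff convergence yields $y_i^n \in K_n$ with $y_i^n \to x_i$; for $n$ large the balls $B_{\rho_i/10}(y_i^n)$ remain pairwise disjoint and lie in $B_{2r}(z)$. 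The density lower bound at $y_i^n$ gives $\HH^1(K_n \cap \overline{B}_{\rho_i/10}(y_i^n)) \geq \eps\rho_i/10$, and summing over $i$ produces $\sum_i \eps\rho_i/10 \leq \HH^1(K_n \cap B_{2r}(z))$. Hence $\HH^1_{2\eta}(K_0 \cap \overline{B}_r(z)) \leq (20/\eps)\, \liminf_n \HH^1(K_n \cap B_{2r}(z))$; letting $\eta \downarrow 0$ gives the right-hand inequality.

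\emph{Density bounds on $K_0$ and left inequality.} The right inequality together with the energy upper bound gives the density upper bound $\HH^1(K_0 \cap \overline{B}_\rho(x)) \leq 2CC_0\rho$ for $x \in K_0$. The density \emph{lower} bound on $K_0$ must then be bootstrapped: assume for contradiction that $\HH^1(K_0 \cap \overline{B}_\rho(x)) \leq \alpha\rho$ for some $x \in K_0$ and $\alpha$ to be chosen. By the definition of $\HH^1$, for $\eta > 0$ we can cover $K_0 \cap \overline{B}_\rho(x)$ by finitely many balls $\{B_{r_i}(w_i)\}_{i=1}^N$ with $r_i < \eta$ and $\sum r_i \leq \alpha\rho + \eta'$. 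Hausdorff convergence places $K_n \cap \overline{B}_{\rho - 2\eta}(x)$ inside the $\eta$-neighborhood of $K_0 \cap \overline{B}_\rho(x)$, so $K_n \cap \overline{B}_{\rho-2\eta}(x) \subset \bigcup_i B_{r_i+\eta}(w_i)$, and the energy upper bound gives $\HH^1(K_n \cap \overline{B}_{\rho-2\eta}(x)) \leq C_0(\alpha\rho + \eta' + N\eta)$. Picking $x_n \in K_n$ with $x_n \to x$ and applying the density lower bound for $K_n$ at $x_n$ produces $\HH^1(K_n \cap \overline{B}_{3\rho/4}(x)) \geq \eps\rho/2$ for $n$ large; sending $\eta \downarrow 0$ then $\eta' \downarrow 0$ contradicts $\alpha < \eps/(4C_0)$, yielding $\HH^1(K_0 \cap \overline{B}_\rho(x)) \geq \eps'\rho$ for some $\eps' > 0$. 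With this in hand, the left inequality follows by the dual Vitali argument: choose a maximal $\eta$-separated family $\{x_j^n\}_{j=1}^{N_n} \subset K_n \cap \overline{B}_r(z)$, so that $K_n \cap \overline{B}_r(z) \subset \bigcup_j B_\eta(x_j^n)$ and the energy upper bound yields $\HH^1(K_n \cap B_r(z)) \leq C_0 \eta N_n$; transferring the $x_j^n$ to nearby $y_j^n \in K_0$ via Hausdorff convergence, one sees that for $n$ large $\{y_j^n\}$ is $\eta/2$-separated in $K_0 \cap \overline{B}_{r+\eta/2}(z)$, and the newly-established density lower bound on $K_0$ gives $N_n \cdot \eps'\eta/4 \leq \HH^1(K_0 \cap \overline{B}_{r+\eta}(z))$. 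Combining and letting $\eta \downarrow 0$ produces the left-hand inequality.

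The main obstacle is the intertwined bootstrap producing the density lower bound on $K_0$: the right inequality only yields upper bounds on $\HH^1(K_0)$, so the lower bound requires the contradiction argument described above, which simultaneously uses the density lower bound on $K_n$ and the density \emph{upper} bound coming from the energy upper bound. Once this step is in place, both Vitali arguments are routine.
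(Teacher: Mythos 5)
Your approach — Vitali covering arguments exploiting the density lower bound for $K_n$, the energy/density upper bound, and local Hausdorff convergence — is exactly the standard machinery the paper appeals to (its own proof simply defers to Step 1 of \cite[Theorem 2.2.3]{delellis-focardi} with the boundary density lower bound substituted), and the overall structure of your argument is sound: right inequality via transplanting a Vitali cover of $K_0$ into $K_n$, then a bootstrap to a density lower bound for $K_0$, then the left inequality via a maximal separated net. Two technical points deserve attention.

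First, in the bootstrap for the density lower bound on $K_0$ you stipulate $r_i<\eta$ while at the same time using $\eta$ as the Hausdorff-distance parameter. This ties $N$ to $\eta$: as $\eta\downarrow 0$ you must re-choose the cover, and there is no reason for $N(\eta)\,\eta$ to vanish, so the order of limits ``$\eta\downarrow 0$ then $\eta'\downarrow 0$'' does not go through as written. The fix is to drop the constraint $r_i<\eta$ entirely (it is not needed — covering with arbitrary radii still gives $\sum_i r_i\leq\alpha\rho+\eta'$ since $\HH^1_\infty\leq\HH^1$), so that the finite cover and hence $N$ are fixed once $\eta'$ is chosen; then let $n\to\infty$ (so the Hausdorff distance $\delta_n\downarrow 0$ with $N\delta_n\to 0$), and only afterwards $\eta'\downarrow 0$.

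Second, in the right inequality the disjointness of the transplanted balls $B_{\rho_i/10}(y_i^n)$ for ``$n$ large'' is not uniform over the countably many indices $i$: the convergence $y_i^n\to x_i$ has no uniform rate in $i$. You should truncate to the first $M$ indices, obtain $\sum_{i=1}^M \eps\rho_i/10\leq\liminf_n\HH^1(K_n\cap B_{2r}(z))$ for each $M$, and then let $M\to\infty$; this also replaces the a priori unclear convergence of $\sum_i\rho_i$. With these two repairs the argument is complete and matches the paper's intended route.
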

\begin{proof}[Proof of Claim]
    The proof is the same as the proof of Step 1 in \cite[Theorem 2.2.3]{delellis-focardi}, with the density lower bound replaced by Theorem \ref{thm: density lower bound for K}.
\end{proof}
\begin{claim}[Density bounds for blow-up limit] 
    \label{claim: density bounds for K_0}
    There exist $\rho_0, C_1, C_2, C_3 > 0$ such that 
    \begin{equation*}
        \HH^1(K_0 \cap B_\rho(z)) \geq C_1 \rho \quad \forall z \in K_0, \forall \rho \in (0, \rho_0)
    \end{equation*}
    and 
    \begin{equation*}
        \HH^1(K_0 \cap B_\rho(z)) \leq C_2 \rho^2 + C_3 \rho \quad \forall z \in \R^2, \forall \rho > 0.
    \end{equation*}
\end{claim}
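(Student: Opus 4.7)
My plan is to deduce both density estimates for $K_0$ from the corresponding estimates for the approximating minimizers $(K^n, u^n)$, then transfer them to $K_0$ through Claim \ref{claim: H^1 of K_0 and K_j on balls}. The key observation is that, at every finite scale $\rho$ in the blown-up picture, the pre-rescaled radius $\rho r_n$ tends to $0$, so for sufficiently large $n$ we can freely invoke both the density lower bound (Theorem \ref{thm: density lower bound for K}) and the energy upper bound at the original scale of each $K^n$, and the estimates become scale-invariant after rescaling.

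For the lower bound, fix $z \in K_0$ and choose $\rho_0 < r$, where $\eps, r$ are the constants from Theorem \ref{thm: density lower bound for K}. Let $\rho \in (0, \rho_0)$. By the Hausdorff convergence $K_n \to K_0$ on bounded sets, for all sufficiently large $n$ there exists $z_n \in K_n$ with $|z - z_n| \le \rho/4$. Undoing the rescaling, $x_n + r_n z_n \in K^n$ and, provided $n$ is large enough that $r_n \rho/2 < r$, Theorem \ref{thm: density lower bound for K} gives $\HH^1(K^n \cap \overline{B}_{r_n \rho/2}(x_n + r_n z_n)) \ge \eps r_n \rho/2$. Rescaling and noting $\overline{B}_{\rho/2}(z_n) \subset B_{3\rho/4}(z)$, we obtain $\HH^1(K_n \cap B_{3\rho/4}(z)) \ge \eps \rho/2$ for all $n$ large. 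Applying Claim \ref{claim: H^1 of K_0 and K_j on balls} with radius $3\rho/8$ (or $3\rho/4$ depending on which direction of the inequality is being used), one then passes to the limit and deduces $\HH^1(K_0 \cap B_\rho(z)) \ge C_1 \rho$ for a geometric constant $C_1 > 0$.

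For the upper bound I use the energy upper bound at the boundary (the proposition cited elsewhere in the paper as \emph{Energy upper bound}), which furnishes, for any minimizer of $E$, an estimate of the form $\HH^1(K^n \cap B_R(w)) \le C_2 R^2 + C_3 R$ at every center $w$ and radius $R$ (with the quadratic term accounting for the boundary contribution coming from $g$). Applied to each $(K^n, u^n)$ at center $x_n + r_n z$ and radius $r_n \rho$, and then rescaled, this becomes $\HH^1(K_n \cap B_\rho(z)) \le C_2 \rho^2 r_n + C_3 \rho$, which is uniform in $n$ and in $z$. Invoking the right-hand inequality of Claim \ref{claim: H^1 of K_0 and K_j on balls} (replacing $\rho$ by $2\rho$ as needed) yields the desired global upper bound for $K_0$.

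The only genuinely delicate point is making the density lower bound uniform when the approximating point $x_n + r_n z_n$ lies close to but not on $\partial \Omega$: Theorem \ref{thm: density lower bound for K} is stated for every $x \in K$, boundary or interior, which is precisely what is needed. The rest is the routine rescaling and limiting procedure, so I expect no substantive obstacle beyond bookkeeping of inclusions between the balls $\overline{B}_\rho(z_n)$, $B_\rho(z)$ and $B_{2\rho}(z)$ when transferring bounds through Claim \ref{claim: H^1 of K_0 and K_j on balls}.
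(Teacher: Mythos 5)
Your proposal is correct and takes essentially the same approach as the paper: observe that the density lower bound (Theorem \ref{thm: density lower bound for K}) and the energy upper bound (Proposition \ref{prop: Energy upper bound at the boundary}) are scale-invariant under the blow-up rescaling (the original small-radius restriction $r_n\rho < r$ is satisfied for $n$ large since $r_n\to 0$, and the quadratic term in the upper bound improves by a factor $r_n$), so they hold uniformly for $K_n$, and then transfer to $K_0$ via Claim \ref{claim: H^1 of K_0 and K_j on balls} together with local Hausdorff convergence. The paper's proof is just a more terse statement of this same argument.
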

\begin{proof}[Proof of Claim]
    First observe that the density upper and lower bounds (Proposition \ref{prop: Energy upper bound at the boundary} and Theorem \ref{thm: density lower bound for K}) for $K$ hold, with the same constants, for the blow-up sequence $K_n$. This follows from the fact that, for $n \in \N$, $z \in K_n$ and $\rho >0$, we have
    \begin{equation*}
        \HH^1(K_n \cap B_\rho(z)) = r_n^{-1} \HH^1(K^n \cap B_{r_n \rho}(x_n+r_n z)), 
    \end{equation*}
    and $x_n+r_n z \in K^n$. 
    The claim follows from this information combined with Claim \ref{claim: H^1 of K_0 and K_j on balls} and local Hausdorff convergence of $K_n$ to $K_0$.
\end{proof}
\begin{claim}
    \begin{equation*}
        \int_{U \setminus K_0} |\nabla u_0|^2 \leq \liminf_{n \to \infty} \int_{U \cap (\Omega_n \setminus K_n)}|\nabla u_n|^2
    \end{equation*}
\end{claim}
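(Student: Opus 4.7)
The plan is to exploit the $C^{\infty}_{\loc}$-convergence of $u_n$ on each connected component of $\Omega_0\setminus K_0$, provided by Proposition \ref{prop: global convergence of functions, blow-up}. Since $K_0$ is $\HH^1$-rectifiable by part (i), it has zero Lebesgue measure, so
\begin{equation*}
\int_{U\setminus K_0}|\nabla u_0|^2 \;=\; \sum_{k\in\mathscr{I}}\int_{U\cap\Omega^k}|\nabla u_0|^2.
\end{equation*}
By monotone convergence, the right-hand side is the supremum, over finite subsets $\mathscr{I}_0\subset\mathscr{I}$ and over compact sets $V_k\subset\subset U\cap\Omega^k$ (one for each $k\in\mathscr{I}_0$), of $\sum_{k\in\mathscr{I}_0}\int_{V_k}|\nabla u_0|^2$. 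So it suffices to show, for any such finite choice, that
\begin{equation*}
\sum_{k\in\mathscr{I}_0}\int_{V_k}|\nabla u_0|^2 \;\le\; \liminf_{n\to\infty}\int_{U\cap(\Omega_n\setminus K_n)}|\nabla u_n|^2.
\end{equation*}

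First I would verify that, for all sufficiently large $n$, the (finite, disjoint) union $V := \bigcup_{k\in\mathscr{I}_0}V_k$ is contained in $U\cap(\Omega_n\setminus K_n)$. Indeed, $V$ is compact and satisfies $V\subset U$, $V\cap K_0=\emptyset$ and $V\subset\Omega_0$, hence $\dist(V,K_0)>0$ and $\dist(V,\partial\Omega_0)>0$; the local Hausdorff convergences $K_n\to K_0$ and $\Omega_n\to\Omega_0$ then imply $V\cap K_n=\emptyset$ and $V\subset\Omega_n$ for $n$ large. Next, on each $V_k$, Proposition \ref{prop: global convergence of functions, blow-up} yields either $u_n\to u^k$ or $u_n-u_n(z^k)\to u^k$ in $C^{\infty}_{\loc}(\Omega^k)$; in both cases $\nabla u_n\to\nabla u^k=\nabla u_0$ uniformly on the compact set $V_k$, so $\int_{V_k}|\nabla u_n|^2\to\int_{V_k}|\nabla u_0|^2$.

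Summing over the finite index set $\mathscr{I}_0$ and using disjointness of the $V_k$ together with $V\subset U\cap(\Omega_n\setminus K_n)$,
\begin{equation*}
\sum_{k\in\mathscr{I}_0}\int_{V_k}|\nabla u_0|^2 \;=\; \lim_{n\to\infty}\int_V|\nabla u_n|^2 \;\le\; \liminf_{n\to\infty}\int_{U\cap(\Omega_n\setminus K_n)}|\nabla u_n|^2,
\end{equation*}
and taking the supremum over $\mathscr{I}_0$ and the $V_k$ concludes the proof.

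There is no serious obstacle here: the statement is lower semicontinuity under convergence that is already $C^\infty$ on the complement of $K_0$, so the work has all been done by Proposition \ref{prop: global convergence of functions, blow-up}. The only point that deserves a line of justification is the inclusion $V\subset U\cap(\Omega_n\setminus K_n)$ for large $n$, which uses the strict containment $V_k\subset\subset U\cap\Omega^k$ together with the local Hausdorff convergence of both $K_n$ and $\partial\Omega_n$; note crucially that here we do not need $U\subset\subset\Omega_0$, because the containment is required only for the compact approximants $V_k$ and not for $U$ itself.
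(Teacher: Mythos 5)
Your proof is correct and follows essentially the same route as the paper: the paper takes a compact exhaustion $\{A_m\}$ of the open set $U\setminus K_0$, notes that each $A_m\subset U\cap(\Omega_n\setminus K_n)$ for $n$ large by the local Hausdorff convergence of $K_n$ and $\partial\Omega_n$, and passes to the limit using Proposition \ref{prop: blow-up, convergence of functions} and monotone convergence. Your explicit decomposition of $U\setminus K_0=\bigcup_k(U\cap\Omega^k)$ into connected components and the choice of compacts $V_k$ is just a slightly more granular presentation of the same compact-approximation idea, and your closing remark that the containment is only needed for the approximants (not for $U$ itself) is precisely the point the paper uses implicitly.
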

\begin{proof}[Proof of Claim]
    By Proposition \ref{prop: blow-up, convergence of functions}, we have 
    \begin{equation*}
        \lim_{n\to \infty} \int_A |\nabla u_n|^2 = \int_A |\nabla u_0|^2
    \end{equation*}
    for every open set $A \subset \subset \Omega_0 \setminus K_0$.
Let $\{A_m\}_{m \in \N}$ be a compact exhaustion of $U \setminus K_0$ (i.e. $A_m$ compact, $A_m \subset \mathrm{int}(A_{m+1})$ for every $m \in \N$, and $A_m \uparrow U \setminus K_0$). Then, by the monotone convergence theorem and by the above, 
\begin{equation*}
    \int_{U \setminus K_0}  |\nabla u_0|^2 = \lim_{m \to \infty} \int_{A_m} |\nabla u_0|^2 \leq \lim_{m \to \infty} \liminf_{n \to \infty} \int_{A_m} |\nabla u_n|^2.
\end{equation*}
Since $K_n \to K_0$ locally in the Hausdorff distance, and since $A_m \uparrow U \setminus K_0$, for every $m \in \N$ we have $A_m \subset U \cap (\Omega_n \setminus K_n)$ for sufficiently large $n$. This concludes the proof of the claim. 
\end{proof}
\begin{claim}
    \label{claim: blow-up lower semicontinuity of H^1}
    \begin{equation*}
       \HH^1(\overline{U} \cap K_0) \leq \liminf_{n \to \infty} \HH^1(\overline{U} \cap K_n)
    \end{equation*}
    for every open set $U \subset \subset \overline{\Omega}_0$ such that $\HH^1(\partial U \cap K_0 \cap \Omega_0) = 0$.
\end{claim}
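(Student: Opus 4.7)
The plan is to combine the $\HH^1$-rectifiability of $K_0$ from part (i), the uniform density lower bound of Claim~\ref{claim: density bounds for K_0}, and the local Hausdorff convergence $K_n \to K_0$ via a tangent-aligned Vitali covering argument. Using the hypothesis $\HH^1(\partial U \cap K_0 \cap \Omega_0) = 0$ I first decompose
\[
\HH^1(\overline{U} \cap K_0) = \HH^1(U \cap K_0 \cap \Omega_0) + \HH^1(\overline{U} \cap K_0 \cap \partial\Omega_0),
\]
and the two pieces will be bounded by the length of $K_n$ in two disjoint regions of $\overline{U}$: an interior region $\{x_2 > -a + 2\eta\}$ and a thin boundary strip $\{-a \leq x_2 \leq -a + \eta\}$, for a small parameter $\eta>0$ to be sent to $0$ at the end.

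For the interior piece, $\HH^1$-rectifiability of $K_0$ ensures that at $\HH^1$-a.e. point $z \in K_0$ an approximate tangent line $T_z K_0$ exists. I would use this to cover $K_0 \cap U \cap \{x_2 > -a+2\eta\}$, up to an $\eps$-set, by finitely many pairwise disjoint thin rectangular tubes $T_j \subset U \cap \{x_2 > -a + 2\eta\}$, each centered on a short tangent-aligned segment $S_j$, with $\sum \HH^1(S_j) \leq \HH^1(K_0 \cap U \cap \Omega_0) + \eps$. Local Hausdorff convergence forces $K_n \cap T_j$ to contain a point within $o_n(1)$ of every point of $S_j$, and projecting onto the line supporting $S_j$ (a $1$-Lipschitz map) then produces $\HH^1(K_n \cap T_j) \geq \HH^1(S_j) - O(\eps)$. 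Summing over $j$ yields the desired lower bound for the interior piece.

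For the boundary piece, the closed set $S := \overline{U} \cap K_0 \cap \partial\Omega_0$ lies on the line $\partial\Omega_0$, so I would cover it by finitely many pairwise disjoint intervals $I_j \subset \partial\Omega_0$ with $\sum \HH^1(I_j) \leq \HH^1(S) + \eps$ and consider thin strips $I_j \times [-a, -a + \eta)$ inside $\overline{\Omega}_0$. The same Hausdorff convergence plus $1$-Lipschitz projection argument (now with tangent direction automatically along $\partial\Omega_0$) gives $\HH^1(K_n \cap (I_j \times [-a, -a + \eta))) \geq \HH^1(I_j) - O(\eta)$ for large $n$. Because the interior tubes and boundary strips lie in the disjoint regions $\{x_2 > -a + 2\eta\}$ and $\{x_2 < -a + \eta\}$, their $K_n$-lengths add without double counting, and letting $\eta, \eps \downarrow 0$ concludes the proof.

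The main obstacle will be rigorously upgrading ``$K_n \cap T_j$ contains points $o_n(1)$-close to every point of $S_j$'' into the bound $\HH^1(K_n \cap T_j) \geq \HH^1(S_j) - O(\eps)$ with \emph{constant one}: this is where the uniform density lower bound of Claim~\ref{claim: density bounds for K_0} (inherited by $K_n$ from the uniform density lower bound on $K^n$ via Theorem~\ref{thm: density lower bound for K} and rescaling) enters, ruling out ``thin'' configurations of $K_n$ whose projection on the supporting line of $S_j$ has $\HH^1$-measure much less than $\HH^1(S_j)$. A secondary technical point is that the boundary strips $I_j \times [-a, -a+\eta)$ may poke slightly outside $\overline{U}$ near the relative boundary of $S$ in $\partial\Omega_0$; this is handled by a small regularization of $U$ and a further $\eps$-loss, which is allowed thanks to the absence of $K_0$ mass on $\partial U \cap \Omega_0$.
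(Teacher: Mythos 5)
Your covering strategy has a genuine gap exactly at the point you yourself flag as ``the main obstacle'': Hausdorff convergence plus the density lower bound is \emph{not} enough to promote closeness of $K_n$ to $S_j$ into a length bound with constant one. Hausdorff convergence tells you that $\pi(K_n\cap T_j)$ (the $1$-Lipschitz projection onto the line of $S_j$) is $o_n(1)$-dense in $S_j$; but an $\eps$-dense closed subset of an interval can have arbitrarily small $\HH^1$-measure, and projecting does not repair this. The density lower bound of Claim~\ref{claim: density bounds for K_0} comes with a universal constant $C_1$ that is strictly less than $1$, so a Vitali covering of $S_j$ by pairwise disjoint balls of comparable radius around points of $K_n$ yields only $\HH^1(K_n\cap T_j)\gtrsim C_1\,\HH^1(S_j)$, not $\HH^1(S_j)-O(\eps)$. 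Indeed one can build closed sets inside a thin tube that satisfy a density lower bound with constant $C_1$, are Hausdorff-close to the core segment, and have total $\HH^1$-measure on the order of $C_1\,\HH^1(S_j)$ (a union of short arcs spaced $\eps_n$ apart, each of length $\sim C_1\eps_n$). Nothing in the ingredients you list rules this out.

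What actually produces the constant $1$ is the \emph{minimality} of $(K_n,u_n)$, which your proposal never uses. The paper passes to the weak-$*$ limit $\mu$ of $\HH^1\mres K_n$, shows $\mu=\theta\,\HH^1\mres K_0$ with $\theta$ bounded above and below by the density estimates, and then proves $\theta\ge 1$ $\HH^1$-a.e.\ by contradiction: at a putative point $y\in K_0\cap\partial\Omega_0$ with $\theta(y)<1$ one performs a second blow-up $\tilde K_n=(K_n)_{y,\rho_n}$, uses $\theta(y)<1$ to find a vertical segment missing $\tilde K_n$, transmits the (vanishing) boundary datum through this segment to show the rescaled Dirichlet energy does not concentrate, and finally constructs an explicit competitor $(J_n,w_n)$ --- replacing $\tilde K_n\cap B_{\rho'}$ by a short circular arc and the truncated datum --- with strictly smaller energy for large $n$, contradicting minimality of the blow-up sequence; for the interior portion $K_0\cap\Omega_0$ the paper invokes the analogous blow-up/competitor step from de~Lellis--Focardi. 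A covering-based argument would have to re-introduce minimality into each tube anyway, so the essential difficulty cannot be bypassed.
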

\begin{proof}[Proof of Claim]
    For every $n \in \N$, define the Radon measure $\mu_n = \HH^1 \mres {K_n}$. By Proposition \ref{prop: Energy upper bound at the boundary}, for every compact set $J \subset \R^2$ we have $\sup_{n \in \N} \mu_n(J) < \infty$. Therefore, by \cite[Theorem 1.41]{evans-gariepy}, there is a Radon measure $\mu$ on $\R^2$ such that, up to subsequences, $\mu_n \rightharpoonup \mu$.
    Now we claim that
    \begin{equation*}
        C^{-1} \HH^1 \mres K_0 \leq \mu \leq C \HH^1 \mres K_0
    \end{equation*}
    for some constant $C>0$. Indeed, by Claims \ref{claim: H^1 of K_0 and K_j on balls} and \ref{claim: density bounds for K_0}, we have 
    \begin{equation*}
        \begin{split}
            \theta^*_1(\mu,z) &= \limsup_{\rho \downarrow 0} \lim_{n\to\infty} \frac{\HH^1(K_n \cap B_\rho(z))}{2\rho} \\
            &\geq C^{-1} \limsup_{\rho \downarrow 0}\frac{\HH^1(K_0 \cap B_\rho(z))}{2\rho} \geq \frac{1}{2} C^{-1} C_1
        \end{split}
    \end{equation*}
    for every $z \in K_0$, and in a similar way
    \begin{equation*}
        \begin{split}
            \theta_{*1} (\mu,z) &= \liminf_{\rho \downarrow 0} \lim_{n\to\infty}\frac{\HH^1(K_n \cap B_\rho(z))}{2\rho} \leq C
        \end{split}
    \end{equation*}
    for some constant $C>0$. Then, by \cite[Theorem 2.56, p.78]{AFP}, there is a positive constant $C$ such that  
    \begin{equation*}
        C^{-1} \HH^1 \mres K_0 \leq \mu \mres K_0 \leq C \HH^1 \mres K_0.
    \end{equation*}
    Finally, since $K_n \to K_0$ locally in the Hausdorff distance and since $K_0$ is closed, for every $z \in \R^2 \setminus K_0$ we have 
    \begin{equation*}
        \mu(B_\rho(z)) = \lim_{n \to \infty} \HH^1(K_n \cap B_\rho(z)) = 0 
    \end{equation*}
    for sufficiently small $\rho>0$. This implies that $\supp{\mu} \subset K_0$ and hence $\mu = \mu \mres K_0$. 

    The above inequality then allows us to conclude that $\mu \ll \HH^1 \mres K_0$. Then, by \cite[Theorem 1.30]{evans-gariepy}, 
    \begin{equation*}
        \mu = \theta \HH^1 \mres K_0,
    \end{equation*}
    where $\theta$ is the density of $\mu$ with respect to $\HH^1 \mres {K_0}$, and $\theta(y) \in [C^{-1},C]$ for $\HH^1$-a.e. $y \in \R^2$. 
    
    Let $U$ be a bounded open subset of $\R^2$ such that $\HH^1(\partial U \cap K_0) = 0$. 
    Then $\mu(\partial U) = 0$ and, by \cite[Theorem 1.40]{evans-gariepy}, we have 
    \begin{equation*}
        \mu(U) = \lim_{n \to \infty} \mu_n(U) = \lim_{n \to \infty} \HH^1(U \cap K_n).
    \end{equation*}
    To prove the Claim, it remains to show that $\theta(y) \geq 1$ for $\HH^1$-a.e. $y \in K_0$.
    The blow-up argument of Step 3 in the proof of \cite[Theorem 2.2.3]{delellis-focardi} shows that $\theta(y) \geq 1$ for $\HH^1$-a.e. $y \in K_0 \cap \Omega_0$. We are left to consider points $y \in K_0 \cap\partial\Omega_0$. 
    
    If $\HH^1(K_0 \cap\partial\Omega_0) = 0$ there is nothing to prove. Hence, we consider the case
    $\HH^1(K_0 \cap\partial\Omega_0) > 0$. 
    Suppose by contradiction that there is a set $A \subset K_0 \cap\partial\Omega_0 $ with $\HH^1(A)>0$ and $\theta(y) < 1$ for every $y \in A$. Since, by point (i), $K_0$ is $\HH^1$-rectifiable, we know that there exists the approximate tangent line $T_y K_0$ for $\HH^1$-a.e. $y \in K_0$. Moreover, since $\HH^1(K_0 \cap\partial\Omega_0) > 0$ and $\partial\Omega_0 = \{x_2 = -a\} = T_x \partial \Omega - a$ for some $a \in [0,+\infty)$, we have 
    \begin{equation*}
        T_y K_0 = T_x \partial \Omega = \{x_2 = 0\}
    \end{equation*}
    for $\HH^1$-a.e. $y \in K_0 \cap\partial\Omega_0$. 
    Therefore, there is a point $y \in A$ such that 
    $y$ is a Lebesgue point of $\theta$ with respect to $\HH^1 \mres K_0$, $0 < \theta(y) < 1$,
    the approximate tangent line $T_y K_0$ exists and $T_y K_0 = T_x \partial \Omega $.

    The existence of the approximate tangent line $T_y K_0$ implies 
    \begin{equation*}
        \HH^1\mres (K_0)_{y,\rho} = \rho^{-1} (\HH^1 \mres {K_0})_{y,\rho} \rightharpoonup \HH^1 \mres {T_y K_0}
    \end{equation*}
    as $\rho \downarrow 0$. 
    Since $K_0$ has the density lower bound (Claim \ref{claim: density bounds for K_0}), we conclude that $(K_0)_{y,\rho} = \rho^{-1}(K_0 - y) \to T_y K_0$ locally in the Hausdorff distance as $\rho \downarrow 0$.
    Let $\{\rho_n\}_{n \in \N} \subset \R$ be such that $0 < \rho_n \to 0$ as $n \to \infty$.
    Then, by a diagonal argument, for a subsequence of $\{r_n\}_n$ which do not rename we have 
    \begin{equation*}
        \tilde{K}_n = (K_n)_{y,\rho_n} = \rho_n^{-1}[r_n^{-1}(K^n-x_n)-y] \to T_y K_0
    \end{equation*}
    locally in the Hausdorff distance.

    Moreover, since $y$ is a Lebesgue point of $\theta$ with respect to $\HH^1 \mres K_0$, by \cite[Proposition 3.12, p.17]{delellis-tangent-measures}, we know that 
    \begin{equation*}
        \rho^{-1} (\theta \HH^1 \mres K_0)_{y,\rho} \rightharpoonup \theta(y) \HH^1 \mres T_y K_0
    \end{equation*}
    as $\rho \downarrow 0$.
    Now recall that 
    \begin{equation*}
        \mu_n = \HH^1 \mres K_n \rightharpoonup \theta \HH^1 \mres K_0
    \end{equation*}
    as $n\to \infty$. Therefore, by a diagonal argument, we can choose the subsequence of  $\{r_n\}_n$ above in such a way that we also have 
    \begin{equation}
        \label{eq: blow-up limit: convergence of measures double blow-up}
        \HH^1 \mres \tilde{K}_n = \HH^1 \mres (K_n)_{y,\rho_n}  = \rho_n^{-1} (\HH^1 \mres K_n)_{y,\rho_n} \rightharpoonup \theta(y) \HH^1 \mres T_y K_0.
    \end{equation}

    We define $\tilde{u}_n = (u_n)_{y,\rho_n}$, $\tilde{g}_n = (g_n)_{y,\rho_n}$ and $\tilde{\Omega}_n = (\Omega_n)_{y,\rho_n}$. 
    Then it is clear that \minimizer{\tilde{K}_n}{\tilde{u}_n}{\tilde{\Omega}_n}{\tilde{g}_n}.
    Then, arguing as in the proof of Theorem \ref{thm: density lower bound for K}, we can extend $u$ to $\R^2$ by setting $u=g$ in $\R^2 \setminus \overline{\Omega}$ and conclude that $u \in SBV(\R^2)$, $S_u \subset K$, and, in the notation of Section \ref{section: notation, density lower bound SBV}, $u \in \argmin\{F(v,\overline{\Omega}): v\in SBV(\R^2), v=g \text{ in } \R^2\setminus \overline{\Omega}\}$. 
    This also implies that $\tilde{u}_n \in SBV(\R^2)$, $S_{\tilde{u}_n} \subset \tilde{K}_n$, and $\tilde{u}_n \in \argmin\{F(v,\overline{\tilde{\Omega}}_n): v \in SBV(\R^2), v=\tilde{g}_n \text{ in } \R^2 \setminus \overline{\tilde{\Omega}}_n\}$.

    Moreover, observe that \eqref{eq: blow-up limit: convergence of measures double blow-up} implies 
    \begin{equation*}
        \lim_{n \to \infty} \HH^1(\tilde{K}_n \cap B_1) = 2 \theta(y) < 2
    \end{equation*}
    and hence, for sufficiently large $n$ and for some $\theta <1$, 
    \begin{equation*}
        \HH^1(\tilde{K}_n \cap B_1) < 2\theta. 
    \end{equation*}
    Furthermore, \cite[Theorem 2.56]{AFP} implies that 
    \begin{equation*}
        \theta_1^*(|\nabla u_n|^2 \mathcal{L}^2, y) = 0 \quad \forall n \in \N,
    \end{equation*}
    for $\HH^1$-a.e. $y\in K_0$, since $\mathcal{L}^2(K_0) = 0$. Therefore we can also assume that $y$ is such that the above property is satisfied. 
    As a consequence, since 
    \begin{equation*}
        |\nabla \tilde{u}_n|^2 \mathcal{L}^2 
        = \rho_n^{-1} \left[r_n^{-1}(|\nabla u^n|^2 \mathcal{L}^2)_{x_n,r_n}\right]_{y, \rho_n} = \rho_n^{-1}(|\nabla u_n|^2 \mathcal{L}^2)_{y, \rho_n},
    \end{equation*}
    by a diagonal argument, we can choose a subsequence of $\{\rho_n\}_n$, that we do not rename, such that
    \begin{equation}
        \label{eq: nabla tilde u_n Leb converges weakly to 0}
        |\nabla \tilde{u}_n|^2 \mathcal{L}^2 \rightharpoonup 0. 
    \end{equation}
    
    Finally, recall that $g_n \to 0$ 
    uniformly as $n \to \infty$ on $B_R$ for every $R>0$. 
    Then, by a diagonal argument, we can choose a subsequence of $\{r_n\}_n$, that we do not rename, such that $\tilde{g}_n = (g_n)_{y,\rho_n} \to 0$
    uniformly in $B_1$ as $n \to \infty$. 
    \begin{claim}
        \label{claim: existence of vertical segment that does not interesect the singular set}
        For any $0 < \gamma < 1-\theta$ there is $t \in (-(1-\gamma), 1-\gamma)$ such that, up to subsequences,  
        \begin{equation*}
            \lim_{n \to \infty} \int_{-1}^1 |\nabla \tilde{u}_n (t,x_2)|^2 \dd{x_2} = 0, \quad
            \tilde{K}_n \cap (\{t\} \times \R) = \emptyset.
        \end{equation*}
    \end{claim}
    \begin{proof}[Proof of Claim]
        Suppose that $0 < \gamma < 1-\theta$. 
        By Fubini's Theorem, by \eqref{eq: nabla tilde u_n Leb converges weakly to 0} and by Fatou's Lemma, we conclude that
        \begin{equation*}
            \liminf_{n \to \infty} \int_{-1}^1 |\nabla \tilde{u}_n(x_1,x_2)|^2 \dd{x_2} = 0
        \end{equation*}
        for $\mathcal{L}^1$-a.e. $x_1 \in (-(1-\gamma), 1-\gamma)$. Now let us define, for every $n \in \N$, 
        \begin{equation*}
            A_n = \{x_1 \in -((1-\gamma), 1-\gamma): \tilde{K}_n \cap (\{x_1\} \times \R) = \emptyset\}.
        \end{equation*}
        Then we must have $\HH^1(A_n) > 1- \gamma - \theta > 0$. Indeed, suppose by contradiction that this is not true. Then for sufficiently large $n$ we have
        \begin{equation*}
            \HH^1(\tilde{K}_n \cap B_1) \geq \HH^1((-(1-\gamma), 1-\gamma) \setminus A_n)= 2(1-\gamma) - \HH^1(A_n) \geq 1-\gamma+\theta > 2\theta
        \end{equation*}
        which is a contradiction.
        Combining this information with the above and taking a subsequence we find a point $t \in (-(1-\gamma), 1-\gamma)$ with the desired properties. 
    \end{proof}
    \begin{claim}
        \label{claim: rho such that Dirichlet integral on the circle is uniformly bounded and the singular set intersects at most two points}
        For any $\gamma \in (0,1)$ there is $\rho \in (1-\gamma, 1)$ such that, up to subsequences
        \begin{equation*}
            \lim_{n \to \infty} \int_{\partial B_\rho} |\nabla \tilde{u}_n|^2 < \infty, \quad \lim_{n \to \infty} \HH^0(\tilde{K}_n \cap \partial B_\rho) \leq 2.
        \end{equation*}
    \end{claim}
    \begin{proof}[Proof of Claim]
        By \eqref{eq: blow-up limit: convergence of measures double blow-up}, since $\HH^1(\partial B_\rho \cap \{x_2=0\}) = 0$ for every $\rho$, we have 
        \begin{equation*}
            \lim_{n \to \infty} \HH^1(\tilde{K}_n \cap (B_1 \setminus B_{1-\gamma})) = 2\gamma \theta(y) < 2\gamma 
        \end{equation*}
        for every $\gamma \in (0,1)$. 
        Then, by the coarea formula \cite[Theorem 2.93, p. 101]{AFP}, by a consequence of the coarea formula \cite[Theorem 3.12, p. 140]{evans-gariepy} and by \eqref{eq: nabla tilde u_n Leb converges weakly to 0}, we have, up to subsequences,
        \begin{equation*}
            \lim_{n \to \infty} \int_{1-\gamma}^1 \left(\HH^0(\tilde{K}_n \cap \partial B_\rho) + \int_{\partial B_\rho} |\nabla \tilde{u}_n(z)|^2 \dd{\HH^1(z)} \right) \dd{\rho} \leq 2 \gamma, 
        \end{equation*}
        and hence, if we define for every $\rho>0$
        \begin{equation*}
            h(\rho) = \liminf_{n \to \infty}  \HH^0(\tilde{K}_n \cap \partial B_\rho) +  \liminf_{n \to \infty} \int_{\partial B_\rho} |\nabla \tilde{u}_n(z)|^2 \dd{\HH^1(z)} , 
        \end{equation*}
        by Fatou's Lemma we have
        \begin{equation*}
            \int_{1-\gamma}^1 h(\rho) \, \dd{\rho} \leq 2\gamma. 
        \end{equation*}
        Then, by Chebyshev's inequality, there exists $\rho \in (1-\gamma, 1)$ such that, for a subsequence for which the two limits exist,
        \begin{equation*}
            \lim_{n \to \infty}  \HH^0(\tilde{K}_n \cap \partial B_\rho) +  \lim_{n \to \infty}\int_{\partial B_\rho} |\nabla \tilde{u}_n(z)|^2 \dd{\HH^1(z)} < 3.
        \end{equation*} 
    \end{proof}
    \begin{claim} 
        \label{claim: blow-up limit: tilde(u)_n uniformly bounded on partial B_rho}
        Let $\gamma \in (0, 1-\theta)$ and let $\rho \in (1-\gamma,1)$ be given by Claim \ref{claim: rho such that Dirichlet integral on the circle is uniformly bounded and the singular set intersects at most two points}. Then, up to subsequences, there is a constant $C>0$ such that
        \begin{equation*}
            \norm{\tilde{u}_n}_{L^\infty(\partial B_\rho)} \leq C 
        \end{equation*}
        for every $n \in \N$.
    \end{claim}
    \begin{proof}[Proof of Claim]
        By Claim \ref{claim: rho such that Dirichlet integral on the circle is uniformly bounded and the singular set intersects at most two points}, for sufficiently large $n$, $\tilde{K}_n$ intersects $\partial B_\rho$ in at most two points. 
        To fix ideas, suppose that $\HH^0(\tilde{K}_n \cap \partial B_\rho) =2$.
        Let $A^1_n$ and $A^2_n$ be the two connected components (two open circular arcs) of $\partial B_\rho \setminus \tilde{K}_n$.
        
        Let $t \in (-(1-\gamma), 1-\gamma)$ be given by Claim \ref{claim: existence of vertical segment that does not interesect the singular set}.
        Let $z \in \partial B_\rho \setminus \tilde{K}_n$. Without loss of generality, assume that $(t, -\sqrt{\rho^2-t^2}) \in A^2_n$. Then, if $z \in A^2_n$, we can connect it to $(t, -\sqrt{\rho^2-t^2})$ with a circular arc contained in $\partial B_\rho$.
        If, instead, $z \in A^1_n$, then we can connect it to $(t, -\sqrt{\rho^2-t^2})$ with a path made of a circular arc contained in $\partial B_\rho$ and the vertical segment $(\{t\} \times \R ) \cap B_\rho$. In both cases, we call $\gamma_{z,t}$ such path. Observe that $\gamma_{z,t} \cap \tilde{K}_n = \emptyset$. Therefore, 
        by Claims \ref{claim: rho such that Dirichlet integral on the circle is uniformly bounded and the singular set intersects at most two points} and \ref{claim: existence of vertical segment that does not interesect the singular set}, 
        \begin{equation*}
            |\tilde{u}_n(z) - \tilde{g}_n(t,-\sqrt{\rho^2-t^2})| \leq \HH^1(\gamma_{z,t})^{1/2}\left(\int_{\gamma_{z,t}} |\nabla \tilde{u}_n|^2\right)^{1/2} \leq C. 
        \end{equation*}
        for some constant $C>0$. We conclude by observing that $\tilde{g}_n(t,-\sqrt{\rho^2-t^2}) \to 0$ as $n \to \infty$.
    \end{proof}
    \begin{claim}
        \label{Claim: blow-up limit: maximum principle}
        There is a constant $C>0$ such that 
        \begin{equation*}
            \norm{\tilde{u}_n}_{L^\infty(B_\rho)} \leq C \quad \forall n \in \N.
        \end{equation*}
    \end{claim}
    \begin{proof}[Proof of Claim]
        Observe that $\tilde{u}_n = \tilde{g}_n$ in $B_\rho \setminus \tilde{\Omega}_n$. 
        Let $C$ be the constant given by Claim \ref{claim: blow-up limit: tilde(u)_n uniformly bounded on partial B_rho}. 
        Since $\tilde{g}_n \to 0$ uniformly in $B_1$, for sufficiently large $n$ we have 
        $\norm{\tilde{g}_n}_{L^\infty(B_\rho)} \leq C$.
        Now the thesis follows by a simple truncation argument, considering $v_n = (\tilde{u}_n \wedge C) \vee (-C)$ as a competitor, in a similar way as in the proof of Proposition \ref{prop: maximum principle}. 
    \end{proof}
    
    For every $\eta>0$ we know that $\tilde{K}_n \cap B_1 \subset B_1 \cap \{|x_2| \leq \eta \}$ for sufficiently large $n$. As a consequence, by Remark \ref{Remark: regularity of u}, we know that $\tilde{u}_n \in C^\infty(B_\rho \cap \{x_2 > \eta\} )$ for sufficiently large $n$. 
    Furthermore, by the energy upper bound (Proposition \ref{prop: Energy upper bound at the boundary}) we know that $\nabla \tilde{u}_n$ is uniformly bounded in $L^2(B_{\rho} \cap \{x_2 > \eta\})$. 
    The above uniform bound (Claim \ref{Claim: blow-up limit: maximum principle}) on the $L^\infty$ norm of $\tilde{u}_n$ also implies that $\tilde{u}_n$ is uniformly bounded in $L^2(B_{\rho})$.
    As a consequence, by a diagonal argument, for a subsequence that we do not rename, for every $\eta >0$
    \begin{equation*}
        \tilde{u}_n \rightharpoonup \tilde{u} \quad \text{ in } W^{1,2}(B_{\rho} \cap \{x_2 > \eta\})
    \end{equation*}
    and 
    \begin{equation*}
        \tilde{u}_n \to \tilde{u} \quad \text {a.e. in } B_\rho \cap \{x_2 > \eta\}
    \end{equation*}
    as $n \to \infty$, for some function $\tilde{u} \in W^{1,2} (B_{\rho} \cap \tilde{\Omega}_0)$, where $\tilde{\Omega}_0 = \{x_2 > 0\}$. Moreover, this implies that $\tilde{u}_n \to \tilde{u}$ a.e. in $B_\rho \cap \tilde{\Omega}_0$.
    \begin{claim}
        \label{claim: blow-up limit: tilde u is zero} 
        $\tilde{u} = 0$ a.e. in $B_\rho \cap \tilde{\Omega}_0$.
    \end{claim}
    \begin{proof}[Proof of Claim]
        By Fubini's Theorem, \eqref{eq: nabla tilde u_n Leb converges weakly to 0} and by Fatou's Lemma, if we define 
        \begin{equation*}
            A:= \left\{ \alpha \in (0,\rho): \liminf_{n \to \infty} \int_{\{x_2=\alpha\} \cap B_\rho } |\nabla \tilde{u}_n|^2 \dd{\HH^1} = 0 \right\},
        \end{equation*}
        then $\mathcal{L}^1((0,\rho) \setminus A) = 0$.
        Moreover, if we define $B = \{z=(z_1,z_2) \in B_\rho: z_2 \in A\}$,
        then $\mathcal{L}^2(B_\rho \cap \tilde{\Omega}_0 \setminus B) = 0$.
        Let us define $C = \{z \in B_\rho \cap \tilde{\Omega}_0: \tilde{u}(z) = \lim_{n \to \infty} \tilde{u}_n(z)\}$.
        Then $\mathcal{L}^2(B_\rho \cap \tilde{\Omega}_0 \setminus (B \cap C)) = 0$.
        We conclude the proof by showing that 
        \begin{equation*}
            \tilde{u}(z) = 0 \quad \forall z \in B_\rho \cap \tilde{\Omega}_0 \cap B \cap C.
        \end{equation*}
        Indeed, let $z=(z_1,z_2) \in B_\rho \cap \tilde{\Omega}_0 \cap B \cap C$.
        Since $\tilde{K}_n \cap B_1 \to \{x_2=0\} \cap B_1$ in the Hausdorff distance, for sufficiently large $n$ we have $\tilde{K}_n \cap B_1 \subset B_1 \cap \{|x_2| \leq \tfrac{1}{2} z_2\}$. 
        This implies, by Remark \ref{Remark: regularity of u}, that $\tilde{u}_n \in C^{\infty} (B_1 \cap \{x_2 > \tfrac{1}{2} z_2 \})$, and hence, by the fundamental theorem of calculus, we have, up to subsequences,
        \begin{equation*}
            |\tilde{u}_n(z_1,z_2) - \tilde{u}_n(t,z_2)| \leq (2\rho)^{1/2} \left(\int_{\{x_2=z_2\} \cap B_\rho } |\nabla \tilde{u}_n|^2 \dd{\HH^1}\right)^{1/2} \to 0,
        \end{equation*}
        where $t$ is given by Claim \ref{claim: existence of vertical segment that does not interesect the singular set}.
        We conclude by observing that 
        \begin{equation*}
            |\tilde{u}_n(t,z_2) - \tilde{g}_n(t, -\sqrt{\rho^2-t^2})| \leq \sqrt{2} \left(\int_{-1}^1 |\nabla \tilde{u}_n (t,x_2)|^2 \, \dd{x_2}\right)^{1/2} \to 0.
        \end{equation*}
    \end{proof}
    We define $\rho' = \tfrac{9}{10}\rho$. By \eqref{eq: blow-up limit: convergence of measures double blow-up}, we have
    \begin{equation*}
        \lim_{n\to \infty} \HH^1(\tilde{K}_n \cap B_{\rho'}) = 2\rho' \theta(y) > 0.
    \end{equation*}
    Let $\eta\in(0,\tfrac{1}{20}\rho)$ be such that 
    \begin{equation*}
        \HH^1(\partial B_{\rho'} \cap \{|x_2| \leq \eta \}) < \rho' \theta(y)
    \end{equation*}
    and let $\varphi \in C^\infty_c(B_{\rho} \cap \{x_2 > \tfrac{1}{2}\eta \})$ be such that $0 \leq \varphi \leq 1$ in $B_{\rho}$ and $\varphi \equiv 1$ in $B_{\rho'} \cap \{x_2 > \eta\}$ (see Figure \ref{fig: comparison argument, proof of blow-up, to show theta>1}). As usual, for sufficiently large $n$ we can assume that $\tilde{K}_n \cap B_1 \subset B_1 \cap \{|x_2| < \tfrac{1}{2} \eta\}$. We define, for every $n \in \N$, $w_n: \tilde{\Omega}_n \to \R$ by
    \begin{equation*}
        w_n = 
        \begin{cases}
            (1-\varphi) \tilde{u}_n \quad &\text{ in } \tilde{\Omega}_n \setminus B_{\rho'}, \\
            (1-\varphi) \tilde{g}_n \quad &\text{ in }  \tilde{\Omega}_n \cap B_{\rho'}, 
        \end{cases}
    \end{equation*}
    and $J_n = [\tilde{K}_n \cap (\tilde{\Omega}_n \setminus B_{\rho'})] \cup (\partial B_{\rho'} \cap \{|x_2| \leq \eta\})$.
    Then $w_n \in W^{1,2}(\tilde{\Omega}_n \setminus J_n)$, $w_n = \tilde{g}_n$ on $\partial \tilde{\Omega}_n \setminus J_n$, and $w_n = \tilde{u}_n$ in $\tilde{\Omega}_n \setminus B_{\rho}$. Observe that 
    \begin{figure}[ht]
        \begin{tikzpicture}[>=stealth, scale=1.1]
            \draw[->](-3.5,0)--(3.5,0) node[right] {$x_1$};
            \draw[->](0,-2.6)--(0,2.6) node[above] {$x_2$};
            \def\r{2}
            \draw (0,0) circle [radius=\r];
            \draw (0,0) circle [radius=0.9*\r];
            \def\e{0.2*\r}
            \draw[dotted] (-2.8, {\e/2}) -- (2.8,{\e/2}) node[right] {\tiny $\tfrac{1}{2}\eta$};
            \draw[dotted] (-2.8, {-\e/2}) -- (2.8,{-\e/2}) node[right] {\tiny $-\tfrac{1}{2}\eta$};
            \draw[dotted] (-2.8, \e) -- (2.8,\e) node[right] {\tiny $\eta$};
            \draw[thick, blue] (-0.91*\r, 0.3*\r) to[out=250, in=130] (-0.9*\r,{0.8*\e}) to [out=310, in=180] (-0.6*\r,{0.8*\e}) to[out=0, in=173] (0.6*\r, {0.8*\e}) to[out=353, in=240] (0.95*\r, 0.18*\r) to [out=60, in=-70] (0.91*\r, 0.3*\r) to [out=110, in=-5] (0,0.95*\r) to[out=175, in=70] (-0.91*\r, 0.3*\r);
            \node[blue] at (45:{1.2*\r}) {\small$\supp \varphi$};
            \node[below right] at (0,0.9*\r) {\small$\rho'$};
            \node[above right] at (0,\r) {\small$\rho$};
            \coordinate (A) at (-\r,{1/4*\e});
            \coordinate (B) at (\r,{-1/6*\e});
            \draw[thick] (A) to[out=5, in=200] (0,{-1/10*\e}) to[out=20, in=200] (B);
            \draw[thick,densely dashed] (B) to[out=20, in=230] (1.2*\r,0.1*\r) node[right] {$h_n$};
            \draw[thick, densely dashed] (-1.2*\r, -0.03*\r) to[out=10, in=185] (A); 
        \end{tikzpicture}
        \caption{}
        \label{fig: comparison argument, proof of blow-up, to show theta>1}
    \end{figure}
    \begin{equation*}
        \begin{split}
            &\int_{B_{\rho} \cap \tilde{\Omega}_n} |\nabla w_n|^2 = \int_{B_{\rho'} \cap \tilde{\Omega}_n} |\nabla w_n|^2 +  \int_{(B_\rho \setminus B_{\rho'}) \cap \tilde{\Omega}_n}|\nabla w_n|^2 \\
            & = \int_{B_{\rho'} \cap \tilde{\Omega}_n} |\nabla [(1-\varphi)\tilde{g}_n]|^2 +  \int_{(B_\rho \setminus B_{\rho'}) \cap \tilde{\Omega}_n \cap \supp{\varphi}}|\nabla[(1-\varphi)\tilde{u}_n]|^2 \\ 
            &+ \int_{(B_\rho \setminus B_{\rho'}) \cap \tilde{\Omega}_n \setminus \supp{\varphi}}|\nabla \tilde{u}_n|^2 =: I_n^1 + I_n^2 + I_n^3 \to 0
        \end{split}
    \end{equation*}
    as $n \to \infty$. 
    Indeed $I_n^1 \to 0$ since $\tilde{g}_n \to 0$ in $C^1(B_1)$. 
    Moreover, by previous computations, $\tilde{u}_n \rightharpoonup 0$ in $W^{1,2}(B_\rho \cap \{x_2 > \tfrac{1}{2} \eta\})$, and hence $\tilde{u}_n \to 0$ in $L^2(B_\rho \cap \{x_2 > \tfrac{1}{2} \eta\})$. This, with \eqref{eq: nabla tilde u_n Leb converges weakly to 0}, implies that $I_n^2 \to 0$. Also $I_n^3 \to 0$ by \eqref{eq: nabla tilde u_n Leb converges weakly to 0}.
    Furthermore, observe that $\HH^1(J_n \cap B_\rho \cap \overline{\tilde{\Omega}}_n) =\HH^1(\tilde{K}_n \cap (B_\rho \setminus \overline{B}_{\rho'})\cap \overline{\tilde{\Omega}}_n) +  \HH^1(\partial B_\rho \cap \{|x_2|\leq \eta\})$.
    Therefore
    \begin{equation*}
        \begin{split}
            &E(J_n, w_n, B_\rho \cap \overline{\tilde{\Omega}}_n) - E(\tilde{K}_n, \tilde{u}_n, B_\rho \cap \overline{\tilde{\Omega}}_n) \\
            &\leq \HH^1(\partial B_\rho \cap \{|x_2|\leq \eta\}) - \HH^1(\tilde{K}_n \cap B_{\rho'}) + o(1) \\ 
            &\leq \rho' \theta(y) - \HH^1(\tilde{K}_n \cap B_{\rho'}) + o(1) \xrightarrow{(n \to \infty)} -\rho'\theta(y) < 0, 
        \end{split}
    \end{equation*}
    and this, for sufficiently large $n$, contradicts the minimality of $(\tilde{K}_n, \tilde{u}_n)$ among the admissible pairs $\mathcal{A}(\tilde{\Omega}_n, \tilde{g}_n)$.
\end{proof}
\begin{claim}
    \begin{equation*}
        \int_{U \setminus K_0} |\nabla u_0|^2 + \HH^1(U \cap K_0) \geq \limsup_{n \to \infty} \left( \int_{U \cap (\Omega_n \setminus K_n)} |\nabla u_n|^2 + \HH^1(U \cap K_n) \right)
    \end{equation*}
    for every open set $U \subset \subset \overline{\Omega}_0$ such that $\HH^1(\partial U \cap K_0 \cap \Omega_0) = 0$.
\end{claim}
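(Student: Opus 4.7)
The strategy is to use $(K_0, u_0)$ restricted to $U$ as a template for building admissible competitors $(J_n, v_n)\in \mathcal{A}(\Omega_n, g_n)$ that coincide with $(K_n, u_n)$ outside a slightly larger set and closely mimic $(K_0, u_0)$ inside, and then to invoke minimality of $(K_n, u_n)$.

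First I would fix $\eps>0$ and choose an auxiliary open set $V$ with $U\subset\subset V\subset\subset\overline{\Omega}_0$, $\HH^1(\partial V\cap K_0\cap\Omega_0)=0$, and both $\HH^1((V\setminus U)\cap K_0)\le\eps$ and $\int_{(V\setminus U)\setminus K_0}|\nabla u_0|^2\le\eps$. Such a $V$ exists because $\HH^1(\partial U\cap K_0\cap\Omega_0)=0$, and because $K_0$ is $\HH^1$-rectifiable with locally finite measure (Theorem \ref{thm: blow-up limit}(i) together with the density bounds derived earlier in part (ii)). I would also select $V$ so that $\partial V$ meets $\partial\Omega_0$ transversally at finitely many points avoiding $K_0$.

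Next I would define the singular set of the competitor by
\begin{equation*}
J_n := (K_n\setminus V)\cup(K_0\cap\overline{V})\cup F_n,
\end{equation*}
where $F_n\subset V\setminus\overline{U}$ is a thin ``fence'' whose role is to enforce the topological admissibility condition in Definition \ref{def: admissible competitor and minimizer on a half-plane}: distinct connected components of $\R^2\setminus K_n$ must remain separated in $\R^2\setminus J_n$. Since $K_n\to K_0$ locally in Hausdorff distance and $\HH^1(\partial V\cap K_0\cap\Omega_0)=0$, a suitable $F_n$ can be assembled from finitely many short arcs joining points of $K_n\cap\partial V$ to $K_0\cap\overline{V}$, with $\HH^1(F_n)=o(1)$. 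This is the boundary analogue of the classical topological-competitor construction in \cite{david} and \cite[Theorem 2.2.3(iii)]{delellis-focardi}.

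Then I would define $v_n$. Outside $V$ set $v_n=u_n$. On each connected component $\Omega^k$ of $\Omega_0\setminus K_0$, define a target $\tilde u^k_n$: if $\Omega^k\subset\Omega^*$ set $\tilde u^k_n=u^k$, using that $u^k=0$ on $\partial\Omega_0\cap\overline{\Omega^k}$ so it is compatible with $g_n\to0$ in $C^1_{\loc}$ (Proposition \ref{prop: global convergence of functions, blow-up}(i)); otherwise set $\tilde u^k_n=u^k+u_n(z^k)$, which respects $v_n\to u^k$ in the sense of Proposition \ref{prop: global convergence of functions, blow-up}(ii). Inside $U$ put $v_n=\tilde u^k_n$ on the component of $V\cap\Omega_n\setminus J_n$ corresponding to $\Omega^k$, and in the collar $V\setminus\overline U$ interpolate between $u_n$ and $\tilde u^k_n$ via a smooth cutoff, using the $C^1_{\loc}$ convergence $u_n-u_n(z^k)\to u^k$ to keep the Dirichlet energy in the collar $o(1)+O(\eps)$.

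Finally I would estimate. By construction $\HH^1(J_n\cap\overline V)\le\HH^1(K_0\cap\overline V)+\HH^1(F_n)+o(1)$, and $\int_{V\setminus J_n}|\nabla v_n|^2\le\int_{V\setminus K_0}|\nabla u_0|^2+o(1)+C\eps$. Minimality of $(K_n,u_n)$ gives
\begin{equation*}
\int_{V\cap(\Omega_n\setminus K_n)}|\nabla u_n|^2+\HH^1(V\cap K_n)\le\int_{V\setminus K_0}|\nabla u_0|^2+\HH^1(V\cap K_0)+o(1)+C\eps.
\end{equation*}
Discarding $V\setminus U$ on the left and estimating its $K_0$- and $|\nabla u_0|^2$-content on the right by $2\eps$ yields the desired inequality after letting $n\to\infty$ and then $\eps\to0$.

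The main obstacle is the simultaneous construction of $F_n$ and of the interpolating $v_n$ near $\partial V$: the fence must enforce the topological admissibility of Definition \ref{def: admissible competitor and minimizer on a half-plane} (which refers to components of $\R^2\setminus K$, not $V\cap\overline{\Omega}_n\setminus K_n$), while the interpolation must simultaneously keep $v_n\in W^{1,2}$, match the boundary datum $g_n$ on $\partial\Omega_n\setminus J_n$, and produce negligible additional Dirichlet energy. The boundary component of the argument---specifically the fact that components $\Omega^k\subset\Omega^*$ force $\tilde u^k_n=u^k\approx g_n$ at $\partial\Omega_0$---is the genuinely new piece compared with the interior theory of \cite{delellis-focardi}.
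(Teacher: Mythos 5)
The paper proves this claim by a completely different route. It argues by contradiction: assuming the inequality fails, it introduces the Radon measures $\mu_n = |\nabla u_n|^2\mathcal{L}^2\mres(\Omega_n\setminus K_n) + \HH^1\mres K_n$, extracts a weak-$*$ limit $\mu$, identifies $\mu = |\nabla u_0|^2\mathcal{L}^2\mres(\Omega_0\setminus K_0) + \theta\,\HH^1\mres K_0$ via density estimates, deduces that $\theta > 1$ on a set of positive $\HH^1$-measure, and then reaches a contradiction by performing a second blow-up at a Lebesgue point $y$ of $\theta$ (using the flattening diffeomorphism $\Phi_n$ when $y\in K_0\cap\partial\Omega_0$). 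In the blow-up, $K_0$ is a straight line and the competitor construction is entirely explicit, whereas your construction must face the full, uncontrolled geometry of $K_0$.

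Your direct competitor approach, while plausible in spirit, has two genuine gaps. First, the collar estimate is circular as stated: the cutoff interpolation produces a term $\int_C \chi^2 |\nabla u_n|^2$ (and a term $\int_C |\nabla\chi|^2 |u_n-\tilde u^k_n|^2$) that you claim is $o(1)+O(\eps)$, but the $C^1_\loc$ convergence of $u_n$ holds only on compact subsets of $\Omega^k$ away from $K_0$, and the collar $V\setminus\overline U$ unavoidably approaches $K_0$; controlling the Dirichlet energy of $u_n$ in the collar is precisely what the claim is about. Any rigorous version needs a Fubini/Chebyshev selection of a good sub-collar where the sequence's energy and the number of intersections with $K_n$ are controlled (this is exactly the $\rho$-selection in the paper's proof of part (iii)). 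Second, $K_0\cap\overline V$ is not a subset of $\overline\Omega_n$, so $J_n$ as defined is not admissible for $\mathcal{A}(\Omega_n,g_n)$; one must first push $K_0$ onto $\overline\Omega_n$ (or vice versa) via a diffeomorphism such as the paper's $\Phi_n$, and then account for the energy distortion. Finally, your fence $F_n$ is motivated by Definition \ref{def: admissible competitor and minimizer on a half-plane}, but that topological condition governs competitors for the generalized minimizer $(K_0,u_0)$ on the half-plane, not competitors for the classical minimizers $(K_n,u_n)$, which need only lie in $\mathcal{A}(\Omega_n,g_n)$; the constraint you are actually fighting is not topological admissibility but the need to prevent the jump of $u_n$ from escaping into $V\setminus J_n$ where $K_n$ and $K_0$ disagree, and a bound $\HH^1(F_n)=o(1)$ for such a fence would require quantitative control on $K_n$ vs.\ $K_0$ beyond Hausdorff convergence.
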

\begin{proof}[Proof of Claim]
    The proof is a slight modification of \cite[Proof of Theorem 2.2.3, Step 4]{delellis-focardi}.
    Assume, by contradiction, that there exists an open set $U \subset \subset \overline{\Omega}_0$ such that $\HH^1(\partial U \cap K_0 \cap \Omega_0) = 0$ and such that the above inequality fails. Then, for a subsequence for which the limit exists, we have 
    \begin{equation*}
        \int_{U \setminus K_0} |\nabla u_0|^2 + \HH^1(U \cap K_0) < \lim_{n \to \infty} \left( \int_{U \cap (\Omega_n \setminus K_n)} |\nabla u_n|^2 + \HH^1(U \cap K_n) \right).
    \end{equation*} 
    For every $n \in \N$, let us define the Radon measure 
    \begin{equation*}
        \mu_n = |\nabla u_n|^2 \mathcal{L}^2\mres (\Omega_n \setminus K_n) + \HH^1\mres K_n.
    \end{equation*}
    Then, by Proposition \ref{prop: Energy upper bound at the boundary}, $\sup_n \mu_n(J) < \infty$ for every compact subset $J$ of $\R^2$. Therefore, by \cite[Theorem 1.41]{evans-gariepy}, there is a Radon measure $\mu$ such that, up to subsequences, $\mu_n \rightharpoonup \mu$. 
    Let $B$ be a bounded Borel subset of $\R^2$ such that $\mu(\partial B) = 0$. Then $\mu(B) = \lim_{n \to \infty} \mu_n(B)$.
    \begin{claim}
        If $B \cap K_0 = \emptyset$ then 
    \begin{equation*}
        \mu(B) = \int_{B \cap (\Omega_0 \setminus K_0)} |\nabla u_0|^2.
    \end{equation*}
    \end{claim}
    \begin{proof}[Proof of Claim]
        First, suppose that $\dist(B, K_0) >0$. Then $B \cap K_n = \emptyset$ for sufficiently large $n$, and hence 
        \begin{equation*}
            \mu(B) = \lim_{n \to \infty} \int_{B \cap \Omega_n} |\nabla u_n|^2.
        \end{equation*}
        By Proposition \ref{prop: global convergence of functions, blow-up}, we know that, for an extension of $u_n$ given by the proposition, $\nabla u_n \to \nabla u_0$ uniformly in $\overline{B}$, and hence there is a constant $C>0$ such that $|\nabla u_n|^2 \leq C$ in $B$. 
        Since $\partial \Omega_n \to \partial\Omega_0$ locally in the Hausdorff distance, we know that $\chi_{\Omega_n} \to \chi_{\Omega_0}$ $\mathcal{L}^2$-a.e. in $B$. 
        Then, by the dominated convergence theorem, 
        \begin{equation*}
            \mu(B) = \int_{B \cap \Omega_0} |\nabla u_0|^2. 
        \end{equation*}
        Now assume only that $B \cap K_0 = \emptyset$. We define, for $t>0$, $B_t = \{y \in B: \dist(y, \partial B) > t\}$.
        Then $B_t \uparrow \mathrm{int}(B)$ as $t \downarrow 0$. 
        Since we assumed $\mu(\partial B) = 0$, this implies that $\mu(B) = \lim_{t \downarrow 0} \mu(B_t)$.
        Since $\mu$ is a Radon measure on $\R^2$ and $\{\partial B_t\}_{t>0}$ is a disjoint family of Borel sets, by \cite[Proposition 2.16]{maggi} we know that $\mu(\partial B_t) > 0$ for at most countably many $t >0$. Therefore, there is a sequence $\{t_n\}_{n \in \N} \subset (0,+\infty)$ such that $t_n \downarrow 0$ as $n \to \infty$, $\mu(\partial B_{t_n}) = 0$ for every $n$, and $\mu(B) = \lim_{n \to \infty} \mu(B_{t_n})$.
        Observe that $\dist(B_{t_n}, K_0) >0$ for every $n$, and hence, by the above computations, 
        \begin{equation*}
            \mu(B_{t_m}) = \int_{B_{t_m} \cap \Omega_0} |\nabla u_0|^2
        \end{equation*}
        for every $m \in \N$. Finally, by the monotone convergence theorem,
        \begin{equation*}
            \mu(B) = \lim_{m \to \infty}  \mu(B_{t_m}) = \int_{B \cap \Omega_0} |\nabla u_0|^2.
        \end{equation*}
    \end{proof}
    If $z \in K_0$, $\rho >0 $ and $\mu(\partial B_\rho(z)) = 0$, we have $\mu(B_\rho(z)) \leq C\pi \rho^2 + 2\pi\rho$ by Proposition \ref{prop: Energy upper bound at the boundary}. 
    This implies that $\theta^*_1(\mu,z) \leq \pi$, 
    and hence, by \cite[Theorem 2.56]{AFP}, we have $\mu \mres K_0 \leq 2 \pi \HH^1\mres K_0$.
    In particular, we deduce that $\mu \mres K_0 \ll \HH^1 \mres K_0$, and hence 
    \begin{equation*}
        \mu = |\nabla u_0|^2 \mathcal{L}^2 \mres (\Omega_0 \setminus K_0) + \theta \HH^1 \mres K_0
    \end{equation*}
    with $\theta$ being the density of $\mu \mres K_0$ with respect to $\HH^1 \mres K_0$. 
    Observe that if $z \in K_0$, $\rho>0$, $\mu(\partial B_\rho(z)) = 0$, by Claim \ref{claim: blow-up lower semicontinuity of H^1} we have 
    \begin{equation*}
        \mu(B_\rho(z)) = \lim_{n \to \infty} \mu_n (B_\rho(z)) \geq \lim_{n \to \infty} \HH^1(B_\rho(z) \cap K_n) \geq \HH^1(B_\rho(z) \cap K_0).
    \end{equation*} 
    We conclude that $1 \leq \theta(z) \leq 2\pi $ for $\HH^1$-a.e. $z \in K_0$.
    Now we claim that $\theta > 1$ on a subset of $U \cap K_0$ with positive $\HH^1$ measure. Otherwise we would have 
    \begin{equation*}
        \begin{split}
            \mu(U) = \int_{U \setminus K_0} |\nabla u_0|^2 + \HH^1(U \cap K_0) 
             < \lim_{n \to \infty} \mu_n(U) = \mu(U).
        \end{split}
    \end{equation*}
    Furthermore, by \cite[Theorem 2.56]{AFP}, we have,  for $\HH^1$-a.e. $y \in K_0$,
    \begin{equation*}
    \theta_1^*(|\nabla u_0|^2 \mathcal{L}^2 \mres (\Omega_0 \setminus K_0), y) = 0.
    \end{equation*}
    
    Now let $y \in K_0$ be such that $y$ is a Lebesgue point of $\theta$ with respect to $\HH^1\mres K_0$, there exists the approximate tangent line $T_y K_0$, $\theta_1^*(|\nabla u_0|^2 \mathcal{L}^2 \mres (\Omega_0 \setminus K_0), y) = 0$, and $\theta(y) > 1$.
    The proof of Step 4 in \cite[Proof of Theorem 2.2.3]{delellis-focardi} gives a contradiction in the case $y \in K_0 \cap \Omega_0$. 
    We are left to consider the case $y \in K_0 \cap \partial\Omega_0$, under the assumption that $\HH^1(K_0 \cap \partial\Omega_0) > 0$. In this case we can also require that $T_y K_0 = T_x \partial \Omega = \{x_2=0\}$. 

    Let $\{\rho_n\}_n \subset \R$ such that $0< \rho_n \to 0$ as $n \to \infty$.
    As we did in the proof of Claim \ref{claim: blow-up lower semicontinuity of H^1}, by a diagonal argument we can find a subsequence of $\{r_n\}_n$, which do not rename, such that $\tilde{K}_n := (K_n)_{y, \rho_n} \to T_y K_0$ locally in the Hausdorff distance. 
    We also define, in the same way, $\tilde{u}_n = (u_n)_{y, \rho_n}$.
    
    Now we claim that
    \begin{equation*}
        \rho^{-1} \mu_{y,\rho} \rightharpoonup \theta(y) \HH^1 \mres T_y K_0
    \end{equation*}
    as $\rho \downarrow 0$. 
    Indeed, by definition of approximate tangent line and since $y$ is a Lebesgue point of $\theta$ with respect to $\HH^1 \mres K_0$, by \cite[Proposition 3.12, p.17]{delellis-tangent-measures} we have 
    \begin{equation*}
        \rho^{-1} (\theta \HH^1 \mres K_0)_{y, \rho} \rightharpoonup \theta(y) \HH^1 \mres T_y K_0.
    \end{equation*}
    Moreover, since $\theta_1^*(|\nabla u_0|^2 \mathcal{L}^2 \mres (\Omega_0 \setminus K_0), y) = 0$, we have
    \begin{equation*}
        \rho^{-1} (|\nabla u_0|^2 \mathcal{L}^2 \mres (\Omega_0 \setminus K_0))_{y,\rho} \rightharpoonup 0.
    \end{equation*}
    
    At this point, by a diagonal argument, we can choose the subsequence of $\{r_n\}_n$ above in such a way that we also have 
    \begin{equation*}
        \tilde{\mu}_n := \rho_n^{-1} (\mu_n)_{y,\rho_n} \rightharpoonup \theta(y) \HH^1 \mres T_y K_0.
    \end{equation*}
    Observe that 
    \begin{equation*}
        \tilde{\mu}_n = |\nabla \tilde{u}_n|^2 \mathcal{L}^2 \mres (\tilde{\Omega}_n \setminus \tilde{K}_n) + \HH^1 \mres \tilde{K}_n.
    \end{equation*}
    Let $\{\eps_n\}_{n \in N} \subset \R$ be such that $\eps_n \downarrow 0$ as $n \to \infty$.  
    By a diagonal argument, we can choose the subsequence of $\{r_n\}_n$ above in such a way that we also have 
    \begin{equation*}
        \begin{split}
            &\tilde{K}_n \cap B_1 \subset \{|x_2| \leq \tfrac{1}{2}\eps_n\}, \\
            &\partial \tilde{\Omega}_n \subset \{|x_2| \leq \tfrac{1}{2}\eps_n\}, \\
            &\HH^1 \mres \partial \tilde{\Omega}_n \rightharpoonup \HH^1 \mres T_x \partial \Omega, \\
            &\lim_{n \to \infty} \tilde{\mu}_n ([-1,1] \times (-\eps_n, \eps_n)) = 2\theta(y).
        \end{split}
    \end{equation*}
    Now, if we define $L_n = ([-1, -1+\eps_n) \cup (1-\eps_n, 1]) \times [-1,1]$, by a diagonal argument we can choose the subsequence of $\{r_n\}_n$ above in such a way that we also have $\lim_{n \to \infty} \tilde{\mu}_n (L_n) = 0$.
    Now let $Q = [-1,1] \times [-1,1]$ and $Q_n^+ = Q \cap \tilde{\Omega}_n$. 
    For sufficiently large $n \in \N$, we have  $\partial \tilde{\Omega}_n \cap Q = \{(x_1,x_2) \in Q: x_2 = h_n(x_1)\}$
    for some function $h_n \in C^2([-1,1])$. Moreover, the sequence $\{h_n\}_n$ is such that 
    $h_n$ and $h_n'$ converge uniformly to $0$ in $[-1,1]$.

    We define, for every $n \in \N$, a map $\Phi_n: \overline{Q_n^+} \to \overline{Q_n^+}$ such that 
    \begin{equation*}
        \Phi_n(x_1,x_2) = (x_1, \varphi_n(x_1,x_2)), 
    \end{equation*}
    where (see Figure \ref{fig: proof of blow-up, regions, competitor to show theta<1})
    \begin{equation*}
        \varphi_n(x_1,x_2) = 
        \begin{cases}
           h_n(x_1) &\text{ if } (x_1,x_2) \in \Sigma_n^0, \\
           \frac{(1-h_n(x_1))x_2+h_n(x_1)-f_n(x_1)}{1-f_n(x_1)} &\text{ if } (x_1,x_2) \in \Sigma_n^+, 
        \end{cases}
    \end{equation*}
    with 
    \begin{equation*}
        \Sigma_n^0 = \{(x_1,x_2): h_n(x_1) \leq x_2 \leq f_n(x_1)\}, \quad
        \Sigma_n^+ = \{(x_1,x_2): f_n(x_1) < x_2 \leq 1\},
    \end{equation*} 
    and 
    \begin{equation*}
        f_n(x_1) = 
        \begin{cases}
            h_n(-1) + \frac{\eps_n - h_n(-1)}{\eps_n}(x_1+1) &\text{ if } x_1 \in [-1, -1+\eps_n), \\
            \eps_n &\text{ if } x_1 \in [-1+\eps_n, 1-\eps_n], \\
            h_n(1) - \frac{\eps_n-h_n(1)}{\eps_n}(x_1-1) &\text{ if } x_1 \in (1-\eps_n, 1].
        \end{cases}
    \end{equation*}
    \begin{figure}[ht]
        \begin{tikzpicture}[>=stealth, scale=2.2]
            \draw[->](-1.5,0)--(1.5,0) node[right] {$x_1$};
            \draw[->](0,-1.5)--(0,1.5) node[above] {$x_2$};
            \def\epsn{0.2}
            \draw[dotted] (-1, {\epsn/2}) -- (1, {\epsn/2});
            \draw[dotted] (-1, {-\epsn/2}) -- (1, {-\epsn/2});
            \draw (-1,-1)--(1,-1)--(1,1)--(-1,1)--cycle;
            \node[below right] at (1,0) {$1$};
            \node[below left] at (-1,0) {$-1$};
            \node[above right] at (0,1) {$1$};
            \node[below right] at (0,-1) {$-1$};
            \draw[dashed] (-1+\epsn, -1) -- (-1+\epsn,1);
            \draw[dashed] (1-\epsn, -1) -- (1-\epsn,1);
            \coordinate (A) at (-1,{1/4*\epsn});
            \coordinate (B) at (1,{-1/6*\epsn});
            \draw[thick, blue] (A) to[out=5, in=200] (0,{-1/10*\epsn}) to[out=20, in=200] (B);
            \draw[thick,densely dashed, blue] (B) to[out=20, in=230] (1.2,0.1) node[right] {$h_n$};
            \draw[thick, densely dashed, blue] (-1.2, -0.03) to[out=10, in=185] (A); 
            \draw[thick] (A) -- (-1+\epsn,\epsn) -- node[above, pos=0.75] {$f_n$} (1-\epsn,\epsn) -- (B);
            \node at (-0.3,0.5) {$\Sigma_n^+$};
            \node at (-0.3,0.1) {$\Sigma_n^0$};
            \draw[<->] (-1,0.5) -- (-1+\epsn,0.5) node[above, pos=0.5]{\footnotesize$\eps_n$};
        \end{tikzpicture}
        \caption{}
        \label{fig: proof of blow-up, regions, competitor to show theta<1}
    \end{figure}
    By construction, $\Phi_n(\Sigma_n^+) = Q_n^+$ and $\Phi_n|_{\partial Q_n^+}$ is the identity. 
    Moreover, observe that $\Phi_n \in C^1(\Sigma_n^+ \setminus L_n)$, $\Phi_n(\Sigma_n^+ \setminus L_n) = Q_n^+ \setminus L_n$ and, if $(x_1,x_2) \in \Sigma_n^+ \setminus L_n$, we have 
    $\det(J\Phi_n(x_1,x_2)) = \frac{1-h_n(x_1)}{1-\eps_n}$.
    This also implies that, if $(x_1,x_2) = \Phi_n^{-1}(x_1,y_2)$ for some $(x_1,y_2) \in Q_n^+ \setminus L_n$, we have $\det(J\Phi_n^{-1}(x_1,y_2)) = \frac{1-\eps_n}{1-h_n(x_1)}$.
    Furthermore, it is easy to see that $\norm{J \Phi_n^{-1}}_2 = 1+o(1)$ in $Q_n^+ \setminus L_n$ uniformly as $n \to \infty$. 
    Similarly, $\Phi_n \in C^1(\Sigma_n^+ \cap L_n)$, $\Phi_n(\Sigma_n^+ \cap L_n) = Q_n^+ \cap L_n$ and, if $(x_1, x_2) \in \Sigma_n^+ \cap L_n$ and $(x_1,x_2) = \Phi_n^{-1}(x_1,y_2)$ for some $(x_1, y_2) \in Q_n^+ \cap L_n$, we have $\det(J\Phi_n^{-1}(x_1,y_2)) = \frac{1-f_n(x_1)}{1-h_n(x_1)}$. Finally, $\norm{J\Phi_n^{-1}}_2 \leq C$ in $Q_n^+ \cap L_n$ for some constant $C>0$.
    
    We define the pair $(J_n, v_n) \in \mathcal{A}(\tilde{\Omega}_n, \tilde{g}_n)$ by 
    \begin{align*}
        J_n &= (\partial \tilde{\Omega}_n \cap Q) \cup \Phi_n(\tilde{K}_n) \cup (\tilde{K} \setminus Q), \\
        v_n &= (\tilde{u}_n \circ \Phi_n^{-1}) \chi_{Q\setminus J_n} + \tilde{u}_n \chi_{\tilde{\Omega}_n \setminus Q}.
    \end{align*}
    Then, by the above properties, it is easy to see that 
    \begin{equation*}
        \begin{split}
            &E(J_n, v_n, \overline{Q_n^+}) - E(\tilde{K}_n, \tilde{u}_n, \overline{Q_n^+}) 
            \\ 
            &\leq (1+o(1)) \int_{\Sigma_n^+ \setminus (L_n \cup \tilde{K}_n)} |\nabla \tilde{u}_n|^2 + C \tilde{\mu}_n(L_n) \\ 
            &+ \HH^1(\partial \tilde{\Omega}_n \cap Q) - \int_{\Sigma_n^+ \setminus L_n} |\nabla \tilde{u}_n|^2 - \tilde{\mu}_n(Q \cap \{|x_2| \leq \eps_n\}) \\ 
            &\xrightarrow{(n \to \infty)} -2(\theta(y)-1) < 0, 
        \end{split}
    \end{equation*}
    and this contradicts the minimality of $(\tilde{K}_n, \tilde{u}_n)$ for sufficiently large $n$.
\end{proof}
This concludes the proof of (ii).
\end{proof}
\begin{proof}[Proof of Theorem \ref{thm: blow-up limit} (iii)]
For simplicity, change coordinates so that $\Omega_0 = \{x_2 > 0\}$.
Let $O$ be an open bounded subset of $\R^2$ and $(J,v)$ be a admissible competitor for the blow-up limit $(K_0, u_0)$ according to Definition \ref{def: admissible competitor and minimizer on a half-plane}, such that $(J,v)$ coincides with $(K_0,u_0)$ in $\overline{\Omega}_0 \setminus O$. 

If $\dist(O, \partial \Omega_0) > 0$ then, by Step 4 in \cite[Proof of Theorem 2.2.3]{delellis-focardi}, we conclude that $E(K_0, u_0) \leq E(J,v)$.

We are left to consider the case $\dist(O, \partial \Omega_0) = 0$.
We consider the pairs $(K_0,u_0)$ and $(J,v)$ defined on the entire plane by extending them to $(\emptyset,0)$ in $\{x_2<0\}$.

It is elementary see that if $O'$ is a bounded open subset of $\R^2$ such that $O \subset O'$, then $(J,v)$ is still admissible competitor for $(K_0,u_0)$ in $O'$ according to Definition \ref{def: admissible competitor and minimizer on a half-plane}, and $E(J,v,O') < E(K_0,u_0, O')$.

Since $O$ is bounded, there exists $R>0$ such that $O \subset \subset B_{R/2}$.
Since $\partial \Omega \in C^2$, there exists $\overline{n} \in \N$ such that for every $n \in \N$, $n \geq \overline{n}$, there exists a function $h_n \in C^2(\R)$ such that $\partial \Omega_n \cap B_R = \{(x_1,x_2) \in B_R : x_2 = h_n(x_1)\}$.
Moreover $h_n \to 0$ in $C^1(B_R)$ as $n\to \infty$.

We define, for every $n \geq \overline{n}$, a map $\Phi_n: \R^2 \to \R^2$ as follows. 
\begin{equation*}
    \Phi_n(x_1,x_2) = (x_1,x_2-h_n(x_1)).
\end{equation*}
Observe that $\Phi_n$ is invertible, with inverse $\Phi_n^{-1}(x_1,x_2)=(x_1,x_2+h_n(x_1))$.
Moreover, for every $n$, $\det D\Phi_n=\det D\Phi_n^{-1} =1$.
Finally, $\sup_{x \in B_R} \norm{D\Phi_n(x)}_2 \leq (1+o(1))$, $\sup_{x \in B_R} \norm{D\Phi_n^{-1}(x)}_2 \leq (1+o(1))$
as $n \to \infty$.
These properties will be useful during various changes of variables. 
Recall that $(K_n,u_n)$ is the blow-up sequence. We define, for $n \in \N$ sufficiently large, $K_n' = \Phi_n(K_n)$, $\Omega_n' = \Phi_n(\Omega_n)$, $u_n' = u_n \circ \Phi_n^{-1}$, $g_n' = g_n \circ \Phi_n^{-1}$.
Observe that, for sufficiently large $n$, $\partial \Omega_n' \cap B_{\frac{3}{4}R} = \{x_2 = 0\} \cap B_{\frac{3}{4}R}$.

By a Fubini-type argument we can select $\rho \in (\frac{1}{2}R, \frac{3}{4}R)$ such that, up to subsequences, 
\begin{align*}
    &\sup_{n\in \N} \left(\int_{\partial B_\rho}|\nabla u_n'|^2 + \HH^0(K_n' \cap \partial B_\rho) \right) < \infty,\\
    &\int_{\partial B_\rho} |\nabla v|^2 + \HH^0(J \cap \partial B_\rho) < \infty.
\end{align*}
Now, for every $\delta \in (0,\rho)$, we define a map $\Psi_\delta: \R^2 \to \R^2$ as follows. $\Psi_\delta (x) = \frac{\rho}{\rho-\delta}x$
for every $x \in \R^2$. Observe that $\Phi_\delta$ is a diffeomorphism of $B_{\rho-\delta}$ onto $B_\rho$. Moreover, $\norm{D\Psi_\delta-I}_{C^0}$ and $\norm{D\Psi_\delta^{-1}-I}_{C^0}$ are infinitesimal as $\delta \to 0$. 

Then, by the same construction used in Step 4 of \cite[Proof of Theorem 2.2.3]{delellis-focardi}, there exist $\delta_n \downarrow 0$ as $n\to \infty$ and $\{(J_n',z_n')\}_{n \in \N}$ such that
\begin{itemize}
    \item $(J_n', z_n') = (K_n',u_n'-g_n')$ in $\Omega_n' \setminus B_\rho$,
    \item $E(J_n',z_n',\Omega_n' \cap (B_\rho \setminus B_{\rho-\delta_n})) \to 0$ as $n\to\infty$,
    \item $E(J_n',z_n',\Omega_n' \cap B_{\rho-\delta_n}) \to E(J,v,B_\rho)$ as $n\to\infty$,
    \item $J_n' \cap B_{\rho-\delta_n} = \Psi_{\delta_n}^{-1}(J\cap B_\rho)$
    \item $z_n' = 0$ on $\{x_2=0\} \cap B_{\rho-\delta_n} \setminus J_n'$ 
\end{itemize}
We define, for every $n\in\N$, $(J_n'',z_n'') = (J_n' \cup (\{x_2=0\} \cap (B_\rho \setminus B_{\rho-\delta_n})),z_n'+g_n')$.
Then $z_n'' = g_n'$ on $\{x_2=0\} \cap B_{\rho-\delta_n} \setminus J_n'$ and $z_n'' = u_n'$ in $\Omega_n' \setminus B_\rho$.
Finally, we define, for every $n \in \N$, $(J_n, z_n) = (\Phi_n^{-1}(J_n''), z_n'' \circ \Phi_n)$.
Then $z_n = g_n$ on $\partial \Omega_n \cap \Phi_n^{-1}(B_\rho)$ and $z_n = u_n$ in $\Omega_n \setminus \Phi_n^{-1}(B_\rho)$.
Moreover, by changing variables, using the properties of $\Phi_n$, $\Phi_n^{-1}$, $(J_n', z_n')$, and since $g_n \to 0$ in $C^1(B_R)$, it is easy to see that
\begin{equation*}
    E(J_n, z_n, \Phi_n^{-1}(B_\rho) \cap \Omega_n) \leq (1+o(1)) E(J,v,B_\rho)
\end{equation*}
as $n\to \infty$. Therefore, by point (ii) in Theorem \ref{thm: blow-up limit},
\begin{align*}
    \lim_{n\to\infty} ( E(J_n, z_n, \Phi_n^{-1}(B_\rho) \cap \Omega_n) -  E(K_n, u_n, \Phi_n^{-1}(B_\rho) \cap \Omega_n) ) \\
    \leq E(J,v,B_\rho)-E(K_0,u_0,B_\rho) <0. 
\end{align*}
Since $(J_n,z_n) \in \mathcal{A}(\Omega_n, g_n)$, this contradicts the minimality of $(K_n,u_n)$ for sufficiently large $n \in \N$.
\end{proof}

\section{Energy and density bounds}
In this section, we prove energy and density bounds for minimizers of our functional.
\subsection{Energy upper bound}
\begin{prop}[Energy upper bound]
    \label{prop: Energy upper bound at the boundary}
    Suppose $\partial \Omega \in C^1$, $g \in C^1(\partial \Omega)$, $(K,u) \in \argmin\{E(J,v): (J,v) \in \mathcal{A}(\Omega,g)\}$.  Then there exists $C = C(\Omega, \norm{g}_{C^1(\partial\Omega)})$ such that, for any $x\in \overline{\Omega}$ and $r>0$,
    \begin{equation*}
        E(K,u,B_r(x)\cap \Omega) \leq C\pi r^2 + 2\pi r
    \end{equation*}
\end{prop}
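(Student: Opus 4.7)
The plan is to construct an explicit admissible competitor that replaces $(K,u)$ inside $B_r(x)\cap \Omega$ by ``cutting out'' the ball with a piece of its boundary circle and setting the function equal to the $C^2$ extension $g$ of the boundary datum (fixed in Section \ref{section: definitions and main results}). Concretely, I would define
\begin{equation*}
    J := (K \setminus B_r(x)) \cup (\partial B_r(x) \cap \overline{\Omega}), \qquad v := \begin{cases} u & \text{on } \Omega \setminus \overline{B_r(x)}, \\ g & \text{on } \Omega \cap B_r(x). \end{cases}
\end{equation*}
Both $K \setminus B_r(x) = K \cap (\R^2 \setminus B_r(x))$ and $\partial B_r(x) \cap \overline{\Omega}$ are closed subsets of $\overline{\Omega}$, so $J$ is closed. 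On $\Omega \setminus J$ the function $v$ is piecewise $W^{1,2}$ on the two open pieces $\Omega \setminus \overline{B_r(x)}$ and $\Omega \cap B_r(x)$, which are separated in $\Omega \setminus J$ by the added curve $\partial B_r(x) \cap \Omega$; hence $v \in W^{1,2}(\Omega \setminus J)$. The boundary condition $v=g$ on $\partial\Omega \setminus J$ holds trivially inside $B_r(x)$ (where $v\equiv g$) and outside $B_r(x)$ because $(K,u) \in \mathcal{A}(\Omega,g)$. Thus $(J,v)\in \mathcal{A}(\Omega,g)$.

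Next I would use minimality: $E(K,u) \leq E(J,v)$. Since $(J,v)$ and $(K,u)$ coincide on $\Omega \setminus \overline{B_r(x)}$ (up to an $\HH^1$-null set on $\partial B_r(x)$), subtracting the common contributions yields
\begin{equation*}
    \int_{(\Omega \setminus K) \cap B_r(x)} |\nabla u|^2 + \HH^1(K \cap B_r(x)) \leq \int_{\Omega \cap B_r(x)} |\nabla g|^2 + \HH^1(\partial B_r(x) \cap \overline{\Omega}).
\end{equation*}
The right-hand side is immediately controlled by
\begin{equation*}
    \int_{\Omega \cap B_r(x)} |\nabla g|^2 \leq \|\nabla g\|_{L^\infty(\R^2)}^2 \, \pi r^2, \qquad \HH^1(\partial B_r(x) \cap \overline{\Omega}) \leq 2\pi r,
\end{equation*}
so setting $C := \|\nabla g\|_{L^\infty(\R^2)}^2$ (which depends only on $\Omega$ and $\|g\|_{C^1(\partial\Omega)}$ through the fixed compactly supported $C^2$ extension) gives the claim.

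The argument is essentially a one-shot competitor comparison, so I do not expect any genuine obstacle. The only care needed is the verification that $(J,v)$ is admissible, in particular that the added piece $\partial B_r(x) \cap \overline{\Omega}$ is a legitimate jump set keeping $v \in W^{1,2}(\Omega\setminus J)$ and that no boundary condition is violated where $\partial B_r(x)$ meets $\partial\Omega$; both points are handled by including the closure in $J$ and by $v=g$ inside $B_r(x)$. Note also that the bound is only informative for small $r$: for large $r$ the estimate follows trivially from the global energy bound $E(K,u) \leq E(\emptyset,g) = \int_\Omega |\nabla g|^2$.
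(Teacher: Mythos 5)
Your argument is correct and is exactly the "simple comparison argument" the paper leaves implicit: excise $B_r(x)$ by adding $\partial B_r(x)\cap\overline{\Omega}$ to the singular set and paste in the fixed $C^2$ extension of $g$, then compare energies. The only caveat worth noting, which does not affect correctness, is that your estimate actually yields the slightly stronger bound with $\HH^1(K\cap B_r(x))$ in place of $\HH^1(K\cap B_r(x)\cap\Omega)$.
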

\begin{proof}
    It follows from a simple comparison argument. 
\end{proof}
\subsection{Density lower bound}
\label{section: density lower bound}
\subsubsection{Preliminaries}
\label{section: notation, density lower bound SBV}
We first need to introduce some notation. If $u \in SBV(\R^2)$ we define 
\begin{equation*}
    F(u,E) = \int_E |\nabla u|^2 + \HH^1(S_u \cap E)
\end{equation*}
for every Borel set $E \subset \R^2$. 
Following the approach in \cite{carriero-leaci}, we want to investigate 
\begin{equation*}
    u \in \argmin\{F(v,\overline{\Omega}): v\in SBV(\R^2), v=g \text{ in } \R^2\setminus \overline{\Omega}\}.
\end{equation*}
For any $\varphi \in C^1(\R)$ with $\varphi(0)=0=\varphi'(0)$, $\lip \varphi \leq 1$, we define 
\begin{equation*}
    \Omega_\varphi = \{(x_1,x_2) \in B_1: x_2 > \varphi(x_1)\}.
\end{equation*}
For the above $u$ we can obtain an energy upper bound.
\begin{prop}[Energy upper bound]
    \label{prop: energy upper bound in SBV}
    Let $u \in \argmin\{F(v,\overline{\Omega}): v\in SBV(\R^2), v=g_e \text{ in } \R^2\setminus \overline{\Omega}\}$ and $x\in \R^2$. Then 
    \begin{equation*}
        F(u, \overline{B}_\rho (x)) \leq C\pi \rho^2 + 2\pi\rho. 
    \end{equation*}
    for some constant $C$ that depends on $\Omega$ and $g$. 
\end{prop}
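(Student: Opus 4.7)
The plan is a straightforward comparison argument. For any $x \in \R^2$ and $\rho > 0$, I would use as competitor the function
\[
v(y) = g_e(y)\,\chi_{\overline{B}_\rho(x)}(y) + u(y)\,\chi_{\R^2 \setminus \overline{B}_\rho(x)}(y).
\]
Since $u \in SBV(\R^2)$, $g_e$ is of class $C^2$, and the two pieces are glued along the smooth curve $\partial B_\rho(x)$, a standard gluing argument puts $v \in SBV(\R^2)$ with $\nabla v = \nabla g_e$ on $\overline{B}_\rho(x)$ and $S_v \subset (S_u \setminus \overline{B}_\rho(x)) \cup \partial B_\rho(x)$. Admissibility, namely $v = g_e$ on $\R^2 \setminus \overline{\Omega}$, is immediate: it holds by construction on $\overline{B}_\rho(x) \setminus \overline{\Omega}$ and by admissibility of $u$ on the complement.

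Next, I would exploit minimality, being mindful that the functional $u$ minimizes is $F(\,\cdot\,,\overline{\Omega})$ while the statement concerns the ball $\overline{B}_\rho(x)$. Since $v = u$ on $\overline{\Omega} \setminus \overline{B}_\rho(x)$, the locality of $F$ and the minimality of $u$ give
\[
F(u, \overline{\Omega} \cap \overline{B}_\rho(x)) \leq F(v, \overline{\Omega} \cap \overline{B}_\rho(x)) \leq F(v, \overline{B}_\rho(x)).
\]
To recover $F(u, \overline{B}_\rho(x))$ on the left I would add the contribution on $\overline{B}_\rho(x) \setminus \overline{\Omega}$, using that $u = g_e$ there, so $S_u \cap (\R^2 \setminus \overline{\Omega}) = \emptyset$ and $\nabla u = \nabla g_e$; this yields $F(u, \overline{B}_\rho(x) \setminus \overline{\Omega}) \leq \norm{\nabla g_e}_\infty^2\, \pi\rho^2$.

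Finally, the right-hand side is estimated by inspection: since $v = g_e$ on $\overline{B}_\rho(x)$, the gradient contribution is at most $\norm{\nabla g_e}_\infty^2\,\pi \rho^2$, while the jump set of $v$ inside $\overline{B}_\rho(x)$ is contained in $\partial B_\rho(x)$, so its $\HH^1$-measure is at most $2\pi\rho$. Combining the estimates produces the desired bound $F(u, \overline{B}_\rho(x)) \leq C\pi \rho^2 + 2\pi \rho$ with $C$ depending only on $\norm{\nabla g_e}_\infty$ (hence on $\Omega$ and $g$). There is no substantial obstacle; the only small bookkeeping is the splitting of $\overline{B}_\rho(x)$ into its intersection with $\overline{\Omega}$ and its complement, which is needed precisely because $u$ is a minimizer of $F(\,\cdot\,,\overline{\Omega})$ rather than of $F$ on all of $\R^2$.
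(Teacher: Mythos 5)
Your argument is correct and matches the paper's intent: the paper states only that the result ``follows from a simple comparison argument,'' and your competitor $v = g_e\,\chi_{\overline{B}_\rho(x)} + u\,\chi_{\R^2 \setminus \overline{B}_\rho(x)}$ is exactly that argument, yielding the Dirichlet term $C\pi\rho^2$ with $C\sim\norm{\nabla g_e}_\infty^2$ and the jump term $\HH^1(\partial B_\rho(x)) = 2\pi\rho$. Your bookkeeping of splitting $\overline{B}_\rho(x)$ into its intersection with $\overline{\Omega}$ and its complement --- needed because $u$ minimizes $F(\cdot,\overline{\Omega})$ rather than $F$ on all of $\R^2$ --- is the right detail to make the argument airtight.
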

\begin{proof}
   It follows from a simple comparison argument.
\end{proof}
\subsubsection{Decay lemma}
For the reader's convenience, we report here Lemma 3.9 in \cite{carriero-leaci}.
\begin{lmm}[Decay]
    \label{lmm: Decay Lemma}
    For every $0 < \alpha < 1$, $0 < \beta < 1$ and $L>0$, there exist 
    $\eps = \eps(\alpha,\beta,L)$ and $\theta= \theta(\alpha, \beta, L)$ such that:
    for every $\varphi \in C^1(\R)$ with $\varphi(0)=0=\varphi'(0)$ and $\lip \varphi \leq 1$ and for every $w \in C^1(B_2)$ with $\lip w < L$, 
    if $u \in SBV(\R^2)$, $u=w$ in $B_1 \setminus \overline{\Omega}_\varphi$ and 
    \begin{equation*}
        F(u,\overline{\Omega}_\varphi) = \inf\{F(v,\overline{\Omega}_\varphi): v\in SBV(\R^2), \, v=u \text{ in } \R^2 \setminus \overline{\Omega}_\varphi \},
    \end{equation*}
    and if 
    \begin{equation*}
        \HH^1(S_u \cap \overline{B}_1) \leq \eps, 
    \end{equation*}
    then 
    \begin{equation*}
        F(u,\overline{B}_\alpha)\leq \alpha^{2-\beta} \max\{F(u, \overline{B}_1), \, \theta [(\lip \varphi)^2 + (\lip w)^2 ] \}.
    \end{equation*}
\end{lmm}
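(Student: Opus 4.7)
The plan is to argue by contradiction via a compactness-normalization scheme, in the classical spirit of the decay lemmas for free discontinuity problems. Suppose the Lemma fails for some $\alpha_0, \beta_0 \in (0,1)$ and $L_0 > 0$. Then for every $n \in \N$ there exist $\varphi_n$, $w_n$, $u_n$ satisfying the hypotheses with $\eps_n = 1/n$ and $\theta_n = n$, but with
\begin{equation*}
  F(u_n, \overline{B}_{\alpha_0}) > \alpha_0^{2-\beta_0}\, \lambda_n^2, \qquad
  \lambda_n^2 := \max\bigl\{F(u_n, \overline{B}_1),\ n[(\lip\varphi_n)^2 + (\lip w_n)^2]\bigr\}.
\end{equation*}
The energy upper bound (Proposition \ref{prop: energy upper bound in SBV}) gives $\lambda_n^2 \leq C$. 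Since $\lip\varphi_n \leq 1$ and $\lip w_n < L_0$, the upper bound $n[(\lip\varphi_n)^2+(\lip w_n)^2] \leq \lambda_n^2 \leq C$ forces $\lip\varphi_n + \lip w_n \to 0$, so after subtracting the constant $w_n(0)$ (which changes neither $F$ nor the minimality) we have $w_n \to 0$ uniformly on $B_2$ and $\varphi_n \to 0$ in $C^1$, so that $\Omega_{\varphi_n}$ converges to $B_1^+ := B_1 \cap \{x_2 > 0\}$ in the Hausdorff distance.

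Next, introduce the normalization $v_n := u_n / \lambda_n$. Then $\int_{\overline{B}_1}|\nabla v_n|^2 \leq 1$, $\HH^1(S_{v_n}) = \HH^1(S_{u_n}) \leq 1/n \to 0$, and on $B_1 \setminus \overline{\Omega}_{\varphi_n}$ the function $v_n$ equals $w_n/\lambda_n$, whose Lipschitz constant is bounded by $1/\sqrt{n}$. Truncating so that $|v_n| \leq M$ (which only decreases $F$ by the maximum principle), we apply Ambrosio's $SBV$ compactness theorem to extract a subsequence $v_n \to v_0$ in $L^2(B_1)$ with weak $W^{1,2}$ convergence of the absolutely continuous parts. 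The bound $\HH^1(S_{v_n}) \to 0$ together with the lower semicontinuity of $\HH^1$ on jump sets forces $\HH^1(S_{v_0}) = 0$, so $v_0 \in W^{1,2}(B_1)$; moreover $v_0 \equiv 0$ on $B_1 \setminus \overline{B_1^+}$ because $w_n/\lambda_n \to 0$ uniformly there.

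To identify $v_0$ as a harmonic function in $B_1^+$ with zero Dirichlet trace on $\{x_2=0\}$, use that the minimality of $u_n$ transfers, after normalization, to near-minimality of $v_n$ for the purely Dirichlet functional against test perturbations supported off the shrinking set $S_{v_n}$: choosing cutoffs that vanish in a $c\sqrt{\HH^1(S_{v_n})}$-neighborhood of $S_{v_n}$, writing the first variation along $v_n + t\eta$, and passing to the limit $n \to \infty$ shows $\int \nabla v_0 \cdot \nabla \eta = 0$ for every test $\eta$ supported in $B_1^+$. The zero boundary condition on $\{x_2=0\}$ follows from the uniform convergence of $v_n$ to $0$ there together with the Hausdorff convergence of the domains. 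By Schwarz reflection $v_0$ extends to a harmonic function on $B_1$ with $\int_{B_1}|\nabla v_0|^2 \leq 2$, so mean-value estimates yield
\begin{equation*}
  \int_{B_{\alpha_0}^+} |\nabla v_0|^2 \leq C \alpha_0^2 \int_{B_1^+}|\nabla v_0|^2 \leq C \alpha_0^2 < \alpha_0^{2-\beta_0}
\end{equation*}
for $\alpha_0$ small; the general $\alpha_0$ case follows from the same estimate applied on a dyadic scale.

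The main obstacle, and the step requiring the most care, is the \emph{convergence} $F(v_n, \overline{B}_{\alpha_0}) \to \int_{B_{\alpha_0}^+}|\nabla v_0|^2$, against which we will contradict the standing assumption $F(v_n, \overline{B}_{\alpha_0}) > \alpha_0^{2-\beta_0}$. Weak $W^{1,2}$ convergence gives only the lower semicontinuity $\liminf \int |\nabla v_n|^2 \geq \int |\nabla v_0|^2$, which goes the wrong way. To obtain the matching upper bound one uses $v_0$ itself (suitably cut off near $S_{v_n}$ and near $\partial\Omega_{\varphi_n}$, and corrected by an $o(1)$ harmonic extension across the thin strip where the domains differ) as a competitor against $v_n$; the resulting competitor energy controls $\int_{B_{\alpha_0}^+\cap\Omega_{\varphi_n}}|\nabla v_n|^2$ up to $o(1)$ errors, using that $\HH^1(S_{v_n}) \to 0$ and that the difference $v_n - w_n/\lambda_n$ vanishes on $\partial\Omega_{\varphi_n}\cap B_1$. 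Combining both directions produces the desired contradiction and closes the argument.
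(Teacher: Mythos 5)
This Lemma is quoted verbatim from \cite{carriero-leaci} and the paper does not supply a proof of it, so there is no in-paper argument against which to compare; your compactness--normalization scheme is, however, the standard route for decay lemmas of this type. The sketch has two gaps. The more serious one is the $L^\infty$ bound you invoke in order to apply $SBV$ compactness. The minimality hypothesis prescribes $u_n$ only on $B_1\setminus\overline{\Omega}_{\varphi_n}$; on $\partial B_1\cap\Omega_{\varphi_n}$ the trace is free, and nothing in the hypotheses bounds $\|u_n\|_{L^\infty}$. Truncating at a level $M$ is not a remedy either: the truncation must agree with $u_n$ off $\overline{\Omega}_{\varphi_n}$ to be an admissible competitor, which fails unless $M$ dominates $\|u_n\|_{L^\infty(\R^2\setminus\overline{\Omega}_{\varphi_n})}$ (uncontrolled), and even when it is admissible it strictly decreases the Dirichlet energy and so destroys the standing contradiction hypothesis $F(u_n,\overline{B}_{\alpha_0})>\alpha_0^{2-\beta_0}\lambda_n^2$. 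In \cite{carriero-leaci,degiorgi-carriero-leaci} this is handled through a Poincar\'e--Wirtinger-type inequality in $SBV$: the smallness of $\HH^1(S_{u_n})$, together with the isoperimetric inequality, gives $L^1$-precompactness of the truncations $(-k)\vee(u_n\wedge k)$ after subtracting suitable medians (anchored to the boundary datum $w_n$ in the boundary case), followed by a diagonal argument over $k$. This step is the technical core of the lemma and cannot be replaced by a maximum-principle remark.

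The second gap is a bookkeeping issue that bites precisely when $\lambda_n\to 0$. Dividing the contradiction hypothesis by $\lambda_n^2$ yields
\begin{equation*}
  \int_{\overline{B}_{\alpha_0}}|\nabla v_n|^2 + \lambda_n^{-2}\HH^1(S_{v_n}\cap\overline{B}_{\alpha_0}) > \alpha_0^{2-\beta_0},
\end{equation*}
which is not the same as $F(v_n,\overline{B}_{\alpha_0})>\alpha_0^{2-\beta_0}$: the jump term carries the weight $\lambda_n^{-2}$, unbounded if $\lambda_n\to 0$. Your competitor argument must therefore bound the full weighted quantity $\int|\nabla v_n|^2+\lambda_n^{-2}\HH^1(S_{v_n}\cap\cdot)$ from above by $\int_{B_{\alpha_0}^+}|\nabla v_0|^2+o(1)$; the jump-free competitor built from $\lambda_n v_0$ does give exactly this, but your sketch only claims control of the Dirichlet part of $v_n$. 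Once these two points are filled in, the rest of the outline (domain convergence, zero trace, Schwarz reflection, the subharmonic mean-value estimate $\int_{B_\alpha^+}|\nabla v_0|^2\leq\alpha^2\int_{B_1^+}|\nabla v_0|^2<\alpha^{2-\beta}$, strong energy convergence) is sound --- and that mean-value estimate already closes the argument for every $\alpha\in(0,1)$, so the dyadic iteration at the end is unnecessary.
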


\subsubsection{Energy lower bound in SBV}
\begin{prop}[Energy lower bound at the boundary in $SBV$]
    \label{prop: density lower bound at the boundary, SBV}
    There exist $\eps, r >0$ depending on $\Omega$ and $g$, such that:
    if 
    \begin{equation*}
        u \in \argmin\{F(v,\overline{\Omega}): v\in SBV(\R^2), v=g \text{ in } \R^2\setminus \overline{\Omega}\}, 
    \end{equation*}
    then 
    \begin{equation*}
      F(u,B_\rho(x)) \geq \eps \rho \quad \forall x \in S_u\cap \partial \Omega, \, \forall \rho \in (0,r).
    \end{equation*}
\end{prop}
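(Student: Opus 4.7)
The plan is to iterate the Decay Lemma (Lemma \ref{lmm: Decay Lemma}) at geometrically decreasing scales around a boundary point $x\in S_u\cap\partial\Omega$, following the De Giorgi--Carriero--Leaci scheme (see \cite[Theorem 3.8]{carriero-leaci} or \cite[Theorem 2.1.3]{delellis-focardi}).

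First I would set up the natural boundary blow-up. Fix $x\in S_u\cap\partial\Omega$, rotate so that $T_x\partial\Omega=\{x_2=0\}$, and use $\partial\Omega\in C^2$ to write $\Omega\cap B_{r_0}(x)=\{y_2-x_2>\varphi(y_1-x_1)\}\cap B_{r_0}(x)$ with $\varphi\in C^2(\R)$, $\varphi(0)=\varphi'(0)=0$ and $\norm{\varphi''}_\infty\leq C_0$. For $\rho\in(0,r_0)$ define
\begin{equation*}
u_\rho(y):=\rho^{-1/2}(u(x+\rho y)-g(x)),\quad g_\rho(y):=\rho^{-1/2}(g(x+\rho y)-g(x)),\quad \varphi_\rho(t):=\rho^{-1}\varphi(\rho t).
\end{equation*}
Then $\lip\varphi_\rho\leq C_0\rho$, $\lip g_\rho\leq\rho^{1/2}\lip g$, $u_\rho$ is a local minimizer of $F$ on $\overline\Omega_{\varphi_\rho}$ among $SBV(\R^2)$ competitors that agree with $g_\rho$ outside, and $F(u_\rho,\overline B_1)=\rho^{-1}F(u,\overline B_\rho(x))$.

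Next I would perform one step of decay. Fix $\beta=\tfrac12$ and a small $\alpha\in(0,\tfrac12)$, set $L:=1+\lip g$, and let $\eps,\theta$ be the corresponding constants of Lemma \ref{lmm: Decay Lemma}. Choose $\eps_0\in(0,\eps)$ and $r_1\in(0,r_0)$ so small that $C_0 r_1+r_1^{1/2}\lip g\leq 1$ (so that the Lipschitz bound $L$ holds at every scale $\rho\leq r_1$) and $\theta' r_1\leq\eps_0$, where $\theta':=\theta(C_0^2+(\lip g)^2)$. For $\rho\leq r_1$ with $a(\rho):=F(u,B_\rho(x))/\rho\leq\eps_0$ one has $\HH^1(S_{u_\rho}\cap\overline B_1)\leq F(u_\rho,\overline B_1)\leq\eps_0<\eps$, so the Decay Lemma applies to $u_\rho$ and, after rescaling back, delivers
\begin{equation*}
a(\alpha\rho)\leq\alpha^{1-\beta}\max\{a(\rho),\theta'\rho\}.
\end{equation*}

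Then I would iterate on $\rho_k:=\alpha^k\rho_0$ starting from some $\rho_0\in(0,r_1)$ with $a(\rho_0)\leq\eps_0$. Since $\theta'\rho_k\leq\eps_0$ and $a(\rho_0)\leq\eps_0$, an easy induction preserves $a(\rho_k)\leq\eps_0$ at every step, and the inequality above gives $a(\rho_k)\leq C\rho_k^{1-\beta}$ for some $C=C(\alpha,\beta,\eps_0,\theta')$. Interpolating between consecutive dyadic scales yields
\begin{equation*}
F(u,B_\rho(x))\leq C'\rho^{2-\beta}\qquad\forall\rho\in(0,\rho_0).
\end{equation*}
By the classical Campanato--type regularity for $SBV$ minimizers of $F$ with smooth boundary data (see \cite[Proposition 3.11]{carriero-leaci} and the argument concluding \cite[Theorem 3.8]{carriero-leaci}), such a super-linear decay of $F$ forces $S_u$ to be empty in a neighborhood of $x$, contradicting $x\in S_u$. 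Hence the assumption $a(\rho)\leq\eps_0$ for some $\rho\in(0,r_1)$ is impossible, which is the claim with $\eps:=\eps_0$ and $r:=r_1$.

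The main obstacle is guaranteeing that the hypotheses of Lemma \ref{lmm: Decay Lemma} persist through infinitely many iterations, both the smallness $\HH^1(S_{u_\rho}\cap\overline B_1)<\eps$ and the Lipschitz bound $L$ on $\varphi_\rho,g_\rho$ being scale-dependent. The calibrated choice of $\eps_0$ and $r_1$ above, in which the forcing term $\theta'\rho$ remains subordinate to the threshold $\eps_0$, makes the decay estimate self-sustaining; the $\rho$-dependent error captures precisely the boundary curvature and the variation of $g$, both vanishing at $x$. A secondary but essential input is the passage from super-linear decay of $F$ to the absence of jumps near $x$, for which we rely on the by-now standard regularity theory of \cite{degiorgi-carriero-leaci,carriero-leaci}.
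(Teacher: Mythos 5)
Your proposal is correct and follows essentially the same strategy as the paper: rescale around a boundary point $x$, observe that the rescaled boundary profile $\varphi_\rho$ and boundary datum $g_\rho$ have Lipschitz constants of order $\rho$ and $\rho^{1/2}$ respectively (so that the scale-dependent error $\theta\rho$ stays below the iteration threshold once $\rho$ is small), iterate Lemma~\ref{lmm: Decay Lemma} with $\beta=\tfrac12$, and conclude from the resulting super-linear decay of $F(u,B_\rho(x))$ that $x\notin S_u$, a contradiction. The only cosmetic difference is in the closing step: the paper passes directly to $\lim_{t\downarrow 0}t^{-1}F(u,\overline B_t(x))=0$ and cites \cite[Theorem 3.6]{degiorgi-carriero-leaci}, whereas you invoke the Campanato-type conclusion from \cite{carriero-leaci} and phrase it as "$S_u$ is empty in a neighborhood of $x$"; the decay you have established is centered only at $x$, so what you actually get (and all that you need) is $x\notin S_u$. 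One small point of bookkeeping the paper handles more carefully: the decay iteration is run on closed balls $\overline B_\rho(x)$, and the final statement on open balls is recovered by halving $\eps$ at the end.
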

\begin{proof}
    The result follows from a slight modification of \cite[Proof of Lemma 3.10]{carriero-leaci}. We write it for the convenience of the reader.

    Fix $\alpha \in (0,1)$, $\beta = \frac{1}{2}$. Let $r>0$ and $C>0$ be given by Lemma \ref{lmm: radius for boundary}. Let $L = \norm{\nabla g_e}_\infty$.
    Let $\eps = \eps(\alpha,\beta,L), \theta=\theta(\alpha,\beta,L)$ be given by Lemma \ref{lmm: Decay Lemma}.

    For any $x \in \partial \Omega$, let $h_x$ be the function given by Lemma \ref{lmm: radius for boundary}. Then, for any $\rho \in (0,r)$, we define 
    \begin{equation*}
        \varphi_{x,\rho} (t) = \rho^{-1} h_x(\rho t) \quad \forall t \in \R, \quad
        \hat{g}_{x,\rho}(y) = \rho^{-\frac{1}{2}} g(x+\rho y) \quad \forall y \in \R^2.
    \end{equation*}
    Then it is easy to see that $\lip_{B_1} \varphi_{x,\rho} \leq C \rho$ and $\lip_{B_1} \hat{g}_{x,\rho} \leq \rho^\frac{1}{2} L$.
    Observe that, for every $\rho \leq \min\{r,1\}$, $\theta [(\lip_{B_1} \varphi_{x,\rho})^2 + (\lip_{B_1} \hat{g}_{x,\rho})^2 ] \leq C\theta \rho$
    for some constant $C>0$ that we do not rename.

    Define $r:= \min\{r,\frac{\eps}{C \theta},1\}$.
    Assume, by contradiction, that there exist $x \in S_u \cap \partial \Omega$ and $\rho \in (0,r)$ such that  
    \begin{equation*}
        F(u,\overline{B}_\rho(x)) \leq \eps \rho.
    \end{equation*}

    Translate and rotate the plane in such a way that $x=0$, $T_x \partial \Omega = \{x_2=0\}$ and $\Omega \cap B_r(x) = \{x_2 > h_x(x_1)\} \cap B_r(x)$.
    Then  $\Omega_{x,\rho} \cap B_1 = \{x_2 > \varphi_{x,\rho} (x_1)\} \cap B_1$, $u_{x,\rho} \in SBV(\R^2)$, $\HH^1(S_{u_{x,\rho}} \cap \overline{B}_1) \leq \eps$, 
    and $u_{x,\rho}(y) = \rho^{-1/2}u(x+\rho y) = \hat{g}_{x,\rho} (y)$ for any $y \in B_1 \setminus \overline{\Omega}_{\varphi_{x,\rho}}$.
    Therefore, by Lemma \ref{lmm: Decay Lemma}, 
    \begin{equation*}
        F(u,\overline{B}_{\alpha \rho}(x)) \leq \alpha^\frac{3}{2} \max\{F(u,\overline{B}_\rho(x)), C \theta \rho^2 \} \leq \alpha^\frac{3}{2} \eps \rho = \alpha^\frac{1}{2} \eps (\alpha\rho) < \eps(\alpha \rho).
    \end{equation*}
    Then, by induction, we have $F(u,\overline{B}_{\alpha^h\rho(x)}) \leq \alpha^{\frac{3}{2}h} \eps \rho$ for every $h \in \N$
    and hence 
    \begin{equation*}
        \lim_{t \to 0} t^{-1} F(u, \overline{B}_t(x)) = 0.
    \end{equation*}
    Finally, by \cite[Theorem 3.6]{degiorgi-carriero-leaci}, we conclude that $x \notin S_u$, which is a contradiction. 
    This proves that $F(u,\overline{B}_\rho(x)) > \eps \rho$.
    By replacing $\eps$ with its half, we get $F(u,B_\rho(x)) \geq \eps \rho$
    for every $x \in S_u \cap \partial \Omega$ and every $\rho \in (0,r)$. 
\end{proof}
\begin{lmm}(Energy lower bound near the boundary 1)
    \label{lmm: energy lower bound 1}
   There exist $\eps,r, \Lambda>0$, depending on $\Omega$ and $g$, such that: if 
    \begin{equation*}
        u \in \argmin\{F(v,\overline{\Omega}): v\in SBV(\R^2), v=g \text{ in } \R^2\setminus \overline{\Omega}\}, 
    \end{equation*} 
    then 
    \begin{equation*}
        F(u, B_\rho(x)) \geq \eps \rho \quad \forall x \in S_u \cap \Omega, \, \forall r > \rho > \Lambda \dist(x,\partial\Omega).
    \end{equation*}
\end{lmm}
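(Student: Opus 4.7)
The plan is to argue by contradiction and reduce the statement to the boundary density bound (Proposition~\ref{prop: density lower bound at the boundary, SBV}) by working at the nearest boundary point to $x$. The key geometric observation is that when $\rho > \Lambda D$ with $D := \dist(x,\partial\Omega)$ small compared to $\rho$, the ball $B_{\rho-D}(y) \subset B_\rho(x)$ for the closest boundary point $y\in\partial\Omega$, so any small upper bound on $F(u,B_\rho(x))$ transfers to a small bound on $F(u,B_{\rho-D}(y))$. Crucially, although $y$ need not belong to $S_u$, Lemma~\ref{lmm: Decay Lemma} only requires smallness of $\HH^1(S_u\cap\overline B_1)$ after rescaling, not that the origin be a jump point; hence it can be iterated freely at $y$.

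Assume for contradiction that $F(u,B_\rho(x))<\eps\rho$ for some $x\in S_u\cap\Omega$ with $\rho>\Lambda D$. Fix $\alpha\in(0,1)$, $\beta=\tfrac12$, take $\eps$ smaller than the threshold in Lemma~\ref{lmm: Decay Lemma} and $r$ small enough (exactly as in the proof of Proposition~\ref{prop: density lower bound at the boundary, SBV}) so that the boundary-curvature term $\theta[(\mathrm{Lip}\,\varphi)^2+(\mathrm{Lip}\,w)^2]$ is always dominated in the $\max$ of Lemma~\ref{lmm: Decay Lemma}. Setting $\rho':=\rho-D$, iterating Lemma~\ref{lmm: Decay Lemma} at $y$ (with rescaled boundary parametrization $\varphi_{y,\alpha^k\rho'}$ and rescaled boundary datum $\hat g_{y,\alpha^k\rho'}$) yields the geometric decay
\[
F\bigl(u,\overline{B}_{\alpha^k\rho'}(y)\bigr)\;\le\; C\,\alpha^{3k/2}\,\eps\,\rho' \qquad \forall k\in\N.
\]

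To produce the contradiction, I would confront this decay with the \emph{interior} density lower bound applied at $x$ itself. By the classical De Giorgi--Carriero--Leaci bound (\cite[Theorem 2.1.3]{delellis-focardi}), since $x\in S_u$ and $B_D(x)\subset\Omega$ one has $F(u,B_D(x))\ge \eps_i D$ for a universal $\eps_i>0$, whence $F(u,B_{2D}(y))\ge\eps_i D$ because $B_D(x)\subset B_{2D}(y)$. Choosing the integer $k$ with $\alpha^k\rho'\in[2D,\,2D/\alpha)$---which exists because $\rho'>(\Lambda-1)D$ with $\Lambda$ large---and using monotonicity of $F$ in the ball, the two bounds combine to
\[
\eps_i\,D \;\le\; C\,\alpha^{3k/2}\,\eps\,\rho'\;\le\; C'_\alpha\,\eps\,\sqrt{D\,\rho'},
\qquad\text{i.e.}\qquad \eps_i\;\le\;\frac{C'_\alpha\,\eps}{\sqrt{\Lambda-1}},
\]
which is a contradiction as soon as $\Lambda>1+(C'_\alpha\eps/\eps_i)^2$. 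Fixing $\eps$ below all smallness thresholds, $r$ as above, and $\Lambda$ large in terms of $\eps/\eps_i$ completes the proof.

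The main technical obstacle is the bookkeeping required to run the iteration of Lemma~\ref{lmm: Decay Lemma} cleanly at every intermediate scale $\alpha^k\rho'\in[2D,\rho']$: one has to verify that the rescaled boundary graph and boundary datum stay within the Lipschitz range required by the lemma, and that the smallness of $\HH^1(S_u\cap\overline B_1)$ is propagated inductively by the geometric decay. Both checks are essentially identical to those carried out in the proof of Proposition~\ref{prop: density lower bound at the boundary, SBV}; the only new point is that our iteration terminates at the finite scale $\approx 2D$ rather than being pushed to $0$, which is exactly what allows the interior density lower bound at $x$ to close the argument.
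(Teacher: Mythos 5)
Your argument is correct and is essentially the paper's own proof: argue by contradiction, transfer the smallness of $F(u,B_\rho(x))$ to the nearest boundary point $y$, iterate the decay Lemma~\ref{lmm: Decay Lemma} at $y$ down to a scale comparable to $D=\dist(x,\partial\Omega)$, and contradict the interior density lower bound after choosing $\Lambda$ large; the paper merely organizes the iteration by re-centering the balls at $x$ at each step (via the auxiliary parameters $\eta,\Lambda',N$), while you stay centered at $y$ and pass to $x$ only at the final scale, which is a harmless variant. One small slip: from $\eps_i D\le C\,\alpha^{3k/2}\eps\,\rho'\le C'_\alpha\,\eps\sqrt{D\rho'}$ one can only deduce $\eps_i\le C'_\alpha\,\eps\sqrt{\rho'/D}$, which is no contradiction; the bound you actually need, immediate from your choice $\alpha^k\rho'<2D/\alpha$, is $\alpha^{3k/2}\rho'=(\alpha^k\rho')^{3/2}(\rho')^{-1/2}\le C_\alpha\,D\sqrt{D/\rho'}\le C_\alpha\,D/\sqrt{\Lambda-1}$, after which your conclusion $\eps_i\le C_\alpha\,\eps/\sqrt{\Lambda-1}$ and the choice of $\Lambda$ close the proof as claimed.
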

\begin{proof}
    Let $\alpha', \eps', r$ be defined as in the proof of Proposition \ref{prop: density lower bound at the boundary, SBV}.
    Let $\eta \in (0,1)$ be such that $\alpha'\leq (1-\eta)^5$ and $\Lambda' > 2$ be such that $\alpha'(1-\eta) \leq \alpha'\left(1-\frac{1}{\Lambda'}\right) - \frac{1}{\Lambda'}$.
    We define $\alpha:= \alpha'(1-\eta)$.
    Let $\epsilon$ be the constant of the density lower bound at the interior (see \cite[Theorem 2.1.3]{delellis-focardi}).
    Let $N \in \N$ be such that $\frac{\Lambda'\alpha^{\frac{1}{4}N} \eps'}{2\alpha} < \epsilon$
    and define $\Lambda := \frac{\Lambda'}{\alpha^N} \geq \Lambda'$.

    Suppose, by contradiction, that there exist $x \in S_u \cap \Omega$ and $r>\rho>\Lambda \dist(x,\partial\Omega)$ such that 
    \begin{equation*}
        F(u,\overline{B}_\rho(x)) \leq \frac{\eps'}{2}\rho.
    \end{equation*}
    Let $d:= \dist(x,\partial\Omega)$ and $y \in \partial\Omega$ such that $|y-x|=d$. Observe that $B_{\rho-d}(y) \subset B_\rho(x)$ and hence 
    \begin{equation*}
        F(u, \overline{B}_{\rho-d}(y)) \leq \frac{\eps'}{2}\rho \leq \eps'(\rho-d).
    \end{equation*}
    Arguing as in the proof of Proposition \ref{prop: density lower bound at the boundary, SBV}, we get, possibly by taking a smaller $r$, 
    \begin{equation*}
        F(u,\overline{B}_{\alpha'(\rho-d)}(y))\leq (\alpha')^\frac{3}{2}(F(u,\overline{B}_{\rho-d}(y))\vee \frac{\eps'}{2}(\rho-d)).
    \end{equation*}
    Now observe that $\alpha'(1-\eta)\rho \leq \alpha'(\rho-d)-d$
    and $B_{\alpha'(\rho-d)-d}(x) \subset B_{\alpha'(\rho-d)}(y)$, so that
    \begin{equation*}
        \begin{split}
            F(u,\overline{B}_{\alpha\rho}(x)) &\leq (\alpha')^\frac{3}{2} (F(u,\overline{B}_\rho(x)) \vee \frac{\eps'}{2}\rho) \\ 
            &\leq \alpha^\frac{5}{4}(F(u,\overline{B}_\rho(x)) \vee \frac{\eps'}{2}\rho) \\
            &\leq \alpha^\frac{1}{4} \frac{\eps'}{2}(\alpha\rho) \leq \frac{\eps'}{2}(\alpha \rho)
        \end{split}
    \end{equation*}
    It is easy to prove by induction that 
    \begin{equation*}
        F(u,\overline{B}_{\alpha^h \rho}(x)) \leq \alpha^{\frac{5}{4}h} \frac{\eps'}{2}\rho \quad \forall h \in \N: \alpha^h \rho \geq \Lambda' d.
    \end{equation*}
    By definition of $\Lambda$, we have $\alpha^N \rho \geq \Lambda'd$.
    Let $\overline{N} \geq N$ be such that $\alpha^{\overline{N}+1} \rho < \Lambda' d \leq \alpha^{\overline{N}} \rho$. 
    Then, by the above claim and by the choice of $\overline{N}$ and $N$, we get
    \begin{equation*}
        d^{-1}F(u,\overline{B}_d(x)) \leq d^{-1}\alpha^{\frac{5}{4}\overline{N}} \frac{\eps'}{2}\rho \leq \frac{\Lambda'}{\alpha^{\overline{N}+1} \rho}\alpha^{\frac{5}{4}\overline{N}} \frac{\eps'}{2}\rho = \frac{\Lambda'\alpha^{\frac{1}{4}N} \eps'}{2\alpha} < \epsilon,
    \end{equation*}
    which contradicts the density lower bound at the interior (\cite[Theorem 2.1.3]{delellis-focardi})
\end{proof}
\begin{lmm}[Energy lower bound near the boundary 2]
    \label{lmm: energy lower bound 2}
  Let $\epsilon$ be the constant of the density lower bound at the interior (see \cite[Theorem 2.1.3]{delellis-focardi}). Let $\Lambda$ be given by Lemma \ref{lmm: energy lower bound 1}. Then, if $u$ is as in Lemma \ref{lmm: energy lower bound 1}, we have
    \begin{equation*}
        \HH^1(S_u \cap B_\rho(x)) \geq \frac{\epsilon}{\Lambda}\rho \quad \forall \,  \dist(x,\partial\Omega) < \rho \leq \Lambda \dist(x,\partial\Omega).
    \end{equation*}
\end{lmm}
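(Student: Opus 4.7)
The plan is to exploit the fact that the hypothesis $\rho > d := \dist(x,\partial\Omega)$ places $B_d(x)$ strictly inside $\Omega$ and inside $B_\rho(x)$, so one can reduce the bound to the interior density lower bound at scale $d$ and then pay a factor of $1/\Lambda$ to upgrade from scale $d$ to scale $\rho \leq \Lambda d$. Concretely, assuming as in Lemma \ref{lmm: energy lower bound 1} that $x \in S_u \cap \Omega$ and $\rho$ is small (inheriting the smallness threshold $r$ from that lemma), the ball $B_d(x)$ is compactly contained in $\Omega$, and $x \in S_u$ is its center.

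The key step is then to invoke the interior density lower bound \cite[Theorem 2.1.3]{delellis-focardi} on $B_d(x)$ to obtain
\begin{equation*}
    \HH^1(S_u \cap B_d(x)) \geq \epsilon d.
\end{equation*}
Since $B_d(x) \subset B_\rho(x)$ and $d \geq \rho/\Lambda$, monotonicity of $\HH^1$ immediately gives
\begin{equation*}
    \HH^1(S_u \cap B_\rho(x)) \geq \HH^1(S_u \cap B_d(x)) \geq \epsilon d \geq \frac{\epsilon}{\Lambda}\rho,
\end{equation*}
which is the claim.

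The only technical point is applying the interior density lower bound at radius exactly equal to $d$, since in the standard statement one requires $B_r(x) \subset \subset \Omega$. This is not really an obstacle: one applies the density lower bound at radius $d-\delta$, obtaining $\HH^1(S_u \cap B_{d-\delta}(x)) \geq \epsilon (d-\delta)$, and then sends $\delta \downarrow 0$, using that $\HH^1 \mres S_u$ is a Borel measure and $B_{d-\delta}(x) \uparrow B_d(x)$ to conclude $\HH^1(S_u \cap B_d(x)) = \lim_{\delta \downarrow 0}\HH^1(S_u \cap B_{d-\delta}(x)) \geq \epsilon d$. Everything else is just the elementary estimate $d \geq \rho/\Lambda$ built into the hypothesis.
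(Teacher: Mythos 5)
Your proof is correct and follows essentially the same route as the paper's. The only cosmetic difference is the choice of intermediate radius: the paper applies the interior density lower bound directly on $B_{\rho/\Lambda}(x)$, using $\rho/\Lambda \leq d$ to guarantee this ball lies inside $\Omega$, which gives $\HH^1(S_u \cap B_\rho(x)) \geq \HH^1(S_u \cap B_{\rho/\Lambda}(x)) \geq \epsilon\,\rho/\Lambda$ in one line; you instead pass through $B_d(x)$ and then use $d \geq \rho/\Lambda$. Your extra limiting step $\delta \downarrow 0$ to reach radius exactly $d$ is harmless and even slightly more careful than the paper (which would need the same care at the endpoint $\rho = \Lambda d$), but it is not needed if one simply works at radius $\rho/\Lambda$ as the paper does, since the strict inequality $\rho > d$ is not required for the ball of radius $\rho/\Lambda$ to sit in $\Omega$.
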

\begin{proof}
    Since $\frac{\rho}{\Lambda} \leq \dist(x,\partial\Omega)$, by \cite[Theorem 2.1.3]{delellis-focardi} we have 
    \begin{equation*}
        \HH^1(S_u \cap B_\rho(x)) \geq \HH^1(S_u \cap B_{\frac{\rho}{\Lambda}}(x)) \geq \epsilon \frac{\rho}{\Lambda}.
    \end{equation*}
\end{proof}
\begin{prop}[Energy lower bound]
    \label{prop: energy lower bound}
    There exist $\eps, r>0$ such that, if $u$ is as in Lemma \ref{lmm: energy lower bound 1}, 
    \begin{equation*}
        F(u,B_\rho(x)) \geq \eps \rho \quad \forall x \in S_u,\, \forall \rho \in (0,r).
    \end{equation*}
\end{prop}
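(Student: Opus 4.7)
The plan is a four-case split according to the position of $x$ and the size of $\rho$ relative to $D := \dist(x,\partial\Omega)$, which glues together the preceding three lemmas with the classical interior density lower bound. Denote by $\eps_1, r_1$ the constants of Proposition \ref{prop: density lower bound at the boundary, SBV}, by $\eps_2, r_2, \Lambda$ those of Lemma \ref{lmm: energy lower bound 1}, and by $\epsilon$ the constant of the density lower bound at the interior \cite[Theorem 2.1.3]{delellis-focardi}. I would then set $\eps := \min\{\eps_1,\eps_2,\epsilon,\epsilon/\Lambda\}$ and $r := \min\{r_1,r_2\}$.

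If $x \in S_u \cap \partial\Omega$, Proposition \ref{prop: density lower bound at the boundary, SBV} yields immediately $F(u,B_\rho(x))\geq \eps\rho$ for every $\rho\in (0,r)$. If instead $x \in S_u \cap \Omega$, so that $D>0$, I would consider three subcases. For $\rho \leq D$ the ball $B_\rho(x)$ lies inside $\Omega$, so the interior density lower bound gives $F(u,B_\rho(x)) \geq \HH^1(S_u \cap B_\rho(x)) \geq \epsilon\,\rho$. For $D < \rho \leq \Lambda D$, Lemma \ref{lmm: energy lower bound 2} yields $F(u,B_\rho(x)) \geq \HH^1(S_u \cap B_\rho(x))\geq (\epsilon/\Lambda)\,\rho$. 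Finally, for $\Lambda D < \rho < r$, Lemma \ref{lmm: energy lower bound 1} provides $F(u,B_\rho(x)) \geq \eps_2\,\rho$. Combining the four estimates with the above choice of $\eps$ gives the claim.

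No genuine obstacle is expected: the constants $\Lambda$ and $\eps_2$ of Lemma \ref{lmm: energy lower bound 1} and the constant $\epsilon/\Lambda$ of Lemma \ref{lmm: energy lower bound 2} were already tuned precisely so that the three regimes $\rho \leq D$, $D < \rho \leq \Lambda D$, $\rho > \Lambda D$ interlock without a gap; the only point to check is that a single choice of $\eps$ and $r$ works in all four cases, which is handled by taking the minima above.
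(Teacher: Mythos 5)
Your proposal is correct and follows essentially the same route as the paper: a case split on $\rho$ relative to $D=\dist(x,\partial\Omega)$, applying the interior density lower bound for $\rho\leq D$, Lemma~\ref{lmm: energy lower bound 2} for $D<\rho\leq\Lambda D$, and Lemma~\ref{lmm: energy lower bound 1} for $\rho>\Lambda D$, with $\eps$ taken as the minimum of the constants involved. The only cosmetic difference is that you isolate $x\in S_u\cap\partial\Omega$ as a separate case handled by Proposition~\ref{prop: density lower bound at the boundary, SBV}, whereas the paper absorbs it into the regime $\rho\geq\Lambda d$ (which covers $d=0$ trivially); your version is arguably a bit more explicit since Lemma~\ref{lmm: energy lower bound 1} is stated for $x\in S_u\cap\Omega$.
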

\begin{proof}
    Let $\eps', r, \Lambda$ be given by Lemma \ref{lmm: energy lower bound 1}. Let $\epsilon$ be the constant of the density lower bound at the interior (see \cite[Theorem 2.1.3]{delellis-focardi}).
    Assume that $x \in S_u$, $\rho \in (0,r)$, and let $d=\dist(x,\partial\Omega)$.
    Define $\eps:= \min\{\eps',\frac{\epsilon}{\Lambda}\}$. 
    If $\rho \geq \Lambda d$, then $F(u, B_\rho(x)) \geq \eps' \rho \geq \eps \rho$
    by Lemma \ref{lmm: energy lower bound 1}.
    If $d < \rho < \Lambda d$, then $F(u, B_\rho(x)) \geq \frac{\epsilon}{\Lambda} \rho \geq \eps \rho$ by Lemma \ref{lmm: energy lower bound 2}.
    Finally, if $\rho \leq d$, then $F(u,B_\rho(x)) \geq \epsilon \rho \geq \eps \rho$
    by \cite[Theorem 2.1.3]{delellis-focardi}.
\end{proof}
\begin{lmm} 
    Let $\eps, r$ be the constant of Proposition \ref{prop: energy lower bound}.
    There exists $\eta \in (0,1)$ such that: if $u$ is as in Proposition \ref{prop: energy lower bound} and 
    \begin{equation*}
        \HH^1(S_u \cap B_\rho(x)) < \eta^2 \rho 
    \end{equation*}
    for some $x \in \overline{\Omega}$ and $\rho \in (0,r)$, then 
    \begin{equation*}
        F(u,B_{\eta\rho}(x)) < \eps \eta \rho.
    \end{equation*}
\end{lmm}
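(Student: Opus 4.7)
The lemma bootstraps the smallness of the jump set at scale $\rho$ into full-energy smallness at the smaller scale $\eta\rho$. The plan is to rescale to unit scale and apply the Decay Lemma (Lemma \ref{lmm: Decay Lemma}). Setting $v := u_{x,\rho}$, the hypothesis becomes $\HH^1(S_v \cap \overline{B}_1) < \eta^2$. The jump-set part of the conclusion is automatic, since $\HH^1(S_u \cap B_{\eta\rho}(x)) \leq \HH^1(S_u \cap B_\rho(x)) < \eta^2 \rho \leq \frac{1}{2}\eps\eta\rho$ whenever $\eta < \eps/2$; it remains to bound the Dirichlet part on $B_{\eta\rho}(x)$.

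For $x \in \partial\Omega$, I would write the rescaled domain as $\Omega_{\varphi_{x,\rho}} \cap B_1$ with $\lip \varphi_{x,\rho} \leq C\rho$ and the rescaled boundary datum $\hat g_{x,\rho}$ with $\lip \hat g_{x,\rho} \leq \rho^{1/2} L$, exactly as in the proof of Proposition \ref{prop: density lower bound at the boundary, SBV}, where $L = \norm{\nabla g_e}_\infty$. Fix $\beta = 1/2$ and let $\eps_0, \theta_0$ be the constants of Lemma \ref{lmm: Decay Lemma} corresponding to a fixed scale $\alpha = 1/2$. Assuming $\eta^2 \leq \eps_0$ and $\rho$ small, iterate Lemma \ref{lmm: Decay Lemma} at this fixed scale a total of $k := \lceil \log_2(1/\eta) \rceil$ times. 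At every step the rescaled singular measure, the rescaled $\lip \varphi$, and the rescaled $\lip \hat g$ only decrease, so the Decay Lemma hypothesis is preserved provided $\eta \leq \eps_0$. This yields
\begin{equation*}
F(v, \overline{B}_\eta) \leq \eta^{3/2} \max\bigl\{F(v, \overline{B}_1), \, C_0 \bigr\} \leq C_1 \eta^{3/2},
\end{equation*}
where $F(v, \overline{B}_1)$ is controlled uniformly in $\rho \in (0,r)$ by the energy upper bound (Proposition \ref{prop: energy upper bound in SBV}). Unrescaling, $F(u, B_{\eta\rho}(x)) = \rho F(v, \overline{B}_\eta) \leq C_1 \rho \eta^{3/2}$, and the conclusion follows once $\eta < (\eps/(2C_1))^2$. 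Taking $\eta$ to be the minimum of $\eps/2$, $\eps_0$, and this last quantity finishes the boundary case.

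For interior points with $\rho < \dist(x,\partial\Omega)$, the same scheme applies verbatim with $\varphi \equiv 0$ and no boundary datum, using the classical interior analogue of Lemma \ref{lmm: Decay Lemma} from \cite{degiorgi-carriero-leaci}. For the remaining case $\dist(x,\partial\Omega) < \rho$, I would split as in Lemma \ref{lmm: energy lower bound 1}: if $\eta\rho \leq \dist(x,\partial\Omega)$ the ball $B_{\eta\rho}(x)$ is fully interior and the interior case applies; otherwise one runs the boundary decay at the nearest point $y \in \partial\Omega$ on $B_{2\rho}(y) \supset B_\rho(x)$, absorbing the factor $2$ into the choice of $\eta$. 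The main technical issue is to propagate the constants cleanly through the iteration of Lemma \ref{lmm: Decay Lemma}: the key is that the sequence of Lipschitz data $\lip \varphi$ and $\lip \hat g$ contracts geometrically under rescaling, while the rescaled singular-measure $\HH^1(S \cap \overline B_1)$ stays bounded by $2^j \eta^2 \leq \eps_0$ at every step $j \leq k$. This is what permits a single choice of $\eta$, uniform over $x \in \overline\Omega$ and $\rho \in (0,r)$.
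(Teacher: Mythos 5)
Your overall strategy (rescale and iterate the Decay Lemma at a fixed scale $\alpha$, using the energy upper bound of Proposition \ref{prop: energy upper bound in SBV} to start the iteration) is genuinely different from the proof in the paper, which argues by contradiction and compactness: one extracts a blow-up sequence $v_k$ with vanishing jump measure, shows the limit is harmonic with zero trace on a line and hence has vanishing gradient, and then proves strong convergence of $\int_{B_1}|\nabla v_k|^2$ via a cut-and-paste competitor, contradicting $F(v_k,B_1)>\eps$. The compactness route deliberately avoids the quantitative matching of constants near the boundary, and that is precisely where your argument has gaps. First, in the regime $\dist(x,\partial\Omega)=:d<\eta\rho$ you propose to run the boundary decay at the nearest point $y$ on $B_{2\rho}(y)\supset B_\rho(x)$; but the hypothesis $\HH^1(S_u\cap B_\rho(x))<\eta^2\rho$ gives no control of the jump measure on $B_{2\rho}(y)$, since the containment goes the wrong way. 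The iteration must instead be started on a ball centered at $y$ that is \emph{contained} in $B_\rho(x)$, e.g.\ $B_{\rho-d}(y)$ with $\rho-d\geq(1-\eta)\rho$, and then run down to scale $\sim 2\eta\rho\geq\eta\rho+d$. Second, and more seriously, in the regime $\eta\rho\leq d<\rho$ you claim ``the interior case applies''; but the interior decay can only be iterated on balls $B_t(x)\subset\Omega$, i.e.\ starting at scale $d$, and the only available starting bound is $F(u,\overline{B}_d(x))\lesssim d$, so the pure interior iteration yields $F(u,B_{\eta\rho}(x))\lesssim(\eta\rho/d)^{3/2}d=\eta^{3/2}\rho^{3/2}d^{-1/2}$, which beats $\eps\eta\rho$ only when $\eta\rho\lesssim\eps^2 d$. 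When $d$ is comparable to $\eta\rho$ but much smaller than $\rho$ this fails; you would need to first run the boundary decay at $y$ from scale $\sim\rho$ down to scale $\sim d$ (to improve the starting energy at scale $d$ to $\lesssim d^{3/2}\rho^{-1/2}$) and only then switch to the interior decay — i.e.\ exactly the two-regime matching carried out in Lemma \ref{lmm: energy lower bound 1}, which your proposal does not reproduce.

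There is also a bookkeeping issue in the iteration itself. Your displayed bound $F(v,\overline{B}_\eta)\leq\eta^{3/2}\max\{F(v,\overline{B}_1),C_0\}$ is not what iterating Lemma \ref{lmm: Decay Lemma} at fixed $\alpha=\tfrac12$ gives if one only uses a uniform bound $C_0$ on $\theta[(\lip\varphi)^2+(\lip\hat g)^2]$: unrescaling each step produces terms of size $2^{-3(k-j)/2}\,2^{-j}\theta C_j$, and with $C_j\leq C_0$ the worst term is of order $\eta$, not $\eta^{3/2}$, which is not enough to conclude $<\eps\eta\rho$ by shrinking $\eta$. The argument is saved only by using, as in the proof of Proposition \ref{prop: density lower bound at the boundary, SBV}, that $C_j\lesssim 2^{-j}\rho$ (the Lipschitz data contract with the scale), which you mention in passing but must be built into the estimate; similarly your statement that the rescaled singular measure ``only decreases'' along the iteration is incorrect (it can grow by a factor $2^j$, as your own later bound $2^j\eta^2\leq\eps_0$ shows) and should be stated consistently. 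With these repairs the quantitative approach can likely be made to work, but as written the intermediate-regime cases do not close.
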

\begin{proof}
    Suppose by contradiction that the claim is false. Then, for every $k \in \N$ there exist 
    \begin{equation*}
        u_k \in \argmin\{F(v,\overline{\Omega}): v\in SBV(\R^2), v=g \text{ in } \R^2\setminus \overline{\Omega}\}
    \end{equation*}
    $x_k \in \overline{\Omega}$, $\rho_k \in (0,r)$ such that 
    \begin{equation*}
        \HH^1(S_{u_k} \cap B_{\rho_k}(x_k)) < \frac{1}{k^2} \rho_k, \quad 
        F(u_k, B_{\frac{1}{k}\rho_k}(x_k)) \geq \eps \frac{1}{k}\rho_k.
    \end{equation*}
    Define $\Omega_k = \Omega_{x_k, \frac{1}{k}\rho_k}$ and $v_k = (u_k - g(x_k))_{x_k, \frac{1}{k}\rho_k}$.
    Then 
    \begin{equation*}
        v_k \in \argmin\{F(v,\overline{\Omega}_k): v\in SBV(\R^2), v=(g-g(x_k))_k \text{ in } \R^2\setminus \overline{\Omega}_k\}.
    \end{equation*}

    By Blaschke's Theorem \cite[Theorem 6.1, p.320]{AFP}, for a subsequence that we do not rename, 
    $\overline{\Omega}_k \to H$ locally in the Hausdorff distance, where $H$ is a closed subset of $\R^2$. 

    In the case $H = \R^2$ the result is classic, so we do not treat it here (see for example \cite{delellis-focardi} and \cite{AFP}).

    Suppose that $H \neq \R^2$. Then there exists $\overline{R}>0$ such that, for a subsequence that we do not rename, $B_{\overline{R}} \cap \partial \Omega_k \neq \emptyset$ for every $k \in \N$.
    
    Let $R \geq \overline{R}$. Define $C(R) = C\pi(R+1)^2 + 2\pi (R+1)$, 
    where $C$ is the constant of the energy upper bound (Proposition \ref{prop: energy upper bound in SBV}), and 
    \begin{equation*}
        h(\rho) = \liminf_k \HH^0(S_{v_k} \cap \partial B_\rho) + \frac{1}{8C(R)} \liminf_k \int_{\partial B_\rho} |\nabla v_k|^2 \quad \forall \rho \in (R,R+1).
    \end{equation*}
    Then, by Fatou's Lemma, by the coarea formula \cite[Theorem 2.93, p. 101]{AFP}, and by the energy upper bound (Proposition \ref{prop: energy upper bound in SBV}), 
    \begin{equation*}
        \int_{R}^{R+1}h(\rho) \, \dd{\rho} 
        \leq \liminf_k \left(\HH^1(S_{v_k} \cap B_{R+1}) + \frac{1}{8C(R)} \int_{B_{R+1}} |\nabla v_k|^2\right) \leq \frac{1}{8}.
    \end{equation*}
    Then, by Chebyshev's inequality, there exists $\rho \in (R,R+1)$ such that, taking a subsequence for which the two below limits exist, 
    \begin{equation*}
        \lim_k \HH^0(S_{v_k} \cap \partial B_\rho) + \frac{1}{8C(R)} \lim_k \int_{\partial B_\rho} |\nabla v_k|^2 \leq \frac{1}{2}.
    \end{equation*}
    This implies  
    \begin{equation*}
        \lim_k \HH^0(S_{v_k} \cap \partial B_\rho) = 0, \quad  \lim_k \int_{\partial B_\rho} |\nabla v_k|^2 \leq C'(R).
    \end{equation*} 
    For every $k \in \N$ let $z_k \in \partial B_\rho \cap \partial \Omega_k$. 
    Then, by the above information, we have
    \begin{equation*}
        |v_k(z)-v_k(z_k)| \leq \int_{\partial B_\rho} |\nabla v_k| \leq (2\pi\rho)^\frac{1}{2} \left(\int_{\partial B_\rho} |\nabla v_k|^2\right)^\frac{1}{2} \leq C(R),
    \end{equation*}
    for every $z \in \partial B_\rho$, where $C(R)$ is a (new) constant depending on $R$.
    Moreover, it is easy to see that $\norm{v_k}_{L^\infty(\overline{B}_\rho \setminus \Omega_k)} \to 0$.
    Therefore $\norm{v_k}_{L^\infty(\partial (B_\rho \cap \Omega_k))}$ is uniformly bounded by a constant depending on $R$ and, by a truncation argument, so is $\norm{v_k}_{L^\infty(B_\rho \cap \Omega_k)}$. We conclude that there is a constant $C(R)$ depending on $R$ such that $\norm{v_k}_{L^\infty(B_R)} \leq C(R)$.
    Then, by the compactness theorem for BV functions (\cite[Theorem 5.5]{evans-gariepy}), for a subsequence that we do not rename, $v_k \to v \text{ in } L^1_\loc(\R^2)$.
    Now, for every $R>0$, by the semicontinuity theorem in SBV (\cite[Theorem 2.4]{degiorgi-carriero-leaci}), we have $v \in SBV(B_R)$, 
    \begin{equation*}
        \HH^1(S_v \cap B_R) = 0, \quad
        \int_{B_R} |\nabla v|^2 \leq 2\pi R.
    \end{equation*}
    In particular, we conclude that $\HH^1(S_v) = 0$ and $v \in W^{1,2}_\loc (\R^2)$.
    Moreover, the minimality of $v_k$ implies that 
    \begin{equation*}
        \int_{B_R \cap H \setminus S_{v_k}} \nabla v_k \cdot \nabla \varphi = 0 \quad \forall \varphi \in C^\infty_c(B_R \cap H).
    \end{equation*}
    Furthermore, since $\{\nabla v_k\}_k$ is uniformly bounded in $L^2(B_R)$ for every $R$, for a subsequence that we do not rename we have $\nabla v_k \rightharpoonup \nabla v$ in $L^2_\loc(\R^2)$, and hence 
    \begin{equation*}
        \int_{B_R \cap H} \nabla v \cdot \nabla \varphi = 0 \quad \forall \varphi \in C^\infty_c(B_R \cap H), \, \forall R>0,
    \end{equation*}
    which means that $v$ is harmonic in $H$. Finally, it is easy to see that $v=0$ on $\partial H$, which is a line. Then, by Schwarz's reflection \cite[Problem 2.4]{gilbarg-trudinger}, we obtain $\tilde{v}$ that is harmonic in $\R^2$ and 
    \begin{equation*}
        \int_{B_R} |\nabla v|^2 \leq 4 \pi R \quad \forall R>0.
    \end{equation*}
    This implies, by the mean value property for harmonic functions and by Hölder's inequality, that $\nabla \tilde{v}=0$ in $\R^2$. 

    Now we claim that the following holds. 
    \begin{claim}
        For a subsequence for which the limit exists, we have 
        \begin{equation*}
            \int_{B_1} |\nabla v|^2 = \lim_k \int_{B_1} |\nabla v_k|^2
        \end{equation*}
    \end{claim}

   This is a contradiction, since $F(u_k, B_{\frac{1}{k}\rho_k}(x_k)) > \eps \frac{1}{k}\rho_k$
    implies $F(v_k, B_1) > \eps$
    and, since $H^1(S_{v_k}\cap B_1) \to 0$, 
    \begin{equation*}
        \lim_k \int_{B_1} |\nabla v_k|^2 > \eps.
    \end{equation*}

    To conclude, we prove the above claim.
    \begin{proof}[Proof of claim]
        Without renaming it, take a subsequence for which the limit exists. 
        We already know that $\int_{B_1} |\nabla v|^2 \leq \lim_k \int_{B_1} |\nabla v_k|^2$.
        Suppose by contradiction that 
        \begin{equation*}
            \int_{B_1} |\nabla v|^2 < \lim_k \int_{B_1} |\nabla v_k|^2.
        \end{equation*}
        Without loss of generality, assume that $H = \{x_2 > 0\}$. 
        Let $h_k$ be such that $\partial \Omega_k \cap B_2 = \graph(h_k) \cap B_2$,
        with $h_k \to 0$ in $C^1(B_2)$.
        Define 
        \begin{equation*}
            \Phi_k(x_1,x_2) = (x_1,x_2-h_k(x_1)) \quad \forall (x_1,x_2) \in \R^2,
        \end{equation*}
        $\Omega_k' = \Phi_k(\Omega_k)$, and $v_k' = (v_k-g_k)\circ \Phi_k^{-1}$.
        Then $v_k ' = 0$ on $\partial\Omega_k' \cap B_2 \setminus S_{v_k'}= \{x_2=0\} \cap B_2 \setminus S_{v_k'}$. 
        Moreover, it is easy to see that 
        \begin{equation*}
            \int_{B_1} |\nabla v|^2 < \lim_k \int_{B_1} |\nabla v_k'|^2.
        \end{equation*}
        \begin{claim}
            There exists $\{w_k'\}_k \subset SBV(\R^2)$ such that 
            \begin{align*}
                &w_k' = v_k' \quad \text{ in } \R^2 \setminus B_2, \\
                &w_k' = 0  \quad \text{ in } B_2 \setminus \overline{\Omega}_k', \\
                &\lim_k \left( \int_{B_2} |\nabla w_k'|^2 -\int_{B_2} |\nabla v_k'|^2\right) <0
            \end{align*}
        \end{claim}
        \begin{proof}[Proof of claim]
            Let $\varphi \in C^\infty_c(B_2)$ be such that $0\leq \varphi \leq 1$, and $\varphi \equiv 1$ in $B_1$. Define, for every $k \in \N$, $w_k' = \varphi v + (1-\varphi) v_k'$.
            Recall that $v=0$ on $\partial H = \{x_2 = 0\}$, and hence the first two properties of the claim are satisfied by construction. Now observe that 
            \begin{equation*}
                \norm{\nabla w_k'}_{L^2(A)} \\\leq \norm{(v-v_k')\nabla\varphi}_{L^2(A)} + \norm{ \varphi \nabla v + (1-\varphi) \nabla v_k'}_{L^2(A)},
            \end{equation*}
            where $A = (B_2\setminus B_1)\cap \supp(\varphi)$.
            Recall that $\norm{v_k}_{L^\infty(B_2)} \leq C$ for some constant $C$, for every $k$.
            Moreover, $v_k \to v$ in $L^1(B_2)$ and, taking a subsequence that we do not rename, $v_k \to v$ a.e. in $B_2$. Therefore, $\norm{v}_{L^\infty(B_2)} \leq C$.
            Furthermore, it is easy to see that also $\norm{v_k'}_{L^\infty(B_2)} \leq C$ for a (possibly different) constant $C$, and also $v_k' \to v$ a.e. in $B_2$.
            By the dominated convergence theorem, $\lim_k \norm{(v-v_k')\nabla\varphi}_{L^2(A)} = 0$.
            Moreover, since $\nabla v_k \rightharpoonup \nabla v$, also $\nabla v_k' \rightharpoonup \nabla v$ in $L^2(B_2)$. This implies that 
            \begin{equation*}
                \lim_k \left( \int_{B_2} |\nabla w_k'|^2 -\int_{B_2} |\nabla v_k'|^2\right) \leq \lim_k \left( \int_{B_1} |\nabla v|^2 -\int_{B_1} |\nabla v_k'|^2\right) < 0.
            \end{equation*}
        \end{proof}
        At this point we define $w_k = w_k' \circ \Phi_k + g_k$ for every $k \in \N$.
        By construction we have $w_k = v_k$ in $\R^2 \setminus \Phi_k^{-1}(B_2)$, $w_k = g_k$ in $\Phi_k^{-1}(B_2) \setminus \Omega_k$, and $S_{w_k} = S_{v_k}$.
        Then $w_k$ is an admissible competitor for $v_k$ in $\Phi_k^{-1}(B_2)$. However, it is easy to see that 
        \begin{equation*}
            \int_{\Phi_k^{-1}(B_2)\cap \Omega_k} |\nabla w_k|^2 <  \int_{\Phi_k^{-1}(B_2)\cap \Omega_k}  |\nabla v_k|^2
        \end{equation*}
        for sufficiently large $k \in \N$. This contradicts the minimality of $v_k$ and concludes the proof. 
    \end{proof}
\end{proof}
\subsubsection{Density lower bound in SBV}
\begin{thm}[Density Lower Bound in SBV]
    \label{thm: density lower bound in SBV}
    There exist $\eps,\rho > 0$ such that: if 
    \begin{equation*}
        u \in \argmin\{F(v,\overline{\Omega}): v\in SBV(\R^2), v=g \text{ in } \R^2\setminus \overline{\Omega}\}, 
    \end{equation*} 
    Then 
    \begin{equation*}
        \HH^1(S_u \cap B_\rho(x)) \geq \eps \rho \quad \forall x \in \overline{S}_u, \, \forall \rho \in (0,r).
    \end{equation*}
\end{thm}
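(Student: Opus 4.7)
The plan is to derive Theorem \ref{thm: density lower bound in SBV} as a short contradiction argument combining the two results immediately preceding it: the energy lower bound (Proposition \ref{prop: energy lower bound}), which gives $F(u, B_\rho(x)) \geq \eps \rho$ for every $x \in S_u$ and $\rho \in (0,r)$, and the last (unnamed) lemma before the theorem, which asserts that whenever $\HH^1(S_u \cap B_\rho(x)) < \eta^2 \rho$ one has $F(u, B_{\eta \rho}(x)) < \eps \eta \rho$ \emph{with the same constant} $\eps$. The strict inequality in this last estimate is what makes the argument work.

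First I would establish the density lower bound directly at points of $S_u$. Fix $x \in S_u$ and $\rho \in (0, r)$, and suppose by contradiction that $\HH^1(S_u \cap B_\rho(x)) < \eta^2 \rho$. The decay lemma would then give $F(u, B_{\eta \rho}(x)) < \eps \eta \rho$. However, since $\eta \in (0,1)$ we have $\eta \rho \in (0,r)$, and Proposition \ref{prop: energy lower bound} applied at $x \in S_u$ with radius $\eta \rho$ yields $F(u, B_{\eta \rho}(x)) \geq \eps \eta \rho$, contradicting the previous inequality. Hence
\[
\HH^1(S_u \cap B_\rho(x)) \geq \eta^2 \rho \qquad \forall x \in S_u, \; \forall \rho \in (0,r).
\]

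Finally I would pass to $x \in \overline{S}_u$ by a trivial approximation. Given $x \in \overline{S}_u$ and $\rho \in (0, r/2)$, pick $y \in S_u$ with $|y-x| < \rho/2$; then $B_{\rho/2}(y) \subset B_\rho(x)$, and applying the previous step at $y$ with radius $\rho/2 < r$ gives
\[
\HH^1(S_u \cap B_\rho(x)) \geq \HH^1(S_u \cap B_{\rho/2}(y)) \geq \tfrac{\eta^2}{2}\, \rho.
\]
This yields the theorem with $\eps' = \eta^2/2$ and threshold radius $r' = r/2$. There is essentially no obstacle at this stage: all of the real work (the decay lemma together with the harmonic extension/boundary reflection arguments underpinning it) is already carried out in the preceding three lemmas and propositions. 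The only delicate point to keep track of is simply that the constant $\eps$ appearing in the decay lemma is \emph{the same} one appearing in the energy lower bound, so the two strict/non-strict inequalities at radius $\eta\rho$ genuinely contradict each other.
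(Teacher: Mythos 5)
Your argument is correct and is essentially identical to the paper's: the same contradiction using the preceding (unnamed) decay lemma together with Proposition \ref{prop: energy lower bound} at radius $\eta\rho$, followed by the same approximation argument to pass from $S_u$ to $\overline{S}_u$ via a nearby point and a half-radius ball. (Incidentally, your bound $\eta^2\rho$ for $x\in S_u$ is the one that actually follows from the contradiction; the paper's display writes $\eps\rho$ there, which is a harmless slip of constants.)
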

\begin{proof}
    Let $\rho \in (0,r)$ and $x \in S_u$.
    Suppose by contradiction that $\HH^1(S_u \cap B_\rho(x)) < \eta^2 \rho$.
    Then, by the previous Lemma, $F(u,B_{\eta \rho}(x)) < \eps \eta \rho$.
    Therefore, by Proposition \ref{prop: energy lower bound}, $x \not\in S_u$, which is a contradiction. This proves that 
    \begin{equation*}
        \HH^1(S_u \cap B_\rho(x)) \geq \eps \rho \quad \forall x \in S_u, \, \forall \rho \in (0,r).
    \end{equation*}
    Now let $x \in \overline{S}_u$. Then there is $\{x_n\}_n \subset S_u$ such that $x_n \to x$.  
    Then, for sufficiently large $n$, $B_{\frac{\rho}{2}}(x_n) \subset B_\rho(x)$, and hence 
    \begin{equation*}
        \HH^1(S_u \cap B_\rho(x)) \geq \HH^1(S_u \cap B_{\frac{\rho}{2}}(x_n)) \geq \frac{\eps}{2}\rho.
    \end{equation*}
    Therefore, replacing $\eps$ with its half, the result holds for any $x \in \overline{S}_u$.
\end{proof}
\subsubsection{Proof of Theorem \ref{thm: density lower bound for K}}
The following Lemma is a slight modification of \cite[Lemma 5.2]{degiorgi-carriero-leaci}.
\begin{lmm}
    \label{lmm: argmin in SBV produces (K,u) with same energy}
Suppose that 
    \begin{equation*}
        u \in \argmin\{F(v,\overline{\Omega}): v\in SBV(\R^2), v=g \text{ in } \R^2\setminus \overline{\Omega}\}.
    \end{equation*}
For every $x \in \Omega \setminus \overline{S}_u$, set $\tilde{u}(x) = \aplim\limits_{y \to x} u(y)$. 
Then 
\begin{equation*}
    \tilde{u} \in C^\infty(\Omega \setminus \overline{S}_u) \quad \text{and} \quad \HH^1(\overline{S}_u \setminus S_u) = 0.
\end{equation*}
Moreover, $(\overline{S}_u,\tilde{u}) \in \mathcal{A}(\Omega,g)$ and $E(\overline{S}_u, \tilde{u}) = F(u, \overline{\Omega})$.
\end{lmm}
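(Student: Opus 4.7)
The plan follows the De Giorgi-Carriero-Leaci strategy \cite[Lemma 5.2]{degiorgi-carriero-leaci}, with modifications to account for the boundary datum. First I would establish smoothness of $\tilde{u}$ on $\Omega\setminus\overline{S}_u$ via a standard Euler-Lagrange argument: for any $\varphi\in C^\infty_c(\Omega\setminus\overline{S}_u)$ and $\varepsilon\in\R$ the perturbation $u+\varepsilon\varphi$ is an admissible $SBV$ competitor satisfying $S_{u+\varepsilon\varphi}=S_u$ and matching the boundary datum, so minimality gives $\int_\Omega\nabla u\cdot\nabla\varphi=0$. Hence $u$ is weakly harmonic on $\Omega\setminus\overline{S}_u$, and elliptic regularity produces a smooth representative that coincides with the approximate limit $\tilde u$ at every point.

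Next I would prove $\HH^1(\overline{S}_u\setminus S_u)=0$, which is the technical core. Let $\mu:=\HH^1\mres S_u$. By the density lower bound (Theorem \ref{thm: density lower bound in SBV}), for every $x\in\overline{S}_u$ and every sufficiently small $\rho>0$ we have $\mu(B_\rho(x))\geq\eps\rho$, so the upper $1$-dimensional density of $\mu$ satisfies $\Theta^{*1}(\mu,x)\geq\eps/2$ at every point of the Borel set $E:=\overline{S}_u\setminus S_u$. Applying \cite[Theorem 2.56]{AFP} yields $\mu(E)\geq c\,\eps\,\HH^1(E)$ for some geometric constant $c>0$. But $\mu(E)=\HH^1(S_u\cap E)=0$ by construction, whence $\HH^1(\overline{S}_u\setminus S_u)=0$, and in particular $\HH^1(\overline{S}_u)=\HH^1(S_u)$.

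Finally, admissibility and the energy identity follow by assembling the pieces. Since $u\equiv g\in C^2(\R^2)$ outside $\overline{\Omega}$ has no jumps there, we have $S_u\subset\overline{\Omega}$ and hence $\overline{S}_u\subset\overline{\Omega}$; harmonicity together with the energy upper bound (Proposition \ref{prop: energy upper bound in SBV}) yields $\tilde{u}\in W^{1,2}(\Omega\setminus\overline{S}_u)$. For any $x\in\partial\Omega\setminus\overline{S}_u$ pick $\rho>0$ with $B_\rho(x)\cap\overline{S}_u=\emptyset$; then $u\in W^{1,2}(B_\rho(x))$ and agrees with the continuous function $g$ on $B_\rho(x)\setminus\overline{\Omega}$, so by standard $W^{1,2}$-trace theory $\tilde{u}=g$ on $\partial\Omega\cap B_\rho(x)$. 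Hence $(\overline{S}_u,\tilde{u})\in\mathcal{A}(\Omega,g)$. Since $\tilde{u}=u$ $\mathcal{L}^2$-a.e.\ on $\Omega\setminus\overline{S}_u$ and $\HH^1(\overline{S}_u)=\HH^1(S_u)=\HH^1(S_u\cap\overline{\Omega})$, we obtain $E(\overline{S}_u,\tilde{u})=F(u,\overline{\Omega})$. The only delicate step is the density-based argument in the second paragraph; everything else is routine given the ingredients already proved in the paper.
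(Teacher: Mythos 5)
Your proof is correct and follows essentially the same route as the paper's: smoothness of $\tilde u$ via the variational characterization of harmonic functions, $\HH^1(\overline{S}_u\setminus S_u)=0$ by combining the density lower bound (Theorem \ref{thm: density lower bound in SBV}) with a density-comparison theorem for Radon measures (you cite \cite[Theorem~2.56]{AFP} where the paper cites \cite[Theorem~2.6]{evans-gariepy}, but these are the same result), and the boundary-trace/energy bookkeeping at the end. Your write-up spells out a few routine steps the paper leaves implicit (e.g.\ $S_u\subset\overline{\Omega}$ and the trace argument on $\partial\Omega\setminus\overline{S}_u$), but there is no substantive difference.
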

\begin{proof}
    Theorem \ref{thm: density lower bound in SBV}, combined with \cite[Theorem 2.6]{evans-gariepy}, implies $\HH^1(\overline{S}_u \setminus S_u) = 0$.
    Now observe that if $\overline{B}_\rho(x) \subset \Omega \setminus \overline{S}_u$, then $u \in W^{1,2}(B_\rho(x))$ and it is a minimizer of the functional $v \mapsto \int_{B_\rho(x)}  |\nabla v|^2$ among the functions $v \in u + W^{1,2}_0(B_\rho(x))$, and hence $v \in C^\infty(B_\rho(x))$.
    Moreover, since $u = g$ in $\R^2 \setminus \overline{\Omega}$, it follows that $\tilde{u} = g$ on $\partial \Omega$. The energy equality is easily verifiable.
\end{proof}
We are finally able to prove Theorem \ref{thm: density lower bound for K}.
\begin{proof}[Proof of Theorem \ref{thm: density lower bound for K}]
    By Remark \ref{Remark: regularity of u}, we have $u \in C(\overline{\Omega}\setminus K)$. Moreover, by Proposition \ref{prop: maximum principle}, we have $u \in L^\infty (\Omega \setminus K)$.
    We extend $u$ to $\R^2$ by setting 
    $u=g$ in $\R^2 \setminus \overline{\Omega}$.
    Recall that $g \in C^1_c(\R^2)$. Therefore $u \in W^{1,2}(\R^2 \setminus K)$ and $\int_{\R^2 \setminus K} |\nabla u| < +\infty$.
    Then, by \cite[Lemma 2.3]{degiorgi-carriero-leaci}, $u \in SBV(\R^2)$ and $S_u \subset K$.
    Moreover, it is easy to see that $u \in \argmin\{F(v,\overline{\Omega}): v\in SBV(\R^2), v=g \text{ in } \R^2\setminus \overline{\Omega}\}$. Indeed, if this is not case, Lemma \ref{lmm: argmin in SBV produces (K,u) with same energy} would give us an admissible pair that contradicts the minimality of $(K,u)$. Therefore, by Theorem \ref{thm: density lower bound in SBV}, we have 
    \begin{equation*}
        \HH^1(S_u \cap \overline{B}_\rho(x)) > \eps \rho \quad \forall x \in \overline{S}_u, \, \forall \rho \in (0,r).
    \end{equation*}
    Now let $x \in K$. If $x \in \overline{S}_u$, we have 
    \begin{equation*}
        \HH^1(K\cap \overline{B}_\rho(x)) \geq  \HH^1(S_u \cap \overline{B}_\rho(x)) > \eps \rho \quad \forall \rho \in (0,r).
    \end{equation*}
    Arguing by contradiction with Lemma \ref{lmm: argmin in SBV produces (K,u) with same energy}, it is easy to see that $\HH^1(K \setminus \overline{S}_u) =0$. This easily implies that for every $x \in K\setminus \overline{S}_u$, for sufficiently small $\rho>0$, we have $\HH^1(K\cap B_\rho(x)) = 0$. In particular, since $(K,u)$ is a normalized pair, $K \setminus \overline{S}_u = \emptyset$.
    This concludes the proof. 
\end{proof}
\section{Blow-up convergence: Proof of Proposition \ref{prop: global convergence of functions, blow-up}}
\label{section: proof of blow-up convergence}

In this section we prove Proposition \ref{prop: global convergence of functions, blow-up}.
Observe that, by the mean value theorem, we have the following estimate
\begin{equation}
    \label{eq: bound on rescaled boundary datum}
    \begin{split}
        |g_n(y)| \leq C |y| r_n^\frac{1}{2}
    \end{split}
\end{equation}
for every $y \in \R^2$, where $C=\norm{\nabla g}_\infty$.

First, we obtain convergence at the interior. 
\begin{prop}
    \label{prop: blow-up, convergence of functions}
    Let $\Omega^k$ be a connected component of $\Omega_0 \setminus K_0$. Then 
    \begin{enumerate}[label=(\roman*)]
        \item \label{item: interior convergence, boundary component} If $\Omega^k \subset \Omega*$, then, up to subsequences, 
        \begin{equation*}
            u_n \to u^k \quad \text{in } C^\infty_\loc (\Omega^k),
        \end{equation*}
        for some harmonic function $u^k: \Omega^k \to \R$. 
        \item \label{item: interior convergence, interior component} Otherwise, let $z^k \in \Omega^k$ and define $v_n = u_n - u_n(z^k)$. Then, up to subsequences, 
        \begin{equation*}
            v_n \to u^k \quad \text{in } C^\infty_\loc (\Omega^k),
        \end{equation*}
        for some harmonic function $u^k: \Omega^k \to \R$. 
    \end{enumerate}
\end{prop}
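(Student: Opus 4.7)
The plan is to combine a uniform local $W^{1,2}$ bound for $u_n$ with interior regularity for harmonic functions, and then to pin down the ambiguity of an additive constant using either the boundary data (case \ref{item: interior convergence, boundary component}) or the subtraction prescribed in the statement (case \ref{item: interior convergence, interior component}).

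First I would rescale the energy upper bound (Proposition \ref{prop: Energy upper bound at the boundary}) to conclude that, for every $R>0$,
\[
\int_{B_R \cap \Omega_n \setminus K_n} |\nabla u_n|^2 \leq C R
\]
uniformly in $n$, and I would use the Euler--Lagrange equations to note that $u_n$ is harmonic in the interior of $\Omega_n \setminus K_n$. Since $K_n \to K_0$ locally in Hausdorff distance, any compact $A \subset \Omega^k$ lies in $\Omega_n \setminus K_n$ for large $n$, and $u_n$ is harmonic on a fixed open neighborhood of $A$. The mean value property applied to the derivatives of harmonic functions, combined with Caccioppoli and Poincaré--Wirtinger, then yields $\|\nabla^\ell u_n\|_{C^0(A)} \leq C(\ell, A)$ for every integer $\ell \geq 1$. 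An Arzelà--Ascoli plus diagonal argument on a compact exhaustion of $\Omega^k$ then gives a subsequence and real constants $c_n$ with $u_n - c_n \to u^k$ in $C^\infty_\loc(\Omega^k)$ for some harmonic $u^k$.

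It remains to show that the constants $c_n$ can be chosen consistently with the statement. In case \ref{item: interior convergence, interior component} I would take $c_n = u_n(z^k)$: a chain-of-balls argument, using the pointwise bound on $|\nabla u_n|$ from the previous step on a tube about a compact connected set in $\Omega^k$ joining $z^k$ to $A$, gives $|v_n| \leq C(A)$ on every compact $A \subset \Omega^k$, so the compactness argument can be applied directly to $v_n$. In case \ref{item: interior convergence, boundary component} I would show that already $c_n \to 0$: since $\Omega^k \subset \Omega^*$, every $x \in \Omega^k$ is joined in $\R^2 \setminus K_0$ to the exterior point $z=(0,-a-1)$ by a compact path; for large $n$, a suitable thickening of this path avoids $K_n$ and crosses $\partial \Omega_n$, where $u_n = g_n \to 0$ locally uniformly by the bound $|g_n(y)| \leq C|y|\, r_n^{1/2}$ of \eqref{eq: bound on rescaled boundary datum}. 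Propagating the boundary smallness along the path by the same chain-of-balls estimate gives $\|u_n\|_{C^0(A)} \leq C(A)$ for every compact $A \subset \Omega^k$, and hence $u_n \to u^k$ in $C^\infty_\loc(\Omega^k)$ with no subtraction.

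The main obstacle I foresee is the chain-of-balls construction in case \ref{item: interior convergence, boundary component}: one needs to verify that a tube around a path from $A$ into $\R^2 \setminus \overline{\Omega}_0$ remains in $\Omega_n \setminus K_n$ (plus a portion of $\R^2 \setminus \overline{\Omega}_n$, where $u_n$ is just $g_n$) for all large $n$. This requires combining the local Hausdorff convergence $K_n \to K_0$, which keeps the path away from $K_n$ eventually, with the $C^1$ convergence $\partial\Omega_n \to \partial\Omega_0$ coming from $\partial\Omega \in C^2$, which controls the crossing of the moving boundary. Once these geometric details are in place the rest is routine interior elliptic regularity.
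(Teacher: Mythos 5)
Your overall strategy is the same as the paper's: get interior gradient bounds from harmonicity and the energy upper bound, extract a $C^\infty_\loc$ limit up to additive constants, and pin down the constant via the boundary data in case \ref{item: interior convergence, boundary component} and via the subtraction at $z^k$ in case \ref{item: interior convergence, interior component}. Case \ref{item: interior convergence, interior component} is fine: there the tube stays at positive distance from $\partial\Omega_0 \cup K_0$, so the pointwise gradient bound is uniform along the chain and $v_n(z^k)=0$ closes the argument.

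The gap is in case \ref{item: interior convergence, boundary component}, at the step you describe as ``propagating the boundary smallness along the path by the same chain-of-balls estimate.'' The pointwise bound $\norm{\nabla u_n}_{L^\infty}$ you are using is an \emph{interior} estimate for harmonic functions on $\Omega_n \setminus K_n$: it scales like the inverse of the distance to $\partial(\Omega_n \setminus K_n)$, and $\partial\Omega_n$ is part of that boundary. So the chain of balls cannot reach the point on $\partial\Omega_n$ where you want to invoke $u_n=g_n \to 0$; the bound you are propagating blows up precisely as the chain approaches that point, and no thickening of the path helps since the degeneration is analytic, not geometric. The paper avoids this by never taking the $L^\infty$ bound near $\partial\Omega_n$: it picks an interior point $w$ on the path, applies the mean value property to $u_n$ (not $\nabla u_n$) to get $|u_n(w)| \lesssim \norm{u_n}_{L^2(S\cap\Omega_n)}$, and then bounds that $L^2$ norm by writing $u_n(y)=g_n(\text{boundary}) + \int \partial_1 u_n$ along horizontal slices, squaring, integrating, and absorbing with Cauchy's inequality and the energy upper bound. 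Only afterward does it use the pointwise $\nabla u_n$ bound, along the portion of the path that stays compactly inside $\Omega^k$, to go from $w$ to $x_i$. You would need to replace your chain-of-balls step near the boundary with a slicing/FTC argument of this kind (or invoke boundary Schauder estimates, as the paper does in the separate up-to-the-boundary proposition); as written the step fails.
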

\begin{proof}
    Let $U$ be a compact subset of $\Omega^k$. Then, by the energy upper bound (Proposition \ref{prop: Energy upper bound at the boundary}), there exists a constant $C(U)$ such that   $\int_U |\nabla u_n|^2 \leq C(U)$
    for sufficiently large $n \in \N$. Moreover, since $u_n$ is harmonic on $\Omega_n \setminus K_n$, we get, for some constant $C(U)$ that we do not rename, $\norm{\nabla u_n}_{L^\infty (U)} \leq C(U)$ for sufficiently large $n\in \N$.

    \begin{enumerate}[leftmargin=*]
        \item Let us prove \ref{item: interior convergence, boundary component}. Without renaming $C(U)$, we have $\norm{u_n}_{L^\infty(U)} \leq C(U)$
        \fsln. To see this, cover $U$ with a finite collection of balls of radius $R=\frac{1}{4} \dist(U,\partial \Omega^k)$, with centers $x_1, \ldots, x_m \in U$. Then, by the mean value theorem and the above estimates, it suffices to show that $u_n(x_i)$ is uniformly bounded for $i=1, \ldots, m$. Take a point $z \in \overline{\Omega^k} \cap \partial\Omega_0 \setminus K_0$ and $\gamma:[a,b]\to \overline{\Omega^k}$ smooth curve such that $\gamma(a)=x_i, \gamma(b)=z.$ 
        Change coordinates so that $\Omega_0 = \{x_1 > 0\}$. 
        Then there is $\sigma>0$ such that the square $S_+ = [0,2\sigma]\times[z_2-\sigma, z_2+\sigma]$ is contained in $\overline{\Omega^k} \setminus K_0$.
        Let $w \in \mathring{S}_+ \cap \gamma((a,b)) $. Then there is $\rho >0$ such that $B_\rho(w) \subset S_+$.
        By the usual Hausdorff convergence, \fsln, $B_\rho(w) \subset \Omega_n \setminus K_n$. 
        Hence, since $u_n$ is harmonic in $\Omega_n \setminus K_n$, by the mean value property for harmonic functions and by Hölder's inequality, 
        \begin{equation*}
            |u_n(w)| \leq \frac{1}{\sqrt{\pi}\rho} \left(\int_{S\cap \Omega_n} |u_n|^2\right)^\frac{1}{2},
        \end{equation*}
        where $S = [-2\sigma, 2\sigma] \times [z_2 - \sigma, z_2 + \sigma]$.
        Moreover, since $w \in \gamma((a,b))$, there is $c\in (a,b)$ such that $\gamma(c)= w$.
        We define the smooth curve $\beta = \gamma \restriction_{[a,c]}$. Then, by the fundamental theorem of calculus and the previous estimates, $|u_n(x_i)| \leq |u_n(w)| + C(U) \HH^1(\beta)$. 
        
        To conclude the proof, it remains to show that  $\int_{S\cap \Omega_n} |u_n|^2$
        is uniformly bounded. For sufficiently large $n\in \N$, there is a function $f_n:[z_2-\sigma, z_2+\sigma] \to \R$ such that $S\cap \Omega_n = \{(y_1,y_2)\in \R^2: f_n(y_2) \leq y_1 \leq 2\sigma, z_2 - \sigma \leq y_2 \leq z_2 + \sigma\}$.
        Since $\partial \Omega_n \to \partial\Omega_0$ locally in the Hausdorff distance, we have $ -\frac{\sigma}{5} \leq f_n(y_2) \leq \frac{\sigma}{5}$
        \fsln, and hence, by the fundamental theorem of calculus, 
        \begin{equation*}
                \int_{S \cap \Omega_n}|u_n|^2 \leq \frac{11}{5}\sigma \int_{z_2-\sigma}^{z_2+\sigma} g_n^2(f_n(y_2), y_2) \, \D y_2 
                +\frac{22}{5}\sigma \int_{S\cap \Omega_n} |u_n| |\nabla u_n|. 
        \end{equation*}
        Using Cauchy's inequality with $\eps = \frac{5}{44 \sigma}$, \cite[Appendix B.2, p. 706]{evans}, we get
        \begin{equation*}
            \begin{split}
            \frac{1}{2} \int_{S \cap \Omega_n}|u_n|^2 &\leq \frac{11}{5}\sigma \int_{z_2-\sigma}^{z_2+\sigma} g_n^2(f_n(y_2), y_2) \, \D y_2 + \frac{242}{25}\sigma^2 \int_{S\cap \Omega_n}|\nabla u_n|^2.
            \end{split}
        \end{equation*}
        We conclude using the fact that $|g_n(f_n(y_2),y_2)|\leq C r_n^{1/2}$ and the energy upper bound (Proposition \ref{prop: Energy upper bound at the boundary}).

        Therefore $u_n$ is uniformly bounded on every compact subset of $\Omega^k$. Since $u_n$ is a sequence of harmonic functions, we deduce by \cite[Theorem 2.6, p. 35]{HFT} that, up to subsequences, $u_n$ converges uniformly to a continuous function $u: \Omega^k \to \R$ on every compact subset of $\Omega^k$. 
        Then, by \cite[Theorem 1.23, p. 16]{HFT}, $u$ is harmonic in $\Omega^k$ and, for every multi-index $\alpha$, $D^\alpha u_n \to D^\alpha u$ uniformly on compact subsets of $\Omega^k$. 
        \item Let us prove \ref{item: interior convergence, interior component}. Since $v_n = u_n - u_n(z_k)$, then $\{v_n\}_{n \in \N}$ is a sequence of harmonic functions, and $\nabla v_n = \nabla u_n$. 
        Therefore, we can repeat the steps in the proof of (i), replacing $w$ with $z^k$. 
        Observe that $v_n(z^k)$ is uniformly bounded, since $v_n(z^k) = 0$ for every $n \in \N$, and so we are done. 
    \end{enumerate}
\end{proof}

Now we prove convergence up to the boundary.
\begin{prop}[Convergence up to the boundary]
    \label{prop: convergence of functions up to the boundary}
    Let $\Omega^k$ be a connected component of $\Omega_0 \setminus K_0$ such that $\Omega^k \subset \Omega^*$.
    Then there exists 
    \begin{equation*}
        u^k \in C^\infty(\Omega^k) \cap C^1(\myset)
    \end{equation*}
    that is harmonic in $\Omega^k$ and satisfies the following property. 
    If $U$ is a compact subset of $\myset$ and $u_n$ is replaced, for sufficiently large $n$, with an appropriate (see the proof) compactly supported $C^{1,\gamma}$ extension to $\Omega_n \cup U$ (notice that, a priori, $u_n$ is defined only on $U \cap \Omega_n$), then, up to subsequences, 
    \begin{equation*}
        u_n \to u^k \text{ in } C^1(U)
    \end{equation*} 
    as $n \to \infty$. When this happens, with an abuse of notation we will write 
    \begin{equation}
        \label{eq: C^1_loc convergence abuse of notation}
        u_n \to u^k \in C^1_\loc(\myset). 
    \end{equation}
\end{prop}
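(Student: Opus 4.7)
The plan is to combine the interior $C^\infty_\loc$ convergence already produced by Proposition \ref{prop: blow-up, convergence of functions}\ref{item: interior convergence, boundary component} with boundary Schauder estimates for harmonic functions. Fix a compact set $U \subset \Omega^k \cup (\overline{\Omega^k} \cap \partial\Omega_0 \setminus K_0)$; since $U$ is closed and disjoint from the closed set $K_0$, there exists $\rho > 0$ such that the $\rho$-neighborhood $U_\rho$ of $U$ is disjoint from $K_0$. By local Hausdorff convergence $K_n \to K_0$, we have $U_\rho \cap K_n = \emptyset$ for all sufficiently large $n$, so $u_n$ is harmonic in $\Omega_n \cap U_\rho$ and satisfies the Dirichlet condition $u_n = g_n$ on $\partial\Omega_n \cap U_\rho$.

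Near each boundary point $z \in U \cap \partial\Omega_0$, I would flatten the boundary by composing with a $C^2$ diffeomorphism. Since $\partial\Omega_n$ is the rescaling of $\partial\Omega \in C^2$ by factor $r_n^{-1}$, this produces a sequence of straightening maps converging to the identity in $C^2_\loc$; the pulled-back equation is uniformly elliptic with coefficients converging to those of $-\Delta$. A direct computation from the definition $g_n(y) = r_n^{-1/2}(g(x_n + r_n y) - g(x_n))$, using $g \in C^2$, gives $\|g_n\|_{C^2(B_R)} = O(r_n^{1/2})$ for every $R > 0$; in particular $g_n \to 0$ in $C^2_\loc$. Together with the uniform $L^\infty$ bound already produced in the proof of Proposition \ref{prop: blow-up, convergence of functions}\ref{item: interior convergence, boundary component} (which crucially uses that $\Omega^k \subset \Omega^*$ so the boundary can be reached through points of $\overline{\Omega^k} \cap \partial\Omega_0 \setminus K_0$), classical boundary Schauder estimates yield uniform bounds on $\|u_n\|_{C^{1,\gamma}}$ on a fixed neighborhood of $U$ intersected with $\overline{\Omega_n}$.

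These estimates allow one to extend each $u_n$ across $\partial\Omega_n$ to a compactly supported function defined on $\Omega_n \cup U_{\rho/2}$ with uniformly bounded $C^{1,\gamma}$ norm, e.g.\ by first flattening the boundary and then applying a standard extension operator on the half-plane. By Arzelà--Ascoli and a diagonal extraction over an exhaustion of $\Omega^k \cup (\overline{\Omega^k} \cap \partial\Omega_0 \setminus K_0)$ by such compact sets $U$, a subsequence of these extensions converges in $C^1_\loc$ to a limit $u^k$. The interior convergence from Proposition \ref{prop: blow-up, convergence of functions}\ref{item: interior convergence, boundary component} identifies $u^k$ with the harmonic function on $\Omega^k$, while $g_n \to 0$ uniformly on compacts forces $u^k = 0$ on $\overline{\Omega^k} \cap \partial\Omega_0 \setminus K_0$, so $u^k \in C^\infty(\Omega^k) \cap C^1(\Omega^k \cup (\overline{\Omega^k} \cap \partial\Omega_0 \setminus K_0))$ as required.

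The PDE content of the argument is classical, so the main obstacle is organizational: the domain $\Omega_n$ varies with $n$, whereas the $C^1$-convergence statement has to be formulated on a fixed set $U$. This is handled by consistently pulling back through the $n$-dependent straightening diffeomorphisms and choosing an extension of $u_n$ to $\Omega_n \cup U_{\rho/2}$ that depends only on the straightening chart near each boundary point, so that the extensions glue together and the $C^1_\loc$ statement over the whole of $\Omega^k \cup (\overline{\Omega^k} \cap \partial\Omega_0 \setminus K_0)$ is well-posed.
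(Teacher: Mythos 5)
Your overall scheme (boundary Schauder estimates for the Dirichlet problem with datum $g_n\to 0$, a $C^{1,\gamma}$ extension of $u_n$ across $\partial\Omega_n$, Arzel\`a--Ascoli plus a diagonal argument over an exhaustion, and gluing with the interior convergence of Proposition \ref{prop: blow-up, convergence of functions}) is essentially the route the paper takes; the only cosmetic difference is that you flatten the boundary while the paper applies \cite[Corollary 6.7]{gilbarg-trudinger} directly on the curved boundary portion, and your computation $\norm{g_n}_{C^2(B_R)}=O(r_n^{1/2})$ is correct. However, there is one genuine gap: the input to the boundary Schauder estimate. To bound $|u_n|_{1,\gamma}$ on $B_r(w)\cap\Omega_n$ for $w\in\partial\Omega_0\setminus K_0$ you need a bound on $\sup_{B_{2r}(w)\cap\Omega_n}|u_n|$, i.e.\ on a region touching $\partial\Omega_n$ and containing points that lie in $\Omega_n$ but in no fixed compact subset of the open component $\Omega^k$ (some of them need not even lie in $\Omega_0$). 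The ``uniform $L^\infty$ bound already produced in the proof of Proposition \ref{prop: blow-up, convergence of functions}'' that you invoke is only a bound on compact subsets of $\Omega^k$, and its constant degenerates as the compact set approaches $\partial\Omega^k$: it comes from the interior gradient estimate $|\nabla u_n(x)|\lesssim \dist(x,\partial\Omega^k)^{-1}$ (via the mean value property and the energy upper bound) integrated along a path, so it does not give a bound uniform up to $\partial\Omega_n$. Note also that the global maximum principle (Proposition \ref{prop: maximum principle}) is useless here, since after the rescaling $u_n = r_n^{-1/2}(u^n-g(x_n))(x_n+r_n\,\cdot)$ it only yields a bound of order $r_n^{-1/2}$.

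This missing $C^0$ bound up to the boundary is precisely the nontrivial part of the paper's proof. There it is obtained by a dedicated argument: using the coarea formula, the energy upper bound and Chebyshev's inequality one selects radii $t_n\in(2r,\tfrac52 r)$ with $\int_{\partial B_{t_n}(w)\cap\Omega_n}|\nabla u_n|^2$ uniformly bounded; the fundamental theorem of calculus along the circular arc, anchored at its endpoint on $\partial\Omega_n$ where $u_n=g_n=O(r_n^{1/2})$, gives a uniform $C^0$ bound on $\partial B_{t_n}(w)\cap\Omega_n$; and then the maximum principle in $B_{t_n}(w)\cap\Omega_n$ (whose remaining boundary lies on $\partial\Omega_n$, where $u_n=g_n$ is small) gives the uniform sup bound on the full half-ball. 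To repair your argument you must insert such a step, or a substitute: for instance, the proof of Proposition \ref{prop: blow-up, convergence of functions} does establish a uniform bound on $\int_{S\cap\Omega_n}|u_n|^2$ for a square $S$ touching the boundary, and you could upgrade this to a uniform $L^\infty$ bound near $\partial\Omega_n$ by a local $L^2$-to-$L^\infty$ boundary estimate for harmonic functions with $C^1$ boundary data before applying the Schauder estimate. As written, the application of boundary Schauder theory is not justified.
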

\begin{proof}
    Change coordinates so that $\Omega_0 = \{x_1 > 0\}$.
     \begin{claim}
         \label{claim: bound on C^2 norm for a ball centered on T_x partial Omega}
         Let $w \in \partial\Omega_0 \setminus K_0$ and $r < \frac{1}{3} \dist(w, K_0)$. Then there exists a constant $C>0$ such that 
         \begin{equation*}
             |u_n|_{1, \gamma ;B_r(w) \cap \Omega_n} \leq C \quad \forall n \in \N.
         \end{equation*}
     \end{claim}
     \begin{proof}[Proof of Claim]
         Since $\partial \Omega_n \to \partial\Omega_0$ locally in the Hausdorff distance, 
         we can assume that $\partial \Omega_n \cap B_r(w) \subset \{|x_1| \leq \frac{r}{3}\}$.
         Now we define $U_n = B_{2r}(w) \cap \Omega_n$, so that $\partial U_n = (\partial \Omega_n \cap B_{2r}(w)) \cup (\partial B_{2r}(w) \cap \Omega_n)$. Then $T_n:=\partial \Omega_n \cap B_{2r}(w)$ is a $C^{1,\gamma}$ portion of $\partial U_n$. Moreover, $U_n \cap K_0 = \emptyset$ for sufficiently large $n$, and hence $\Delta u_n = 0$ in $U_n$. Finally, $u_n = g_n $ on $T_n$ and $g_n \in C^{1,\gamma} (\overline{U_n})$. 
         Now, if $w = (0,w^2)$, we define $w_n = \partial\Omega_n \cap \{x_2 = w^2\}$. 
         Observe that $w_n \in T_n$ and $\frac{3}{2}r < \dist(w_n, \partial U_n \setminus T_n)$. 
         Therefore, by \cite[Corollary 6.7, p. 100]{gilbarg-trudinger}, we have 
         \begin{equation*}
             |u_n|_{1, \gamma; B_n \cap U_n} \leq C(|u_n|_{0; U_n} + |g_n|_{1, \gamma; U_n})
         \end{equation*}
         where $B_n = B(w_n, \frac{3}{2}r)$ and $C$ does not depend on $n$.
         By a consequence of the coarea formula \cite[Theorem 3.12, p. 140]{evans-gariepy}, by Proposition \ref{prop: Energy upper bound at the boundary}, and by Chebyshev's inequality \cite[6.17, p. 193]{folland}, for every $n$ there exists $t_n \in (2r, \frac{5}{2}r)$ such that 
         \begin{equation*}
             \int_{\partial B_{t_n}(w) \cap \Omega_n} |\nabla u_n|^2 \dd{\HH^1} < 4C.
         \end{equation*}
         We define $\gamma_n(\theta) = (t_n \cos\theta, w^2 + t_n \sin\theta)$ for every $\theta \in [0, 2\pi)$. Then there exist $a_n,b_n \in [0,2\pi]$ such that $\partial B_{t_n}(w) \cap \Omega_n = \gamma_n((a_n,b_n))$.
         We define $h_n: (a_n,b_n) \to \R$ by $h_n(\theta) = u_n(\gamma_n(\theta))$. 
         In particular, by definition of $t_n$, 
         \begin{equation*}
            \int_{a_n}^{b_n} |h_n'(\theta)|^2 \dd{\theta} \leq t_n  \int_{\partial B_{t_n}(w) \cap \Omega_n} |\nabla u_n|^2 \dd{\HH^1} \leq C r
         \end{equation*}
         for some constant $C>0$. 
         Then, by Hölder's inequality, we have $[h_n]_{0,\frac{1}{2},(a_n,b_n)} \leq C r^{1/2}$
         for some constant $C>0$.
         Since $\gamma_n(a_n) \in \partial \Omega_n \cap B_R$ for some $R>0$, by \eqref{eq: bound on rescaled boundary datum} we have $|g_n(\gamma_n(a_n))| \leq C$. This, together with the above estimate, implies 
         \begin{equation*}
             |u_n|_{0; \partial B_{t_n}(w) \cap \Omega_n} \leq C
         \end{equation*} 
         for some constant $C>0$. 
         Now, by the maximum principle \cite[Theorem 2.3]{gilbarg-trudinger}, we have
         \begin{equation*}
             |u_n|_{0; B_{t_n}(w) \cap \Omega_n} \leq |u_n|_{0; \partial (B_{t_n}(w)\cap \Omega_n)}.
         \end{equation*}
         By \eqref{eq: bound on rescaled boundary datum} we have 
         \begin{equation*}
             |g_n|_{0; \partial \Omega_n \cap B_{t_n}(w)} \leq C
         \end{equation*}
         for some constant $C>0$.
         Combining the above estimates, we conclude that 
         \begin{equation*}
             |u_n|_{0; B_{t_n}(w) \cap \Omega_n} \leq C 
         \end{equation*}
         for some constant $C>0$, for every $n \in \N$.
         Since $t_n \geq 2r$, $U_n = B_{2r}(w) \cap \Omega_n \subset B_{t_n}(w) \cap \Omega_n$, 
         and hence $|u_n|_{0; U_n} \leq C$
         for some constant $C>0$. 
         Moreover, observe that $|g_n|_{1,\gamma;U_n} \leq C$
         for every $n$. We conclude that 
         \begin{equation*}
             |u_n|_{1, \gamma; B_n \cap U_n} \leq C
         \end{equation*}
         for some constant $C>0$. 
         Furthermore, by construction we have $B_n \subset B_{2r} (w)$ and hence $B_n \cap U_n = B_n \cap B_{2r} (w) \cap \Omega_n = B_n \cap \Omega_n$. 
         We conclude by observing that $B_r(w) \subset B_n$. 
     \end{proof}
     \begin{claim}
         \label{claim: bound on C^2 norm for a ball that intersects T_x partial Omega}
         Let $w \in \Omega_0 \setminus K_0$ and $r < \frac{1}{20} \dist(w, K_0)$. Suppose that $B_r(w) \cap \partial\Omega_0 \neq \emptyset$. Then there exists a constant $C>0$ such that 
         \begin{equation*}
             |u_n|_{1, \gamma; B_r(w) \cap \Omega_n} \leq C \quad \forall n \in \N.
         \end{equation*}
     \end{claim}
     \begin{proof}[Proof of Claim]
         A simple adaptation of the proof of Claim \ref{claim: bound on C^2 norm for a ball centered on T_x partial Omega}.
     \end{proof}
     Now assume that $U$ is a compact subset of $\Omega^k \cup (\overline{\Omega^k} \cap \partial\Omega_0 \setminus K_0 )$. Let $r < \frac{1}{20} \dist(U, K_0)$.
     We define $U' = U \cap \{x_1 \leq \frac{r}{2}\}$. Then, since $U'$ is compact, there exist $w_1, \ldots, w_m \in U'$ such that $U' \subset \cup_{i=1}^m B(w_i,r)$. 
     Moreover, by construction, $B(w_i, r) \cap \partial\Omega_0 \neq \emptyset$ for every $i \in \{1, \ldots, m\}$. Therefore, by Claim \ref{claim: bound on C^2 norm for a ball that intersects T_x partial Omega}, there is a constant $C>0$ such that  $|u_n|_{1, \gamma; U'\cap \Omega_n} \leq C$ for every  $n \in \N$.
     Now, by the proof of \cite[Lemma 6.37, p. 136]{gilbarg-trudinger}, for every $n$ there exists a $C^{1,\gamma}$ extension of $u_n$, which do not rename, to $\Omega_n \cup U'$, such that $|u_n|_{1,\gamma;U'} \leq C |u_n|_{1,\gamma; U'\cap\Omega_n}$
     for some constant $C$ that does not depend on $n$. 
     This implies that $\{u_n\}_n$ and $\{\partial_i u_n\}_n$, $i = 1, 2$, are uniformly bounded and equicontinuous in $U'$. 
     Therefore, by the Arzelà-Ascoli Theorem \cite[p. 208]{royden}, there exists $u' \in C^1(U')$ such that, up to subsequences, $u_n \to u' \text{ in } C^1(U')$.
     
     Now we define $U'' = U \cap \{x_1 \geq \frac{r}{4}\}$. Then $U''$ is a compact subset of $\Omega^k$, and hence, by Proposition \ref{prop: blow-up, convergence of functions}, up to subsequences $u_n \to u'' \text{ in } C^\infty(U'')$.
     This implies that $u' = u''$ in $U' \cap U''$. We define $u: U \to \R$ by 
     \begin{equation*}
         u(y) = \begin{cases}
             u'(y)  &\text{ if } y \in U', \\
             u''(y) &\text{ if } y \in U''.
         \end{cases}
     \end{equation*}
     Then $u$ is well defined and $u_n \to u$ in $C^1(U)$. Moreover, by Proposition \ref{prop: blow-up, convergence of functions}, we conclude that $u \in C^\infty(U \cap \Omega_0)$ and $\Delta u = 0$ in $U \cap \Omega_0$. 
     Now, for every $m \in \N$, we define 
     \begin{equation*}
         A_m = \{y \in \myset : \dist(y, K_0) \geq \frac{1}{m}\} \cap \overline{B}_m.
     \end{equation*}
     Then, for every $m \in \N$, $A_m$ is compact and $A_m \subset \mathrm{int}(A_{m+1})$. 
     Moreover, $\cup_{m=1}^\infty A_m = \myset$.
     Then, applying the above results to the compact sets $A_m$ and by a diagonal argument, we obtain a function $u^k \in C^1(\myset) \cap C^\infty(\Omega^k)$ that is harmonic in $\Omega^k$ and such that, up to subsequences, 
     \begin{equation*}
         u_n \to u^k \text{ in } C^1_\loc (\myset). 
     \end{equation*}
 \end{proof}
\begin{proof}[Proof of Proposition \ref{prop: global convergence of functions, blow-up}]
    The result follows by combining Propositions \ref{prop: blow-up, convergence of functions} and \ref{prop: convergence of functions up to the boundary}.
\end{proof}
\appendix
\section{Useful properties}
In this section we collect some elementary but useful results that are interesting or needed in the paper. 

The following lemma is used several times, and it is an elementary consequence of definition of $C^2$ boundary.
\begin{lmm}
    \label{lmm: radius for boundary}
   There exist $\overline{r}>0$ and $C>0$ with the following property. For every $x \in \partial \Omega$ there exists $h_x \in C^2(\R)$, with $h_x(0) = h_x'(0) = 0$ and $\norm{h_x''}_\infty \leq C$, such that, for every $r \in (0,\overline{r})$, $\partial \Omega \cap B_r(x)$ is the graph of $h_x$ over $x+T_x \partial \Omega$.

    More precisely, if we translate and rotate the coordinates so that $x=0$ and $T_x \partial \Omega = \{y_2 = 0\}$, we have $\Omega \cap B_r(x) = \{y_2 > h_x (y_1)\} \cap B_r(x)$.
\end{lmm}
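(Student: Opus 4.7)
The plan is to combine the pointwise $C^2$-graph property of $\partial\Omega$ with the compactness of $\partial\Omega$ and a cut-off trick to extend the local graph function to all of $\R$.

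First, I would fix any $x_0 \in \partial\Omega$. By definition of $C^2$ boundary, after translating $x_0$ to the origin and rotating so that $T_{x_0}\partial\Omega = \{y_2=0\}$, there exist $\rho(x_0)>0$ and $h_{x_0} \in C^2((-\rho(x_0),\rho(x_0)))$ with $h_{x_0}(0)=h_{x_0}'(0)=0$ such that
\[
\Omega \cap B_{\rho(x_0)}(x_0) = \{y_2 > h_{x_0}(y_1)\} \cap B_{\rho(x_0)}(x_0).
\]
Denote $M(x_0) := \|h_{x_0}''\|_{L^\infty((-\rho(x_0)/2,\rho(x_0)/2))}$, which is finite by $C^2$-regularity.

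Second, I would extract uniformity from compactness. Since $\partial\Omega$ is compact, pick a finite cover $\{B_{\rho(x_i)/4}(x_i)\}_{i=1}^N$ of $\partial\Omega$. For any $x \in \partial\Omega$ there is $i$ with $|x-x_i|<\rho(x_i)/4$. Working in the coordinates around $x_i$, the inverse function theorem applied to the projection onto $T_x\partial\Omega$ (which makes angle $\arctan|h_{x_i}'(\pi_1(x))|$ with $T_{x_i}\partial\Omega$, uniformly controlled by $M(x_i)\rho(x_i)$) produces $h_x \in C^2((-r_i, r_i))$ satisfying $h_x(0)=h_x'(0)=0$ and describing $\partial\Omega$ as a graph over $T_x\partial\Omega$ on $B_{r_i}(x)$, with $r_i>0$ and $\|h_x''\|_\infty$ controlled only by $\rho(x_i)$ and $M(x_i)$ through the standard chain rule for implicit graph changes. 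Taking $r_0 := \min_i r_i >0$ and $C_0 := \max_i \sup_x \|h_x''\|_{L^\infty((-r_0,r_0))}$, where the inner sup runs over points $x$ in the $i$-th ball, gives a uniform radius $r_0$ and a uniform bound $C_0$ valid for every $x \in \partial\Omega$.

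Third, to promote $h_x$ from a function defined on $(-r_0,r_0)$ to a function on all of $\R$, I would fix once and for all a cut-off $\eta \in C^\infty_c(\R;[0,1])$ with $\eta\equiv 1$ on $(-r_0/2, r_0/2)$ and $\supp\eta \subset (-r_0,r_0)$, and set $\tilde h_x := \eta \cdot h_x$, extended by zero outside $(-r_0,r_0)$. Clearly $\tilde h_x \in C^2(\R)$, $\tilde h_x(0)=\tilde h_x'(0)=0$, and
\[
\|\tilde h_x''\|_{L^\infty(\R)} \leq \|\eta''\|_\infty \|h_x\|_{L^\infty} + 2\|\eta'\|_\infty \|h_x'\|_{L^\infty} + \|\eta\|_\infty \|h_x''\|_{L^\infty} \leq C,
\]
where the $C^0$ and $C^1$ bounds on $h_x$ on $(-r_0,r_0)$ come from $h_x(0)=h_x'(0)=0$ and $\|h_x''\|_\infty \leq C_0$ via the mean value theorem. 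Setting $\overline{r} := r_0/2$, and renaming $\tilde h_x$ to $h_x$, the statement follows since on $B_r(x)$ with $r<\overline{r}$ the function $\tilde h_x$ coincides with $h_x$ and therefore describes $\partial\Omega$.

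The argument is essentially routine; the only mildly delicate point is step two, where one needs to convert the graph description at a nearby base point $x_i$ into a graph description over $T_x\partial\Omega$ with a $C^2$-norm that does not blow up as $x$ varies. This is handled by the implicit function theorem applied to the projection $\pi_{T_x\partial\Omega}$, which is well-conditioned provided $r_0$ is chosen small compared to $\min_i 1/(M(x_i)\rho(x_i))$.
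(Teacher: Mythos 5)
Your proposal is correct in substance, but note that the paper itself offers no proof of this lemma: it is stated in the appendix with the remark that it is ``an elementary consequence of definition of $C^2$ boundary,'' so there is no argument to compare against line by line. Your write-up is a reasonable way to make that elementary consequence precise: local graph representation over $T_{x_0}\partial\Omega$ from the definition of $C^2$ boundary, a finite cover of the compact set $\partial\Omega$ to extract a uniform radius and curvature bound, re-graphing over $T_x\partial\Omega$ for $x$ near a chart center via the inverse/implicit function theorem, and a cut-off to pass from $h_x\in C^2((-r_0,r_0))$ to $h_x\in C^2(\R)$ with $\|h_x''\|_\infty$ uniformly bounded (using $h_x(0)=h_x'(0)=0$ to control $\|h_x\|_\infty$ and $\|h_x'\|_\infty$ on the interval). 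Two points deserve to be spelled out rather than waved at. First, in the re-graphing step the quantitative content is that $s(t)=\langle\gamma(t)-x,\tau_x\rangle$ satisfies $s'(t)=\bigl(1+h_{x_i}'(t)\,h_{x_i}'(t_x)\bigr)/\sqrt{1+h_{x_i}'(t_x)^2}\geq c_i>0$ for $|t-t_x|$ small compared to $1/\bigl(M(x_i)^2\rho(x_i)\bigr)$, which is what makes the inverse $t(s)$ and hence $\|h_x''\|$ uniformly controlled within each chart; your threshold $\min_i 1/(M(x_i)\rho(x_i))$ should be adjusted accordingly, though this is only a constant-chasing issue. Second, the lemma also asserts the domain-side identity $\Omega\cap B_r(x)=\{y_2>h_x(y_1)\}\cap B_r(x)$, which you do not verify: it follows from the same transversality estimate (each line in the new normal direction $\nu_x$ meets the local graph exactly once inside the chart, and $\nu_x$ agrees with the inner normal at $x$), so the epigraph description in the $x_i$-coordinates transfers to the rotated coordinates; adding a sentence to this effect would close the argument. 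Also observe that for $\overline{r}$ small the containment $B_r(x)\subset B_{\rho(x_i)}(x_i)$ is what guarantees that no other piece of $\partial\Omega$ enters $B_r(x)$, which you use implicitly when asserting that $\partial\Omega\cap B_r(x)$ is exactly the re-graphed set; it is worth stating explicitly since the equality (not just inclusion) of $\partial\Omega\cap B_r(x)$ with the graph is part of the claim.
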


The following proposition guarantees that minimizers do not entirely exclude the boundary datum.
\begin{prop}
    \label{prop: K is not equal to partial Omega}
    Assume \minimizer{K}{u}{\Omega}{g}. Then $K \neq \partial\Omega$.
\end{prop}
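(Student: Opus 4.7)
The plan is a direct comparison argument. Assume toward contradiction that $K = \partial\Omega$. Then $\partial\Omega \setminus K = \emptyset$, so the Dirichlet condition on $u$ is vacuous and $u$ ranges freely over $W^{1,2}(\Omega)$; in particular
\[
E(K,u) = \int_\Omega |\nabla u|^2 + \HH^1(\partial\Omega) \geq \HH^1(\partial\Omega).
\]
It therefore suffices to exhibit an admissible competitor $(K',u') \in \mathcal{A}(\Omega,g)$ with $E(K',u') < \HH^1(\partial\Omega)$.

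The idea is to delete a small arc of $\partial\Omega$ from $K$ and pay only a small Dirichlet cost to satisfy the boundary condition on it. I would fix $x_0 \in \partial\Omega$ and $r \in (0,\overline{r})$, with $\overline{r}$ as in Lemma \ref{lmm: radius for boundary}, and set $I_r := \partial\Omega \cap B_r(x_0)$ and $K' := \partial\Omega \setminus I_r$ (which is closed). The curvature bound provided by Lemma \ref{lmm: radius for boundary} yields $\HH^1(I_r) \geq 2r - Cr^3$, hence
\[
\HH^1(K') \leq \HH^1(\partial\Omega) - 2r + Cr^3.
\]
For the function I would take a cutoff $\varphi \in C^\infty_c(B_{2r}(x_0);[0,1])$ with $\varphi \equiv 1$ on $B_r(x_0)$ and $|\nabla\varphi| \leq 2/r$, and define
\[
u' := \varphi\, g + (1-\varphi)\, g(x_0),
\]
using the fixed $C^{2}$ compactly supported extension of $g$ to $\R^2$ introduced in Section \ref{section: definitions and main results}. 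Then $u' \in W^{1,2}(\Omega)$, and on $I_r \subset B_r(x_0)$ we have $\varphi \equiv 1$, so $u' = g$ on $\partial\Omega \setminus K' = I_r$; in particular $(K',u') \in \mathcal{A}(\Omega,g)$.

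The one computation is the Dirichlet estimate, and it is not an obstacle. Writing $\nabla u' = (g - g(x_0))\nabla\varphi + \varphi\nabla g$ and using $|g(y) - g(x_0)| \leq \lip(g)\,|y-x_0| \leq 2r\lip(g)$ on $B_{2r}(x_0)$, one finds
\[
|\nabla u'| \leq (2r\lip g)(2/r) + \|\nabla g\|_\infty \leq C'
\]
on $B_{2r}(x_0)$ and zero elsewhere, hence $\int_\Omega |\nabla u'|^2 \leq (C')^2 \pi (2r)^2 = O(r^2)$. Combining with the length estimate,
\[
E(K',u') \leq \HH^1(\partial\Omega) - 2r + Cr^3 + O(r^2) < \HH^1(\partial\Omega) \leq E(K,u)
\]
for $r$ sufficiently small, contradicting minimality. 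The only non-trivial point is the elementary cancellation between the $1/r$ blow-up of $|\nabla\varphi|$ and the $r$-smallness of $g - g(x_0)$ near $x_0$, which makes the Dirichlet cost of order $r^2$, dominated by the $-2r$ saved in the boundary length term.
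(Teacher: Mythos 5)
Your proposal is correct and follows essentially the same route as the paper: delete a small boundary arc near a point $x_0$, satisfy the datum there with the interpolation $\varphi g+(1-\varphi)g(x_0)$, and use the cancellation between the $1/r$ growth of $|\nabla\varphi|$ and the $O(r)$ smallness of $g-g(x_0)$ to pay only $O(r^2)$ in Dirichlet energy against a length saving of order $r$. The only (inessential) difference is that the paper first notes that $u$ must be constant and replaces it by $g(x_0)$ so as to compare energies locally in $B_\rho(x_0)$, whereas you compare total energies directly via the trivial bound $E(K,u)\geq \HH^1(\partial\Omega)$.
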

\begin{proof}
    Suppose by contradiction that $K = \partial \Omega$. Then $u$ must be constant in $\Omega$ and $E(K,u) = \HH^1(K)$.
    Let $x \in \partial \Omega$. For every $\rho > 0$, let $\zeta_\rho \in C^\infty_c(B_\rho(x))$ be such that $0\leq \zeta \leq 1$ in $B_\rho(x)$, $\zeta_\rho \equiv 1$ in $B_{\rho /2}(x)$ and 
    $|\nabla \zeta_\rho| \leq \frac{C}{\rho}$ in $B_\rho(x)$ for some constant $C$. 
    Observe that $E(K,g(x)) = E(K,u)$ since $u$ is constant, and hence \minimizer{K}{g(x)}{\Omega}{g}.
    Consider $v = \zeta_\rho g  + (1-\zeta_\rho) g(x)$ and $J = K \setminus B_{\rho/2} (x)$. Then $(J,v) \in \mathcal{A}(\Omega,g)$ since $v=g$ on $\partial\Omega \cap B_{\rho/2}(x)$.
    Observe that, if $y \in B_\rho(x)\cap \Omega$, $|\nabla v(y)|\leq (C+1)\norm{\nabla g}_{L^\infty(\Omega)}$
    and hence
    \begin{equation*}
        E (J,v,B_\rho(x)) \leq (C+1)^2 \norm{\nabla g}^2_{L^\infty(\Omega)}  \rho^2 + \HH^1(\partial\Omega \cap(B_\rho(x) \setminus B_{\rho/2}(x)) ).
    \end{equation*}
    Moreover, $E(K,g(x),B_\rho(x)) = \HH^1(\partial \Omega \cap B_\rho(x))$ and hence
    \begin{equation*}
        \HH^1(\partial \Omega \cap B_{\rho/2}(x)) \leq (C+1)^2 \norm{\nabla g}^2_{L^\infty(\Omega)}  \rho^2, 
    \end{equation*}
    which is a contradiction, since $\partial\Omega \in C^1$ implies $\HH^1(\partial \Omega \cap B_{\rho/2}(x)) \sim \rho$ as $\rho \to 0$.
\end{proof}

A simple truncation argument, using Proposition \ref{prop: K is not equal to partial Omega} and \cite[Lemma 4.3.1]{delellis-focardi}, yields the following maximum principle. 
\begin{prop}[Maximum Principle]
    \label{prop: maximum principle}
Assume \minimizer{K}{u}{\Omega}{g}.
Then $\norm{u}_{L^\infty(\Omega)} \leq \norm{g}_{L^\infty(\partial\Omega)}$. 
\end{prop}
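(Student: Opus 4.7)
The approach is a standard truncation followed by the removal of an \emph{interior constant bubble}. Set $M:=\|g\|_{L^\infty(\partial\Omega)}$ and argue by contradiction, assuming the open set $A:=\{u>M\}\subset\Omega\setminus K$ is nonempty (the case $u<-M$ is symmetric). The truncation $v:=u\wedge M$ still satisfies $v=g$ on $\partial\Omega\setminus K$ because $g\le M$ there, so $(K,v)\in\mathcal{A}(\Omega,g)$. Since $\nabla v=\chi_{\{u\le M\}}\nabla u$ a.e., we have $|\nabla v|\le|\nabla u|$ pointwise and $\HH^1(K)$ is unchanged, giving $E(K,v)\le E(K,u)$. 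Minimality forces equality, hence $\nabla u=0$ a.e.\ on $A$.

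Next I would localize to a connected component $V$ of $\Omega\setminus K$ meeting $A$. On $V$ the function $u$ is harmonic (by the Euler--Lagrange equation for minimizers) and its gradient vanishes on the nonempty open subset $A\cap V$; unique continuation for harmonic functions then forces $u\equiv c$ on all of $V$ with $c>M$. If $V$ touched $\partial\Omega\setminus K$, then $u=g$ there would give $c\le M$, a contradiction. Consequently $\partial V\cap(\partial\Omega\setminus K)=\emptyset$, which combined with the fact that $V$ is a connected component of $\Omega\setminus K$ yields $\partial V\subset K$.

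The main obstacle is turning the existence of such an ``interior bubble'' into a violation of minimality. Here I would invoke \cite[Lemma 4.3.1]{delellis-focardi}: set $K':=K\setminus\partial V$ and extend $v$ inside $V$ by the constant $c$ to produce $v'$. That lemma ensures $(K',v')\in\mathcal{A}(\Omega,g)$ and that the Dirichlet energy is unchanged, while the density lower bound (Theorem \ref{thm: density lower bound for K}) forces $\HH^1(\partial V)>0$, so $\HH^1(K')<\HH^1(K)$. This strictly decreases $E$, contradicting minimality. Proposition \ref{prop: K is not equal to partial Omega} enters at the outset to guarantee that $\partial\Omega\setminus K\ne\emptyset$, so that the truncation competitor is not vacuous and the comparison in Step~1 actually involves the boundary datum $g$.
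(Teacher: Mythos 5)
Your overall architecture is the one the paper itself indicates (the paper's proof is a one-line appeal to a truncation argument, Proposition \ref{prop: K is not equal to partial Omega} and \cite[Lemma 4.3.1]{delellis-focardi}), and your first two steps are carried out correctly: $v=u\wedge M$ is admissible because $g\le M$ on $\partial\Omega\setminus K$, minimality forces $\nabla u=0$ a.e.\ on $\{u>M\}$, and harmonicity plus unique continuation gives $u\equiv c>M$ on any connected component $V$ of $\Omega\setminus K$ meeting $\{u>M\}$, which therefore cannot touch $\partial\Omega\setminus K$, so $\partial V\subset K$.

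The gap is in the elimination of such a component. The pair $(K',v')$ with $K'=K\setminus\partial V$, $v'=c$ on $V$ and $v'=v$ elsewhere is in general \emph{not} admissible, and no external lemma can make it so: once $\partial V$ is deleted from $K$, membership in $\mathcal{A}(\Omega,g)$ requires $v'\in W^{1,2}$ across $\partial V\cap\Omega$, but $v'$ has inner trace $c$ and outer trace equal to that of $u\wedge M$ from the neighbouring components, and these generically differ; in particular the assertion that the Dirichlet energy is ``unchanged'' cannot be what \cite[Lemma 4.3.1]{delellis-focardi} provides. A correct removal must use that $c$ is a free constant: delete only a small window $\partial V\cap B_{\rho/2}(x)$ around a well-chosen $x\in\partial V$, pick $c$ equal to a local average of $u$ on the other side, and glue with a cutoff, paying a Dirichlet cost that must then be beaten by the length gain of order $\rho$ (density lower bound); this is exactly the mechanism of the paper's proof of Proposition \ref{prop: K is not equal to partial Omega}, and it is the nontrivial content being delegated to the cited lemma, whose precise statement you should quote if you use it as a black box. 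Two smaller points: Proposition \ref{prop: K is not equal to partial Omega} does not by itself give $\partial\Omega\setminus K\neq\emptyset$ (nothing prevents $K\supsetneq\partial\Omega$ a priori); its actual role is the degenerate case $V=\Omega$, i.e.\ $K=\partial\Omega$, while the case $\partial\Omega\subset K$, $K\neq\partial\Omega$ is excluded by comparing with the competitor $(\partial\Omega,\mathrm{const})$ together with the normalization of $K$, or falls under the same bubble-elimination step. Also, $\HH^1(\partial V)>0$ needs no density lower bound: $\partial V$ is the topological boundary of a nonempty bounded open planar set.
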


Since they are needed throughout the paper, we write the Euler-Lagrange equations for our functional.
Outer variations yield 
\begin{equation}
    \label{eq: Euler-Lagrange integral form}
    \int_{\Omega\setminus K} \nabla u \cdot \nabla v = 0 \quad \forall v \in C^\infty_c (\Omega),
\end{equation}
which, under the right smoothness assumptions, corresponds to 
\begin{equation}
    \label{eq: Euler-Lagrange }
    \begin{cases}
        \Delta u = 0 \quad &\text{in } \Omega \setminus K, \\
        u = g \quad &\text{on } \partial \Omega \setminus K, \\
        \frac{\partial u}{\partial \nu} = 0 \quad &\text{on } K. 
    \end{cases}
\end{equation}
Finally, inner variations yield the following. 
\begin{lmm}[First derivative of the energy]
    \label{lmm: first derivative of the energy for generalized minimizers}
    Let \minimizer{K}{u}{\Omega}{g}, $z \in \R^2$, $\eta \in [C^1_c(B_R(z))]^2$ for some $R>0$.
    For $t \in \R$, define $\Phi_t(x) = x + t \, \eta (x)$ for every $x \in \R^2$ and $K_t = \Phi_t(K)$, $u_t = u \circ \Phi_t^{-1}$.
    Then 
    \begin{equation*}
            \derivative{}{t} E(K_t,u_t, B_R)|_{t=0} =  \int_{B_R(z) \cap \Omega \setminus K} (|\nabla u|^2 \diverg \eta - 2 \nabla u^T D\eta \, \nabla u) + \int_{K \cap B_R(z)} e^T D\eta \, e \, \dd{\HH^1}
    \end{equation*}
    as $t \to 0$, where $e(x) \in S^1$ is such that $T_x K  = \mathrm{span}\{e(x)\}$ for every $x \in K$ such that the approximate tangent line $T_x K$ exists.
\end{lmm}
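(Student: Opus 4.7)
[Proof proposal for Lemma \ref{lmm: first derivative of the energy for generalized minimizers}]
The plan is a standard inner variation calculation, treating the Dirichlet and $\mathcal{H}^1$ contributions separately. Since $\eta$ is compactly supported in $B_R(z)$, for all sufficiently small $|t|$ the map $\Phi_t$ is a $C^1$ diffeomorphism of $\mathbb{R}^2$ which is the identity outside $B_R(z)$; in particular $\Phi_t(B_R(z))=B_R(z)$ and $\Phi_t^{-1}(B_R(z))=B_R(z)$. This lets one write $E(K_t,u_t,B_R(z))$ as an integral over the \emph{fixed} set $B_R(z)\cap\Omega\setminus K$, so that differentiation in $t$ is reduced to pointwise expansion of smooth factors under dominated convergence.

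First I would treat the bulk term. Changing variables $y=\Phi_t(x)$ in the integral and using $\nabla u_t(\Phi_t(x))=D\Phi_t(x)^{-T}\nabla u(x)$, one obtains
\begin{equation*}
\int_{B_R(z)\cap \Phi_t(\Omega)\setminus K_t}|\nabla u_t|^2 = \int_{B_R(z)\cap\Omega\setminus K}\nabla u^T\, A_t(x)\,\nabla u\,\mathrm{d}x,
\end{equation*}
where $A_t:=D\Phi_t^{-1}D\Phi_t^{-T}\det D\Phi_t$. A Taylor expansion of $A_t$ around $t=0$, using $D\Phi_t=I+tD\eta+O(t^2)$ and $\det D\Phi_t=1+t\,\mathrm{div}\,\eta+O(t^2)$, gives $A_t=I+t\bigl((\mathrm{div}\,\eta)I-D\eta-D\eta^T\bigr)+O(t^2)$ with the remainder bounded uniformly on $B_R(z)$. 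Differentiating under the integral (justified by dominated convergence, since $|\nabla u|^2\in L^1$ and the integrand depends smoothly and boundedly on $t$) yields the Dirichlet contribution
\begin{equation*}
\int_{B_R(z)\cap\Omega\setminus K}\bigl(|\nabla u|^2\,\mathrm{div}\,\eta-2\,\nabla u^T D\eta\,\nabla u\bigr).
\end{equation*}

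Next I would handle the length term via the area formula on the rectifiable set $K$. Rectifiability of $K$ and existence of the approximate tangent line $\mathcal{H}^1$-a.e. follow from Ahlfors regularity, which is a consequence of the density lower bound (Theorem \ref{thm: density lower bound for K}) and the energy upper bound (Proposition \ref{prop: Energy upper bound at the boundary}). Since $\Phi_t$ is a diffeomorphism and the identity outside $B_R(z)$, $\Phi_t(K)\cap B_R(z)=\Phi_t(K\cap B_R(z))$, and the area formula gives
\begin{equation*}
\mathcal{H}^1(K_t\cap B_R(z))=\int_{K\cap B_R(z)}|D\Phi_t(x)\,e(x)|\,\mathrm{d}\mathcal{H}^1(x).
\end{equation*}
Expanding $|D\Phi_t\,e|^2=1+2t\,e^T D\eta\,e+O(t^2)$ and taking square roots yields $|D\Phi_t\,e|=1+t\,e^T D\eta\,e+O(t^2)$ with an $O(t^2)$ remainder that is uniform on the compact support of $\eta$. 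Another application of dominated convergence (the dominating function being a multiple of $\mathbf{1}_{K\cap B_R(z)}$, integrable by the upper bound on $\mathcal{H}^1$) gives the length contribution $\int_{K\cap B_R(z)} e^T D\eta\,e\,\mathrm{d}\mathcal{H}^1$. Summing the two pieces yields the claimed identity.

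I do not expect a serious obstruction: the only points that require care are (i) verifying that $\Phi_t^{-1}(B_R(z))=B_R(z)$ so that the change of variables is clean, which follows from $\eta$ being compactly supported inside $B_R(z)$, and (ii) invoking rectifiability of $K$ to legitimately apply the area formula and to speak of the tangent vector $e$; this is an input from the standing regularity theory for the problem, already present in the paper.
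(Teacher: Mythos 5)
The paper states this lemma in the appendix without proof, treating it as a standard inner-variation computation, so there is no written argument to compare against; your proof is the canonical one, and both expansions (of $A_t = D\Phi_t^{-1}D\Phi_t^{-T}\det D\Phi_t$ for the bulk term and of the tangential Jacobian $|D\Phi_t\,e|$ for the length term) are carried out correctly, with the dominated-convergence justifications in order since $D\eta$ is bounded with compact support, $|\nabla u|^2\in L^1$, and $\HH^1(K\cap B_R(z))<\infty$ by the energy upper bound. Two points need correction or care. First, rectifiability of $K$ does \emph{not} follow from Ahlfors regularity: Ahlfors-regular sets can be purely unrectifiable, so the density lower and upper bounds alone do not give you the approximate tangent line. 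The correct source, available in the paper, is the $SBV$ machinery behind Theorem \ref{thm: density lower bound for K}: $u$ extends to an $SBV$ minimizer with $S_u\subset K$ and $\HH^1(K\setminus S_u)=0$, so $K$ agrees $\HH^1$-a.e.\ with the rectifiable jump set $S_u$; this is what legitimizes the area formula and the tangent vector $e$. Second, for a general $\eta\in[C^1_c(B_R(z))]^2$ the map $\Phi_t$ need not preserve $\Omega$, so $u_t=u\circ\Phi_t^{-1}$ need not be defined on all of $B_R(z)\cap\Omega\setminus K_t$; your computation in fact differentiates the energy of the deformed configuration written over $\Phi_t(\Omega)$, i.e.\ over $\Phi_t(B_R(z)\cap\Omega\setminus K)$. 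That is the right reading, and it is harmless in the paper's applications (in the proof of Lemma \ref{lmm: tilt lemma} and of Proposition \ref{prop: generalized minimizer with K in partial Omega containing an interval} the vector field is chosen so that $\Phi_t$ maps the relevant portion of $\Omega$ into itself), but you should state explicitly that this is the interpretation of $E(K_t,u_t,B_R)$ being differentiated, since otherwise the Dirichlet term in the statement is not even well defined.
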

\begin{rmk}[Regularity]
    \label{Remark: regularity of u}
    The integral form of the Euler-Lagrange equation \eqref{eq: Euler-Lagrange integral form}, together with classical elliptic regularity theory, implies that $u \in C^\infty(\Omega\setminus K) \cap C^{1,\gamma}(\overline{\Omega} \setminus K)$ for any \minimizer{K}{u}{\Omega}{g}. 
\end{rmk}
    
\end{document}